




\documentclass[reqno, oneside]{amsart}

\PassOptionsToPackage{a4paper, total={6in, 8.5in}}{geometry}
\PassOptionsToPackage{toc,title,page}{appendix}
\PassOptionsToPackage{status=draft}{fixme}

\setlength{\parskip}{0.3\baselineskip}

\usepackage[dvipsnames]{xcolor}
\usepackage{tikz}
\usepackage{amsmath,bm,bbm,amsthm, amssymb}
\usepackage{mathtools}
\usepackage{color}
\usepackage{dsfont}
\usepackage{etoolbox}
\usepackage{comment}
\usepackage{pgf}
\usetikzlibrary{calc}
\usetikzlibrary{patterns}
\usetikzlibrary{arrows}
\usetikzlibrary{decorations.pathreplacing}
\usepackage[utf8]{inputenc}
\usepackage{pgfplots}

\usepackage[theorems,skins]{tcolorbox}
\usepackage{adjustbox}
\usepackage[final]{pdfpages}
\usepackage[ruled,vlined,linesnumbered,algosection,resetcount]{algorithm2e}
\usepackage{blkarray}
\usepackage{arydshln}
\usepackage{wrapfig}
\usepackage{enumitem}

\newcommand{\aG}{\operatorname{G:}}
\newcommand{\aH}{\operatorname{H:}}
\newcommand{\aF}{\operatorname{F:}}

\newcommand{\aT}{\operatorname{T:}}
\newcommand{\aU}{\operatorname{U:}}

\newcommand{\aM}{\operatorname{M:}}
\newcommand{\aP}{\operatorname{P:}}

\newcommand{\aTp}{\operatorname{T':}}
\newcommand{\aPp}{\operatorname{P':}}
\newcommand{\aFp}{\operatorname{F':}}

\newcommand{\vol}{\operatorname{vol}}

\usepackage{%
  appendix, etoolbox, setspace, accents, geometry, fancyhdr, hyperref, xpatch, graphicx, caption, subcaption, fixme%
}

\fancyhf{}
\fancyhead[R]{\thepage}
\fancyhead[L]{\sectiontitle} 

\pagestyle{fancy}  


\makeatletter
\xapptocmd{\@sect}{
  \csname #1mark\endcsname{#8} 
  \gdef\sectiontitle{#8} 
}{}{}
\makeatother

\makeatletter
\renewcommand\section{\@startsection {section}{1}{\z@}%
                                       {-5ex \@plus -1ex \@minus -.2ex}%
                                       {2.5ex \@plus.2ex}%
                                       {\center\scshape\Large}}

\renewcommand\subsection{\@startsection{subsection}{2}{\z@}%
                                         {-3.5ex\@plus -1ex \@minus -.2ex}%
                                         {1ex \@plus .2ex}%
                                         {\center\scshape\large}}
\makeatother

\makeatletter
\def\l@subsection{\@tocline{2}{0pt}{3pc}{6pc}{}}
\makeatother

\newcommand\scalemath[2]{\scalebox{#1}{\mbox{\ensuremath{\displaystyle #2}}}}

\setcounter{section}{0}

\usepackage[foot]{amsaddr}

\makeatletter
\renewenvironment{abstract}{%
  \ifx\maketitle\relax
    \ClassWarning{\@classname}{Abstract should precede
      \protect\maketitle\space in AMS document classes; reported}%
  \fi
  \global\setbox\abstractbox=\vtop \bgroup
    \normalfont\Small
    \list{}{\labelwidth\z@
      \leftmargin0.1\textwidth
      \rightmargin0.1\textwidth
      \listparindent\normalparindent
      \itemindent\z@
      \parsep\z@
      \@plus\p@
    }%
    \item[\hskip\labelsep\bfseries Abstract.]%
}{%
  \endlist\egroup
  \ifx\@setabstract\relax \@setabstracta \fi
}
\makeatother

\theoremstyle{plain}
\usepackage{amsthm}
\newtheorem{theorem}{Theorem}
\newtheorem{proposition}[theorem]{Proposition}
\newtheorem{corollary}[theorem]{Corollary}
\newtheorem{lemma}[theorem]{Lemma}

\theoremstyle{definition}

\newtheorem{example}[theorem]{Example}
\newtheorem{experiment}[theorem]{Experiment}

\newtheorem{remark}[theorem]{Remark}

\numberwithin{equation}{section}
\numberwithin{theorem}{section}

\theoremstyle{plain}

\newtheorem*{assumptionF*}{Assumption F}
\newtheorem{assumptionF'}{Assumption F'}
\newtheorem*{assumptionF'*}{Assumption F'}

\newtheorem*{assumptionS*}{Assumption S}
\newtheorem{assumptionS'}{Assumption S'}
\newtheorem*{assumptionS'*}{Assumption S'}

\newtheorem*{assumptionM*}{Assumption M}

\newtheorem*{assumptionT*}{Assumption T}
\newtheorem{assumptionT'}{Assumption T'}
\newtheorem*{assumptionT'*}{Assumption T'}

\newtheorem*{assumptionU*}{Assumption U}

\newtheorem*{assumptionX*}{Assumption X}

\newtheorem*{assumptionC*}{Assumption C}
\newtheorem{assumptionC'}{Assumption C'}
\newtheorem*{assumptionC'*}{Assumption C'}

\newtheorem*{assumptionR1*}{Assumption R1}

\newtheorem*{assumptionR2*}{Assumption R2}

\newtheorem*{assumptionP*}{Assumption P}
\newtheorem*{assumptionP'}{Assumption P'}
\newtheorem*{assumptionP'*}{Assumption P'}

\newtheorem*{assumptionI*}{Assumption I}
\newtheorem*{assumptionI'}{Assumption I'}
\newtheorem*{assumptionI'*}{Assumption I'}

\newtheorem*{assumptionW*}{Assumption W}
\newtheorem*{assumptionW'}{Assumption W'}
\newtheorem*{assumptionW'*}{Assumption W'}

\newtheorem*{assumptionR*}{Assumption R}
\newtheorem*{assumptionR'}{Assumption R'}
\newtheorem*{assumptionR'*}{Assumption R'}

\newtheorem*{assumptionG*}{Assumption G}

\newtheorem*{assumptionH*}{Assumption H}

\def\R{\mathbb R}

\def\E{\mathbb E}
\def\T{\mathbb T}
\def\eps{\varepsilon}

\def\xx{\mathbf x}
\def\yy{\mathbf y}

\def\00{\mathbf 0}

\def\bet{\begin{theorem}}
\def\ent{\end{theorem}}
\def\bec{\begin{corollary}}
\def\enc{\end{corollary}}
\def\bep{\begin{proof}}
\def\enp{\end{proof}}
\def\f{\frac}

\def\a{\alpha}

\def\es{\emptyset}
\def\su{\subseteq}
\def\ms{\mathsf}

\def\mc{ \mathcal}

\def\PP{\mc P}
\def\dtv{d_{\ms{TV}}}
\def\dkr{d_{\ms{KR}}}

\def\one{\mathds1}
\def\de{\delta}
\def\d{\mathrm d}
\def\ze{\zeta}
\renewcommand\leq{\leqslant}
\def\r{\rho}
\renewcommand\geq{\geqslant}
\renewcommand\le{\leqslant}
\renewcommand\ge{\geqslant}

\DeclarePairedDelimiter{\abs}{\lvert}{\rvert}

\def\e{\varepsilon}
\def\bel{\begin{lemma}}
\def\enl{\end{lemma}}
\def\im{\item}
\def\been{\begin{enumerate}}
\def\enen{\end{enumerate}}

\def\k_d{\kappa_d}

\def\lmax{\ell_{\ms{max}}}

\def\k{\kappa}

\def\bepr{\begin{proposition}}
\def\enpr{\end{proposition}}

\usepackage{etoolbox}

\newlength{\dhatheight}

\begin{document}






\title{POISSON APPROXIMATION OF LARGE-LIFETIME CYCLES}

\author{Christian Hirsch $^1$}
\address{$^1$ Department of Mathematics, Aarhus University, Denmark, E-mail: \texttt{hirsch@math.au.dk}}

\author{Nikolaj Nyvold Lundbye $^2$}
\address{$^2$ Department of Mathematics, Aarhus University, Denmark, E-mail: \texttt{nnl@math.au.dk}}

\author{Moritz Otto $^3$}
\address{$^3$ Mathematical Institute, Leiden University, Netherlands, E-mail: \texttt{m.f.p.otto@math.leidenuniv.nl}}

\maketitle
\thispagestyle{empty} 
\vspace{-3ex}

\begin{abstract}
	In topological data analysis, the notions of persistent homology, birthtime, lifetime, and deathtime are used to assign and capture relevant 
	cycles (i.e., topological features) of a point cloud, such as loops and cavities. In particular, cycles with a large lifetime are of special interest. 
	In this paper, we study such large-lifetime cycles when the point cloud is modeled as a Poisson point process. 
	First, we consider the case with no bound on the deathtime, where we establish Poisson convergence of the centers of large-lifetime features on 
	the 2-dimensional flat torus. Afterwards, by imposing a bound on the deathtime, we enter a sparse connectivity regime, 
	and we prove joint Poisson convergence of the centers, lifetimes, and deathtimes in dimensions $d \geq 2$
	under suitable model conditions. \newline \newline
\textbf{Keywords: } \v Cech- \& Vietoris-Rips complexes, persistent homology, persistence diagram, largest lifetime,
Poisson point processes, Poisson U-statistics, Poisson approximation. \newline \newline
\textbf{MSC Classification:} 55U10, 60D05, 60G55, 60F05, 60G70.
\end{abstract}

\begingroup
\small 
\setlength{\parskip}{2pt}
\setlength{\parindent}{0pt}
\tableofcontents
\endgroup






\begin{spacing}{1.25}

\section*{Introduction}
Topological data analysis (TDA) has recently gained popularity across a wide variety of disciplines because it provides insights into datasets by 
relying on invariants from algebraic topology \cite{wasserman}. One of the key tools in TDA is \textit{persistent homology}, which tracks the scales at which topological 
features such as components, loops, or cavities appear and disappear. In this way, each feature is associated with a \textit{lifetime}. The main idea is that features 
with long lifetimes indicate interesting topological structures, while features with short lifetimes are likely statistical noise.
This approach has been applied in fields such as astronomy, chemistry, materials science and neuroscience \cite{gidea,neuro_hess,tda_nature,rien}.

Despite TDA’s widespread adoption, its statistical foundations are still in their infancy. This presents a serious challenge for applications, 
particularly in medicine, where it is crucial to devise hypothesis tests to determine whether a given dataset is consistent with a null model. 
Currently, the vast majority of TDA-based hypothesis tests rely on Monte Carlo or bootstrap methods. While these methods offer flexibility 
with respect to the considered model, their results do not generalize beyond a specific sampling window. Additionally, the high computational
costs create substantial obstacles for large datasets. This has recently motivated the development of tests based on the asymptotic normality
of persistence-related functionals over large domains.
For example, recent works \cite{svane,cyl} have examined the asymptotic normality of test statistics based on feature lifetimes averaged over entire domains. 
However, deviations from the null model can often be more clearly identified by considering the maximum lifetime of all features in the sampling window. 
For instance, there are currently no statistically rigorous limit results for the immensely popular persistence landscape in large windows \cite{bubenik}.
This has led to growing interest in the analysis of extreme events in TDA, a fruitful area of research at the intersection of extreme value theory and 
algebraic topology \cite{CH20,BKS17,ecc2,ecc3}. 

The first breakthrough in the probabilistic analysis of extreme lifetimes occurred in \cite{BKS17} when the high-probability order of the largest lifetime was determined. 
While \cite{BKS17} focused on multiplicative lifetimes, in \cite{CH20} maximal lifetimes were investigated in an additive sense, 
establishing Poisson approximation results. However, these findings on maximal lifetimes are restricted to codimension 1,
meaning that Poisson approximations are not yet available for applications requiring the analysis of loops in 3D space. More recently, 
\cite{univ} developed a general framework from which a precise and broadly 
applicable conjecture on the asymptotic distribution of large lifetimes has been derived. 
Although these conjectures have not yet been
fully established at a rigorous level, \cite{BS24} managed to
obtain a universal law of large numbers for multiplicative lifetimes in arbitrary dimensions.

In the present work, we take the first steps toward establishing a theory of Poisson approximation for 
large-lifetime features in arbitrary dimensions. 
First, we consider an \textit{unbounded regime}, which refers to having no bound on the \textit{deathtime} of features. 
Here, we focus solely on multiplicative lifetimes for a stationary Poisson point process, 
and our findings can be seen as a continuation of the work in \cite{BKS17}. Our work goes beyond \cite{BKS17} in the following way:
\been 
\im By considering only dimension $d=2$, we are able to achieve Poisson convergence of the spatial locations of large-lifetime features 
and prove that the scaling of the threshold for what constitutes a large lifetime grows asymptotically as the 
expected largest lifetime in \cite{BKS17}.
\enen

A strategy that has been successfully implemented in other contexts is the consideration of the \emph{sparse regime} \cite{KM18},
 where the expected number of $r_n$-distance neighbors of a typical point tends to 0 as $n \to \infty$. To reflect the sparsity assumption,
  we focus our investigations on large-lifetime features whose deathtime is at most $r_n>0$. Henceforth, we refer to the \textit{$m$-sparse regime}, 
  as where, with high probability, connected clusters are of size at most $m \geq 1$.
  This key focus on the $m$-sparse regime allows us to go substantially beyond the findings in the unbounded regime and from \cite{CH20}:
\begin{enumerate}[resume]
    \item We prove Poisson convergence of the location of large-lifetime features in arbitrary dimensions $d \geq 2$, 
	and we are no longer restricted to features of codimension 1. Instead, we are restricted to dimensions $d$, 
	feature sizes $k$, and cluster sizes $m$, where each cluster of size $m$ has at most one $k$-feature with lifetime close to the maximal possible lifetime $\lmax$.
	We verify this uniqueness for various combinations, including, for example, loops in 3D space.
    \item Our framework is sufficiently flexible to accommodate inhomogeneities in the initial Poisson point cloud over 'nice' regions. 
	This includes cases where intensities are equipped with a bounded, continuously differentiable Lebesgue density on a convex set of finite volume.
    \item Besides proving Poisson convergence of the location of large-lifetime features, we also show that by adding the deviation of the lifetime to $\lmax$ and 
	deathtime as marks,
	 Poisson convergence still holds under certain regularity assumptions. These assumptions are verified for $d = 2$. In particular, we investigate what 
	 this additional information reveals about the asymptotic distributional behavior of the \textit{persistence diagram} and the \textit{largest lifetime}.
\end{enumerate}

The key probabilistic tool in our proofs is the Malliavin-Stein method from \cite{decreusefond,BSY21},
which not only proves Poisson convergence, but also quantifies its speed. In the $m$-sparse regime, a major challenge is that \cite{decreusefond} only
applies to U-statistics and the large-lifetime features are not of this form.
We address this by approximating large-lifetime features with a U-statistic 
and control the error of this approximation.
In the unbounded regime, such an approximation is not possible and therefore we use more involved methods from \cite{BSY21}.
The most difficult step, excluding clustering of multiple large-lifetime features, is resolved using the
recently established continuous BK inequality \cite{HHLM19}. 

The rest of the paper is organized as follows. 
In Section \ref{sec:1}, we recall key concepts from topology and stochastic geometry and explain the basic principles of TDA.
In Section \ref{sec:2}, we describe the unbounded and $m$-sparse regimes in detail and present the main theorems. 
Sections \ref{sec:3} and \ref{sec:4} prove Poisson convergence in the unbounded and $m$-sparse regimes, respectively. 
Section \ref{sec:5} discusses the model assumptions we use.
Section \ref{sec:6} demonstrates applications of the main theorems, focusing on certain choices of the intensity, the largest lifetime and the persistence diagram. 
Finally, the value of $\lmax$ and the persistence diagram are explored through simulations in Section \ref{sec:7}.
\vspace{-1ex}






\section{Preliminaries}
\label{sec:1}

First of all, this section contains a brief review of the \textit{$d$-dimensional flat torus}, \textit{point processes}
and in particular the \textit{Poisson point process}. Secondly, we review relevant notions from algebraic topology
like \textit{\v Cech} and \textit{Vietoris-Rips complexes}, \textit{persistent homology}
and the concepts of \textit{birthtimes}, \textit{deathtimes} and \textit{lifetimes}.
We refer to \cite{chazal} for comprehensive study of these topics.


\vspace{2.5ex}
\begin{center}
	\large{\textsc{\v Cech \& Vietoris-Rips complexes}}	
\end{center}
\vspace{1ex}
Let $(X,\rho)$ denote a metric space and let $\PP \su X$ be finite.
A \textit{simplicial complex} on $\PP$
is a family of subsets of $\PP$ closed under taking subsets.
One can think of a
simplicial complex as a generalization of a graph, where, e.g., the 0-simplices are vertices, the 1-simplices edges and the 2-simplices solid triangles. 
For radius $t > 0$, the \textit{\v Cech complex} $\mathsf{C}_t(\PP)$ and \textit{Vietoris-Rips complex} $ \mathsf{VR}_t(\PP)$
satisfy that
	 \vspace{0.75ex} $$
	 \begin{aligned}
		\{x_0,\ldots,x_p\} \in  \mathsf{C}_t(\PP)  & \iff \text{ there exists } x \in X
	  \text{ such that } \rho(x,x_j) \leq t \text{ for all } 0 \leq j \leq p. \\
	  \{x_0,\ldots,x_p\}\in  \mathsf{VR}_t(\PP)  & \iff  \rho(x_i,x_j) \leq 2t \text{ for all } 0 \leq i, j \leq p.
		\end{aligned}
	 \vspace{0.75ex} $$
In other words, a $p$-simplex is included in the \v Cech complex at radius $t$ if there is a ball of this radius which contains all $p+1$ points,
 whereas a $p$-simplex is included in the Vietoris-Rips complex at radius $t$
when the pairwise distance of all $p+1$ points is less than twice this radius (see Figure \ref{fig:1}). 

\vspace{1ex}
\begin{figure}[htbp]
    \centering
	\makebox[0.9\textwidth]{
    \begin{subfigure}[t]{0.4 \textwidth}
        \centering
        \begin{tikzpicture}[scale=0.75]
			\begin{scope}[shift={(0,0)}]
				\fill[blue!40!black] (0,0) -- (2,0) -- (1,1.73) -- cycle; 
				\node[draw, circle, fill=black, minimum size=4pt, inner sep=0pt] (A) at (0,0) {};
				\node[draw, circle, fill=black, minimum size=4pt, inner sep=0pt] (B) at (2,0) {};
				\node[draw, circle, fill=black, minimum size=4pt, inner sep=0pt] (C) at (1,1.73) {};
				\draw[black] (A) -- (B) -- (C) -- (A);
			\end{scope}
			
			\node[draw, circle, fill=black, minimum size=4pt, inner sep=0pt] (D) at (3.25,2.25) {};
			\draw[black] (B) -- (C) -- (D) -- (B);
		
			\begin{scope}[shift={(3,-0.5)}]
				\node[draw, circle, fill=black, minimum size=4pt, inner sep=0pt] (1) at (1,0) {};
				\node[draw, circle, fill=black, minimum size=4pt, inner sep=0pt] (2) at (2.25,0.5) {};
				\node[draw, circle, fill=black, minimum size=4pt, inner sep=0pt] (3) at (3,0) {};
				\node[draw, circle, fill=black, minimum size=4pt, inner sep=0pt] (4) at (4,1) {};
				\draw[black] (1) -- (2) -- (3) -- (4);
			\end{scope}
			
			\node[draw, circle, fill=black, minimum size=4pt, inner sep=0pt] at (5.75,2) {};
		\end{tikzpicture}
		\caption{\text{Simplicial complex with nine 0-simplices}, eight 1-simplices and one
		2-simplex (blue).
		}
    \end{subfigure}%
    \hfill
    \begin{subfigure}[t]{0.4 \textwidth}
        \centering
		\begin{tikzpicture}[scale=0.65]

			\begin{scope}[shift={(-2.5,0)}] 
			
			
			\coordinate (A1) at (0,0);
			\coordinate (B1) at (2,0);
			\coordinate (C1) at (1,{sqrt(3)});
			
			\filldraw[black] (A1) circle (2pt);
			\filldraw[black] (B1) circle (2pt);
			\filldraw[black] (C1) circle (2pt);
			
			\draw (A1) -- (B1);
			\draw (A1) -- (C1);
			\draw (B1) -- (C1);
			
			
			\draw[black, dashed] (A1) circle (1.02);
			\draw[black, dashed] (B1) circle (1.02);
			\draw[black, dashed] (C1) circle (1.02);
			
			
			\end{scope}
			
			\begin{scope}[shift={(2.5,0)}] 
			
			
			\coordinate (A2) at (0,0);
			\coordinate (B2) at (2,0);
			\coordinate (C2) at (1,{sqrt(3)});
			
			\filldraw[black] (A2) circle (2pt);
			\filldraw[black] (B2) circle (2pt);
			\filldraw[black] (C2) circle (2pt);
			
			\draw (A2) -- (B2);
			\draw (A2) -- (C2);
			\draw (B2) -- (C2);
			
			\fill[blue!40!black, opacity=0.9] (A2) -- (B2) -- (C2) -- cycle;
			
			\draw[black, dashed] (A2) circle (1.02);
			\draw[black, dashed] (B2) circle (1.02);
			\draw[black, dashed] (C2) circle (1.02);
			
			
			\end{scope}
			
			\end{tikzpicture}
        \caption{\text{Left: Čech complex at radius $t>0$.} \text{Right: Vietoris–Rips complex
		 at the same $t$.}}
    \end{subfigure}
	} 
	\caption{Illustration of simplices and \v Cech 
	versus Vietoris-Rips complexes.}
	\label{fig:1}
\end{figure}
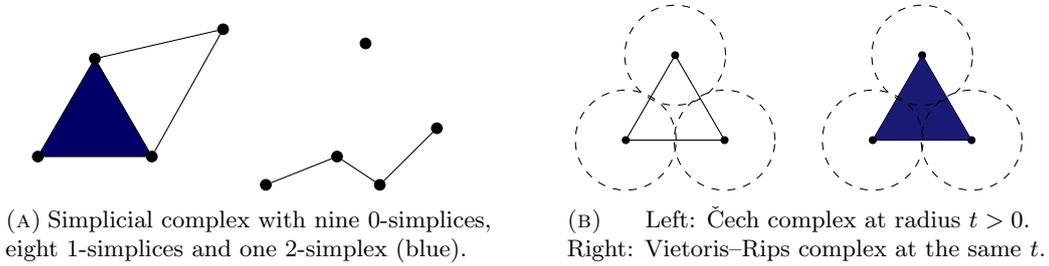
 

\vspace{0.75ex}
\begin{center}
  \large{\textsc{The flat torus}}
\end{center}
\vspace{1ex}

Henceforth, we will replace $X$ by a region $W \subseteq \R^d$, where $d \geq 2$. The canonical choice of metric $\rho$ on $W$ is  
the Euclidean distance, which is the metric induced by the Euclidean norm $\Vert \cdot \Vert$. However, we can also define the flat torus $\T^d$ 
as the unit cube $[0,1]^d$ equipped with the \textit{toroidal metric} \vspace{0.3ex} 
\begin{equation*}
	\rho(x,y) = \min_{q \in \mathbb{Z}^d} \Vert x-y+q \Vert, \quad \text{for any } x,y \in [0,1]^d. \vspace{0.3ex} 
\end{equation*} Equivalently, one can think of the flat torus as the quotient space $\R^d / \mathbb{Z}^d$, 
which induces a Lebesgue measure $\vol_{\T^d}(\cdot)$ on the torus from the image measure of
the regular Lebesgue measure $\vol_{\R^d}(\cdot)$ under the quotient map $q:\R^d \to \R^d /\mathbb{Z}^d$, i.e., 
$
	\vol_{\T^d}(A) = \vol_{\R^d}(q^{-1}(A) \cap [0,1]^d), 
$
which becomes a translation invariant probability measure. 
The main advantage of the flat torus is that it is compact, yet has no boundary, which 
removes edge effects when studying point processes.


\vspace{2.5ex}
\begin{center}
	\large{\textsc{Point processes and Kantorovich-Rubinstein distance}}
\end{center}
\vspace{1ex}
Let $\mathbb{F}^d$ denote $\R^d$ or $\T^d$, and $\mathcal{B}(\mathbb{F}^d)$ the Borel $\sigma$-algebra on $\mathbb{F}^d$ equipped with two 
measures $\mu$, $\nu$. Then, we define the \textit{total variation distance} as
\vspace{0.75ex} $$
   \dtv(\mu,\nu) = \sup\big\{\abs{\mu(A)-\nu(A)}\colon A \in \mathcal{B}(\mathbb{F}^d) \text{ and }  \mu(A),\nu(A) < \infty \big\}.
\vspace{0.75ex} $$
Let $(\Omega, \mc F, P)$ denote a probability space large enough to
contain all random objects in the present work and
let $\mathfrak{N}(\mathbb{F}^d)$ denote the space of locally finite (i.e., finite on compact sets) counting measures on $\mathcal{B}(\mathbb{F}^d)$, and equip $\mathfrak{N}(\mathbb{F}^d)$
with the $\sigma$-algebra $\mc G$ generated by the maps $\mu \mapsto \mu(A)$ for $A \in \mathbb{F}^d$. A (locally finite) \textit{point process} 
$\PP:\Omega \to \mathfrak{N}(\mathbb{F}^d)$ is a $\mc F$-$\mc G$-measurable map. For another point process $\mc Q$, let $\Sigma(\PP,\mc Q)$ denote the set of
couplings of the probability measures $P(\PP^{-1}(\cdot))$ and $P(\mc Q^{-1}(\cdot))$. Finally, we define the \textit{Kantorovich-Rubinstein distance} of the point processes $\PP$ and $\mc Q$ as
\vspace{0.75ex} $$
   \dkr(\PP,\mc Q) = \inf_{\mc C \in \Sigma(\PP,\mc Q)} \int_{\mathfrak{N}(\mathbb{F}^d) \times \mathfrak{N}(\mathbb{F}^d)} \dtv(\mu,\nu)\  \mc C(\d \mu, \d \nu).
\vspace{0.75ex} $$
By \cite[Proposition 2.1]{decreusefond}, then $\dkr(\PP_n,\PP) \to 0$ implies that $\PP_n$ converges in distribution to $\PP$.

\vspace{2.5ex}
\begin{center}
	\large{\textsc{The Poisson Point Process}}
\end{center}
\vspace{1ex}

By a \textit{Poisson point process}, abbreviated \textit{P.p.p.}, with locally finite atom-free \textit{intensity measure} $\lambda$, 
we refer to a point process $\PP$ on $W \su \mathbb{F}^d$ such that (i) $\PP(A) \sim \text{Pois}(\lambda(A))$, 
and (ii) $\PP(A)$ and $\PP(B)$ are independent for any disjoint Borel sets $A,B$.
 An equivalent construction of the Poisson point process is the following: 
 Assuming that $W$ is compact, then by taking a sequence of random points $x_1,x_2,\ldots \sim \lambda/\lambda(W)$ independent
 and $N \sim \text{Pois}(\lambda(W))$, then
$
\vert \{x_1, \ldots, x_N \}  \cap A \vert \sim \PP(A)
$
for any Borel set $A$. Thus, we can also think of $\PP$ as a collection of random points. For non-compact $W$, 
we can discretize into bounded regions and repeat the above construction in each region and take the union over all regions. 
When $\lambda$ is translation invariant (on $\mathbb{F}^d$), we say that $\PP$ is \textit{stationary}.
 If $\lambda$ has a density $f$ with respect to the Lebesgue measure, we use the term \textit{intensity function} 
 to refer to $f$. Finally, a crucial property of the Poisson point process is \textit{Mecke's formula} 
 \cite[Theorem 3.2.5]{schneider}: For any non-negative, measurable function $g$, \vspace{0.5ex} \begin{equation*} \label{eq
	} \E\bigg[\sum_{\xx \in \PP^{(k)}} g(\xx, \PP) \bigg] = \int_{W^k} g(\xx,\PP \cup \{\xx\}) \ \lambda^k(\d \xx), \vspace{0.5ex} \end{equation*}
 where $\PP^{(k)}$ denotes all $k$-tuples of different points from $\PP$ and $\lambda^k$ denotes the $k$-fold product measure. 
 The above sum is often called a \textit{Poisson functional}. When $g$ is symmetric in $\xx$ and does not depend on the entire process $\PP$,
  we get a special type of Poisson functional called a \textit{Poisson $U$-statistic}.


\vspace{2.5ex}
\begin{center}
	\large{\textsc{Cycles, birthtimes, deathtimes and lifetimes}}
\end{center}
\vspace{1ex}
For any simplicial complex, we define the \textit{space of $p$-chains} as 
  the linear span of $p$-simplices over the field $\mathbb{Z}/2\mathbb{Z}$, and the \textit{$p$th homology group}
   as the quotient space of the $p$-chains whose boundary is 0 (which is understood in terms of a boundary operator),
    where two $p$-chains are equivalent if their difference is a boundary of a $(p+1)$-chain. We call elements of 
	the $p$th homology group \textit{$p$-dimensional cycles} or simply \textit{$p$-cycles}. 
	For example, 0-cycles are the connected components, 1-cycles are the 'loops,' and 2-cycles are the 'cavities'.
	In topological data analysis, to recover properties of the point process which generated $\PP$, we build a \textit{filtration}
	which is an increasing sequence of simplicial complexes (e.g. in \v Cech or Vietoris-Rips by using all radii at once).
	The analysis of $\PP$ is then based on \textit{persistent homology}, which tracks how each homology group changes throughout the filtration. 
When new simplices are added in the filtration, they are called \textit{critical} if they cause one of the homology groups to change. 
A critical simplex that destroys a cycle, i.e. reduces the dimension of one of the homology groups by 1, is called \textit{negative}
and we let $\mc N(\PP)$ denote all negative simplices. We rely on the identification found in \cite[Algorithm 9]{chazal} to identify
 every non-trivial $p$-cycle with a negative $(p+1)$-simplex for $p \geq 1$.
For simplices $\xx$, introduce the notation/terminology:
\vspace{0.25ex} \begin{equation*}
	\begin{aligned}
		z_\xx & \quad \quad \quad \text{ The center of the smallest ball containing $\xx$, } \\
		r_\xx & \quad \quad \quad  \text{ The filtration time where $\xx$ is added,} \\
		b_{\xx,\mc P} & \quad \quad \quad \text{ The time when the cycle destroyed by $\xx$ was added (if $\xx \in \mc N(\PP)$),} \\
		\ell^{+}_{\xx,\PP} & \quad \quad \quad \text{ The \textit{additive lifetime} of the cycle destroyed by $\xx$ computed as $r_\xx - b_{\xx,\PP}$}. \\
		\ell^{*}_{\xx,\PP}  & \quad \quad \quad \text{ The \textit{multiplicative lifetime} of the cycle destroyed by $\xx$ computed as $r_\xx/b_{\xx,\PP}$}. \\
	\end{aligned}
	\vspace{0.75ex} \end{equation*}

	We will call $b_{\xx,\PP}$ the \textit{birthtime} and $r_\xx$ the \textit{deathtime} of the cycle destroyed by adding $\xx \in \mc N(\PP)$. 
	Note for the \v Cech filtration $(\mathsf{C}_t(\PP))_{t\geq 0}$, the deathtime is the radius of the circumscribed ball of $\xx$. In both the \v Cech and Vietoris-Rips filtrations,
	 the deathtime depends only on the points in the simplex, while the birthtime may depend on all points in the point cloud. 
	 Note that only the additive lifetime is bounded by the deathtime, 
	 and only the multiplicative lifetime remains unchanged when scaling the point cloud. 
	Note that some extra care is needed in order to study negative simplices on the torus rather than in Euclidean space.
	 This is treated in detail for \v Cech complexes in Section \ref{sec:3}.

	\begin{example}["The fish"]

Let $\PP \su \R^2$ be the point cloud consisting of the six points 
$x_1 = (1,\tfrac{\sqrt{3}}{2})$, $x_2 = (-1,-\tfrac{\sqrt{3}}{2})$, $x_3 = (0,0)$, $x_4 = (1,1)$, $x_5 = (1,-1)$, $x_6 = (2,0)$. 
Choosing first the \v Cech filtration up to filtration time of $t=1$, it follows that the edges $\{x_3,x_4\},\{x_3,x_5\}$,
$\{x_4,x_6\},\{x_5,x_6\}$ 
are added at filtration time $t = \tfrac{1}{\sqrt 2}$, 
the edges $\{x_1,x_2\},\{x_2,x_3\},\{x_1,x_3\}$ at $t=\tfrac{\sqrt{3}}{2}$ and the 2-simplices $\{x_1,x_2,x_3\}$
$\{x_3,x_4,x_5\}$, $\{x_4,x_5,x_6\}$ at $t=1$. Thus, two 1-cycles appear and disappear
  during the filtration, which we identify with the negative 2-simplices $\xx = \{x_1, x_2, x_3\}$ and $\xx' = \{x_3, x_4, x_5\}$ 
  Then $z_\xx = (-1, 0)$, $b_{\xx, \PP} = \tfrac{\sqrt{3}}{2}$, $r_\xx = 1$, $\ell^+_{\xx, \PP} = 1 - \tfrac{\sqrt{3}}{2}$, 
  $\ell^*_{\xx, \PP} = \tfrac{2}{\sqrt{3}}$, and $z_{\xx'} = (1, 0)$, $b_{\xx', \PP} = \tfrac{1}{\sqrt 2}$, $r_{\xx'} = 1$,
   $\ell^+_{\xx', \PP} = 1 - \tfrac{1}{\sqrt 2}$, $\ell^*_{\xx', \PP} = \sqrt{2}$. If we instead choose the Vietoris-Rips filtration, 
   then nothing changes with the edges. However, the 2-simplex $\{x_1, x_2, x_3\}$ is also added at $t=\tfrac{\sqrt{3}}{2}$, 
   which removes one of the 1-cycles from before (see Figure 2).
\end{example}

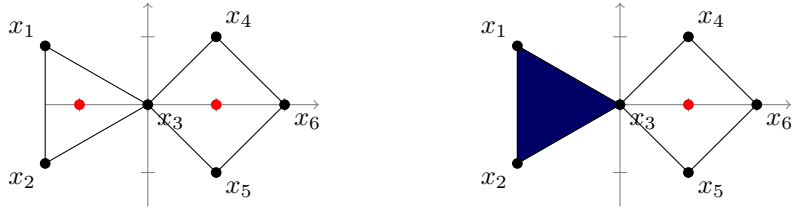
\begin{figure}[h]
    \centering
    \begin{subfigure}{0.4\textwidth}
        \centering
        \begin{tikzpicture}[scale=0.9]
            \draw[->] (-1.5, 0) -- (2.5, 0) [gray] ;
            \draw[->] (0, -1.5) -- (0, 1.5) [gray] ;
        
            \foreach \x in {-1, 0, 1, 2} {
                \draw (\x, 0.1) -- (\x, -0.1) [gray];
            }
        
            \foreach \y in {-1, 0, 1} {
                \draw (0.1, \y) -- (-0.1, \y) [gray];
            }
        
             \filldraw[red] (-1, 0) circle (2pt);
             \filldraw[red] (1, 0) circle (2pt);
            \filldraw[black] (-1.5, {sqrt(3)/2}) circle (2pt) node[above left] {$x_1$};
            \filldraw[black] (-1.5, -{sqrt(3)/2}) circle (2pt) node[below left] {$x_2$};
            \filldraw[black] (0, 0) circle (2pt) node[below right] {$x_3$};
            \filldraw[black] (1, 1) circle (2pt) node[above right] {$x_4$};
            \filldraw[black] (1, -1) circle (2pt) node[below right] {$x_5$};
            \filldraw[black] (2, 0) circle (2pt) node[below right] {$x_6$};
        
            \draw (0, 0) -- (1, 1); 
            \draw (0, 0) -- (1, -1); 
            \draw (1, 1) -- (2, 0); 
            \draw (1, -1) -- (2, 0); 
            \draw (-1.5, {sqrt(3)/2}) -- (-1.5, -{sqrt(3)/2}); 
            \draw (-1.5, -{sqrt(3)/2}) -- (0, 0); 
            \draw (-1.5, {sqrt(3)/2}) -- (0, 0); 
        \end{tikzpicture}
    \end{subfigure}
    \begin{subfigure}{0.4\textwidth}
        \centering
        \begin{tikzpicture}[scale=0.9]
            \draw[->, gray] (-1.5, 0) -- (2.5, 0);
            \draw[->, gray] (0, -1.5) -- (0, 1.5);

            \foreach \x in {-1, 0, 1, 2} {
                \draw (\x, 0.1) -- (\x, -0.1) [gray];
            }
        
            \foreach \y in {-1, 0, 1} {
                \draw (0.1, \y) -- (-0.1, \y) [gray];
            }

            \filldraw[red] (1, 0) circle (2pt);
            \filldraw[blue!40!black] (-1.5, {sqrt(3)/2}) -- (-1.5, -{sqrt(3)/2}) -- (0, 0) -- cycle;
            \filldraw[black] (-1.5, {sqrt(3)/2}) circle (2pt) node[above left] {$x_1$};
            \filldraw[black] (-1.5, -{sqrt(3)/2}) circle (2pt) node[below left] {$x_2$};
            \filldraw[black] (0, 0) circle (2pt) node[below right] {$x_3$};
            \filldraw[black] (1, 1) circle (2pt) node[above right] {$x_4$};
            \filldraw[black] (1, -1) circle (2pt) node[below right] {$x_5$};
            \filldraw[black] (2, 0) circle (2pt) node[below right] {$x_6$};

            \draw (0, 0) -- (1, 1); 
            \draw (0, 0) -- (1, -1); 
            \draw (1, 1) -- (2, 0); 
            \draw (1, -1) -- (2, 0); 
            \draw (-1.5, {sqrt(3)/2}) -- (-1.5, -{sqrt(3)/2}); 
            \draw (-1.5, -{sqrt(3)/2}) -- (0, 0); 
            \draw (-1.5, {sqrt(3)/2}) -- (0, 0); 
        \end{tikzpicture}
    \end{subfigure}
    \caption{Example 1.1 with the \v Cech (left) and Vietoris-Rips (right)
    complex at time $t=\tfrac{\sqrt{3}}{2}$. The red points mark the centers $z_\xx$ and $z_{\xx'}$ of the negative 2-simplices.}
\end{figure}

\pagebreak






\section{Main results}
\label{sec:2}

Let $\mathbb{F}^d$ denote either $\R^d$ or $\T^d$ and $\PP \su \mathbb{F}^d$ denote a
point cloud, where we build the \v Cech or Vietoris-Rips
filtration. We are now ready to define the main object of interest: For deathtime bound $r \in (0, \infty]$, transformation 
$\varphi(\cdot,\PP):\PP^{(k)} \to \mathbb{F}^{d} \times [0,\infty)^p$ and threshold $\ell>0$, let
\vspace{0.75ex} \begin{equation}
\label{eq:xi_def}
	\xi = \xi(k, r, \ell, \varphi, \PP) =\sum_{\xx \in \PP^{(k)}} 
\one\big\{\xx \in \mc N(\PP), \ r_\xx \leq r, \ \ell_{\xx,\PP}^\circ \ge \ell \big\} \de\{\varphi(\xx,\PP)\}. \\[-0.5ex]
\vspace{0.75ex} \end{equation}
Here the set $\PP^{(k)}$ denotes all $k$-tuples of pairwise different points from $\PP$, the set $\mc N(\PP)$ all negative simplices,
$\ell_{\xx,\PP}^\circ$ the additive or multiplicative lifetime, and $A \mapsto \de\{y\}(A)$ the Dirac measure of $y \in \mathbb{F}^{d} \times [0,\infty)^p$.
For instance, when $\varphi(\xx,\PP)$ is the center $z_\xx$ of $\xx$, then (\ref{eq:xi_def}) on a set $A \subseteq \mathbb{F}^d$
counts how many $(k-2)$-cycles which are \textit{centered} in $A$ have
a deathtime \textit{smaller} than $r$ and a lifetime \textit{larger} 
than $\ell$.
When $\varphi(\xx,\PP)$ is $(z_\xx, \ell_{\xx,\PP}, r_\xx)$, then (\ref{eq:xi_def}) on
a set $A \times B \times C$ counts the number of cycles, as previously described, that are centered in $A$, have a lifetime in $B$, and a deathtime in $C$.
Letting $\PP_n$ denote a Poisson point process on $\mathbb{F}^d$
with intensity increasing in $n \geq 1$, we will study the asymptotic behavior of (\ref{eq:xi_def})
as $n \to \infty$. First, we will consider
an \textit{unbounded regime} on $\T^2$, where there is no bound on the deathtime (i.e., $r = \infty$).
Afterwards, we will consider an \textit{$m$-sparse regime} on $\R^d$, where we require the deathtime bound
$r = r_n$ to vanish sufficiently fast as $n \to \infty$. 


\begin{center}
	\subsection{The unbounded regime}
\end{center}

Suppose there is no bound on the deathtime ($r=\infty$). In this section, we will only work with a stationary Poisson point process
denoted $\PP_n$ with intensity $n \geq 1$ on $\T^2$ and simplices of size $k=3$. To specify the thresholds $\ell_{n,\alpha}$ in this regime, 
we will need the function $v$ defined as 
\vspace{0.75ex} \begin{equation}
	\label{eq:v_def}
v(\ell) = \int_{\T^{2 \cdot 3}}
P(\xx \in \mc N(\PP_n \cup \{\xx\} ), \ \ell_{\xx,\PP_{n} \cup \{\xx \} }^* \geq \ell) \ \d\xx.
\vspace{0.75ex} \end{equation}
Let $\dkr$
	denote the Kantorovich-Rubinstein metric (Section \ref{sec:1}), and for a sequence $(a_n)_{n \geq 1}$, let
	\vspace{0.75ex} \begin{equation*}
	O(a_n) = \{(b_n)_{n \geq 1} \colon \text{ there exist a } C > 0 \text{ such that } b_n \leq C a_n \text{ for all sufficiently large } n \geq 1\}.
	\vspace{0.75ex} \end{equation*}
	\begin{theorem}[\textbf{"Centers $\rightarrow$ Poisson"}]
		\label{thm:4}
		Let $\alpha > 0$ and define
		\vspace{0.5ex} \begin{equation}
			\label{eq:xi_def_unbounded}
				\xi_n^1=\sum_{\xx \in \PP_{n}^{(3)}}
			 \one\big\{\xx \in \mc N(\PP_n), \ \ell_{\xx,\PP_{n}}^* \ge \ell_{n,\alpha} \}
			  \de\{z_\xx\}, \\[-0.5ex]
		\vspace{0.55ex} \end{equation}
		where we assume that
		\begin{itemize}[leftmargin=5mm,labelindent=50pt]
			\item[] $\aFp$ The filtration consists of \v Cech complexes, 
			\item[] $\aPp$ The points $\PP_{n}$ is a stationary Poisson point process on $\T^2$ with intensity $n \geq 1$.
			\item[] $\aTp$ The thresholds $\ell_{n,\alpha}$ are equal to $v^{-1}(\alpha/n^3)$
			for all sufficiently large $n \geq 1$.
		\end{itemize}
		Let $\zeta^{1}$ denote a stationary Poisson point process on $\T^2$ with intensity $\alpha$. For every $\e > 0$, then
				\vspace{0.75ex} $$
				\dkr(\xi_n^1, \zeta^{1})\in O\big(n^{-1/36 + \e}\big). \\[0.1em]
				\vspace{0.75ex} $$	
		\end{theorem}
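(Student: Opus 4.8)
The plan is to run the Malliavin--Stein method for Poisson process approximation in the Kantorovich--Rubinstein metric, in the quantitative form of \cite{decreusefond,BSY21}. Since the birthtime $b_{\xx,\PP_n}$, hence the multiplicative lifetime $\ell^*_{\xx,\PP_n}$, is a genuinely global quantity, the functional $\xi_n^1$ in \eqref{eq:xi_def_unbounded} is a Poisson functional but \emph{not} a Poisson $U$-statistic, so the clean $U$-statistic bounds are unavailable and we must use the more flexible estimates of \cite{BSY21}. These reduce $\dkr(\xi_n^1,\zeta^1)$ to a sum of three families of error terms: (a) the total-variation distance between the intensity measure $\E[\xi_n^1]$ and $\alpha\,\vol_{\T^2}$; (b) a second-order term bounding the expected number of \emph{pairs} of triples $\xx\neq\xx'$ that each produce a negative simplex with lifetime $\ge\ell_{n,\alpha}$ and whose stabilisation regions overlap; and (c) long-range terms controlling the residual dependence of the event $\{\xx\in\mc N(\PP_n),\ \ell^*_{\xx,\PP_n}\ge\ell_{n,\alpha}\}$ on points of $\PP_n$ outside a ball of some truncation radius $s_n$ around $z_\xx$. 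Term (a) in fact vanishes: by $\aPp$ and $\aFp$ the \v Cech filtration on $\T^2$, negativity, the lifetime functional, and the centre map $\xx\mapsto z_\xx$ are all equivariant under torus translations (with the care around torus loops taken in Section~\ref{sec:3}), so $\E[\xi_n^1(\,\cdot\,)]$ is translation invariant on $\T^2$, and by Mecke's formula together with \eqref{eq:v_def} and $\aTp$, \[ \E\big[\xi_n^1(\T^2)\big]=n^3\int_{\T^{2\cdot 3}}P\big(\xx\in\mc N(\PP_n\cup\{\xx\}),\ \ell^*_{\xx,\PP_n\cup\{\xx\}}\ge\ell_{n,\alpha}\big)\,\d\xx=n^3 v(\ell_{n,\alpha})=\alpha, \] so $\E[\xi_n^1]=\alpha\,\vol_{\T^2}=\E[\zeta^1]$ exactly.

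\textbf{Stabilisation.} Next I would show that, for a triple $\xx$ with circumradius $R=r_\xx$, whether $\xx$ contributes to \eqref{eq:xi_def_unbounded} is determined, outside an event of probability polynomially small in $n$, by $\PP_n\cap B(z_\xx,s_n)$. The mechanism is geometric: because $r_\xx$ is bounded by the diameter of $\T^2$, a lifetime $\ge\ell_{n,\alpha}$ forces $b_{\xx,\PP_n}$ to be of order at most $\ell_{n,\alpha}^{-1}$, so the three points form a tiny, nearly collinear cluster whose circumscribed disk of radius $R$ is filled in only at time $R$; for the $1$-cycle destroyed by $\xx$ to survive until time $R$ that disk must be void of $\PP_n$, and whether the cycle is killed earlier depends only on $\PP_n$ within $O(R)$ of $z_\xx$. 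Using the exponential void estimates for Poisson processes as in \cite{BKS17}, the probability that $R>s_n$, or that a cycle-relevant point lies beyond $s_n$, is polynomially small once $s_n$ is chosen appropriately; here one also uses that $\ell_{n,\alpha}=v^{-1}(\alpha/n^3)$ grows at the order of the expected largest multiplicative $1$-cycle lifetime from \cite{BKS17}, whose logarithmic corrections are absorbed into the $\e$.

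\textbf{The second-order term.} This is the crux. After localisation, term (b) is, up to constants, the expected number of pairs $\xx\neq\xx'$ at centre-distance at most $s_n$ that both produce a large-lifetime negative simplex; by Mecke this becomes an integral over six points carrying two void-type events. I would split according to whether the two associated empty disks are disjoint or overlap. In the disjoint case, the continuous BK inequality of \cite{HHLM19} bounds the joint probability by the product of the two single-witness probabilities, so that after integrating and multiplying by $n^6$ this contribution is controlled by the square of the (tiny) one-feature probability together with the spatial constraint. The overlapping case is the genuinely delicate one, since the two void events are then strongly positively correlated and cannot be decoupled by independence; here one exploits the $d=2$ geometry, namely that nearby centres force the two large circumdisks to share all but a small region, so the joint void event is essentially a single void event and contributes far less than a naive bound, shrinking the whole term below the target rate.

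\textbf{Assembly and the obstacle.} Inserting the above into the bound of \cite{BSY21} leaves $s_n$ as the only free parameter: enlarging $s_n$ shrinks the long-range term (c) but inflates the second-order term (b); balancing the competing polynomial contributions produces the exponent $1/36$, with $\e$ absorbing the logarithmic factors from $\ell_{n,\alpha}$ and from the volume of near-collinear triples, so that $\dkr(\xi_n^1,\zeta^1)\in O(n^{-1/36+\e})$, which by \cite[Proposition 2.1]{decreusefond} also yields convergence in distribution. The main obstacle throughout is the second-order term: two large-lifetime features with close centres have heavily overlapping empty disks, so their indicator events are not independent and cannot be handled by the continuous BK inequality alone; it is the combination of that inequality (for the disjoint regime) with the two-dimensional geometry of large $1$-cycles (for the overlapping regime) that makes the clustering contribution manageable, and this is precisely where restricting to $d=2$ is essential.
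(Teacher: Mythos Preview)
Your overall framework is right --- replace $\xi_n^1$ by a localized version, apply \cite{BSY21}, and the intensity term does vanish exactly by the translation-invariance argument you give (this is Remark~\ref{rem:unbounded_extension}(2)). But your treatment of the second-order term has a genuine gap, and the mechanism you propose is not the one that works.

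Your split into ``disjoint circumdisks'' versus ``overlapping circumdisks'' does not close. In the overlapping case you argue that the joint void event is essentially a single void event; but that only gives $P(\xx\text{ good},\,\xx'\text{ good})\lesssim P(\xx\text{ good})$, and after integrating over the choices of $\xx'$ with $z_{\xx'}$ near $z_\xx$ you are left with roughly $\alpha$ times the expected number of admissible triples $\xx'$ in a ball of radius $s_n$, which is polylogarithmic in $n$ and does not vanish. You need an \emph{additional} source of smallness from the presence of a second large-lifetime feature, and the void-overlap heuristic does not supply one. The paper's route is different both in the BK application and in the geometry: it does not apply BK to two void events at all. Instead it proves a geometric lemma (Lemma~\ref{lem:disjoint_occurence}, via the loop associated to a $1$-cycle through the Alpha complex and the Jordan curve theorem) showing that if two negative simplices both have multiplicative lifetime $\ge\ell$, then one of them occurs \emph{disjointly} from a connected cluster of size $\ge\lceil\ell/2\rceil$ at the birthtime scale $b_n$. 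The continuous BK inequality then factors this into $P(\xx\text{ good})\cdot P(\text{cluster of size }\lceil\ell_{n,\alpha}/2\rceil\text{ exists})$, and the second factor is controlled by a subcritical-percolation estimate of order $(C\log\log n/\log n)^{\lceil\ell_{n,\alpha}/2\rceil-1}$. The rate $n^{-1/36+\e}$ then comes not from balancing a free radius $s_n$ --- the radius $R_n=\log(n)/\sqrt n$ is fixed by the percolation scales --- but from inserting the lower bound $\ell_{n,\alpha}\ge\tfrac{1}{18+\e}\,\log n/\log\log n$ (Lemma~\ref{prop:asymptotics}, proved by a discretisation argument in the spirit of \cite{BKS17}) into that cluster bound. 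This loop-and-cluster mechanism is exactly where $d=2$ is used, and it is the idea your proposal is missing.
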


		\begin{remark}
			\label{rem:unbounded_extension}
			(\textit{Concerning Theorem \ref{thm:4}})
			\setlength{\topsep}{0pt}
			\begin{enumerate}[topsep=0pt, partopsep=0pt, parsep=0pt, itemsep=0pt]
				 \item \textit{Proof:} In very broad strokes, the proof of Theorem \ref{thm:4} consists of reducing to \textit{mortal} cycles 
				 that satisfy a reasonable deathtime bound, which implies a subcritical birthtime bound in the continuum percolation sense. 
				 As a consequence, we may restrict ourselves to cycles that stabilize to a certain ball around their center 
				 and apply \cite[Theorem 4.1]{BSY21} for stabilizing Poisson functionals, which provides several error terms that need to be controlled. 
				 The most interesting of these, the probability of multiple cycles (exceedances), is controlled by showing that 
				 multiple cycles imply the disjoint occurrence of one of the cycles and a large connected component in the subcritical regime, 
				 and then applying the continuous BK-inequality.
				 \item \textit{Intensity:} Note, assumption $T'$ ensures that the expected number of large-lifetime features on the torus remains constant as $n \to \infty$. Actually,
			 	for all sufficiently large $n \geq 1$, then
				 \vspace{0.5ex} \begin{equation}
					\label{eq:alpha_expr_unbounded}
				\E[\xi_n^1(A)] = \alpha \cdot \vol_{\T^2}(A), \quad \text{ for all } A \in \mathcal{B}(\T^2).	
				\vspace{0.5ex} \end{equation}
				Indeed by assumptions $T'$, $F'$, translating by $(0,x_1,x_1)$ and using (\ref{eq:v_def}) yields that
				\vspace{0.5ex} \begin{equation}
				\label{eq:unb_alpha_expansion}
				n^3 \int_{\T^{2 \cdot 2} }
				P\big((0,\xx) \in \mc N(\PP_{n} \cup \{(0,\xx)\}), \ \ell^*_{(0,\xx),\PP_{n}\cup \{(0,\xx)\}} 
				\geq \ell_{n,\alpha} \big) \ \d\xx = \alpha.
				\vspace{0.5ex} \end{equation}
				Using Mecke's formula (Section \ref{sec:1}) and repeating the translation above,
				\vspace{0.5ex} \begin{equation}
					\label{eq:expansion_3}
				\quad \quad \quad \E[\xi_n^1(A)] = \vol_{\T^2}(A) n^3\int_{\T^{2\cdot 2}} 
				P((0,\xx) \in \mc N(\PP_{n}\cup \{(0,\xx)\}), \ \ell^*_{(0,\xx),\PP_{n}\cup \{(0,\xx)\}} 
				\geq \ell_{n,\alpha} ) \ \d\xx. 
				\vspace{0.5ex} \end{equation}
				Combining (\ref{eq:unb_alpha_expansion}) and (\ref{eq:expansion_3}) yields the claim.
			\item \textit{Thresholds:} It is not immediately clear if the thresholds in assumption $T'$ are well-defined (i.e., that $v$ is invertible) but we prove this in Section \ref{sec:5}.
				Moreover, relying on the approach outlined in \cite{BKS17}, we prove in Section \ref{sec:3} that for sufficiently large $n\geq 1$, then for any $\e > 0$,
				\vspace{0.5ex} $$
				\frac{1}{18 + \e } \frac{\log(n)}{\log(\log(n))} \leq \ell_{n,\alpha} \leq 2 \frac{\log(n)}{\log(\log(n))}.
				\vspace{0.5ex} $$
			\item \textit{Extension:} Suppose we instead consider the marked point process,
			\vspace{0.5ex} $$
				\Xi_n = \sum_{\xx \in \PP_{n}^{(3)}}
				\one\{\xx \in \mc N(\PP_n)\}
				 \de\{(z_\xx,n^3 v(\ell^*_{\xx,\PP_n}))\},
			\vspace{0.5ex} $$
			and let $\Xi_n \cap A$ denote $\Xi_n$ restricted to the set $A$. For any $\alpha > 0$, we can also prove that
			\vspace{0.5ex} $$
				\dkr\big( \Xi_n \cap(\T^2 \times [0,\alpha]), \Xi \cap(\T^2 \times [0,\alpha])\big) \in O\big(n^{-1/36 + \e}\big),,
			\vspace{0.5ex} $$
			where $\Xi$ is a stationary Poisson point process with unit intensity on $\R^d \times [0,\infty)$. Indeed
			letting $\hat \Xi_n $ denote the restriction of $\Xi_n$ to mortal 1-cycles and noting $n^3 v(\ell^*_{\xx,\PP_n}) \in [0,\alpha]$ if and only if  $\ell^*_{\xx,\PP_n} \geq \ell_{n,\alpha}$,
			where $\ell_{n,\alpha} $ is chosen as in assumption $T'$, we may proceed as in Section \ref{sec:3} and apply \cite[Theorem 4.1]{BSY21}.
			Thus, we can actually use the same method
			to prove joint Poisson convergence of the centers and the transformed lifetimes $n^3 v(\ell)$. As a consequence, in Section 6, we find that the
		 \textit{largest} lifetime, after this transformation, asymptotically follows an Exponential law with mean 1.
		\end{enumerate}		
		\end{remark}

\pagebreak

\begin{center}
	\subsection{The $m$-sparse regime}
\end{center}

Now assume that for a natural number $m \geq 1$ it holds that $n(nr_n^d)^{m-1} \to \infty$ whereas $n(nr_n^d)^m \to 0$ as $n \to \infty$, 
which we call the \textit{$m$-sparse regime}. One way to interpret this condition is through a stationary Poisson point process $\PP_n$ on $\R^d$
with intensity $n \geq 1$. Indeed, let $B(x,t)$ denote the open ball with
center $x \in \R^d$ and radius $t>0$, and define $C_t = C_{t}(m)$ as the set of $m$-tuples $(y_1,\ldots,y_m)$ where
the union of the balls $B(y_j,t/2)$ for $j=1,\ldots,m$ is connected.
We call the elements of $C_{t}$ for \textit{$m$-clusters at level $t$}, and we stress that this refers to the points, rather than the connected components. Using Mecke's formula, we can compute that 
the expected number of $m$-clusters at level $t$ in $\PP_n$ centered inside the unit cube $[0,1]^d$ is proportional to $n(nt^d)^{m-1}$. 
Thus, the condition on $nr_n^d$ is actually requiring that the expected number of $m$-clusters at level $r_n$
grows to infinity yet the expected number of $(m+1)$-clusters vanishes. 
Define the deterministic \textit{maximal lifetime},
\vspace{0.5ex} \begin{equation} 
    \label{eq:lmax_def}
    \lmax^\circ = \lmax^\circ(k,m) = \sup \big\{ \ell_{\xx,\PP}^\circ \colon \vert \PP \vert = m \text{ and } \xx \in \PP^{(k)} \cap \mc N(\PP) \text{ with } r_\xx \leq 1 \big\}.
\vspace{0.5ex} \end{equation}
Since $r_n \to 0$ in the $m$-sparse regime, the lifetimes will be too small to compare to $\lmax^\circ$. Instead, since multiplicative lifetimes are scale-invariant and additive lifetimes are homogeneous,
we introduce
 \vspace{0.5ex} \begin{equation}
	\label{eq:u_statistic}
	\hat \ell^{+}_{\xx,\PP}  = \frac{\ell^{+}_{\xx,\PP}}{r_n} \quad \text{ and } \quad  \hat \ell^{*}_{\xx,\PP} = \ell^{*}_{\xx,\PP},
\vspace{0.5ex} 	\end{equation}
as the \textit{scaled lifetimes} and use the common notation $ \hat \ell^{\circ}_{\xx,\PP} $ for $\circ \in \{+,*\}$ as before.
To specify the threshold, introduce the notation $(0,\yy)_k = (0,y_1,\ldots,y_{k-1})$ and define for $u \in [0,\lmax^\circ]$, the map
\vspace{0.5ex} \begin{equation} 
	\label{eq:gdef_int}
	g(u) = \int_{\R^{d(m-1)}}
	\one\{ (0,\yy) \in C_1, \ (0,\yy)_k \in \mc N((0,\yy)), \
	r_{(0,\yy)_k} \leq 1, \
	\ell^\circ_{(0,\yy)_k, (0,\yy) }\geq \lmax^\circ - u\big\} \ \d \yy.
	\vspace{0.5ex} \end{equation} 
\bet[\textbf{"Centers $\rightarrow$ Poisson"}]
			\label{thm:1}
			Let $\alpha > 0$, $k \geq 3$ and define 
			\vspace{0.75ex} \begin{equation}
				\label{eq:xi_def_2}
					\xi_n^2 =\sum_{\xx \in \PP_{n\kappa}^{(k)}} 
				\one\big\{\xx \in \mc N(\PP_{n\kappa}), \ r_\xx \leq r_n, \ \hat \ell^\circ_{\xx,\PP_{n\kappa}}\ge \lmax^\circ - u_{n,\alpha} \big\} \de\{z_\xx\}, \\[-0.5ex]
				\vspace{0.75ex} \end{equation}
where we assume that
			\begin{itemize}[leftmargin=5mm,labelindent=50pt]
				\item[] $\aF$ The filtration is either \v Cech or Vietoris-Rips and 
				the lifetime either additive or multiplicative,	
				\item[] $\aM$ The deathtime bound $r_n>0$ satisfies that 
				$n(nr_n^d)^{m-1}\to \infty$ whereas $n(nr_n^d)^{m} \to 0$ as $n \to \infty$,	
				\item[] $\aP$ The points $\PP_{n\kappa}$ is a Poisson point process with intensity function $n \cdot \kappa$, where $\kappa$ is a bounded, \phantom{m}\phantom{i} 
				integrable function supported on a Borel set $W \su \R^d$
				satisfying that 
				\vspace{0.5ex} $$
			  \quad \quad \ \  \sup_{y,y'\in W \colon |y-y'| \le 2m r_n} \abs{\k(y) - \k(y') } \in O(r_n) \quad \text{ and } \quad \int_W 
\one\{B(y,2m r_n) \nsubseteq W\} \kappa(y) \ \d\yy \in O(r_n),
			 	 \vspace{0.5ex} $$
				\item[] $\aT$ The sequence $u_{n,\alpha}$ is equal to $g^{-1}\big(\alpha/\big(n(nr_n^d)^{m-1}\binom{m}{k}/m!\big)\big)$
				for all large $n \geq 1$.
				\item[] $\aU$ The pair $m \geq k$ satisfies that there is an $\e >0$ such that for all $\PP$ of size $m$
				there is at most \phantom{i}\phantom{i} \phantom{i}
				 one subset $\PP' \su \PP$ 
				and one $\xx \in \mc \PP^{(k)} \cap \mc N(\mc P)$ with $r_\xx \leq 1$ for which $\ell_{\xx,\PP'}^\circ \ge \lmax^\circ -  \e$.
			\end{itemize}
			If $\zeta^2$ denotes a Poisson point process on $\R^d$ with intensity function
			$ y \mapsto \a \kappa(y)^m$, it holds that
			\vspace{0.75ex} \begin{equation*}
			\dkr(\xi_n^2, \zeta^{2})\in O(n(nr_n^d)^{m}). \\[0.1em]
			\vspace{0.25ex} \end{equation*}
		\ent
	In other words, Theorem \ref{thm:1} states that
	the centers of large-lifetime cycles
	are approximately a Poisson point process with points scattered according to $\kappa^{m}$.
	In Remark \ref{rem:sparse_extension}, we discuss assumptions $P$, $T$ and $U$.
	Next, we add the scaled deviation to $\lmax^\circ$ as a mark on the centers. Let $g^{(j)}$ 
	denote the $j$'th derivative of $g$ and $C^{p}(A)$ denote all $p$-times continuous differentiable maps on the set $A$.

\bet[\textbf{"(Centers, Lifetimes) $\rightarrow$ Poisson"}]
\label{thm:2}
Let $\alpha > 0$, $k \geq 3$  and define 
\vspace{0.75ex} \begin{equation}
    \label{eq:xi_def_2}
    \xi_n^3 =\sum_{\xx \in \PP_{n\kappa}^{(k)}} 
    \one\big\{\xx \in \mc N(\PP_{n\kappa}), \ r_\xx \leq r_n \big\} 
    \de\big\{\big(z_\xx,\tfrac{\lmax^\circ-\hat \ell_{\xx,\PP_{n\kappa}}}{u_{n,\alpha}}\big)\big\}, \\[-0.5ex]
\vspace{0.75ex} \end{equation}
under assumptions $F$, $M$, $P$, $T$ and $U$ from Theorem \ref{thm:1}. Additionally, assume that
\begin{enumerate}[leftmargin=*, labelindent=14pt, label=$\aG$]
    \item There is a $q \geq 1$ such that $g \in C^{q+1}([0,\lmax^\circ])$
    with $g^{(j)}(0) = 0$ for all $j < q$, 
    and $g^{(q)}(0) > 0$.
\end{enumerate}
If $\zeta^{3}$ denotes a Poisson point process on $\R^d \times [0,\infty)$ with 
intensity function $
(y,u) \mapsto \alpha\k(y)^{m}qu^{q-1},
$
then for every compact set $K \su [0,\infty)$, it holds that
\vspace{0.75ex} \begin{equation*}
\dkr\big(\xi_n^3\cap(\R^d \times K), \zeta^{3}\cap(\R^d \times K)\big)
 \in O\big(n(nr_n^d)^{m} + \big(n(nr_n^d)^{m-1}\big)^{-1/q} \big).
\vspace{0.25ex} \end{equation*}
\ent
Theorem \ref{thm:2} tells us that larger (scaled) deviations to $\lmax^\circ$ are more likely
and these deviations are independent of the location of the cycle. For further consequences, including the \textit{largest lifetime}, see Remark \ref{rem:sparse_extension} below.
Since $\xi_n^3(\cdot \times [0,1]) = \xi_n^2$, we see Theorem \ref{thm:2} is an extension of Theorem \ref{thm:1}. Finally, we add the deviation to deathtime 1.
Define for $u\in [0,\lmax^\circ]$ and $v \in [0,1]$, the map
	 \vspace{0.75ex} \begin{equation}
	\scalebox{0.96}{$ \displaystyle
    \label{eq:hdef_int}
    h(u, v) =  \int
	\one\{ (0,\yy) \in C_1, \ (0,\yy)_k \in \mc N((0,\yy)), \
	r_{(0,\yy)_k} \in [1-v,1], \
	\ell^\circ_{(0,\yy)_k, (0,\yy) }\geq \lmax^\circ - u\big\} \ \d \yy.$}
	\vspace{0.25ex} \end{equation}
and note $h(u,1)=g(u)$. Let $h^{(i,j)}$ denote the $i,j$'th partial derivative of $h$ and set $\tilde h(u, v) = h(u, u v)$.

\bet[\textbf{"(Centers, Lifetimes, Deathtimes) $\rightarrow$ Poisson"}]
\label{thm:3}
Let $\alpha > 0$, $k \geq 3$  and define
\vspace{0.75ex} \begin{equation}
    \label{eq:xi_def_4}
    \xi_n^4 =\sum_{\xx \in \PP_{n\kappa}^{(k)}} 
    \one\big\{\xx \in \mc N(\PP_{n\kappa}) \big\} 
    \de\big\{\big(z_\xx,\tfrac{\lmax^\circ-\ell_{\xx,\PP_{n\kappa}}}{u_{n,\alpha}}, \tfrac{r_n-r_\xx}{u_{n,\alpha}} \big)\big\}, \\[-0.5ex]
\vspace{0.75ex} \end{equation}
under assumptions $F$, $M$, $P$, $T$ and $U$ from Theorem \ref{thm:1}. Additionally, assume that
\begin{enumerate}[leftmargin=*, labelindent=14pt, label=$\aH$]
    \item There is a $q\geq 1$ such that $h(\cdot,v)\in C^{q+1}([0,\lmax^\circ])$ and $h(u,\cdot) \in C^{q+1}([0,1])$
    with $\tilde h(u,0)=0$, $h^{(j,0)}(0,1) = 0$, $h^{(q,0)}(0,1)>0$,
     $\tilde h^{(j,1)}(0,v) = 0$, $\tilde h^{(q,1)}(0,v)>0$ 
    when $j<q$ for all $u,v$.
\end{enumerate}
If $\zeta^{4}$ denotes a P.p.p. on 
$\R^d \times [0,\infty)\times [0,\infty)$ with intensity function
$ (y,u,v) \mapsto \alpha\k(y) ^{m}qu^{q-1}H_q(v)$, where
$H_q(v) = \tilde h^{(q,1)}(0,v)/h^{(q,0)}(0,1)$,
then for every compact $K_1,K_2 \su [0,\infty)$, it holds that
\vspace{0.75ex} \begin{equation*}
\dkr\big(\xi_n^{4}\cap (\R^d \times K_1 \times K_2), \zeta^{4}\cap (\R^d \times K_1 \times K_2)\big)
 \in O\big(n(nr_n^d)^{m}+ \big(n(nr_n^d)^{m-1}\big)^{-1/q} \big).
\vspace{0.75ex} \end{equation*}
\ent

\begin{remark}[\textit{Concerning Theorems \ref{thm:1} - \ref{thm:3}}]
	\label{rem:sparse_extension}
	\
	\begin{enumerate}
		\item \textit{Proof:} The proof strategies of Theorems \ref{thm:1} - \ref{thm:3} are similar and in broad strokes as follows: First, we introduce
		an $U$-statistic approximation using only cycles inside $m$-clusters and bound the error of this approximation by upper bounding the expected number of $j$-clusters
		for $j \geq m+1$. Then we apply \cite[Theorem
		3.1]{decreusefond} to the U-statistic approximation and need to control the intensity measure as well as the first and second moments of the U-statistic.
		Concerning the intensity, assumption $T$ ensures the approximation of the unmarked large-lifetime centers has constant intensity. When adding the marks, we compare the marked centers 
		to the marked first coordinate and use assumption $P$ to control this difference.
		\item \textit{Assumption P:} While the conditions on the density $\kappa$ in assumption $P$ are sufficient, they are also quite technical.
		 We prove in Section \ref{sec:5} that if $W=\R^d$ or $W$ is convex with finite volume, 
		 and $\kappa$ is bounded and continuously differentiable on $W$, then assumption $P$ holds.
		 \item \textit{Assumption U:} It is not immediately clear if assumption $U$ holds for all combinations of $d$, $k$ and $m$. 
		 However, as we prove in Section \ref{sec:5}, assumption $U$ holds for the \v Cech filtration and $m=k \leq d+1$ as well as
		 for the Vietoris-Rips filtration, $k=d+1$ and $m=2(k-1)$.
		 \item \textit{Assumptions G/H:} First, note that assumption $H$ implies assumption $G$. These assumptions are the most technical in the present work as
		 we need to understand which configuration of points realizes $\lmax$ and determine how the probability of having lifetime larger than $\lmax - u$ decreases
		 as $u \to 0$. In Section \ref{sec:5}, we
		 prove that for $d=2$ and $k=3$, assumption $H$ holds with $m=q=3$ in the \v Cech case, and $m=4$ and $q=5$ in the Vietoris-Rips case.
		 Roughly speaking, the intuition of the former is that after placing one point at the origin, the second point can vary in an area of order $u^2$, but choosing
		 a deathtime $r$, which can vary by an order of $u$, will determine the third point by definition of the \v Cech complex to lie on a circumference of a circle, which has area 0.
		 Similarly, for Vietoris-Rips and $q=5$, place a point at the origin and a second point at distance $1-u$ away. Requiring the third and fourth point to be more than $1$ apart, 
		 they can each vary in an areas of order $u^2$.
		 
		 \item \textit{Thresholds:} We prove in Section \ref{sec:5} that the thresholds in assumption $T$ are well-defined.
		 When assumption $G$ holds, it follows by a Taylor expansion that 
		 \vspace{0.5ex} $$
		 g(u_{n,\alpha}) = \frac{g^{q}(0)}{q!}u_{n,\alpha}^q + O(u_{n,\alpha}^{q+1}).
		 \vspace{0.5ex} $$
		 Hence ignoring constants and the error term, it follows that
		 \vspace{0.5ex} \begin{equation}
			\label{eq:threshold_scaling}
		 u_{n,\alpha} \sim \big(n(nr_n^d)^{m-1}\big)^{-1/q}.
		 \vspace{0.5ex} \end{equation}
		 Even without assumption $G$, we will still prove in Section \ref{sec:5} that $u_{n,\alpha} \to 0$ as $n \to \infty$.
		 \item \textit{Torus:} We conjecture that it is possible to extend the Poisson approximation in Theorems \ref{thm:1} - \ref{thm:3} to $\T^d$. In particular in the case
		 of stationarity, $d=2$, $k=3$, \v Cech filtration and multiplicative lifetimes, we can use the detailed remarks in Section \ref{sec:3} to treat $\T^2$ like $\R^2$.
		 \item \textit{Largest lifetime:} Another consequence of Theorem \ref{thm:2} is that the deviation of the largest lifetime from $\lmax$ will asymptotically follow a 
		 Weibull distribution with shape parameter $q$ and scale parameter $(\gamma \alpha)^{-1/q}$, where $\gamma = \int_W \kappa^{m}(y) \ \d y$. This claim is proven in Section \ref{sec:6}.
		 \item \textit{Deathtime deviation:} Compared to Theorem \ref{thm:2}, it is more difficult to interpret the new factor $\tilde h^{(q,1)}(0,v)$ in the limiting density in
		 Theorem \ref{thm:3}. However since the density factorizes, it means that the distribution of the deathtime deviation is i.i.d. for all large-lifetime cycles. In particular, we can
		study the behavior of a region of the persistence diagram (see \text{Section 6}) and devise a simple model-checking procedure for statistical analysis (see Section 7).
\end{enumerate}		
\end{remark}






\pagebreak

\vspace{-1ex}
\section{Convergence in the unbounded regime}
\label{sec:3}

This section is dedicated to proving Theorem \ref{thm:4}. Let $\PP_n$ denote a stationary Poisson point process on $\T^2$ 
with intensity $n \geq 1$, and fix the simplex size $k=3$ for the rest of the section. 
Moreover, all references to assumptions $F'$, $P'$, and $T'$ in this section refer to the assumptions in Theorem \ref{thm:4}. The rest of the section is structured as follows: 
In Section \ref{sec:3.1}, we define \textit{mortal} cycles with deathtimes less than a reasonable bound and use the asymptotic behavior of the threshold sequence to obtain a birthtime bound. 
In Section \ref{sec:3.2}, we prove that this birthtime bound is subcritical and bound the probability of the existence of a large connected component at this filtration time. 
In Section \ref{sec:3.3}, we define a stabilizing approximation of (\ref{eq:xi_def_unbounded}) and use \cite[Theorem 4.1]{BSY21} to prove Theorem \ref{thm:4} by bounding the error terms $E_1$ through $E_4$. 
In Section \ref{sec:3.4}, we establish that we may reasonably treat \v{C}ech complexes on the torus as \v{C}ech complexes on $\R^2$. Moreover, we prove that multiple 
large-lifetime cycles imply the disjoint occurrence of a large connected component from one of the cycles. 
Finally, in Section \ref{sec:3.5}, we prove all lemmas stated in Sections  \ref{sec:3.1} - \ref{sec:3.3}.




	\subsection{Mortal large-lifetime cycles}
\label{sec:3.1}
First, we introduce an approximation of $\xi_n^1$ in (\ref{eq:xi_def_unbounded}): For $n \geq 1$, let
\vspace{0.75ex} \begin{equation}
	\label{eq:deathtime_bound}
	r_n = \sqrt{\frac{\log(n)}{n}}.
\vspace{0.75ex} \end{equation}
From \cite[Theorem 6.1]{K11}, it follows that for large enough $C>0$ and any $r > Cr_n$,
the \v Cech complex $\mathsf{C}_{r}(\PP_n)$ is contractible with high probability,
which implies the homology is trivial. Delving into the proof of \cite[Theorem 6.1]{K11},
we compute that $C \geq (4^d d^{d/2} \vol(\T^2))^{-1/2} = 1/\sqrt{32}$ works in our setting
and by choosing $C=1$, we can use $r_n$ in (\ref{eq:deathtime_bound}) as a
natural deathtime bound. We will call 1-cycles with deathtime less than $r_n$ for \textit{mortal}.
The reason why it is advantageous to only consider mortal large-lifetime cycles is that the deathtime and
lifetime bound together imply a birthtime bound.
Hence, we are interested in lower bounding the asymptotic behavior of the threshold sequence $\ell_{n,\alpha}$.
\begin{lemma}[Threshold]
	\label{prop:asymptotics}
	Under assumption $T'$, it holds for any $\e > 0 $ that
	\vspace{0.4ex} $$
	\frac{1}{18 + \e} \frac{\log(n)}{\log(\log(n))} 
	\leq \ell_{n,\alpha} \leq
	2 \frac{\log(n)}{\log(\log(n))} 
	\vspace{0.4ex} $$
	for all sufficiently large  $n \geq 1$.
\end{lemma}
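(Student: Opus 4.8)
The plan is to reduce the statement to a first‑moment computation and then run a Bobrowski--Kahle--Skraba–type estimate in both directions. Since the intensity measure of $\PP_n$ is $n$ times Lebesgue measure on $\T^2$, Mecke's formula identifies $n^3 v(\ell)$ with a constant multiple of the expected number of negative $2$‑simplices $\xx\subseteq\PP_n$ with $\ell^{*}_{\xx,\PP_n}\ge\ell$, i.e.\ with (a constant multiple of) the expected number of $1$‑cycles of the \v Cech filtration of $\PP_n$ whose multiplicative lifetime is at least $\ell$. This quantity is nonincreasing in $\ell$ and, by assumption $T'$, equals $\alpha$ at $\ell=\ell_{n,\alpha}$. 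Writing $\bar\ell_n=2\log(n)/\log\log(n)$ and $\underline\ell_n=\tfrac1{18+\e}\log(n)/\log\log(n)$, it is therefore enough to show that the expected number of such cycles with lifetime at least $\bar\ell_n$ tends to $0$ while the expected number with lifetime at least $\underline\ell_n$ tends to $\infty$: the first fact forces $v(\bar\ell_n)\le\alpha/n^3=v(\ell_{n,\alpha})$ and the second forces $v(\underline\ell_n)\ge\alpha/n^3=v(\ell_{n,\alpha})$ for all large $n$, so monotonicity of $v$ yields $\underline\ell_n\le\ell_{n,\alpha}\le\bar\ell_n$. All cycles relevant below have vanishing diameter, so they do not feel the toroidal topology and may be analysed as cycles in $\R^2$ (cf.\ Section \ref{sec:3.4}).

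For the upper bound I would argue as in \cite{BKS17}. Fix a negative $2$‑simplex $\xx$ with $\ell^{*}_{\xx,\PP_n}\ge\ell$, write $r=r_\xx$ for its deathtime---the circumradius of $\xx$---and $b=b_{\xx,\PP_n}$ for the birthtime of the destroyed cycle, so $b\le r/\ell$. By the nerve lemma $\mathsf{C}_t(\PP_n)$ is homotopy equivalent to the union of the balls $B(x,t)$, $x\in\PP_n$, and since this union only grows in $t$, the hole killed by $\xx$ survives on the whole interval $[b,r)$ of scales and is, at scale $b$, the erosion by $r-b$ of a bounded component $H_b$ of the complement of $\bigcup_{x\in\PP_n}B(x,b)$; hence $H_b$ contains a $\PP_n$‑free ball $B(z,r-b)$, and $H_b$ is encircled by a simple closed polygon with vertices $x_1,\dots,x_M\in\PP_n$ and edges of length at most $2b$. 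The isoperimetric inequality then gives $M\ge\pi(r-b)/b\ge\pi(\ell-1)$. A first moment over these $M$ vertices via Mecke's formula, together with the bound $e^{-\pi n(r-b)^2}$ for the empty ball and a dyadic decomposition of the range of $b$, will bound the expected number of such cycles by a constant times $n\sum_{M\ge\pi(\ell-1)}\bigl(CM/(e(\ell-1)^2)\bigr)^{M-1}$ with $C$ an absolute constant; once $\ell\ge\bar\ell_n$ this series is dominated by its leading term, of order $n(C'/\ell)^{\pi\ell}$, which tends to $0$ since $\pi\ell\log\ell=2\pi\log(n)(1+o(1))>\log(n)$.

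For the lower bound I would exhibit an explicit family of configurations and estimate its first moment from below. A regular $M$‑gon inscribed in a circle of radius $\rho$ about $z$ generates a $1$‑cycle of the \v Cech filtration with birthtime $\rho\sin(\pi/M)$, the common scale at which its $M$ edges enter, and deathtime $\rho$: any triangulation of the $M$‑gon by triples of its vertices contains a triangle whose smallest enclosing ball is $B(z,\rho)$ (or a triangle having a diameter as its longest edge), so the cycle cannot be filled before scale $\rho$, while a fan triangulation fills it exactly at scale $\rho$; its multiplicative lifetime is thus $1/\sin(\pi/M)\ge M/\pi$. Letting each vertex move within a ball of radius $\delta$ of order $\rho/M$ and requiring $B(z,\rho-2\delta)$ to contain no further point of $\PP_n$ yields a positive‑measure family of configurations still producing a $1$‑cycle of multiplicative lifetime at least $\ell$, provided $M\ge C\ell$ for a suitable absolute constant $C<9$. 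A first moment over these $M$ points via Mecke's formula, using the empty‑ball probability $e^{-cn\rho^2}$ and optimising over $\rho$ (the optimum having $n\rho^2\asymp M$, where the empty‑ball factor is $e^{-M}$ and $M!\le M^M$), gives the expected number of such cycles to be at least a constant times $n(c'/M^2)^M$. Choosing $M=\lceil C\underline\ell_n\rceil$, this equals $\exp\bigl(\log(n)-2M\log(M)(1+o(1))\bigr)$ with $2M\log M=\tfrac{2C}{18+\e}\log(n)(1+o(1))<\log(n)$, hence it tends to $\infty$; together with the first paragraph this completes the proof.

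The step I expect to be the main obstacle is the geometric ``necessity'' part of the upper bound---extracting from a cycle of large multiplicative lifetime both a $\PP_n$‑free ball whose radius is comparable to the deathtime and a surrounding cycle on $\Omega(\ell)$ points, and then controlling the resulting double sum over the number of vertices and the dyadic scale. This is precisely the machinery of \cite{BKS17}, and the constants $18$ and $2$ above are honest but not claimed to be sharp; the lower bound is comparatively routine once the regular‑polygon lifetime has been computed.
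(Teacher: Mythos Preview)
Your reduction to first moments via Mecke's formula and the monotonicity of $v$ is exactly how the paper sets things up, but the two halves are executed differently from the paper's proof.

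For the \emph{upper bound}, you run the full BKS17 first-moment machinery (isoperimetric inequality to force $M\ge\pi(\ell-1)$ vertices on the loop, then a Mecke computation with a dyadic sum over birthtimes). The paper instead first restricts to \emph{mortal} cycles with deathtime at most $r_n=\sqrt{\log n/n}$ (the expected number of immortal negative simplices being at most $n^3e^{-n\pi r_n^2}\to0$), and then splits by birthtime: late-born cycles ($nb^2>\log(n)^{-1/2}$) satisfy $\ell^*\le r_n/b\le\log(n)^{3/4}\le2a_n$ directly, and early-born cycles are dispatched by combining the paper's Lemma~\ref{lem:disjoint_occurence} (loop size $\ge\ell^*$) with a cluster-size bound cited from \cite{BKS17}. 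Your route is more self-contained but heavier; the paper's reuses tools already needed for the main theorem. One small sharpening: you invoke a $\PP_n$-free ball of radius $r-b$, but by Remark~\ref{rem:cech_properties}(1) the full ball $B(z_\xx,r_\xx)$ is already $\PP_n$-free whenever $\xx$ is negative, so you may as well use radius $r_\xx$.

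For the \emph{lower bound}, the paper follows the box discretisation of \cite{BKS17} (big squares $Q_i$ of side $2L_n$ with an inner ring of small squares $Q_{i,j}$ of side $\ell_n$), whereas you use perturbed regular $M$-gons. Your construction is actually more efficient: the constant $C$ relating $M$ to $\ell$ is essentially $\pi$ plus perturbation losses, so your argument would yield a lower bound with denominator close to $2\pi$ rather than $18$ if pushed; the paper's $9$ comes from the cruder diagonal factor $4\sqrt5$ in the box geometry. In your count $n(c'/M^2)^M$, the $M^{-2M}$ appears after dividing the ordered Mecke integral by $M!$; this is correct, though it would help to say explicitly that the polynomial prefactors from the parametrisation by centre and orientation are absorbed into $c'$.
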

The proof of Lemma \ref{prop:asymptotics} is also found in Section \ref{sec:3.5}. Using the lower bound in Lemma \ref{prop:asymptotics}
immediately yields
the following upper bound on the birthtimes, 
\vspace{0.75ex} \begin{equation}
	\label{eq:birthtime_bound}
	b_n = \frac{19 \log(\log(n))}{\sqrt{n\log(n)}}.
\vspace{0.75ex} \end{equation} 
\begin{lemma}[Birthtime bound]
	\label{lem:birthtime_bound}
	Under assumptions $F'$ and $T'$ 
	as well as $r_n$ and $b_n$ as in (\ref{eq:deathtime_bound}) and (\ref{eq:birthtime_bound}), respectively, it holds for any point cloud $\PP \in \mathfrak{N}(\T^2)$ and negative simplex $\xx \in \mc N(\PP)$ that
	\vspace{0.75ex} $$
	r_\xx \leq r_n \quad \text{ and } \quad \ell^*_{\xx,\PP} \geq \ell_{n,\alpha} \quad \implies \quad b_{\xx,\PP} \leq b_n .
	\vspace{0.75ex} $$
\end{lemma}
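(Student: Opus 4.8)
The statement is essentially an arithmetic consequence of the two hypotheses: the multiplicative lifetime $\ell^*_{\xx,\PP} = r_\xx / b_{\xx,\PP}$ is large while the deathtime $r_\xx$ is small, which forces the birthtime $b_{\xx,\PP} = r_\xx/\ell^*_{\xx,\PP}$ to be even smaller. So the proof is just a one-line chain of inequalities once the right bounds are plugged in. The plan is to combine the deathtime bound $r_\xx \le r_n = \sqrt{\log(n)/n}$ with the threshold lower bound from Lemma~\ref{prop:asymptotics}.

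\textbf{Key steps.} First, I would write $b_{\xx,\PP} = r_\xx / \ell^*_{\xx,\PP}$, which is valid precisely because $\xx \in \mc N(\PP)$ so that a well-defined birthtime exists, and because for the \v Cech filtration the multiplicative lifetime is the ratio of deathtime to birthtime as recorded in Section~\ref{sec:1}. Next, using the assumed bounds $r_\xx \le r_n$ and $\ell^*_{\xx,\PP} \ge \ell_{n,\alpha}$, I get
\vspace{0.5ex}$$
b_{\xx,\PP} = \frac{r_\xx}{\ell^*_{\xx,\PP}} \le \frac{r_n}{\ell_{n,\alpha}} = \frac{1}{\ell_{n,\alpha}}\sqrt{\frac{\log(n)}{n}}.
\vspace{0.5ex}$$
Then I would apply the lower bound of Lemma~\ref{prop:asymptotics}, say with $\e = 1$, giving $\ell_{n,\alpha} \ge \tfrac{1}{19}\tfrac{\log(n)}{\log(\log(n))}$ for all sufficiently large $n$, hence
\vspace{0.5ex}$$
b_{\xx,\PP} \le 19\,\frac{\log(\log(n))}{\log(n)}\sqrt{\frac{\log(n)}{n}} = \frac{19\log(\log(n))}{\sqrt{n\log(n)}} = b_n,
\vspace{0.5ex}$$
which is exactly the claim. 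I should note that this holds for \emph{all} $\PP$ and $\xx$ simultaneously because the bound is uniform: the hypotheses are imposed pointwise on $(\xx,\PP)$, and the inequality $\ell_{n,\alpha} \ge \tfrac{1}{19}\tfrac{\log n}{\log\log n}$ depends only on $n$.

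\textbf{Main obstacle.} There is essentially no obstacle here; the only subtle points are bookkeeping. One is making sure the identity $\ell^*_{\xx,\PP} = r_\xx/b_{\xx,\PP}$ is exactly the definition being used (it is, by the terminology table in Section~\ref{sec:1}), so the manipulation is an equality rather than an inequality. The other is that the constant $19$ in the definition~\eqref{eq:birthtime_bound} of $b_n$ must be at least the constant produced by the chosen $\e$ in Lemma~\ref{prop:asymptotics}; picking $\e=1$ yields $18+\e=19$ on the nose, so the bound is tight and the choice of $b_n$ is calibrated precisely for this. If one wanted a cleaner constant one could take any $\e \in (0,1)$ and absorb it, but since $b_n$ is already fixed with the $19$, the stated $\e=1$ version is what closes the argument. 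No randomness, no geometry beyond the definitions enters — Lemma~\ref{lem:birthtime_bound} is a deterministic corollary of Lemma~\ref{prop:asymptotics}.
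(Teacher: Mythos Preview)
Your proposal is correct and matches the paper's approach exactly: the paper states that the lower bound in Lemma~\ref{prop:asymptotics} ``immediately yields'' the birthtime bound $b_n$, and your one-line chain $b_{\xx,\PP} = r_\xx/\ell^*_{\xx,\PP} \le r_n/\ell_{n,\alpha} \le b_n$ (using $\e=1$ in Lemma~\ref{prop:asymptotics}) is precisely that immediate argument.
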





\subsection{Excluding large clusters}
Note that for the birthtime bound $b_n$ in (\ref{eq:birthtime_bound}), as $n \to \infty$, then
\vspace{0.75ex} \begin{equation*}
	nb_n^2 =  \frac{19^2 \log(\log(n))^2}{\log(n)} \longrightarrow 0,
\vspace{0.75ex} \end{equation*} 
and therefore all mortal large-lifetime cycles are born within a subcritical connectivity regime, in 
continuum percolation sense. 
\label{sec:3.2}
Based on this birthtime bound, we define for $n \geq 1$,
\vspace{0.4ex} \begin{equation}
	\label{eq:radius_seq}
R_n = \frac{\log(n)}{\sqrt{n}} \quad \text{ and } \quad m_n = \bigg\lceil \frac{R_n}{b_n}\bigg\rceil = \bigg\lceil \frac{\log(n)^{3/2}}{19\log(\log(n))}\bigg\rceil,
\vspace{0.75ex} \end{equation}
where $\lceil x \rceil = \min \{m \in \mathbb{N} \colon m \geq x\}$. The goal is to establish that it is very unlikely to have a connected component in $\PP_n$ of size $m_n$ or larger at filtration time $b_n$.
Recall that $C_{b}(m)$ denotes the set of $m$-clusters at filtration time $b$ (Section \ref{sec:2}).
For $m \geq 1$ and $b > 0$, define the set 
\vspace{0.75ex} \begin{equation}
   \label{eq:L_n}
\mc C_b^m(\PP_n) =   \{\yy \in \PP_n^{(m)} \colon \yy \in C_{b}(m)\}.
\vspace{0.75ex} \end{equation}
When convenient,
write $\vol(\cdot)$ instead of $\vol_{\T^2}(\cdot)$,
and $\PP_n^\xx$ instead of $\PP_n \cup \{ x_1,x_2,x_3 \}$.
	 
	 \begin{lemma}[Large clusters]
		 \label{lem:Ln_bound}
	 Under assumptions $P'$ and $T'$, then for any Borel set $W \su \T^2$ and $\xx \in \T^{2\cdot p}$, for $p \leq m$, it holds that
	 \vspace{0.75ex} $$
	 P\big(\mc C_{b_n}^m(\PP_n^\xx \cap W) \neq \emptyset \big) \leq  p!p n \vol(W) \bigg(19^2 \e \pi\frac{\log(\log(n))^2}{\log(n)}\bigg)^{m-1}.
	 \vspace{0.75ex} $$
	 \end{lemma}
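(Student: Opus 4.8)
The plan is to bound $P\big(\mc C_{b_n}^m(\PP_n^\xx \cap W) \neq \emptyset\big)$ by a union bound over all $m$-tuples that form an $m$-cluster at level $b_n$, splitting the tuples according to how many of their points come from the deterministic set $\{x_1,\dots,x_p\}$ and how many come from the Poisson process $\PP_n$. Writing $j$ for the number of deterministic points used ($0 \le j \le p$), there are at most $\binom{p}{j} j!$ ordered ways to choose and place them, and the remaining $m-j$ points must be taken from $\PP_n \cap W$ and must complete a connected union of $b_n/2$-balls. So the probability is at most a sum over $j$ of $\binom{p}{j} j!$ times the expected number of $(m-j)$-tuples of $\PP_n$ in $W$ that, together with the $j$ fixed points, lie in $C_{b_n}(m)$.

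The key step is estimating, via Mecke's formula, the expected number of ways to extend a partially-fixed configuration to an $m$-cluster at level $b_n$. Given that the union of $m$ balls of radius $b_n/2$ is connected, the $m$ centers all lie within a set of diameter at most $m b_n$; more usefully, once one point is pinned (either a deterministic point, or — if $j=0$ — the first Poisson point, which contributes a factor $n\vol(W)$), each further point must lie within distance $b_n$ of the union of the previously placed balls along a spanning tree, hence in a region of volume at most $\pi b_n^2$ (using $\vol(\T^2)$-normalization and that $b_n \to 0$ so toroidal balls behave like Euclidean ones). This gives, for each $j$, a bound of the form $\binom{p}{j} j! \cdot n\vol(W) \cdot (\pi b_n^2 n)^{m-1-(\text{something})}$; the dominant term is the fully-Poisson one ($j$ small), contributing $n\vol(W)(n\pi b_n^2)^{m-1}$. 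Substituting $b_n = 19\log\log(n)/\sqrt{n\log(n)}$ from \eqref{eq:birthtime_bound} turns $n\pi b_n^2$ into $19^2\pi \log\log(n)^2/\log(n)$, and since for large $n$ this quantity is less than $1$ and less than the constant $\e\cdot 19^2\pi\log\log(n)^2/\log(n)$ appearing in the statement, all the $j\ge 1$ terms are absorbed; collecting the combinatorial prefactors into the crude bound $p!p$ (which dominates $\sum_j \binom{p}{j}j!$ times the lower-order powers) yields the claimed inequality.

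I would carry this out in the order: (i) union bound over $m$-tuples, stratified by overlap with $\{x_1,\dots,x_p\}$; (ii) apply Mecke to replace the Poisson sum by an integral over $W^{m-j}$ against $(n\kappa)^{m-j}\,\d\yy$, using assumption $P'$ (stationarity, so $\kappa \equiv 1$, and $\lambda = n\,\vol_{\T^2}$) — here $T'$ only enters through making $b_n$ the legitimate birthtime bound via Lemma \ref{lem:birthtime_bound}; (iii) bound the cluster-connectivity indicator by a spanning-tree argument to peel off one factor of $\pi b_n^2$ per extra point; (iv) plug in $b_n$ and simplify, checking that the $j=0$ term dominates and the prefactors fit under $p!p$. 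The main obstacle is the geometric estimate in step (iii): carefully justifying that "connected union of $m$ balls of radius $b_n/2$" forces each successive center into a region of volume $\le \pi b_n^2$ — this requires fixing a spanning tree of the intersection graph, ordering the points along it, and being slightly careful that on $\T^2$ for small $b_n$ a metric ball of radius $b_n$ still has area exactly $\pi b_n^2$ and that the union-bound over the (boundedly many, in $m$) tree shapes is harmless.
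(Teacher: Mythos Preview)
Your plan is essentially the paper's proof: stratify by the number $j$ of deterministic points, apply Mecke, bound the connectivity indicator by a sum over spanning trees, peel leaves to extract factors of $\pi b_n^2$, then substitute $b_n$. One correction to your reading of the statement: the symbol you are treating as a slack constant is Euler's number $e$, and it is \emph{not} absorbed from lower-order terms but produced exactly by the step you call ``harmless'' --- the union bound over spanning trees gives $m^{m-2}$ by Cayley's formula, and together with the $1/m!$ (from passing to unordered clusters) Stirling yields $m^{m-2}/m!\le e^{m}$, which is where the factor $(e\pi n b_n^2)^{m-1}$ comes from. So in your step~(iii) you should keep the spanning tree on all $m$ vertices (not just the Poisson ones), count trees via Cayley, and combine with $1/m!$; this is also what makes the $j\ge 1$ terms genuinely comparable to the $j=0$ term rather than something you can simply declare subdominant.
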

	 From applying Lemma \ref{lem:Ln_bound}
	 with $W = \T^2$ and $m = m_n$ as well as using that $m_n \geq \log(n)$ for all sufficiently large $n$, then for any $\xx \in \T^{2\cdot p}$, we see that
	 \vspace{0.75ex} \begin{equation}
	 P\big(\mc C_{b_n}^{m_n}(\PP_n^\xx) \neq \emptyset \big) \in O\bigg( n \bigg(19^2 \e \pi \frac{\log(\log(n))^2}{\log(n)}\bigg)^{\log(n)-1}\bigg) \su O(n^{-\beta}) \quad \text{ for any $\beta > 0$}.
	 \vspace{0.75ex} \end{equation} 
	In other words, this proves the claim that clusters of size $m_n$ or larger are very unlikely at time $b_n$. 




\subsection{Proof of Theorem \ref{thm:4}}
\label{sec:3.3}	
	Define for a point cloud $\PP$ the set of all 2-simplices $\xx$, where if $\PP$ is \textit{restricted} to $B(z_\xx,R_n)$,
	then $\xx$ is mortal large-lifetime cycle, i.e,
	\vspace{0.75ex} \begin{equation}
		\label{eq:common_clusters}
		\mc B(\PP) = \big\{\xx \in \mc N(\PP \cap B(z_\xx, R_n)) \colon r_\xx \leq r_n \text{ and } \ell^*_{\xx,\PP \cap B(z_\xx, R_n )} \geq \ell_{n,\alpha} \},
	\vspace{0.75ex} \end{equation}
	as well as the centers of such 2-simplices in the Poisson point process $\PP_n$,
	\vspace{0.75ex} \begin{equation}
		\label{eq:xi_def_mortal}
		\hat \xi_n^1 =\sum_{\xx \in \PP_{n}^{(3)}} 
		\one \{\xx \in \mc B(\PP_{n}) \} \de\{z_\xx\}.
	\vspace{0.75ex} \end{equation}
	The next lemma bounds the Kantorovich-Rubinstein error in using (\ref{eq:xi_def_mortal}) to approximate (\ref{eq:xi_def_unbounded})
	and the proof is found in Section \ref{sec:3.5}.
	Recall $\dtv$ denotes the total variation distance (Section \ref{sec:1}).
	\begin{lemma}[Error of approximation]
			\label{lem:deathtime_bound}
		Under assumptions $F'$ and $P'$, it holds that
		\vspace{0.75ex} $$
		\dkr(\xi_n^1, \hat \xi_n^1) \leq \dtv(\E[\xi_n^1],\E[\hat \xi_n^1]) \in O\big(n^{-1/8}\big). \\[0.1em]
		 \vspace{0.75ex} $$
		\end{lemma}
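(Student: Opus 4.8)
Since $\xi_n^1$ and $\hat\xi_n^1$ are both deterministic functionals of the single process $\PP_n$, they are automatically coupled, so
$\dkr(\xi_n^1,\hat\xi_n^1)\le\E[\dtv(\xi_n^1,\hat\xi_n^1)]$; and as the total variation distance between two counting measures is bounded by the number of atoms on which they disagree, this is at most $\E[N_1]+\E[N_2]$, where $N_1$ counts the $\xx\in\PP_n^{(3)}$ with $\xx\in\mc N(\PP_n)$, $\ell^*_{\xx,\PP_n}\ge\ell_{n,\alpha}$ but $\xx\notin\mc B(\PP_n)$, and $N_2$ counts the $\xx$ with $\xx\in\mc B(\PP_n)$ but without the first property. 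Writing both expectations through Mecke's formula turns them into integrals over $\T^{2\cdot3}$ of probabilities for $\xx$ appended to $\PP_n$. I would then show that $\E[N_2]$ is super-polynomially small and that $\E[N_1]=O(n^{-1/8})$; since then $\dtv(\E[\xi_n^1],\E[\hat\xi_n^1])=\E[\xi_n^1(\T^2)]-\E[\hat\xi_n^1(\T^2)]=\E[N_1]-\E[N_2]$ up to a negligible correction, this simultaneously yields the displayed intermediate bound in terms of $\dtv(\E[\xi_n^1],\E[\hat\xi_n^1])$. Throughout I use the identification of \v Cech complexes on $\T^2$ with \v Cech complexes on $\R^2$ (Section \ref{sec:3.4}), which also lets me treat $\mathsf C_r(\PP_n)$ as homologically trivial for $r\ge r_n$ and rules the toroidal $1$-cycles out of $\xi_n^1$.

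\textbf{Localization part.} For $\E[N_2]$, and for the part of $\E[N_1]$ coming from $2$-simplices that are \emph{mortal} large-lifetime cycles of $\PP_n$ (deathtime $\le r_n$), I argue as follows. If $\xx$ has $r_\xx\le r_n$ and the large-lifetime condition holds for one of the clouds $\PP_n$ and $\PP_n\cap B(z_\xx,R_n)$, then Lemma \ref{lem:birthtime_bound} (valid for an arbitrary cloud) forces the destroyed cycle to be born by time $b_n$, hence to lie in a connected component at filtration time $b_n$. For the two clouds to produce different answers — negativity of $\xx$, or the value of the birthtime and thus of the lifetime — this component must reach outside $B(z_\xx,R_n)$ (the vertices of $\xx$ themselves lie within $r_\xx\le r_n<R_n$ of $z_\xx$), which by $m_nb_n\ge R_n$ requires a cluster of size at least $m_n$ at level $b_n$ meeting $B(z_\xx,R_n)$. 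Applying Lemma \ref{lem:Ln_bound} with $p=3$ and $W=B(z_\xx,R_n)$, together with the ensuing super-polynomial bound (using $m_n\ge\log n$), gives a contribution in $O(n^{-\beta})$ for every $\beta>0$; this covers all of $\E[N_2]$ and the mortal part of $\E[N_1]$.

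\textbf{Non-mortality part.} It remains to bound the part of $\E[N_1]$ coming from non-mortal large-lifetime cycles of $\PP_n$, i.e., negative $2$-simplices $\xx$ with $\ell^*_{\xx,\PP_n}\ge\ell_{n,\alpha}$ and $r_\xx>r_n$. On the event $H_n$ that $\mathsf C_r(\PP_n)$ is contractible for all $r\ge r_n$, no $1$-cycle is destroyed after time $r_n$, so this count vanishes; by \cite[Theorem 6.1]{K11} applied at the scale $r_n=\sqrt{\log n/n}$, which sits a factor $\sqrt{32}$ above the contractibility threshold, $P(H_n^c)$ is of polynomial order. Hence Cauchy--Schwarz gives
\[
\E\big[\#\{\text{non-mortal large-lifetime }\xx\}\big]=\E\big[\#\{\cdots\}\,\one_{H_n^c}\big]\le \E\big[(\#\{\text{large-lifetime }\xx\})^2\big]^{1/2}\,P(H_n^c)^{1/2},
\]
and a direct two-fold Mecke computation (as in the control of the error terms in the proof of Theorem \ref{thm:4}) shows $\E[(\#\{\text{large-lifetime }\xx\})^2]=O(1)$; combined with $P(H_n^c)=O(n^{-1/4})$ from \cite[Theorem 6.1]{K11} under the choice $C=1$, this is $O(n^{-1/8})$. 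Summing the three contributions proves the lemma.

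\textbf{Main obstacle.} The delicate step is the last one: one must extract from \cite[Theorem 6.1]{K11} a genuinely quantitative tail for $P(H_n^c)$ with a large enough exponent, and pair it with a matching (second-)moment bound on the number of negative $2$-simplices, so that the product lands at $O(n^{-1/8})$ — this is precisely where the generous choice $r_n=\sqrt{\log n/n}$, rather than the minimal $\tfrac{1}{\sqrt{32}}\sqrt{\log n/n}$, is spent. The localization bookkeeping of the middle step (in particular verifying that $R_n$ is large enough that a sub-$m_n$ cluster at level $b_n$ together with the capping simplex $\xx$ fits inside $B(z_\xx,R_n)$) is routine but must be done with the explicit constants in $r_n$, $b_n$, $R_n$, $m_n$.
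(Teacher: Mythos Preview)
Your localization part essentially matches the paper's treatment of the difference between mortal large-lifetime cycles of $\PP_n$ and of $\PP_n\cap B(z_\xx,R_n)$, modulo one small inaccuracy: applying Lemma~\ref{lem:Ln_bound} with $W=B(z_\xx,R_n)$ is not quite right, since the large cluster straddles $\partial B(z_\xx,R_n)$ rather than lying inside it; the paper simply invokes $\mc C^{m_n}_{b_n}(\PP_n^\xx)\ne\emptyset$ with $W=\T^2$, which suffices because that probability is $O(n^{-\beta})$ for every $\beta$ and absorbs the factor $n^3$ from Mecke.

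The genuine gap is in your non-mortality part. Your Cauchy--Schwarz bound requires $\E\big[\big(\xi_n^1(\T^2)\big)^2\big]=O(1)$, and your justification (``as in the control of the error terms in the proof of Theorem~\ref{thm:4}'') is circular: those error terms $E_2,E_3,E_4^I$ are formulated for $\hat\xi_n^1$, not $\xi_n^1$, and the whole argument for them relies on the localization to $B(z_\xx,R_n)$ and on the birthtime bound $b_{\xx,\PP_n}\le b_n$, which in turn requires $r_\xx\le r_n$. For $\xi_n^1$ you have no deathtime bound, hence no birthtime bound, hence the BK/subcritical-cluster decoupling is unavailable. Without that $O(1)$ second moment, no quantitative tail on $P(H_n^c)$ coming from \cite{K11} will rescue the argument (the crude substitute $\#\{\cdots\}\le|\PP_n|^3$ leaves a factor $n^3$ after Cauchy--Schwarz).

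The paper avoids all of this with a one-line void-probability argument. By Remark~\ref{rem:cech_properties}(1), $\xx\in\mc N(\PP_n)$ forces $\PP_n\cap B(z_\xx,r_\xx)=\emptyset$; hence if also $r_\xx>r_n$ then $\PP_n\cap B(z_\xx,r_n)=\emptyset$. By Mecke,
\[
\E\big[\#\{\xx\in\mc N(\PP_n):r_\xx>r_n\}\big]\;\le\; n^3\int_{\T^{2\cdot 3}}P\big(\PP_n(B(z_\xx,r_n))=0\big)\,\d\xx\;=\;n^3e^{-n\pi r_n^2}\;=\;n^{3-\pi},
\]
and since $\pi>25/8$ this is $O(n^{-1/8})$. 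This is both where the exponent $-1/8$ actually comes from and why the choice $r_n=\sqrt{\log n/n}$ (rather than any contractibility statement from \cite{K11}) is what matters here.
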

A consequence of Lemma \ref{lem:deathtime_bound}, assumption $T'$ and Remark \ref{rem:unbounded_extension}(2) is that the intensity of $\hat \xi_n^1$ is asymptotically constant, i.e.,
there is a constant $C > 0$ such that for all sufficiently large $n \geq 1$,
\vspace{0.75ex} \begin{equation}
	\label{eq:almost_alpha}
\alpha - C n^{-1/8} \leq \E[\hat \xi_n^1 (\T^2)] \leq \alpha + C n^{-1/8} \leq 2\alpha.
\vspace{0.75ex} \end{equation}
The idea is to apply the Poisson approximation for stabilizing functionals in \cite[Theorem 4.1]{BSY21} to $\hat \xi_n^1$ instead of $\xi_n^1$
in order to prove Theorem \ref{thm:4}. Recall that a map $f$ is localized to a set $S$ if $f(\xx,\PP) = f(\xx, \PP \cap S)$ for any point cloud $\PP \in \mathfrak{N}(\mathbb{T}^2)$ and $\xx \in \PP^{(3)}$. As a stopping set, we choose the ball $B(z_\xx,R_n)$ and note that the maps
\vspace{0.75ex} \begin{equation}
	\label{eq:scores}
f: \xx \mapsto z_\xx \quad \text{and} \quad g: (\xx,\PP) \mapsto \one\big\{\xx \in \mc B(\PP) \},
\vspace{0.75ex} \end{equation}
are permutation invariant in $\xx$. Moreover $f$ is localized to $B(z_\xx,R_n)$ since $r_n < R_n$ and $g$ is localized to $B(z_\xx,R_n)$
by definition of $B(\PP)$.
Choosing the bounding set to be $B(z_\xx,R_n)$ as well, it follows that the first error term $E_1$ from \cite[Theorem 4.1]{BSY21} is 
\vspace{0.75ex} $$
E_1 = n^3\int_{\T^{2\cdot 3}}
P\big(\xx \in \mc B(\PP_n^\xx),
 \ B(z_\xx, R_n) \not\subseteq B(z_\xx, R_n)  \big) \ \d\xx = 0.
\vspace{0.75ex} $$
Hence, we only have to deal with three error terms: For $\es \subsetneq I \subsetneq \{1,2,3 \}$ and $\xx_I = (x_i)_{i \in I}$, let
\vspace{0.75ex} \begin{equation}
	\label{eq:E_terms_def}
\begin{aligned}
	E_2 &= n^6 \int_{\T^{2\cdot 3}} \int_{\T^{2\cdot 3}}
	\one \{ B(z_\xx,R_n) \cap B(z_\yy,R_n) \neq \emptyset  \} P\big(\xx \in \mc B(\PP_n^\xx)\big) 
	P\big(\yy \in \mc B(\PP_n^\yy)\big)
		 \ \d\xx \d\yy, \\[1ex]
	E_3 & = n^6 \int_{\T^{2\cdot 3}} \int_{\T^{2\cdot 3}}
	\one \{ B(z_\xx,R_n) \cap B(z_\yy,R_n) \neq \emptyset  \} P\big(\xx \in \mc B(\PP_n^{\xx,\yy}), \yy \in \mc B(\PP_n^{\xx,\yy}) \big) 
		  \ \d\xx \d\yy, \\[1ex]
	E_4^I &=
	  n^{6-|I|} \int_{\mathbb{T}^{2(3-|I|)}} \int_{\T^{2\cdot 3}}
	  P\big(\xx \in \mc B(\PP_n^{\xx,\yy}), (\xx_I,\yy) \in \mc B(\PP_n^{\xx,\yy}) \big) 
		  \ \d\xx \d\yy. \\[1ex]
\end{aligned}
\vspace{0.75ex} \end{equation}


\begin{lemma}[$E_2 \rightarrow 0$]
	\label{lem:E_2}
	Under assumptions $F'$, $P'$, and $T'$, it holds for every $\e > 0$ that 
	\vspace{0.75ex} $$
	E_2 \in O\big(n^{-1+\e}\big).
	\vspace{0.75ex} $$
\end{lemma}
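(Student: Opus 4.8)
The plan is to factorise the double integral defining $E_2$ by carrying out the $\yy$-integration first, exploiting stationarity of the approximating process. The key observation is that the event $\{\xx \in \mc B(\PP)\}$ is invariant under simultaneous translation of $\xx$ and $\PP$ on the torus: the centre $z_\xx$ translates with $\xx$, the deathtime $r_\xx$, the membership $\xx \in \mc N(\cdot)$ and the multiplicative lifetime $\ell^*_{\xx,\cdot}$ are translation invariant, and $(\PP + t) \cap B(z_\xx + t, R_n) = (\PP \cap B(z_\xx, R_n)) + t$. Since $\PP_n$ is stationary by assumption $P'$, it follows that the point process $\hat\xi_n^1$ from \eqref{eq:xi_def_mortal} is stationary on $\T^2$, so its intensity measure is a multiple of $\vol_{\T^2}$; that is, $\E[\hat\xi_n^1(A)] = c_n \vol_{\T^2}(A)$ for every Borel $A \su \T^2$, where $c_n := \E[\hat\xi_n^1(\T^2)]$ satisfies $c_n \le 2\alpha$ for all large $n$ by \eqref{eq:almost_alpha}.

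Next, Mecke's formula applied to $\hat\xi_n^1$ gives both
\[
c_n = n^3 \int_{\T^{2 \cdot 3}} P\big(\xx \in \mc B(\PP_n^\xx)\big) \, \d\xx
\qquad\text{and}\qquad
\E\big[\hat\xi_n^1(A)\big] = n^3 \int_{\T^{2 \cdot 3}} \one\{z_\yy \in A\} \, P\big(\yy \in \mc B(\PP_n^\yy)\big) \, \d\yy .
\]
Since $B(z_\xx, R_n) \cap B(z_\yy, R_n) \ne \es$ forces $\rho(z_\xx, z_\yy) \le 2R_n$, I would bound the indicator in $E_2$ by $\one\{z_\yy \in B(z_\xx, 2R_n)\}$ and integrate over $\yy$ with $\xx$ held fixed. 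For $n$ so large that $2R_n < \tfrac12$, the toroidal ball $B(z_\xx, 2R_n)$ has Euclidean volume $4\pi R_n^2$, independently of $z_\xx$, so the inner integral equals $\E[\hat\xi_n^1(B(z_\xx, 2R_n))] = 4\pi R_n^2 c_n$. Substituting this constant back and using the first Mecke identity,
\[
E_2 \le 4\pi R_n^2 c_n \cdot n^3 \int_{\T^{2 \cdot 3}} P\big(\xx \in \mc B(\PP_n^\xx)\big) \, \d\xx = 4\pi R_n^2 c_n^2 \le 16\pi \alpha^2 \, \frac{\log(n)^2}{n} ,
\]
by \eqref{eq:radius_seq}. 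As $\log(n)^2 \in O(n^\e)$ for every $\e > 0$, this yields $E_2 \in O(n^{-1+\e})$.

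The one point requiring care is the claim that the inner $\yy$-integral does not depend on $\xx$: this is exactly where stationarity of $\hat\xi_n^1$ enters, and it relies on all toroidal balls of radius $2R_n$ being congruent, which holds once $R_n < \tfrac14$ — automatic for large $n$. A secondary technicality is that the centre $z_\xx$ of a simplex on the torus is only unambiguously defined when the points of $\xx$ are close together, but the restriction $r_\xx \le r_n$ built into $\mc B$ confines the relevant simplices to a ball of vanishing radius, so for large $n$ this causes no trouble (and the torus-versus-$\R^2$ comparison of Section \ref{sec:3.4} makes this precise). No part of the harder machinery — the BK inequality or the other error terms — is needed here; $E_2$ is the benign term, and the whole argument is just Mecke's formula plus a volume estimate.
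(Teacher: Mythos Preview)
Your proof is correct and takes essentially the same approach as the paper: both use stationarity to factorise the double integral, Mecke's formula to identify each factor with $\E[\hat\xi_n^1(\T^2)]$, and then the bound \eqref{eq:almost_alpha} together with $R_n^2 = \log(n)^2/n$ to arrive at the same estimate $16\pi\alpha^2 \log(n)^2/n$. The only cosmetic difference is that the paper replaces $\one\{z_\yy \in B(z_\xx,2R_n)\}$ by the slightly cruder $\one\{x_1 \in B(z_\yy,2R_n)\}$ and performs an explicit translation, whereas you invoke stationarity of $\hat\xi_n^1$ directly --- your organisation is, if anything, a touch cleaner.
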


	
\begin{lemma}[$E_3 \rightarrow 0$]
	\label{lem:E_3}
	Under assumptions $F'$, $P'$, and $T'$, it holds for every $\e > 0$ that 
	\vspace{0.75ex} $$
		E_3 \in  O\big(n^{-1/36+ \e}\big).
	\vspace{0.75ex} $$
\end{lemma}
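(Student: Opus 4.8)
The plan is to implement the route indicated in Remark~\ref{rem:unbounded_extension}(1): to reduce the \emph{joint} occurrence of two mortal large-lifetime cycles with nearby centers to the disjoint occurrence, in the sense of the continuous BK inequality \cite{HHLM19}, of a single such cycle together with a large connected cluster in the subcritical regime, and then to combine the resulting factorization with the cluster estimate of Lemma~\ref{lem:Ln_bound}. In contrast to $E_2$ in Lemma~\ref{lem:E_2}, which already appears in product form, here the probability in the integrand couples the two cycles through the common point cloud $\PP_n^{\xx,\yy}$, and manufacturing a product is the whole point.

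First I would fix $\xx,\yy\in\T^{2\cdot 3}$ with $B(z_\xx,R_n)\cap B(z_\yy,R_n)\neq\emptyset$, so that $\|z_\xx-z_\yy\|\le 2R_n$, and work on the integrand event $\{\xx\in\mc B(\PP_n^{\xx,\yy}),\ \yy\in\mc B(\PP_n^{\xx,\yy})\}$. Applying Lemma~\ref{lem:birthtime_bound} inside each stopping ball forces both cycles to have birthtime at most $b_n$, so at filtration time $b_n$ there are loops $L_\xx,L_\yy$ representing the two $1$-cycles and lying inside $B(z_\xx,R_n)$ and $B(z_\yy,R_n)$ respectively. Since the deathtimes are at most $r_n$ while the multiplicative lifetimes are at least $\ell_{n,\alpha}$, each loop must wind around a region whose diameter is comparable to its deathtime using edges of length at most $2b_n$, hence contains at least a fixed multiple $c\,\ell_{n,\alpha}$ of points. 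Using the structural result of Section~\ref{sec:3.4} I would then bound the integrand event by the disjoint occurrence of the event ``$\xx$ is a mortal large-lifetime cycle'', witnessed by the points of $L_\xx$ and the emptiness of the circumball of $\xx$, and the event ``some connected component at filtration time $b_n$ inside $B(z_\xx,3R_n)$ has at least $c\,\ell_{n,\alpha}$ points'', witnessed by (a portion of) $L_\yy$, augmented by a connecting path if the two loops happen to lie in one component. The continuous BK inequality \cite{HHLM19} then yields
$$
P\big(\xx\in\mc B(\PP_n^{\xx,\yy}),\ \yy\in\mc B(\PP_n^{\xx,\yy})\big)
\le P\big(\xx\in\mc B(\PP_n^{\xx})\big)\cdot P\big(\mc C_{b_n}^{\lceil c\,\ell_{n,\alpha}\rceil}(\PP_n\cap B(z_\xx,3R_n))\neq\emptyset\big).
$$

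To finish, the first factor is controlled by assumption $T'$ together with Lemma~\ref{lem:deathtime_bound}, which via (\ref{eq:almost_alpha}) give $n^3\int_{\T^{2\cdot 3}}P(\xx\in\mc B(\PP_n^{\xx}))\,\d\xx\le 2\alpha$, and the second factor by Lemma~\ref{lem:Ln_bound} with $W=B(z_\xx,3R_n)$ and cluster size $m=\lceil c\,\ell_{n,\alpha}\rceil$, giving a bound $C\,nR_n^2\rho_n^{\,c\ell_{n,\alpha}-1}$ where $\rho_n=19^2 e\pi\,(\log\log n)^2/\log n\to0$; inserting $\ell_{n,\alpha}\ge\tfrac1{18+\e}\tfrac{\log n}{\log\log n}$ from Lemma~\ref{prop:asymptotics} turns this into $O(n^{-c/18+\e})$ up to logarithmic factors. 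Since pinning the circumcenter $z_\yy$ into the ball $B(z_\xx,2R_n)$ is a codimension-$2$ constraint, the $\yy$-integration contributes a factor $O(R_n^2)=O(\log(n)^2/n)$, the $\xx$-integration is absorbed into the first factor, and collecting the powers of $n$ from the outer $n^6$, the $n^{-3}$ in the first factor, the $R_n^2$, and $n^{-c/18}$ leaves $E_3\in O(n^{-1/36+\e})$.

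The hard part will be the case in which $L_\xx$ and $L_\yy$ are \emph{not} contained in spatially separated balls --- when $\|z_\xx-z_\yy\|$ is only of order $r_n$, or when the two loops share points and therefore sit in one connected component. There one cannot simply declare the two cycle events to occur on disjoint point sets, and the whole argument hinges on choosing the right pair of disjoint witnesses: stripping off one complete cycle while retaining a connected remainder that is still large --- of size at least a fixed fraction of $\ell_{n,\alpha}$ --- so that Lemma~\ref{lem:Ln_bound} remains effective. It is precisely this (up to) factor-$2$ loss in the admissible cluster size, set against the $\tfrac1{18}$ of Lemma~\ref{prop:asymptotics}, that produces the exponent $\tfrac1{36}$ rather than a faster rate, and making this disjointification rigorous is exactly where the continuous BK inequality of \cite{HHLM19} does the essential work.
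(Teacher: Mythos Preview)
Your overall strategy --- Lemma~\ref{lem:disjoint_occurence} plus the continuous BK inequality to manufacture a product, then Lemma~\ref{lem:Ln_bound} on the cluster factor --- is exactly the paper's. The gap is in the final bookkeeping. After your BK step neither factor you wrote down, $P\big(\xx\in\mc B(\PP_n^\xx)\big)$ nor $P\big(\mc C_{b_n}^{\lceil c\ell_{n,\alpha}\rceil}(\PP_n\cap B(z_\xx,3R_n))\neq\emptyset\big)$, depends on~$\yy$; the only remaining $\yy$-constraint is the indicator of $S_n$, and by translation invariance indeed $\int_{\T^{2\cdot3}}\one_{S_n}\,\d\yy=4\pi R_n^2$. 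But then your bound reads
\[
n^6 \cdot O(R_n^2)\cdot O\big(\log(n)^2\,n^{-1/36+\e}\big)\cdot \tfrac{2\alpha}{n^3}
\;=\; O\big(n^{2-1/36+\e}\big),
\]
which diverges. The codimension-$2$ constraint on $z_\yy$ alone is not enough: you have thrown away the deterministic condition $r_\yy\le r_n$ contained in $\{\yy\in\mc B\}$, which would confine all three $\yy$-points to a ball of radius $r_n$ and supply the missing $r_n^4\sim n^{-2}\log(n)^2$. Relatedly, the BK inequality does not let you drop the added points --- both factors after BK live on $\PP_n^{\xx,\yy}$, not on $\PP_n^\xx$ and $\PP_n$ respectively, and the first of these is not obviously comparable to $P(\xx\in\mc B(\PP_n^\xx))$.

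The paper recovers the missing $n^{-2}$ precisely through this $\yy$-dependence: after BK the first factor is still $P\big(\xx\in\mc B(\PP_n^{\xx,\yy})\big)$, and Mecke's formula converts $n^3\int_{S_n} P(\xx\in\mc B(\PP_n^{\xx,\yy}))\,\d\yy$ into $\E\big[\sum_{\yy\in(\PP_n\cap B(z_\xx,2R_n))^{(3)}}\one\{\xx\in\mc B(\PP_n^\xx)\}\big]$, the retained condition $r_\yy\le r_n$ together with $S_n$ ensuring $\yy\subset B(z_\xx,O(R_n))$. The inner count is now at most $\PP_n(B(z_\xx,2R_n))^3$, which is $O(\log(n)^6)$ on the typical event $\{\PP_n(B)\le 2\,\E\PP_n(B)\}$ and is controlled by a Poisson Chernoff bound plus Cauchy--Schwarz on the complement. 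This trades your errant $n^2$ for a polylogarithmic factor, after which $U_{n,\alpha}\in O(n^{-1/36+\e})$ finishes the argument.
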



\begin{lemma}[$E_4^I \rightarrow 0$]
	\label{lem:E_4}
	Under assumptions $F'$, $P'$, and $T'$, it holds for every $\e > 0$ that 
	\vspace{0.75ex} $$
		E_4^I \in  O\big(n^{-1/36+ \e}\big).
	\vspace{0.75ex} $$
\end{lemma}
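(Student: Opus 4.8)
The plan is to mirror the proof of Lemma~\ref{lem:E_3}, exploiting that two large-lifetime cycles sharing points are forced into a single bounded window. Fix $\es\subsetneq I\subsetneq\{1,2,3\}$ and $i_0\in I$; by permutation invariance we may fix which coordinates are shared, and we treat $|I|=1$ and $|I|=2$ in the same way. On the event in the definition of $E_4^I$, both $\xx$ and $(\xx_I,\yy)$ are \v Cech-negative with circumradius at most $r_n$, so every point of $\xx$ and of $\yy$ lies in $B(x_{i_0},2r_n)$ and $\|z_\xx-z_{(\xx_I,\yy)}\|\le 2r_n$. Hence, for $n$ large (recall $r_n=o(R_n)$), both stopping balls are contained in $B(z_\xx,2R_n)$, the indicator depends only on $\PP_n^{\xx,\yy}\cap B(z_\xx,2R_n)$, and $\yy$ ranges over a set of volume $O(r_n^{2(3-|I|)})$. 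Moreover, by Lemma~\ref{lem:birthtime_bound} both cycles are born before $b_n$, hence in the subcritical regime of Section~\ref{sec:3.2}.

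Next I invoke the clustering analysis of Section~\ref{sec:3.4}: the presence of two distinct mortal large-lifetime cycles forces a connected cluster at level $b_n$ that occurs disjointly from the birth/death witness of one of them. This yields the inclusion
$$\{\xx\in\mc B(\PP_n^{\xx,\yy}),\,(\xx_I,\yy)\in\mc B(\PP_n^{\xx,\yy})\}\;\subseteq\;\{\xx\in\mc B(\PP_n^{\xx,\yy})\}\circ\{\mc C_{b_n}^{m_n^{*}}(\PP_n^{\xx,\yy}\cap B(z_\xx,2R_n))\neq\es\},$$
where $\circ$ denotes disjoint occurrence and $m_n^{*}=\lfloor\tfrac12\ell_{n,\alpha}\rfloor$; the factor $\tfrac12$ absorbs the part of the second birth-loop that may overlap the witness of $\xx$, and the lower bound of Lemma~\ref{prop:asymptotics} gives $m_n^{*}\ge\tfrac1{36}\tfrac{\log n}{\log\log n}$ for large $n$. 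The continuous BK inequality \cite{HHLM19} then bounds the probability of the right-hand side, pointwise in $(\xx,\yy)$, by $P(\xx\in\mc B(\PP_n^{\xx,\yy}))$ times $\sup_{z}P(\mc C_{b_n}^{m_n^{*}}(\PP_n^{\xx,\yy}\cap B(z,2R_n))\neq\es)$.

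Inserting this into the definition of $E_4^I$, pulling the cluster supremum out, and then re-integrating the extra points $\yy$ by a Mecke-type rewriting (which only produces a polylogarithmic factor, via a moment bound on $\PP_n(B(x_{i_0},4r_n))\sim\mathrm{Pois}(O(\log n))$), one is left with a bound of the shape
$$E_4^I\;\le\;\Big(\sup_z P\big(\mc C_{b_n}^{m_n^{*}}(\PP_n^{\xx,\yy}\cap B(z,2R_n))\neq\es\big)\Big)\cdot(\log n)^{O(1)}\cdot n^3\!\int_{\T^{2\cdot3}}\!P(\xx\in\mc B(\PP_n^\xx))\,\d\xx.$$
The last integral equals $\E[\hat\xi_n^1(\T^2)]$ and is $O(1)$ by (\ref{eq:almost_alpha}). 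For the supremum, Lemma~\ref{lem:Ln_bound} with $W=B(z,2R_n)$ gives the bound $C\,n\,\vol(B(z,2R_n))(19^2\e\pi\tfrac{(\log\log n)^2}{\log n})^{m_n^{*}-1}=C\log^2n\,(19^2\e\pi\tfrac{(\log\log n)^2}{\log n})^{m_n^{*}-1}$, and since $m_n^{*}\ge\tfrac1{36}\tfrac{\log n}{\log\log n}$, taking logarithms shows this is $n^{-1/36+o(1)}$. Combining the three factors yields $E_4^I\in O(n^{-1/36+\e})$ for every $\e>0$.

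The one genuinely new point relative to Lemma~\ref{lem:E_3} is the disjoint-occurrence step when $\xx$ and $(\xx_I,\yy)$ share $|I|$ vertices: one must guarantee that the birth-loop of the second cycle still contributes a connected piece of size at least $\tfrac12\ell_{n,\alpha}$ that is disjoint from the birth/death witness of $\xx$, since otherwise the BK step only produces a factor $n^{-o(1)}$. This forces one to use that the two destroyed homology classes are distinct — and, if necessary, that the two centres or deathtimes differ — to exclude the two birth-loops coinciding up to $o(\ell_{n,\alpha})$ vertices. This is exactly the content supplied by Section~\ref{sec:3.4}; if the clustering lemma there is stated only for vertex-disjoint simplices it has to be extended to cover the overlap considered here, and this extension is the main obstacle I anticipate.
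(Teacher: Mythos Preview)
Your proposal is correct and follows essentially the same route as the paper: both observe that a shared vertex forces $B(z_\xx,R_n)\cap B(z_{(\xx_I,\yy)},R_n)\neq\es$, then apply Lemma~\ref{lem:disjoint_occurence}, the continuous BK inequality, Lemma~\ref{lem:Ln_bound} and Mecke's formula to reduce to the intermediate bound~(\ref{eq:E3_pitstop}) from the $E_3$ proof, after which the argument is verbatim. Your final concern is unnecessary: Lemma~\ref{lem:disjoint_occurence} is stated for arbitrary $\xx,\yy\in\mc B(\PP_n)$ with no vertex-disjointness assumption, and its proof works only with the associated \emph{loops} $\gamma_{\xx,\PP_n},\gamma_{\yy,\PP_n}$, so it applies unchanged when the two negative simplices share one or two vertices.
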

The proof of Lemma \ref{lem:E_2} 
relies on translation invariance of $\vol(\cdot)$ and Remark \ref{rem:unbounded_extension}(2).
The proof of Lemma \ref{lem:E_3} deals with the interaction of the two cycles
$\xx$ and $\yy$. In the next subsection, we will show that this implies that a 
large connected component disjointly occurs from one of the cycles, and then apply the continuous BK inequality.
The proof of Lemma \ref{lem:E_4} is very similar to the proof of Lemma \ref{lem:E_3}
once we establish that sharing a vertex implies that $R_n$-balls around the centers of
$\xx$ and $\yy$ must overlap as in $E_2$ and $E_3$. We postpone the proofs in full detail until Section \ref{sec:3.5}.
Note it is the rate of convergence in the $E_3$ and $E_4$ terms which determine
the convergence speed of the upper bound in Theorem \ref{thm:4}. While 
the proof for $E_2$ also works for $d>2$, the decomposition needed
for $E_3$ and $E_4$ relies on the fact that $d=2$, the Jordan Curve Theorem, 
and that we can define the loop associated with a negative simplex through the vacant set of a graph.


\begin{proof}[Proof of Theorem \ref{thm:4}]
	Recall we want to prove that
$
	\dkr(\xi_n^1, \zeta^{1})\in O\big(n^{-1/36+\e}\big)
$	for every $\e > 0$. 
Since we already argued that $f$ and $g$ in (\ref{eq:scores}) localizes to $B(z_\xx,R_n)$, then by
\cite[Theorem 4.1]{BSY21},
\vspace{0.75ex} \begin{equation}
	\label{eq:terms_from_BSY}
\dkr(\hat \xi_{n}^1, \zeta^1) \leq \dtv(\E[\hat \xi^1_n], \E[\zeta^1]) + E_2 + E_3 + \sum_{\es \subsetneq I \subsetneq \{1,2,3 \}} E_4^I,
\vspace{0.75ex} \end{equation} 
where $E_2$, $E_3$ and $E_4^I$ are as defined in (\ref{eq:E_terms_def}). By Lemma \ref{lem:deathtime_bound} and Remark \ref{rem:unbounded_extension}(2), we have
\vspace{0.75ex} \begin{equation}
	\label{eq:intensity_rate}
\dtv(\E[\hat \xi^1_n], \E[\zeta^1]) \leq \dtv(\E[\hat \xi^1_n], \E[\xi^1_n]) + \dtv(\E[\xi^1_n], \E[\zeta^1]) \leq \E[\dtv(\hat \xi^1_n, \xi^1_n)]  \in O(n^{-1/8}).
\vspace{0.75ex} \end{equation}
Combining and (\ref{eq:terms_from_BSY}) and (\ref{eq:intensity_rate}) with Lemmas \ref{lem:E_2}, \ref{lem:E_3}, and \ref{lem:E_4}, then 
\vspace{0.75ex} $$
\dkr(\hat \xi_{n}^1, \zeta^1) \in O(n^{-1/8} + n^{-1+\e} + n^{-1/36+ \e} + n^{-1/36+ \e}) \subseteq O(n^{-1/36+ \e}),
\vspace{0.75ex} $$
and therefore using Lemma \ref{lem:deathtime_bound} once more, it follows that  
\vspace{0.75ex} $$
\dkr(\hat \xi_{n}^1, \zeta^1) \leq \dkr(\hat \xi_{n}^1, \hat \xi_{n}^1) + \dkr(\hat \xi_{n}^1, \zeta^1) \in O(n^{-1/36+ \e}),
\vspace{0.75ex} $$ 
which completes the proof.
\end{proof}




\subsection{\v Cech complexes on the torus}
\label{sec:3.4}
The goal of this section is to transfer some properties of the \v Cech filtration in Euclidean space to the torus as well. 
The main challenge is of course that regions on the torus can 'wrap around' the boundary, which for instance
means that the intersection of two balls may not be a contractible set. However, we now argue that our objects are sufficiently local
so we can avoid these issues.
\begin{remark}[\textit{\v{C}ech properties}]
	\label{rem:cech_properties}
	\
	\begin{enumerate}
		\item \textit{Negative property:} Since any two points on the torus are at most a toroidal distance of $1/\sqrt{2}$ from one another, it follows that $B(z_\xx,r_\xx) \subseteq B(z_\xx, 1/\sqrt{6})$ by using the relationship between the side length and the distance from a vertex to the centroid of an equilateral triangle. By translating such that $z_\xx$ is in the middle of the torus, we see that $B(z_\xx, r_\xx)$ can be isometrically embedded in $\R^2$. Letting $\text{conv}(\xx)$ denote the convex hull of $\xx$, it follows from \cite[Lemma 2.4]{O22} that
		\vspace{0.75ex} \begin{equation}
			\label{eq:cech_convex}
			\mc N(\PP_n) = \{\xx \in \PP_n^{(3)} \colon \PP_n \cap B(z_\xx,r_\xx) = \es, \ z_\xx \in \text{int}(\text{conv}(\xx)) \},
		\vspace{0.75ex} \end{equation}
		since all critical 2-simplices are negative when $d=2$.

		\item \textit{Alpha complex:} As $R_n \to 0$ as $n \to \infty$, it follows that for sufficiently large $n$, the ball $B(z_\xx, R_n)$ does not wrap around the torus and may also be isometrically embedded in $\R^2$. In particular, we may use the Jordan Curve Theorem restricted to this ball. Let $\xx \in \mc B(\PP_n)$ and consider the set of 1-simplices in the \v{C}ech complex $\mathsf{C}_{b_{\xx,\PP_n}}(\PP_n)$ embedded as edges in $\R^2$. Then remove all edges between points with non-neighboring Voronoi cells (\cite{chazal}). This creates the \textit{Alpha complex} (\cite{chazal}). Some properties of the Alpha complex include that it has the same homology as the \v{C}ech complex and that the center $z_\xx$ is in the interior of the closed subgraph at the birthtime of $\xx$.

		\item \textit{Associated cluster:} Since $\PP_n$ is Poisson, almost surely there is a unique edge of length $2b_{\xx,\PP_n}$. Let the cluster at time $b_{\xx,\PP_n}$ contain the vertices of this unique edge, which we call \textit{the cluster associated to $\xx$} (see Figure \ref{fig:associated}). Due to the properties of the Alpha complex, it follows that $z_\xx$ lies in the interior of the convex hull of the associated cluster of $\xx$.

		\item \textit{Associated loop:} Let $G(\PP_n, b_{\xx,\PP_n})$ denote the planar graph with the cluster associated to $\xx$ as its vertices and the 1-simplices between them at time $b_{\xx,\PP_n}$ as its edges. Due to the Jordan Curve Theorem (if $\xx \in \mc B(\PP_n)$), the vacant set $B(z_\xx,s_{\xx,\PP_n}) \setminus G(\PP_n, b_{\xx,\PP_n})$ has exactly two connected components, each with this edge on its boundary. Now take the connected component containing $z_\xx$, which is possible via the Alpha complex, and consider the boundary of this component. If this boundary is disconnected and consists of several loops, then pick the external boundary as described in \cite{GHK14,HV22}, i.e., the boundary component with no other components in its exterior. Henceforth, we refer to this closed, non-intersecting graph as \textit{the loop associated to $\xx$} (see Figure \ref{fig:associated}).
	\end{enumerate}
\end{remark}

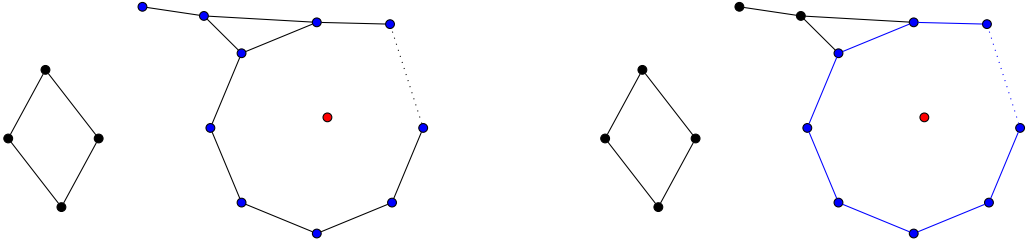
\begin{figure}[h]
    \centering
	\hspace{-0.1\textwidth}
    \begin{subfigure}[t]{0.25\textwidth}
        \centering
        \begin{tikzpicture}[scale=0.7, shift={(-0.5,0)}]
            \node[draw, circle, fill=blue, scale=0.35] (oct1) at (0:2) {};
            \node[draw, circle, fill=blue, scale=0.35] (oct2) at (55:2.4) {};
            \node[draw, circle, fill=blue, scale=0.35] (oct3) at (90:2) {};
            \node[draw, circle, fill=blue, scale=0.35] (oct4) at (135:2) {};
            \node[draw, circle, fill=blue, scale=0.35] (oct5) at (180:2) {};
            \node[draw, circle, fill=blue, scale=0.35] (oct6) at (225:2) {};
            \node[draw, circle, fill=blue, scale=0.35] (oct7) at (270:2) {};
            \node[draw, circle, fill=blue, scale=0.35] (oct8) at (315:2) {};
            
            \draw[dotted, black] (oct1) -- (oct2);
            \draw[black] (oct2) -- (oct3) -- (oct4) -- (oct5) -- (oct6) -- (oct7) -- (oct8) -- (oct1);
            
            \node[draw, circle, fill=blue, scale=0.35] (new1) at (135:3) {};
            \node[draw, circle, fill=blue, scale=0.35] (new2) at (145:4) {};
            \draw[black] (oct4) -- (new1);
            \draw[black] (new1) -- (new2);
            \draw[black] (oct3) -- (new1);
            
            \node[draw, circle, fill=black, scale=0.35] (d1) at (-4.1, -0.2) {};
            \node[draw, circle, fill=black, scale=0.35] (d2) at (-5.1, 1.1) {};
            \node[draw, circle, fill=black, scale=0.35] (d3) at (-5.8, -0.2) {};
            \node[draw, circle, fill=black, scale=0.35] (d4) at (-4.8, -1.5) {};
            
            \draw (d1) -- (d2) -- (d3) -- (d4) -- (d1);
        
            \node[draw, circle, fill=red, scale=0.35] at (0.2, 0.2) {};
        \end{tikzpicture}
        
    \end{subfigure}
    \hspace{0.25\textwidth}
    \begin{subfigure}[t]{0.25\textwidth}
        \centering
        \begin{tikzpicture}[scale=0.7, shift={(-6.5,0)}]
            \node[draw, circle, fill=blue, scale=0.35] (oct1) at (0:2) {};
            \node[draw, circle, fill=blue, scale=0.35] (oct2) at (55:2.4) {};
            \node[draw, circle, fill=blue, scale=0.35] (oct3) at (90:2) {};
            \node[draw, circle, fill=blue, scale=0.35] (oct4) at (135:2) {};
            \node[draw, circle, fill=blue, scale=0.35] (oct5) at (180:2) {};
            \node[draw, circle, fill=blue, scale=0.35] (oct6) at (225:2) {};
            \node[draw, circle, fill=blue, scale=0.35] (oct7) at (270:2) {};
            \node[draw, circle, fill=blue, scale=0.35] (oct8) at (315:2) {};
            
            \draw[dotted, blue] (oct1) -- (oct2);
            \draw[blue] (oct2) -- (oct3) -- (oct4) -- (oct5) -- (oct6) -- (oct7) -- (oct8) -- (oct1);
            
            \node[draw, circle, fill=black, scale=0.35] (new1) at (135:3) {};
            \node[draw, circle, fill=black, scale=0.35] (new2) at (145:4) {};
            \draw[black] (oct4) -- (new1);
            \draw[black] (new1) -- (new2);
            \draw[black] (oct3) -- (new1);
            
            \node[draw, circle, fill=black, scale=0.35] (d1) at (-4.1, -0.2) {};
            \node[draw, circle, fill=black, scale=0.35] (d2) at (-5.1, 1.1) {};
            \node[draw, circle, fill=black, scale=0.35] (d3) at (-5.8, -0.2) {};
            \node[draw, circle, fill=black, scale=0.35] (d4) at (-4.8, -1.5) {};
            
            \draw (d1) -- (d2) -- (d3) -- (d4) -- (d1);
        
            \node[draw, circle, fill=red, scale=0.35] at (0.2, 0.2) {};
        \end{tikzpicture}
        
    \end{subfigure}
    \caption{In both cases, the red point is the center $z_\xx$ of $\xx$ and the dotted line is the unique
	edge of length $b_{\xx,\PP_n}$. Left: All blue points is the cluster associated to $\xx$.
	Right: All blue vertices \textit{and} edges is the loop associated of $\xx$. }
	\label{fig:associated}
\end{figure}

In order to control the last two error terms, $E_3$ and $E_4^I$, we need to tool to
	deal with multiple exceedances. In particular, we will prove than 
	when the birthtimes are bounded by $b \geq 0$,
	then two large cycles implies that at filtration time $b \geq 0$
	there exists a large connected component disjointly from 
	one of the cycles. Let $A \circ B$ denote the disjoint occurrence (\cite{HHLM19,Meester}) of 
	the sets $A$ and $B$.

	\begin{lemma}[Occurence]
		\label{lem:disjoint_occurence}
	Under assumptions $P'$ and $F'$, then for any  $b, \ell \geq 0$
	and $\xx, \yy \in \mc B(\PP_n)$,
	$$
	 \left\{ b_{\xx,\PP_n} \leq b, \ \ell^*_{\xx,\PP_n} \geq \ell \right\}
	 \cap \left\{b_{\yy,\PP_n} \leq b, \ \ell^*_{\yy,\PP_n} \geq \ell \right\}
	  \su \left\{ b_{\xx,\PP_n} \leq b, \ \ell^*_{\xx,\PP_n} \geq \ell \right\}  \circ
	 \bigcup_{\yy \in \PP_n^{(m )}} \{\yy \in C_{b}(m ) \},
	 $$
	where $m = \lceil \ell/2 \rceil$. Moreover, the loop associated to $\xx$ is of size larger than $\ell^*_{\xx,\PP_n}$. 
	\end{lemma}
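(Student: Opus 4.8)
The plan is to route everything through the \emph{loop associated to $\xx$} from Remark~\ref{rem:cech_properties}(4), reading the event on the left-hand side on the full configuration $\PP_n$ so that the standing hypotheses $\xx,\yy\in\mc B(\PP_n)$ make these loops available. On that event, Remark~\ref{rem:cech_properties} attaches to $\xx$ and to $\yy$ an associated cluster and an associated loop: a simple closed polygon $L_\xx$ (resp.\ $L_\yy$) whose vertices are points of $\PP_n$, whose consecutive vertices lie at toroidal distance at most $2b_{\xx,\PP_n}\le 2b$ (resp.\ $2b_{\yy,\PP_n}\le 2b$) since they span an edge of $\mathsf{C}_{b_{\xx,\PP_n}}(\PP_n)$, which encircles $z_\xx$ (resp.\ $z_\yy$), and each of whose vertices lies at distance at least $r_\xx$ from $z_\xx$ (resp.\ $r_\yy$ from $z_\yy$) because $\PP_n\cap\mathring B(z_\xx,r_\xx)=\es$ by~\eqref{eq:cech_convex}. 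In particular the vertex set of $L_\xx$ is connected at filtration time $b_{\xx,\PP_n}\le b$, hence forms a cluster of $N$ points at level $b$, where $N$ is the number of vertices of $L_\xx$; the same holds for $L_\yy$.

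For the ``moreover'', and to make these clusters large, I would bound $N$ by a winding argument. Since $L_\xx$ is a simple closed polygon enclosing $z_\xx$, the unsigned angles $\theta_1,\dots,\theta_N$ subtended at $z_\xx$ by its successive edges sum to at least $2\pi$. For one edge of length $L\le 2b_{\xx,\PP_n}$ with endpoints at distances $a,a'\ge r_\xx$ from $z_\xx$, the law of cosines gives
\[
\cos\theta_i=\frac{a^2+a'^2-L^2}{2aa'}\ \ge\ 1-\frac{L^2}{2aa'}\ \ge\ 1-\frac{L^2}{2r_\xx^2},
\]
and, using $b_{\xx,\PP_n}\le r_\xx$ together with convexity of $\arcsin$ on $[0,1]$, this forces $\theta_i\le 2\arcsin(b_{\xx,\PP_n}/r_\xx)\le\pi b_{\xx,\PP_n}/r_\xx$. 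Summing gives $2\pi\le N\pi b_{\xx,\PP_n}/r_\xx$, i.e.\ $N\ge 2r_\xx/b_{\xx,\PP_n}=2\ell^*_{\xx,\PP_n}$, in particular strictly larger than $\ell^*_{\xx,\PP_n}$, which is the asserted ``moreover''. Since $\ell^*_{\xx,\PP_n}\ge\ell$ and $m=\lceil\ell/2\rceil$, any $m$ consecutive vertices of $L_\xx$ (and of $L_\yy$) form a connected $m$-cluster at level $b$.

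It remains to certify the disjoint occurrence. Take as cluster-witness $m$ consecutive vertices of $L_\yy$, and as witness for $\{b_{\xx,\PP_n}\le b,\ \ell^*_{\xx,\PP_n}\ge\ell\}$ the configuration that certifies $\xx$ to be a negative simplex with the prescribed death- and lifetime, namely $\{x_1,x_2,x_3\}$, the edges of $L_\xx$ (which force $b_{\xx,\PP_n}\le b$, hence --- $r_\xx$ being a function of $\xx$ alone --- also $\ell^*_{\xx,\PP_n}\ge\ell$), and the emptiness of $\mathring B(z_\xx,r_\xx)$. Every vertex of $L_\yy$ lies in $\PP_n$ and therefore outside $\mathring B(z_\xx,r_\xx)$, so the two witnesses can clash only along $L_\xx\cap L_\yy$. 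If $L_\xx$ and $L_\yy$ are vertex-disjoint we obtain at once the inclusion asserted in the lemma, which is the form in which the continuous BK inequality \cite{HHLM19} is later fed the bounds on $E_3$ and $E_4^I$.

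I expect the main obstacle to be the remaining case $L_\xx\cap L_\yy\neq\es$. Here I would work in the plane furnished by the isometric embedding of Remark~\ref{rem:cech_properties}(2), using that $L_\xx$ and $L_\yy$ are \emph{distinct} simple closed curves (coincidence would force $\xx$ and $\yy$ to destroy the same $1$-cycle); after a generic perturbation they cross transversally at finitely many points, the Jordan Curve Theorem then cuts $L_\yy$ into arcs alternately inside and outside the region bounded by $L_\xx$, and --- since $L_\yy$ has at least $2\ell^*_{\yy,\PP_n}\ge 2\ell\ge 4m-2$ vertices --- one wants some ``outside'' arc to carry $m$ consecutive vertices avoiding $L_\xx$, which would furnish the cluster-witness disjoint from the witness of ``$\xx$ is a large-lifetime cycle.'' The delicate point, and the reason the argument is special to $d=2$, is to rule out the degenerate scenario of many short free arcs strung along a near-tangency of the two loops and thereby guarantee a long crossing-free arc; I would control this using that an edge of $L_\xx$ has length $2b_{\xx,\PP_n}=2r_\xx/\ell^*_{\xx,\PP_n}$, a small fraction of the radius $r_\xx$ of the sphere around $z_\xx$ that $L_\xx$ hugs.
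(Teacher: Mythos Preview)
Your winding argument for the ``moreover'' is correct and in fact slightly sharper than what the paper does (the paper just bounds the total edge length of $L_\xx$ by $|V(L_\xx)|\cdot 2b_{\xx,\PP_n}$ and observes this must exceed $2r_\xx$, giving $|V(L_\xx)|>\ell^*_{\xx,\PP_n}$; you get a factor of~$2$ more). The vertex-disjoint case of the main inclusion is also fine.

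The gap is exactly where you flag it: the overlap case $L_\xx\cap L_\yy\neq\es$. Your proposed route --- look for a long arc of $L_\yy$ in the exterior of $L_\xx$ --- does not obviously succeed, and the ``many short free arcs'' scenario you worry about is real: nothing in your setup prevents $L_\yy$ from weaving in and out of $L_\xx$ with each outside excursion carrying fewer than $m$ vertices. The angular smallness of individual edges does not by itself rule this out.

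The paper sidesteps this entirely by a different decomposition. Write the union $L_\xx\cup L_\yy$ as $\gamma_0\cup\gamma_1\cup\gamma_2$, where $\gamma_0$ is the shared part and $\gamma_1\subseteq L_\xx$, $\gamma_2\subseteq L_\yy$ are connected, edge-disjoint, and meet $\gamma_0$ in at most two vertices. Now argue by contradiction on the \emph{total} $|V(\gamma_1)|+|V(\gamma_2)|$: if this is $<\ell$, then the total edge length of $\gamma_1\cup\gamma_2$ is at most $(|V(\gamma_1)|+|V(\gamma_2)|)\cdot 2b_{\yy,\PP_n}<2r_\yy$ (taking WLOG $b_{\xx,\PP_n}\le b_{\yy,\PP_n}$). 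But $\gamma_1\cup\gamma_2$, possibly together with $\gamma_0$, forms a closed curve enclosing $z_\yy$; the length bound then forces vertices of $\gamma_1\cup\gamma_2$ into $B(z_\yy,r_\yy)$, contradicting~\eqref{eq:cech_convex}. Hence $|V(\gamma_1)|+|V(\gamma_2)|\ge\ell$, so one of $\gamma_1,\gamma_2$ carries at least $\lceil\ell/2\rceil=m$ vertices and furnishes the disjoint cluster witness. The point is that you never need a single long free arc of one loop; you only need the \emph{combined} non-shared parts to be large, and the length--radius contradiction delivers exactly that.
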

	\begin{proof}
     Suppose $b_{\xx,\PP_n} \leq b_{\yy,\PP_n} \leq b$ and $\ell^*_{\xx,\PP_n}, \ell^*_{\yy,\PP_n} \geq \ell$.
	Since $\xx$ and $\yy$ are negative, then by Remark \ref{rem:cech_properties}(1) both $B(z_\xx,r_\xx)\cap \PP_n$ and
	$B(z_\yy,r_\yy)\cap \PP_n$ are empty. Let $V(G)$ and $L(G)$ denote the vertex set and the total edge length of a graph $G$, respectively.
	Moreover, let $\gamma_{\xx,\PP_n}$ and $\gamma_{\yy,\PP_n} $ denote the loops associated to $\xx$ and $\yy$ as described in Remark \ref{rem:cech_properties}(4).
	Finally, let $\gamma_1 \su \gamma_{\xx,\PP_n} $ and $\gamma_2 \su \gamma_{\yy,\PP_n} $ denote two connected subgraphs
	sharing no edges and chosen as large as possible so there is a third (possibly empty) connected subgraph $\gamma_0$ such that 
	$\gamma_0 \cup \gamma_1 \cup \gamma_2 = \gamma_{\xx,\PP_n} \cup \gamma_{\yy,\PP_n}$ and $\abs{V(\gamma_0) \cap V(\gamma_1) \cap V(\gamma_2)} \leq 2$
	(see Figure \ref{fig:curves}).
	Note that it now suffices to prove that $\abs{V(\gamma_1)} + \abs{V(\gamma_2)} \geq \ell$
	as we then take either $\gamma_1$ or $\gamma_2$ as the connected component with more than $\ell/2$ vertices.
	Suppose towards a
	contradiction that $\abs{V(\gamma_1)} + \abs{V(\gamma_2)} < \ell$. Hence
	\vspace{0.75ex} \begin{equation}
		\label{eq:graph_length}
	 L(\gamma_1 \cup \gamma_2) \leq (\abs{V(\gamma_1)} + \abs{V(\gamma_2)})b_{\yy,\PP_n} \leq r_{\yy}.
	\vspace{0.75ex} \end{equation}
	With the notion of
	inside and outside of a closed loop from the Jordan Curve Theorem, it follows that if 
	$\abs{V(\gamma_0) \cap V(\gamma_1) \cap V(\gamma_2)} \leq 1$, i.e. $\gamma_0$ is empty or a single vertex, 
	then as $z_\yy$ is in the inside of the closed loop $\gamma_2$, then $\gamma_2 \su B(z_\yy, r_\yy )$ by (\ref{eq:graph_length}) and therefore contradicting
	that $B(z_\yy, r_\yy ) \cap \PP_n$ is empty. If instead $\abs{V(\gamma_0) \cap V(\gamma_1) \cap V(\gamma_2)} = 2$, then 
	$z_\yy$ is inside the closed loop $\gamma_1 \cup \gamma_2$ and again by (\ref{eq:graph_length}), then 
	$\gamma_2 \su B(z_\yy, r_\yy )$ which again is a contradiction. Finally, if $\abs{V(\gamma_{\xx,\PP_n})} < \ell^*_{\xx,\PP_n}$, then
	\vspace{0.75ex} \begin{equation*}
	 L(\gamma_{\xx,\PP_n}) \leq  \abs{V(\gamma_{\xx,\PP_n})} b_{\xx,\PP_n} \leq r_{\xx}.
	\vspace{0.75ex} \end{equation*}
	which again leads to a contradiction and thus completes the proof. 
	\end{proof}

	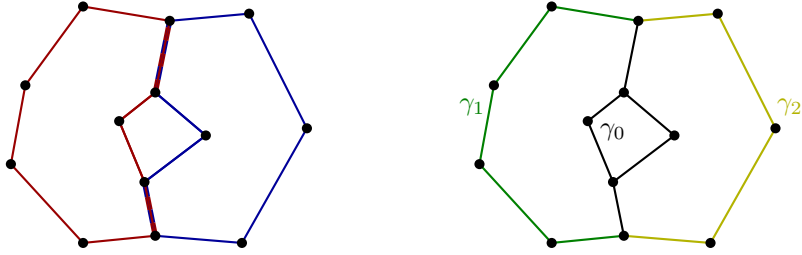
\begin{figure}[htbp]
    \centering
    \begin{subfigure}[t]{0.5\textwidth}
        \centering
        
\begin{tikzpicture}[scale=0.95]

    \definecolor{dark red}{rgb}{0.6,0,0}
    \definecolor{dark blue}{rgb}{0,0,0.6}
    
    \coordinate (A) at (-0.1, 0);
    \coordinate (B) at (-0.25, 0.75); 
    \coordinate (C) at (-0.1, 2);
    \coordinate (D) at (0.1, 3);
    
    \coordinate (K) at (-0.6, 1.6);
    \coordinate (L) at (0.6, 1.4);
    
    \coordinate (E) at (1.2, 3.1);
    \coordinate (M) at (2.0, 1.5); 
    \coordinate (F) at (1.1, -0.1);
    
    \coordinate (G) at (-1.1, 3.2);
    \coordinate (H) at (-1.9, 2.1); 
    \coordinate (I) at (-2.1, 1.0); 
    \coordinate (J) at (-1.1, -0.1);
    
    \tikzset{
        alternating edge/.style={
            line width=2pt,
            dash pattern=on 5pt off 5pt,
        }
    }
    
    \draw[alternating edge, dark red, dash phase=0pt] (A) -- (B);
    \draw[alternating edge, dark blue, dash phase=5pt] (A) -- (B);
    
    \draw[alternating edge, dark red, dash phase=0pt] (C) -- (D);
    \draw[alternating edge, dark blue, dash phase=5pt] (C) -- (D);
    
    \draw[dark red, thick] (B) -- (K);
    \draw[dark red, thick] (K) -- (C);
    
    \draw[dark blue, thick] (B) -- (L);
    \draw[dark blue, thick] (L) -- (C);
    
    \draw[dark blue, thick] (D) -- (E) -- (M) -- (F) -- (A);
    \draw[dark blue, thick] (A) -- (B) -- (L) -- (C) -- (D);
    
    \draw[dark red, thick] (D) -- (G) -- (H) -- (I) -- (J) -- (A);
    \draw[dark red, thick] (A) -- (B) -- (K) -- (C) -- (D);
    
    \foreach \point in {A, B, C, D, E, F, G, H, I, J, K, L, M}
    {
        \fill (\point) circle (2pt);
    }
    
    \end{tikzpicture}
    \end{subfigure}%
    \hspace{-0.1\textwidth} 
    \begin{subfigure}[t]{0.5\textwidth}
        \centering
        \begin{tikzpicture}[scale=0.95]

            \definecolor{dark green}{rgb}{0,0.5,0}
            \definecolor{dark yellow}{rgb}{0.7,0.7,0}
            
            \coordinate (A) at (-0.1, 0);
            \coordinate (B) at (-0.25, 0.75); 
            \coordinate (C) at (-0.1, 2);
            \coordinate (D) at (0.1, 3);
            
            \coordinate (K) at (-0.6, 1.6);
            \coordinate (L) at (0.6, 1.4);
            
            \coordinate (E) at (1.2, 3.1);
            \coordinate (M) at (2.0, 1.5); 
            \coordinate (F) at (1.1, -0.1);
            
            \coordinate (G) at (-1.1, 3.2);
            \coordinate (H) at (-1.9, 2.1); 
            \coordinate (I) at (-2.1, 1.0); 
            \coordinate (J) at (-1.1, -0.1);
            
            \draw[black, thick] (A) -- (B);
            \draw[black, thick] (B) -- (K);
            \draw[black, thick] (K) -- (C);
            \draw[black, thick] (C) -- (D);
            \draw[black, thick] (B) -- (L);
            \draw[black, thick] (L) -- (C);
            
            \draw[dark green, thick] (D) -- (G);
            \draw[dark green, thick] (G) -- (H);
            \draw[dark green, thick] (H) -- (I);
            \draw[dark green, thick] (I) -- (J);
            \draw[dark green, thick] (J) -- (A); 
            
            \draw[dark yellow, thick] (D) -- (E);
            \draw[dark yellow, thick] (E) -- (M);
            \draw[dark yellow, thick] (M) -- (F);
            \draw[dark yellow, thick] (F) -- (A);
            
            \foreach \point in {A, B, C, D, E, F, G, H, I, J, K, L, M}
            {
                \fill (\point) circle (2pt);
            }
            
            \node[dark green] at ($(H) + (-0.3,-0.3)$) {$\gamma_1$};
            \node[dark yellow] at ($(M) + (0.2,0.3)$) {$\gamma_2$};
            \node[black] at ($(B) + (0,0.7)$) {$\gamma_0$};
            
            \end{tikzpicture}
    \end{subfigure}
    \caption{Two loops (left) sharing 2 edges and 4 vertices and an illustration of the choice of $\gamma_0$, $\gamma_1$ and $\gamma_2$ (right)
     in the proof of Lemma \ref{lem:disjoint_occurence}.}
     \label{fig:curves}
\end{figure}

%
%

	\vspace{0.75ex}
	\subsection{Proof of Lemmas \ref{prop:asymptotics} to \ref{lem:E_4}}
	\label{sec:3.5}

\begin{proof}[Proof of Lemma \ref{prop:asymptotics}]
	First, to make the notation more compact, let 
	\vspace{0.4ex} \begin{equation}
		\label{eq:threshold}
	a_n = \frac{\log(n)}{\log(\log(n))}.
	\vspace{0.4ex} \end{equation}
	\noindent \textit{Lower bound:} Note that it suffices to show for every $\e > 0$ that as $n \to \infty$, then
	\vspace{0.4ex} \begin{equation*}
		\label{eq:threshold_ineq}
		\E\bigg[\sum_ {\xx \in \PP_n^{(3)}} \one \big\{ \xx \in \mc N(\PP_n), \ \ell^*_{\xx,\PP_n} > \tfrac{a_n}{9(2+\e)} \big\}\bigg] \longrightarrow \infty,
	\vspace{0.4ex} \end{equation*}
	since if this holds and $\ell_{n,\alpha} < a_n/(9(2+\e))$, then the constant expected number $\alpha>0$ of 1-cycles with lifetime larger than $\ell_{n,\alpha} $
	would be violated (i.e., Remark \ref{rem:unbounded_extension}(2)). To that end, we adapt the discretization argument found in \cite{BKS17}: 

	\noindent Let $\e>0$ and $\beta = 2 + \e$. Let $\ell_n = n^{-1/2}\log(n)^{-\beta/2}$, and let $L_n = a_n\ell_n/(4\beta)$.
	Now split $[0,1]^2$ into 
	$M_n \geq 1$ squares $Q_i$, which are the \textit{largest} number of squares with equal side length $2L_n$ which can be stacked to fit inside $[0,1]^2$.
	Let $\tilde Q_i$ denote the shrunk version of $Q_i$ with side length $L_n$ such that distance to the boundary of $Q_i$ is $L_n/2$
	and along the edge of $\tilde Q_i$, place $m_n \geq 1$ smaller squares
	$Q_{i,j}$ of side length $\ell_n$ (see Figure 5). Thus we can think of $Q_{i,1} \cup \ldots \cup Q_{i,m_n}$ as an (approximate) $\ell_n$-thick boundary of $\tilde Q_i$.
	Define the events
	\vspace{0.75ex} $$
		A_i = \big\{\PP_n(Q_i) = m_n \big\} \cap \bigg( \bigcap_{j=1}^{m_n} \big\{ \PP_n(Q_{i,j}) = 1 \big\}\bigg), \quad \text{for } i=1,\ldots,M_n,
	\vspace{0.75ex} $$
	i.e., $A_i$ is the event that $Q_i$ has $m_n$ points and each smaller cube $Q_{i,j}$ has a single point and no points outside these cubes.
	From the Poisson distribution and spatial independence, it follows that
	\vspace{0.75ex} $$
		P(A_i) = P(\PP_n(Q_i) = m_n) \prod_{j=1}^{m_n} P(\PP_n(Q_{i,j}) = 1 \mid \PP_n(Q_i) = m_n ) = (n \ell_n^2)^{m_n}\text{e}^{-4nL_n^2}.
	\vspace{0.75ex} $$
	Thus, using that $n \ell_n^2 = \log(n)^{-\beta}$ and $n L_n^2 = a_n^2 \log(n)^{-\beta}/(16 \beta^2)$, then
\vspace{0.75ex} $$
	\prod_{i=1}^{M_n} (1-P(A_i)) =  \big(1-(n \ell_n^2)^{m_n}\text{e}^{-4nL_n^2} \big)^{M_n} \leq \exp\big(-M_n\text{e}^{-\beta m_n\log(\log(n))-a_n^2 \log(n)^{-\beta}}\big).
\vspace{0.75ex} $$
Since $m_n \leq 4(L_n/\ell_n) = a_n/\beta$ and $M_n \geq 16 \beta^2 L_n^{-2} / 2 = 8 \beta^2 n \log(n)^\beta a_n^{-2}$, it follows that
\vspace{0.75ex} $$
M_n\text{e}^{-\beta m_n\log(\log(n))-a_n^2 \log(n)^{-\beta}} \geq 8 \beta^2 n \log(n)^{\beta-2}\log(\log(n))^2 \text{e}^{- \log(n)-4\log(n)^{2-\beta}/\log(\log(n))} \rightarrow \infty,
\vspace{0.75ex} $$
since $\beta > 2$. Thus with high probability, at least one $A_i$ occurs. On this $A_i$, since $L_n/\ell_n \to \infty$, it follows for sufficiently large $n$ that the $m_n$ points consistute a 1-cycle
with birthtime upper bounded by $\sqrt{5} \ell_n/2$, since the largest pairwise distance between two points in two $Q_{i,j}$ squares is $\sqrt{5}$, and deathtime lower bounded  by $L_n/2 - \ell_n$.
 Hence with high probability, there is a cycle with multiplicative lifetime larger than $a_n / (4\sqrt{5}\beta) - 2/\sqrt{5} \geq a_n/(9 \beta)$ for all sufficiently large $n$.
Now split $\T^2$ into $n^2$
squares with side length $1/n$ and repeat the above construction (with appropriate scaling of $\ell_n$ and $L_n$) to conclude
that, with high probability, there exist $n^2$ 1-cycles with lifetime larger than $a_n/(9\beta)$, which completes the proof of the lower bound. 

\noindent \textit{Upper bound:} As before, it suffices to show, as $n \to \infty$, that 
\vspace{0.75ex} \begin{equation*}
		\label{eq:threshold_zero}
		\E\bigg[\sum_ {\xx \in \PP_n^{(3)}} \one \{ \xx \in \mc N(\PP_n), \ \ell^*_{\xx,\PP_n} > 2 a_n\}\bigg] \longrightarrow 0.
		\vspace{0.75ex} \end{equation*}
	By Lemma \ref{lem:deathtime_bound}, it suffices to show that
	\vspace{0.75ex} \begin{equation*}
		\label{eq:threshold_zero}
		\E\bigg[\sum_ {\xx \in \PP_n^{(3)}} \one \{ \xx \in \mc B(\PP_n), \ b_{\xx,\PP_n} \leq b_n, \ \ell^*_{\xx,\PP_n} > 2 a_n\}\bigg] \longrightarrow 0.
	\vspace{0.75ex}\end{equation*}
	First, we
  call a cycle $\xx$ \textit{early-born}
	if
	\vspace{0.75ex} \begin{equation}
		\label{eq:early_birth}
		n b_{\xx,\PP_n}^2 \leq \log(n)^{-1/2}, 
	\vspace{0.75ex} \end{equation}
	and otherwise we call the cycle \textit{late-born}. If $\xx$ is an early-born cycle with an associated loop of size $m \geq 1$, then 
	\cite[Lemma 4.2]{BKS17} 
gives that $m \leq 2 a_n $ with high probability. However, by Lemma \ref{lem:disjoint_occurence}, then  $m \geq \ell^*_{\xx,\PP_n}$
and therefore $\ell^*_{\xx,\PP_n} \leq 2 a_n$. If $\xx$ instead is a late-born cycle, it follows from $r_\xx \leq r_n$ and (\ref{eq:early_birth})  that 
		$\ell^*_{\xx,\PP_n} \leq \log(n)^{3/4} \leq 2a_n$ for all sufficiently large $n$, which completes the proof.
\end{proof}

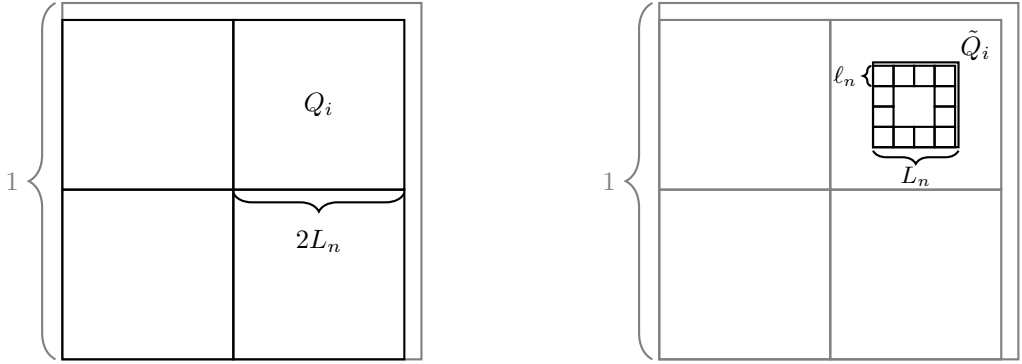
\begin{figure}[h!]
	\begin{subfigure}[t]{0.4\textwidth}
        \centering
	\begin{tikzpicture}[scale=0.45]
		\draw[thick, gray] (0, 0) rectangle (10.5, 10.5);
		\draw[decorate, decoration={brace, amplitude=10pt}, thick, gray] (- 0.2, 0) -- (-0.2 , 10.5) node[midway, left =10pt] { $1$};
	  
		\draw[thick] (0, 0) rectangle (5, 5);
		\draw[thick] (5, 0) rectangle (10, 5);
	  
		\draw[thick] (0, 5) rectangle (5, 10);
		\draw[thick] (5, 5) rectangle (10, 10);

		\node at (7.5, 7.5) { $Q_i$};
		\draw[decorate, decoration={brace, mirror, amplitude=7pt}, thick] (5, 4.9) -- (10, 4.9) node[midway, below=10pt] { $2L_n$};
	  \end{tikzpicture}
	  \end{subfigure}
	  \hspace{0.1\textwidth}
	  \begin{subfigure}[t]{0.4\textwidth}
        \centering
		\begin{tikzpicture}[scale=0.45]
			\draw[thick, gray] (0, 0) rectangle (10.5, 10.5);
			\draw[decorate, decoration={brace, amplitude=10pt}, thick, gray] (- 0.2, 0) -- (-0.2 , 10.5) node[midway, left =10pt] { $1$};
		  
			\draw[thick, gray] (0, 0) rectangle (5, 5);
			\draw[thick, gray] (5, 0) rectangle (10, 5);
		  
			\draw[thick, gray] (0, 5) rectangle (5, 10);
			\draw[thick, gray] (5, 5) rectangle (10, 10);

			\draw[thick] (6.25, 6.25) rectangle (8.75, 8.75);
			\draw[decorate, decoration={brace, mirror, amplitude=5pt}, thick] (6.25, 6.15) -- (8.75, 6.15) node[midway, below=2.5pt] { \small $L_n$};
			
			\draw[thick] (6.25, 6.25) rectangle (6.85, 6.85);
			\draw[thick] (6.85, 6.25) rectangle (7.45, 6.85);
			\draw[thick] (7.45, 6.25) rectangle (8.05, 6.85);
			\draw[thick] (8.05, 6.25) rectangle (8.65, 6.85);

			\draw[thick] (6.25, 8.05) rectangle (6.85, 8.65);
			\draw[thick] (6.85, 8.05) rectangle (7.45, 8.65);
			\draw[thick] (7.45, 8.05) rectangle (8.05, 8.65);
			\draw[thick] (8.05, 8.05) rectangle (8.65, 8.65);
			
			\draw[thick] (6.25, 6.85) rectangle (6.85, 7.45);
			\draw[thick] (6.25, 7.45) rectangle (6.85, 8.05);

			\draw[thick] (8.05, 6.85) rectangle (8.65, 7.45);
			\draw[thick] (8.05, 7.45) rectangle (8.65, 8.05);

			\node at (9.25, 9.25) { $\tilde Q_i$};
			\draw[decorate, decoration={brace, amplitude=3pt}, thick] (6.2, 8.05) -- (6.2, 8.65) node[midway, left = 1.5 pt] { \small $\ell_n$};
		  \end{tikzpicture}
	  \end{subfigure}
	  \caption{Left: Construction of the $Q_i$-squares of length $2L_n$ inside the unit cube.
	  Right: The smaller $Q_{i,j}$-squares of length $\ell_n$ inside the shrunk version $\tilde Q_i$ of $Q_i$.}
\end{figure}

\begin{proof}[Proof of Lemma \ref{lem:Ln_bound}]
	Let $\mc T_m$ denote the set of spanning trees on $\{1,\ldots,m\}$. If $\yy$ is $b_n$-connected, there is a spanning tree $T \in \mc T_m$ where each edge is at most of length $b_n$. By the union bound,
\vspace{0.75ex} $$
P\big(\mc C_{b_n}^m(\PP_n^\xx \cap W) \neq \emptyset\big) 
\leq  \frac{1}{m!} 
\E\bigg[  \sum_{\yy \in (\PP_{n}^\xx \cap W)^{(m)}} \sum_{T \in \mc T_m} 
\prod_{(i,i') \in T} \one\{\rho(y_i,y_{i'}) < b_n \} \bigg],
\vspace{0.75ex} $$
where $\rho$ is the toroidal metric (Section \ref{sec:1}). Letting $\yy_{j} = (y_1,\ldots,y_j)$ and $\yy^{j} = (y_{j+1},\ldots,y_{m})$,
then
\vspace{0.75ex} $$
P\big(\mc C_{b_n}^m(\PP_n^\xx \cap W) \neq \emptyset\big) 
\leq  \frac{1}{m!} 
\sum_{j=0}^{p} \sum_{\yy_{j} \in \delta_{\xx}^{(j)}}
\E\bigg[  \sum_{\yy^{j} \in (\PP_{n}\cap W)^{(m-j)}} \sum_{T \in \mc T_{m-j}} 
\prod_{(i,i') \in T} \one\{\rho(y_i,y_{i'})  < b_n \} \bigg].
\vspace{0.75ex} $$
Since $\vert \delta_{\xx}^{(j)} \vert \leq p!$ for any $j \leq p$, then
applying Mecke's formula,
\vspace{0.75ex} $$
P\big(\mc C_{b_n}^m(\PP_n^\xx \cap W) \neq \emptyset\big) 
\leq  \frac{p!}{m!} 
\sum_{j=0}^{p} n^{m-j}
\sum_{T \in \mc T_{m-j}} \int_{W^{m-j}} 
\prod_{(i,i') \in T} \one\{\rho(y_i,y_{i'})  < b_n \} \ \d \yy .
\vspace{0.75ex} $$
For $T \in \mc T_m$, select a vertex $i_0$ of degree 1 in $T$ with unique
neighbor $i_1$. Then 
\vspace{0.75ex} $$
P\big(\mc C_{b_n}^m(\PP_n^\xx \cap W) \neq \emptyset\big) 
\leq  \frac{p!}{m!} 
\sum_{j=0}^{p} n^{m-j} \pi b_n^2
\sum_{T \in \mc T_{m-j}} \int_{W^{m-j-1}} 
\prod_{\substack{(i,i') \in T \\ i \neq i_0, i' \neq i_1}} 
\one\{\rho(y_i,y_{i'})  < b_n \} \ \d \yy^{-i_0}, 
\vspace{0.75ex} $$
where $\yy^{-i_0} = (y_1, \ldots,y_{i_0-1},y_{i_0 + 1}, \ldots, y_{m-j})$.
Iterating this on the remaining vertices of $T$, then
\vspace{0.7ex} $$
P\big(\mc C_{b_n}^m(\PP_n^\xx \cap W) \neq \emptyset\big) 
\leq  \frac{p!}{m!} 
\sum_{j=0}^{p} n^{m-j} (\pi b_n^2)^{m-j-1} \vol(W)
\sum_{T \in \mc T_{m-j}}1.
\vspace{0.7ex} $$
Putting $j=0$ and using Cayley's formula for the number of spanning trees, 
it follows that
\vspace{0.7ex} $$
P\big(\mc C_{b_n}^m(\PP_n^\xx \cap W) \neq \emptyset\big) 
\leq p!p n \vol(W) (\pi nb_n^2)^{m-1} m^{m-2}/(m!) 
\vspace{0.7ex} $$
By Stirling's formula, then $m^{m-2}/(m!) \leq \text{e}^m$ and hence
\vspace{0.7ex} \begin{equation}
\label{eq:comp_bound}
P\big(\mc C_{b_n}^m(\PP_n^\xx \cap W) \neq \emptyset\big) 
\leq p!p n \vol(W) (\text{e} \pi n b_n^2)^{m-1}
\vspace{0.7ex} \end{equation}
Plugging the definition of $b_n$ from (\ref{eq:birthtime_bound}) into
the bound in (\ref{eq:comp_bound}) completes the proof.
\end{proof}
	\begin{proof}[Proof of Lemma \ref{lem:deathtime_bound}]
		First, introduce the intermediary point process,
		\vspace{0.75ex} $$
		\breve \xi_n^1 = \sum_{\xx \in \PP_n^{(3)}} \one\{\xx \in \mc N(\PP_n), \ r_\xx \leq r_n,\  \ell_{\xx, \PP_n} \geq \ell_{n,\alpha}\} \delta\{z_\xx\}
		\vspace{0.75ex} $$
		Let $A$ denote a Borel set in $\T^2$. Note if $r_\xx \leq r_n$, then $\xx \in \mc N(\PP_n)$ 
		if and only if $\xx \in \mc N(\PP_n \cap B(z_\xx,R_n))$ by Remark \ref{rem:cech_properties}(1). Using the triangle inequality and this observation, we see that 
		\vspace{0.75ex} \begin{equation*}
			\label{eq:string_0}
		\vert \breve \xi_n^1 (A) - \hat \xi_n^1 (A) \vert \leq  \sum_{\xx \in \mc N(\PP_n)} 
		\one\{ r_\xx \leq r_n\}\one \big\{\ell_{\xx, \PP_n} \neq \ell_{\xx, \PP_n \cap B(z_\xx, R_n)}\big\}\one_A(z_\xx),
		\vspace{0.75ex} \end{equation*}
		Moreover, it also follows from Remark \ref{rem:cech_properties} that if the lifetime increases or decreases by restricting 
		to the ball $B(z_\xx,R_n)$, then
		one point in the connected cluster associated to $\xx$ must lie in $B(z_\xx,R_n)^c$ and hence by Lemma \ref{lem:birthtime_bound} this implies that this cluster
		is of size $m_n$ or larger. Thus,
		\vspace{0.75ex} $$
		\vert \breve \xi_n^1 (A) - \hat \xi_n^1(A) \vert \leq  \sum_{\xx \in \mc N(\PP_n)} 
		\one\{ r_\xx \leq r_n\} \one\{\mc C_{b_n}^m(\PP_n) \neq \emptyset \} \one_A(z_\xx),
		\vspace{0.75ex} $$
		By Mecke's formula, Lemma \ref{lem:Ln_bound} and the definition of the total variation distance, it follows that
		\vspace{0.6ex} \begin{equation}
			\label{eq:breve_hat}
		\E[\dtv(\breve \xi_n^1,\hat \xi_n^1)] \in O(n^{-\beta}).
		\vspace{0.75ex} \end{equation}
		for any $\beta >0$. Using Remark \ref{rem:cech_properties}(1), it follows that
		\vspace{0.75ex} $$
		\vert \xi_n^1(A) - \breve \xi_n^1(A) \vert \leq \sum_{\xx \in \mc N(\PP_n)} \one\{r_\xx > r_n, \ \ell_{\xx, \PP_n}^{*}\geq \ell_{n, \alpha}\} \one_A(z_\xx)
		\leq \sum_{\xx \in \PP_n^{(3)}} \one\{\PP_{n}(B(z_\xx,r_n)) = 0 \}
		\vspace{0.75ex} $$
		By Mecke's formula and substituting $r_n$ in (\ref{eq:deathtime_bound}),
		\vspace{0.75ex} \begin{equation}
			\label{eq:breve_original}
		\E[\dtv( \xi_n^1,\breve \xi_n^1)] \leq n^3 \int  P \big(\PP^\xx_{n}(B(z_\xx,r_n)) = 0 \big) \ \d\xx = n^3\exp(-n\pi r_n^2) \in O(n^{-1/8}).
		\vspace{0.75ex} \end{equation}
		Hence, by definition of the Kantorovich-Rubinstein distance as well as equations (\ref{eq:breve_hat}) and (\ref{eq:breve_original}),
		\vspace{0.75ex} \begin{equation*}
			\label{eq:string_1}
		\dkr(\xi_n^1, \hat \xi_n^1) \leq \E[\dtv(\xi_n^1, \hat \xi_n^1)] \leq \E[\dtv( \xi_n^1,\breve \xi_n^1)] + \E[\dtv(\breve \xi_n^1,\hat \xi_n^1)] \in O(n^{-1/8}),
		\vspace{0.75ex} \end{equation*}
		which completes the proof.
		\end{proof}
		
	\bep[Proof of Lemma \ref{lem:E_2}]
	Recall we want to prove that for every $\e >0$, then
	\vspace{0.75ex} $$
	E_2 = n^6 \int_{\T^{2\cdot 3}} \int_{\T^{2\cdot 3}}
	\one \{ B(z_\xx,R_n) \cap B(z_\yy,R_n) \neq \emptyset  \} P\big(\xx \in \mc B(\PP_n^\xx)\big) 
	P\big(\yy \in \mc B(\PP_n^\yy)\big)
		 \ \d\xx \d\yy \in O(n^{-1+\e}).
	\vspace{0.75ex} $$	 
	First, note that  $B(z_\xx,R_n) \cap B(z_\yy,R_n) \neq \emptyset$
	and $r_\xx \leq r_n < R_n$
	implies that $x_1 \in B(z_\yy,2 R_n)$. Thus, 
	\vspace{0.75ex} \begin{align*} 
	E_2 & \leq  n^6\int_{\T^{2\cdot 3}}\int_{B(z_\yy,2 R_n) \times \T^{2\cdot 2}}
		P\big(\xx \in \mc B(\PP_{n}^{\xx}) \big)
		P\big(\yy \in \mc B(\PP_{n}^{\yy}) \big)
		 \ \d\xx \d\yy.
		\vspace{0.75ex} \end{align*} 
		By translation invariance of the Lebesgue measure on $\T^2$ and linearity of the center $z(\cdot)$, then
	\vspace{0.75ex} \begin{align*} 
		E_2 \leq \vol(B(0,2 R_n)) n^6 \int_{\T^{2\cdot 3}} \int_{\T^{2\cdot 2}}&
		P\big((0,\xx) \in \mc B (\PP_{n}^{(0,\xx)}) \big) P\big(\yy \in \mc B(\PP_n^\yy)\big)
		\ \d\xx \d\yy.
	\vspace{0.75ex} \end{align*} 
		By Mecke's formula and (\ref{eq:almost_alpha}), then for sufficiently large $n$,
		\vspace{0.75ex} $$
			E_2 \leq 4 \alpha^2 \vol(B(0,2 R_n)) = 16 \pi \alpha^2 \frac{\log(n)^{2}}{n},
		\vspace{0.75ex} $$
	 and using that $\log(n)^2 \in O(n^{\e})$ for any $\e > 0$, then this completes the proof.
	\enp
	 \begin{proof}[Proof of Lemma \ref{lem:E_3}]
		Recall that we want to prove that for every $\e >0$, then
		\vspace{0.75ex} $$
		E_3 = n^6 \iint
	\one \{ B(z_\xx,R_n) \cap B(z_\yy,R_n) \neq \emptyset  \} P\big(\xx \in \mc B(\PP_n^{\xx,\yy}), \yy \in \mc B(\PP_n^{\xx,\yy}) \big)
		  \ \d\xx \d\yy \in O(n^{-1/36 + \e}).
		\vspace{0.75ex} $$
		First, by Lemma \ref{lem:disjoint_occurence} and the continuous BK inequality
		(\cite[Theorem 2.1]{HHLM19}),
		\vspace{0.75ex} \begin{align*}
			E_3 &\leq n^6 \iint \one_{S_n}(\xx,\yy)
			P\big( \xx \in \mc B(\PP_{n}^{\xx,\yy}) \big) 
			 P\big(\mc C_{b_n}^{\lceil \ell_{n,\alpha}/2 \rceil}(\PP_n^{\xx,\yy} \cap B(z_\xx, 2R_n)) \neq \emptyset \big)
			 \ \d\xx \d\yy,
		\vspace{0.75ex} \end{align*} 
		where $S_n = \{ B(z_\xx,R_n) \cap B(z_\yy,R_n) \neq \emptyset  \} $. By Lemma \ref{lem:Ln_bound}, there exists a constant $C>0$ such that
		\vspace{0.75ex} \begin{align*}
			E_3 &\leq C U_{n,\alpha} n^6 \log(n)^2
			\underset{S_n}{\iint} P\big(  \xx \in \mc B(\PP_{n}^{\xx,\yy}),
			 \ \ell^*_{\xx,\PP_{n}^{\xx,\yy}}\ge \ell_{n,\alpha} \big) 
			 \ \d\xx \d\yy,
		\vspace{0.75ex} \end{align*}
		where for every $\e > 0$,
		\vspace{0.75ex} $$
			U_{n,\alpha} = 
			\bigg(\frac{\log(\log(n))^2}{\log(n)}\bigg)^{\lceil \ell_{n,\alpha} /2\rceil -1} \in O\big(n^{-1/36 + \e }\big),
		\vspace{0.75ex} $$
		using the lower bound for $\ell_{n,\alpha}$ provided in Lemma \ref{prop:asymptotics}. By Mecke's formula,
		\vspace{0.75ex} \begin{equation}
			\label{eq:E3_pitstop}
			E_3 \leq C U_{n,\alpha} n^3
			\int \E\bigg[ \sum_{\yy \in (\PP_n \cap B(z_\xx, 2R_n))^{(3)}}
			\one\big\{ \xx \in \mc B(\PP_{n}^{\xx}),
			 \ \ell^*_{\xx,\PP_{n}^{\xx}}\ge \ell_{n,\alpha} \big\}
			 \bigg] \ \d\xx
		\vspace{0.75ex} \end{equation} 
		Now, define the events 
		\vspace{0.25ex} \begin{equation*}
			\label{eq:A_events}
		A_{n}^1 = \big\{\PP_{n}(B(z_\xx, 2R_n)) \leq 2 \E[\PP_{n}(B(z_\xx, 2R_n))] \big\},
		\vspace{0.75ex} \end{equation*}
		\begin{equation*}
			\label{eq:A_events_2}
		A_{n}^2 = \big\{\PP_{n}(B(z_\xx, 2R_n)) > 2 \E[\PP_{n}(B(z_\xx, 2R_n))] \big\},
		\vspace{0.75ex} \end{equation*}
		and define for $i\in \{1,2\}$, the expression
		\vspace{0.75ex} \begin{equation}
			\label{eq:new_E3_terms}
		E_{3,i} = C U_{n,\alpha} n^3
		\int \E\bigg[ \sum_{\yy \in (\PP_n \cap B(z_\xx, 2R_n))^{(3)}}
		\one\big\{ \xx \in \mc B(\PP_{n}^{\xx}),
		 \ \ell^*_{\xx,\PP_{n}^{\xx}}\ge \ell_{n,\alpha} \big\} \one_{A_n^i}(\xx)
		 \bigg] \ \d\xx.
		\vspace{0.4ex} \end{equation}
		First, the event $A_n^1$ and assumption $P'$ imply that
		\vspace{0.75ex} $$
		\vert (\PP_{n}\cap B(z_\xx, 2R_n))^{(3)} \vert \leq 8 \E[\PP_{n}(B(z_\xx, 2R_n))]^{3} 
		= 512 \pi^3 \log(n)^{6}.
		\vspace{0.75ex}$$
		Combining this with (\ref{eq:new_E3_terms}), Remark \ref{rem:unbounded_extension}(2) and (\ref{eq:almost_alpha}), it now follows for any $\e > 0$ that 
		\vspace{0.75ex} $$
		E_{3,1} \leq 1024 \pi^3 \alpha C \log(n)^{8}
		U_{n,\alpha} \in  O(n^{-1/36+\e}).
		\vspace{0.75ex} $$
		Next, using a Poisson Chernoff-type bound (\cite[Theorem 2.1 \& Remark 2.6]{Janson}),
		\vspace{0.75ex} \begin{equation}
		\label{eq:chernoff_bound}
		P(A_n^2) \leq \exp(-3\E[\PP_{n}(B(z_\xx, 2R_n))]/8) \leq \exp(-\log(n)^{2}).
		\vspace{0.75ex} \end{equation}
		Thus by combining (\ref{eq:chernoff_bound}) and
		the Cauchy-Schwarz inequality, it follows that 
		\vspace{0.75ex} \begin{equation*}
			E_{3,2} \leq C U_{n,\alpha}
			n^{3}\exp(-\log(n)^{2}/2)  \int 
			\E\bigg[\bigg(\sum_{\yy \in (\PP_{n}\cap B(z_\xx, 2R_n))^{(3)}}
			1 \bigg)^2\bigg]^{1/2}  \d\xx.
			\vspace{0.75ex} \end{equation*}
		Since $\PP_{n}(B(z_\xx, 2R_n)) \sim \text{Pois}(n\cdot \vol(B(z_\xx, 2R_n)))$, then
		$\E[\PP_{n}(B(z_\xx, 2R_n))^{6}] \leq n^6 \exp(18/n) $ and
		\vspace{0.75ex}$$
		E_{3,2} \leq CU_{n,\alpha} n^9 \exp(-\log(n)^2/2 - 18/n) \in O(n^{-1/36 + \e }),
		\vspace{0.75ex} $$ 
		which then completes the proof.
	\end{proof}

	\begin{proof}[Proof of Lemma \ref{lem:E_4}]
		
		First as $\xx$ and $(\xx_I,\yy)$ share at least one point, then 
		if $r_\xx,r_\yy \leq r_n \leq R_n$, it follows that $B(z_\xx,R_n) \cap B(z_\yy,R_n) \neq \emptyset$
		and we proceed as in the proof of Lemma \ref{lem:E_3}:
		By Lemma \ref{lem:disjoint_occurence}, the continuous BK inequality, Lemma \ref{lem:Ln_bound}
		and Mecke's formula, it follows that
		\vspace{0.75ex} \begin{equation}
			\label{eq:E4_pitstop}
			E_4^I \leq C U_{n,\alpha} \log(n)^2
			n^{3}
			\int \E\bigg[ \sum_{\yy \in (\PP_n \cap B_n(\xx))^{(3-\abs{I})}} P\big(  \xx \in \mc B(\PP_{n}^{\xx}),
			 \ \ell^*_{\xx,\PP_{n}^{\xx}}\ge \ell_{n,\alpha} \big) \bigg]
			 \ \d\xx.
		\vspace{0.75ex} \end{equation}
		Note that in (\ref{eq:E4_pitstop}), we obtain a upper bound by putting $\abs{I} = 0$ 
		and therefore (\ref{eq:E4_pitstop}) is further bounded by the bound in (\ref{eq:E3_pitstop}).
		Thus, we merely repeat the rest of the proof of Lemma \ref{lem:E_3}. 
	\end{proof}

%
%
%
%

\section{Convergence in the $m$-sparse regime}
\label{sec:4}
This section is dedicated to proving the three Poisson approximation results 
in the $m$-sparse setting. Recall that this setting is defined by
choosing a deathtime bound $r_n > 0$ such that:

\begin{assumptionM*}
	\label{ass:S}
	As $n \to \infty$, then $\rho_{n, m} = n(nr_n^d)^{m-1} \longrightarrow  \infty$
	and $\rho_{n, m+1} = n(nr_n^d)^{m} \longrightarrow  0$.
\end{assumptionM*}

When convenient, we omit the cluster size $m$ in the notation. Let
\vspace{0.75ex} $$
\varphi_n(\xx, \yy)=\bigg(z_\xx,\frac{\lmax^\circ - \hat \ell_{\xx,\yy}^\circ}{u_{n,\alpha}},\frac{r_n - r_\xx}{u_{n,\alpha}}\bigg).
\vspace{0.75ex} $$
Then, the point processes $\xi_n^2$, $\xi_n^3$, and $\xi_n^4$ in Theorems \ref{thm:1}, \ref{thm:2}, and \ref{thm:3} can be rewritten as
\vspace{0.75ex} \begin{equation}
	\label{eq:xi_p_def}
	\begin{aligned}
	\xi_n^{4}(A \times B \times C) &=\sum_{\xx \in \PP_{n\kappa}^{(k)}} 
	\one\{\xx \in \mc N(\PP_{n\kappa})\} \delta\{\varphi_n(\xx,\PP_{n\kappa})\}(A \times B \times C), \\[0.5ex]
	\xi_n^{3}(A \times B) &= \xi_n^{4}(A \times B \times  [0,\infty)), \\[0.5ex]
	\xi_n^{2}(A) &= \xi_n^{4}(A \times [0,1] \times  [0,\infty)),
	\end{aligned}
\vspace{0.75ex} \end{equation}
where $A \times B \times C \in \mc B(\R^d \times [0,\infty) \times [0,\infty))$. The rest of the section is organized as follows: In Section \ref{sec:4.1}, we introduce a Poisson U-statistic approximation of (\ref{eq:xi_p_def}). 
In Section \ref{sec:4.2}, we present the overall proof strategy via a series of lemmas,
and in Section \ref{sec:4.3}, we prove all three theorems using these lemmas.  
Finally, in Section \ref{sec:4.4}, we prove the lemmas in Sections \ref{sec:4.1} and \ref{sec:4.2}.




\vspace{0.75ex}
	\subsection{Large-lifetime cycles in $m$-clusters}
\label{sec:4.1}
All references to assumptions $F$, $P$, $T$, $U$, $G$, and $H$ 
are to these as stated in Theorems \ref{thm:1}, \ref{thm:2}, and \ref{thm:3}. 
For a Lebesgue density $\lambda$ and Poisson point process $\PP_{n\lambda}$, with 
intensity $n \cdot \lambda$, we introduce
\vspace{0.75ex} \begin{equation}
	\label{eq:xi_p_hat_def}
	\begin{aligned}
	\hat \xi_{n,\lambda}^{4}(D) &=\frac{1}{m!} \sum_{\yy  \in \PP_{n\lambda}^{(m)}}
	\one\{\yy \in C_{r_n}\}  \sum_{\xx \in \yy^{(k)}} 
	\one\{\xx \in \mc N(\yy)\} \delta\{\varphi_n(\xx,\yy)\}(A \times B \times C), \\[0.5ex]
	\hat \xi_{n,\lambda}^{3}(A \times B) &= \hat \xi_{n,\lambda}^{4}(A \times B \times  [0,\infty)), \\[0.5ex]
	\hat \xi_{n,\lambda}^{2}(A) &= \hat \xi_{n,\lambda}^{4}(A \times [0,1] \times  [0,\infty)),
	\end{aligned}
\vspace{0.75ex} \end{equation}
where $A \times B \times C \in \mc B(\R^d \times [0,\infty) \times [0,\infty))$. In other words, (\ref{eq:xi_p_hat_def}) represents the (marked) location of large-lifetime cycles \textit{inside} $m$-clusters, which will serve as an approximation of the 
corresponding point processes in (\ref{eq:xi_p_def}) when $\lambda = \kappa$. Also note that all three point processes in (\ref{eq:xi_p_hat_def}) 
are Poisson U-statistics (Section 1). In order to derive some properties of these U-statistics, 
we first note that features with a very large lifetime must be $1$-connected 
and that the maximal lifetime $\lmax^\circ(\cdot)$ is strictly increasing in $m$.

\begin{lemma}
	\label{obs:lmaxincreasing}
	Let $\yy \in \R^{dm}$ and $\xx \in \mc N(\yy)$ with $r_\xx \leq 1$. Under assumption $F$, 
	$\ell_{\xx,\yy} > \lmax^\circ(m-1)$ implies that $\yy \in C_1(m)$. 
	Additionally, under assumption $U$, it holds that $\lmax^\circ(m) > \lmax^\circ(m-1)$.
\end{lemma}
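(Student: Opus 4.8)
The plan is to prove the two assertions in Lemma~\ref{obs:lmaxincreasing} separately, starting with the easier one: that a cycle with lifetime exceeding $\lmax^\circ(m-1)$ forces the underlying $m$-tuple to be $1$-connected. Suppose $\yy \in \R^{dm}$ is \emph{not} in $C_1(m)$, i.e. the union $\bigcup_{j} B(y_j, 1/2)$ is disconnected. Then $\yy$ splits into two nonempty groups $\yy'$, $\yy''$ of sizes $m', m'' < m$ whose $1/2$-ball unions are at positive distance, so there is no edge at filtration time $1$ (hence none at any time $t \le r_\xx \le 1$) between the two groups. Since the negative simplex $\xx$ destroys a cycle, all vertices of $\xx$ and all simplices involved in its birth lie in a single connected component of the filtration; thus $\xx \subseteq \yy'$ (say), and moreover the cycle born at time $b_{\xx,\yy}$ and the circumscribing ball at time $r_\xx$ only see points of $\yy'$. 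Concretely, $\ell^\circ_{\xx,\yy} = \ell^\circ_{\xx,\yy'}$ because restricting the point cloud to $\yy'$ does not change $r_\xx$ (it depends only on the points of $\xx$) nor $b_{\xx,\yy}$ (the relevant filling $(k+1)$-simplex and the cycle it destroys are supported on the component of $\xx$). After rescaling $\yy'$ so that its deathtime becomes $\le 1$ — which is legitimate because, under assumption~$F$, the additive lifetime is homogeneous and the multiplicative lifetime is scale-invariant, so $\ell^\circ$ is unchanged or only rescaled in a way controlled by $r_\xx \le 1$ — we get $\ell^\circ_{\xx,\yy} \le \lmax^\circ(m')\le \lmax^\circ(m-1)$ by the definition \eqref{eq:lmax_def} and monotonicity of the sup over cloud sizes. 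This contradicts $\ell_{\xx,\yy} > \lmax^\circ(m-1)$, proving the first claim.

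For the second claim, $\lmax^\circ(m) > \lmax^\circ(m-1)$, the idea is to take a near-optimal configuration on $m-1$ points and perturb it by adding one more point so as to strictly increase the lifetime, using assumption~$U$ to rule out the degenerate case where adding a point cannot help. First note $\lmax^\circ(m) \ge \lmax^\circ(m-1)$ trivially, since any cloud $\PP$ of size $m-1$ with $\xx \in \PP^{(k)} \cap \mc N(\PP)$, $r_\xx \le 1$, can be enlarged to size $m$ by adding a far-away point that changes nothing. So it suffices to exclude equality. Suppose, for contradiction, $\lmax^\circ(m) = \lmax^\circ(m-1) =: \lmax^\circ$. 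By compactness/continuity of the lifetime functional on the (bounded) configuration space of clouds with $r_\xx \le 1$ — the sup in \eqref{eq:lmax_def} is attained — pick $\PP$ with $|\PP| = m$, $\xx \in \PP^{(k)}\cap\mc N(\PP)$, $r_\xx \le 1$, achieving $\ell^\circ_{\xx,\PP} = \lmax^\circ$. Taking $\e>0$ as in assumption~$U$, this same $\PP$ (with its subset $\PP' = \PP$ and simplex $\xx$) has $\ell^\circ_{\xx,\PP'} = \lmax^\circ \ge \lmax^\circ - \e$; assumption~$U$ then says this is the \emph{only} subset-simplex pair with this property. Now I would argue that removing any single point $y \in \PP \setminus \xx$ yields a cloud $\PP \setminus \{y\}$ of size $m-1$ still containing $\xx \in \mc N(\cdot)$ with $r_\xx\le 1$ and $\ell^\circ_{\xx, \PP\setminus\{y\}} = \lmax^\circ$ (birthtime and deathtime of $\xx$ are unaffected by $y$ when $y$ plays no role in the birth — and if $y$ \emph{does} lower the birthtime, removing it only \emph{increases} the lifetime, contradicting maximality of $\lmax^\circ(m-1)$); hence $\xx$ uses all of $\PP$, i.e. $k = m$. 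But then $\PP$ itself, viewed inside any strictly larger cloud $\PP \cup \{y^*\}$ with $y^*$ chosen generically so that $B(y^*, \tfrac12)$ touches the cluster, produces a \emph{second} subset-simplex pair (a different negative simplex $\xx^*$ on $k$ of the now $m+1$ points, or a shifted birth of $\xx$ with strictly larger lifetime) with lifetime $\ge \lmax^\circ - \e$ — contradicting assumption~$U$ applied at size $m$. Chasing this carefully gives a point to add to $\PP$ that strictly raises the lifetime, so $\lmax^\circ(m) > \lmax^\circ(m-1)$.

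The main obstacle I anticipate is the second claim, specifically making the perturbation argument rigorous: one must produce an \emph{explicit} local move (slide a boundary point outward, or append a point that lengthens the shortest cycle while keeping the circumscribing radius $\le 1$) that provably increases $\ell^\circ$, rather than merely invoking "genericity". The cleanest route is probably to separate the \v Cech and Vietoris-Rips cases and use the geometric descriptions alluded to in Remark~\ref{rem:sparse_extension}(4) — e.g. for \v Cech the birthtime is governed by the longest edge of the associated cycle, so inserting a vertex that subdivides a long edge strictly decreases the birthtime while leaving the deathtime fixed — and then check that assumption~$U$ is exactly what forbids the pathological configurations where no such move exists. A secondary technical point is justifying that the suprema in \eqref{eq:lmax_def} are attained and that the lifetime functional is continuous under the rescaling used in the first claim; both follow from the fact that, modulo scaling, the admissible configurations form a compact set once one fixes $r_\xx \le 1$ and works up to isometry, but this should be stated carefully.
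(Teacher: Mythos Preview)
Your argument for the first claim (connectedness) is correct and matches the paper's proof: take the $1$-connected component $\yy'\subseteq\yy$ containing $\xx$, observe that $\ell^\circ_{\xx,\yy}=\ell^\circ_{\xx,\yy'}$ under assumption~$F$, and use the definition of $\lmax^\circ(|\yy'|)$ to force $|\yy'|=m$.

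For the second claim (strict monotonicity), however, your route is far more complicated than needed and contains a genuine gap. The paper's proof is two lines: assume $\lmax^\circ(m)=\lmax^\circ(m-1)$, pick $\e>0$ from assumption~$U$, choose any $(m-1)$-point cloud $\yy'$ with $\xx\in\mc N(\yy')$, $r_\xx\le 1$, and $\ell^\circ_{\xx,\yy'}\ge\lmax^\circ(m-1)-\e=\lmax^\circ(m)-\e$, and then \emph{add a point $y_0$ at distance $2$ from $\yy'$}. In the resulting $m$-point cloud $\PP=(y_0,\yy')$, the same $\xx$ is still negative with $r_\xx\le 1$, and both $\ell^\circ_{\xx,\yy'}\ge\lmax^\circ(m)-\e$ and $\ell^\circ_{\xx,(y_0,\yy')}\ge\lmax^\circ(m)-\e$. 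That exhibits \emph{two distinct subsets} $\PP'=\yy'$ and $\PP'=(y_0,\yy')$ of $\PP$ satisfying the condition in assumption~$U$, which is the contradiction.

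You seem to have read assumption~$U$ as asserting uniqueness only of the simplex $\xx$, rather than of the pair $(\PP',\xx)$; with the correct reading no perturbation, no compactness, and no attainment of the supremum is needed. Your argument, by contrast, (i) assumes the supremum in \eqref{eq:lmax_def} is attained, which you correctly flag as unproven; (ii) tries to deduce $k=m$ from a removal argument whose key step (``removing $y$ only increases the lifetime'') is not justified in general; and (iii) in the final step invokes assumption~$U$ on a cloud of size $m+1$, whereas $U$ is only postulated at size $m$. The edge-subdivision/perturbation strategy you sketch at the end is a different idea altogether and would require filtration-specific geometric work that assumption~$U$ was precisely designed to avoid.
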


Due to the fact that $\hat \xi_{n,\lambda}^2$ and $\hat \xi_{n,\lambda}^3$ are just $\hat \xi_{n,\lambda}^4$ restricted to certain product sets, it will, in most cases, 
be sufficient to state properties for $\hat \xi_{n,\lambda}^4$, and these properties will automatically be inherited by $\hat \xi_{n,\lambda}^2$ and $\hat \xi_{n,\lambda}^3$. 
We now use Lemma \ref{obs:lmaxincreasing} to establish that the approximation $\hat \xi_{n,\kappa}^4$ coincides with the original point process $\xi_n^4$ on sets of size $m$ or smaller.

\begin{lemma}
	\label{lem:equalmeasures}
	Under assumptions $F$, $M$, $P$, $T$, and $U$, $\hat \xi^p_{n,\kappa} \cap A = \xi^p_n \cap A$ for any $\vert A \vert \leq m$.
\end{lemma}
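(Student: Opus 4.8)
The plan is to read the assertion as a statement about the defining functionals evaluated on a fixed finite input: for every point configuration $\psi \su W$ with $|\psi| = \ell \le m$, the expression defining $\hat\xi^p_{n,\kappa}$ and the one defining $\xi^p_n$ agree at $\psi$ (for $p\in\{3,4\}$ after restriction to the fixed bounded mark windows of Theorems~\ref{thm:2}--\ref{thm:3}; for $p=2$ the built-in threshold $\hat\ell^\circ_{\xx,\cdot}\ge\lmax^\circ-u_{n,\alpha}$ plays the same role), for all sufficiently large $n$. First I would dispose of the U-statistic structure. If $\ell<m$ then $\psi^{(m)}=\es$, so $\hat\xi^p_{n,\kappa}$ is the zero measure at $\psi$. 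If $\ell=m$, then $\psi^{(m)}$ consists of the $m!$ orderings of $\psi$, and for each of them the indicator $\one\{\yy\in C_{r_n}\}$, the index set $\yy^{(k)}$, the set $\mc N(\yy)$ and the map $\varphi_n(\cdot,\yy)$ depend only on the underlying point set; hence each of the $m!$ summands equals $\one\{\psi\in C_{r_n}\}\sum_{\xx\in\psi^{(k)}}\one\{\xx\in\mc N(\psi)\}\de\{\varphi_n(\xx,\psi)\}$, the prefactor $1/m!$ cancels, and $\hat\xi^p_{n,\kappa}$ at $\psi$ equals $\one\{\psi\in C_{r_n}\}$ times the expression defining $\xi^p_n$ at $\psi$. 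Thus the whole claim reduces to showing: if $\ell<m$, or $\ell=m$ and $\psi\notin C_{r_n}$, then $\xi^p_n$ evaluated at $\psi$ carries no mass inside the relevant mark window.

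For this, fix such a $\psi$ and a negative simplex $\xx\in\psi^{(k)}\cap\mc N(\psi)$ that would contribute a point of $\xi^p_n$; being a point of the process on $\R^d\times[0,\infty)^2$ already forces $r_\xx\le r_n$. I would rescale by $1/r_n$: both the \v Cech and the Vietoris--Rips filtrations are scale covariant (assumption $F$), so $\xx':=\xx/r_n\in\mc N(\psi/r_n)$ with $r_{\xx'}\le 1$, and by the normalisation \eqref{eq:u_statistic} --- multiplicative lifetimes are scale invariant and additive lifetimes homogeneous of degree one --- one has $\hat\ell^\circ_{\xx,\psi}=\ell^\circ_{\xx',\psi/r_n}$; moreover $\psi\in C_{r_n}(m)\iff \psi/r_n\in C_1(m)$. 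Now if $\ell=m$ and $\psi\notin C_{r_n}$, then $\psi/r_n\notin C_1(m)$, so the contrapositive of Lemma~\ref{obs:lmaxincreasing} yields $\ell^\circ_{\xx',\psi/r_n}\le\lmax^\circ(m-1)$; and if $\ell<m$, then $\ell^\circ_{\xx',\psi/r_n}\le\lmax^\circ(\ell)\le\lmax^\circ(m-1)$ by monotonicity of $\lmax^\circ(\cdot)$ (append a point far from the configuration). In either case $\hat\ell^\circ_{\xx,\psi}\le\lmax^\circ(m-1)$, so the first mark of the contributed point satisfies
\[
\frac{\lmax^\circ-\hat\ell^\circ_{\xx,\psi}}{u_{n,\alpha}}\ \ge\ \frac{\lmax^\circ(m)-\lmax^\circ(m-1)}{u_{n,\alpha}}.
\]
The numerator is a strictly positive constant by Lemma~\ref{obs:lmaxincreasing} together with assumption $U$, while $u_{n,\alpha}\to0$ as $n\to\infty$ by assumptions $M$ and $T$ (and the well-definedness of $u_{n,\alpha}$, all established in Section~\ref{sec:5}). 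Hence for all large $n$ this mark exceeds $1$ and exceeds any fixed compact mark window; the point therefore does not lie in the window, which is what remained to be shown.

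I expect the only genuinely delicate point to be the rescaling bookkeeping: one must verify that negativity, the deathtime $r_\xx$, the birthtime $b_{\xx,\psi}$, and hence both flavours of $\hat\ell^\circ_{\xx,\psi}$ transform correctly under $\psi\mapsto\psi/r_n$ --- this is exactly the role of assumption $F$ and the definition \eqref{eq:u_statistic} --- and that it is the \emph{strict} increase $\lmax^\circ(m)>\lmax^\circ(m-1)$ from Lemma~\ref{obs:lmaxincreasing} and assumption $U$, not mere monotonicity, that forces the lifetime mark to diverge rather than merely stay bounded away from zero. Collapsing the U-statistic when $|\psi|=m$ and handling the case $|\psi|<m$ are then routine.
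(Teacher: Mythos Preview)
Your proof is correct and follows the same strategy as the paper's: show $\hat\xi^p_n$ vanishes on small configurations, then use the strict gap $\lmax^\circ(m)>\lmax^\circ(m-1)$ from Lemma~\ref{obs:lmaxincreasing} together with $u_{n,\alpha}\to 0$ to force the lifetime mark of any would-be contributor to $\xi^p_n$ out of the relevant window. The rescaling bookkeeping you flag is exactly what the paper leaves implicit.

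You are in fact more careful than the paper on two points. First, you treat the subcase $|\psi|=m$ with $\psi\notin C_{r_n}$ explicitly via the contrapositive of Lemma~\ref{obs:lmaxincreasing}; the paper disposes of $j=m$ with ``by definition'', which is only literally correct when $A$ is understood to be an $r_n$-connected cluster (as it always is in the application inside Lemma~\ref{lem:dkrhat}). Second, you correctly observe that for $p\in\{3,4\}$ the assertion should be read after restriction to a fixed compact mark window --- the paper's proof speaks of a ``large-lifetime cycle'' and derives a contradiction from $\ell^\circ_{\xx,A}>\lmax^\circ(j)$, which is precisely the $p=2$ threshold condition; without the window restriction the unthresholded $\xi^4_n$ could carry a point with a very large second mark on a small cluster. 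Your formulation makes this explicit and is what is actually used downstream in the proofs of Theorems~\ref{thm:2}--\ref{thm:3}.
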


Furthermore, it will be useful to have an expression for the intensity of (\ref{eq:xi_p_hat_def}). For $r \geq 0$, let
\vspace{0.25ex} \begin{equation}
 \label{eq:F_set}
 C_r^{-} = \big\{(y_1,\ldots,y_m) \in C_s \colon (y_1,\ldots, y_k)\in \mc N\big((y_1,\ldots,y_m)\big) \},
 \vspace{0.75ex} \end{equation}
i.e., the $m$-clusters at filtration time $r$ where the first $k$ points form a negative simplex.

\begin{lemma}[Intensity formula]
	\label{lem:expansion}
	Under assumption $F$, for any $D \in \mc B(\R^d \times [0,\infty)^2)$,
	\vspace{0.75ex} \begin{align*}
		\E \big[ \hat{\xi}^{4}_{n,\lambda }(D)\big] = \rho_{n,m}\frac{\binom{m}{k}}{m!} 
		& \int_{\R^d} \ldots  \int_{\R^d} \one\{(0,y_2,\ldots,y_m)\in C_1^- \} \lambda(y_1) \prod_{j=2}^{m} \lambda(r_n y_j+y_1) \times \\
		& \quad \times \one\{ \varphi_n\big(r_n (0,y_2,\ldots,y_k) + y_1 , r_n (0,y_2,\ldots,y_m) + y_1 \big)\in D\}
		\ \d y_1 \ldots \d y_m.
	\vspace{0.75ex} \end{align*}
	Additionally, under assumption $T$ and $\lambda \equiv 1$, 
	\vspace{0.75ex} \begin{equation}
		\label{eq:alpha_formula}
		\E \big[ \hat{\xi}^{2}_{n,1}([0,1]^d)\big]
		 = \rho_{n,m}\frac{\binom{m}{k}}{m!} \int_{\R^{d(m-1)}} 
		 \one\big\{(0,\yy)\in C_1^-, \ \ell^\circ_{(0,\yy)_k,(0,\yy)}\ge \lmax^\circ - u_{n,\alpha }, \ r_{(0,\yy)_k} \leq r_n \big\}
		  \d\yy = \alpha.
	\vspace{0.75ex} \end{equation}
\end{lemma}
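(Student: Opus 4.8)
The plan is to obtain the general identity by a single application of the multivariate Mecke equation, followed by a symmetrisation of the inner sum and a rescaling change of variables, and then to deduce $\E[\hat\xi^2_{n,1}([0,1]^d)]=\alpha$ by specialising $\lambda\equiv 1$ and the test set. First I would apply Mecke's formula (Section~\ref{sec:1}) in its $m$-fold form to $\PP_{n\lambda}$, whose intensity measure is $n\lambda(y)\,\d y$. The key observation is that the integrand
\[
\one\{\yy\in C_{r_n}\}\sum_{\xx\in\yy^{(k)}}\one\{\xx\in\mc N(\yy)\}\,\one\{\varphi_n(\xx,\yy)\in D\}
\]
is a function of the $m$-tuple $\yy$ alone — the negative simplices $\mc N(\yy)$ and the mark $\varphi_n(\xx,\yy)$ are built from $\yy$, not from the ambient process — so the conditional expectation appearing in Mecke's formula is trivial, and the left-hand side becomes $\tfrac{n^m}{m!}\int_{(\R^d)^m}(\cdots)\prod_{j=1}^m\lambda(y_j)\,\d\yy$.

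Next I would symmetrise. Since $\mc N(\cdot)$, $z_\xx$, $r_\xx$ and $\ell^\circ_{\xx,\cdot}$ depend on $\xx$ only through its underlying point set, while both $C_{r_n}$ and $\prod_j\lambda(y_j)$ are invariant under permutations of the coordinates of $\yy$, relabelling the integration variables so that $\xx$ is always the first $k$ coordinates collapses $\sum_{\xx\in\yy^{(k)}}$ to a single term and introduces the factor $\binom{m}{k}$ counting the $k$-subsets of $\{1,\dots,m\}$. I would then substitute $y_j=y_1+r_n\tilde y_j$ for $j=2,\dots,m$, with Jacobian $r_n^{d(m-1)}$, so that $n^m r_n^{d(m-1)}=\rho_{n,m}$; the configuration $\yy$ becomes $r_n(0,\tilde y_2,\dots,\tilde y_m)+y_1$. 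Because the \v Cech and Vietoris--Rips filtrations of a translated and dilated point set are the translated, dilated filtrations with filtration times scaled by $r_n$, the event $\{\yy\in C_{r_n}\}\cap\{(y_1,\dots,y_k)\in\mc N(\yy)\}$ transforms into $\{(0,\tilde y_2,\dots,\tilde y_m)\in C_1^{-}\}$ via~\eqref{eq:F_set}; likewise $z_\xx$, $r_\xx$ are translation/dilation covariant and the scaled lifetime $\hat\ell^\circ_{\xx,\yy}$ of~\eqref{eq:u_statistic} is exactly the unit-scale lifetime of the rescaled configuration (scale invariance of $\ell^*$ in the multiplicative case; homogeneity of $\ell^+$ together with the division by $r_n$ in the additive case). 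Renaming $\tilde y_j$ back to $y_j$ and writing $\prod_{j=1}^m\lambda(y_j)=\lambda(y_1)\prod_{j=2}^m\lambda(r_ny_j+y_1)$ produces the displayed formula.

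For the second assertion, take $\lambda\equiv 1$ and $D=[0,1]^d\times[0,1]\times[0,\infty)$, so $\hat\xi^2_{n,1}([0,1]^d)=\hat\xi^4_{n,1}(D)$ by~\eqref{eq:xi_p_hat_def}. In the formula just derived the density factors become $1$; the third coordinate of $\varphi_n$ lying in $[0,\infty)$ is the condition $r_{(0,\yy)_k}\le 1$, the second coordinate lying in $[0,1]$ is $\lmax^\circ - u_{n,\alpha}\le\ell^\circ_{(0,\yy)_k,(0,\yy)}\le\lmax^\circ$ where the upper bound is automatic from the definition~\eqref{eq:lmax_def} of $\lmax^\circ$, and the first coordinate $z_\xx\in[0,1]^d$ restricts $y_1$ to a translate of $[0,1]^d$, so integrating $y_1$ out contributes $\vol([0,1]^d)=1$. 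The remaining integral is exactly $g(u_{n,\alpha})$ after comparing with~\eqref{eq:gdef_int} and~\eqref{eq:F_set}, and assumption~$T$ gives $g(u_{n,\alpha})=\alpha\big/\big(\rho_{n,m}\binom{m}{k}/m!\big)$, so the prefactor cancels and the expectation equals $\alpha$.

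The genuinely delicate point is the rescaling step: one must verify that ``restrict to an $m$-cluster, then apply the affine map $y\mapsto y_1+r_ny$'' commutes with all the persistence-theoretic constructions — membership in $\mc N(\cdot)$, the identification of $(p+1)$-cycles with negative $(p+1)$-simplices, and the birth/death/lifetime functionals — and, in the additive setting, that the normalisation by $r_n$ in~\eqref{eq:u_statistic} is precisely what removes the scale. Everything else is routine Mecke/Fubini bookkeeping.
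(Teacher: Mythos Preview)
Your proposal is correct and follows essentially the same route as the paper: Mecke's formula, a symmetrisation that reduces the inner sum over $\xx\in\yy^{(k)}$ to the factor $\binom{m}{k}$ (the paper does this via explicit index sets $\mc I$ and permutations $\sigma_I$, you via relabelling, but it is the same argument), and then the affine change of variables $y_j\mapsto y_1+r_n y_j$ combined with the scaling behaviour of the filtration. Your treatment of the second part---integrating out $y_1$ over a translate of $[0,1]^d$ and recognising the remaining integral as $g(u_{n,\alpha})$---also matches the paper; note that the deathtime constraint after rescaling is indeed $r_{(0,\yy)_k}\le 1$ as you write, consistent with the definition \eqref{eq:gdef_int} of $g$.
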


Lemma \ref{lem:expansion} states that the choice of 
thresholds in assumption $T$ implies that the number of
large-lifetimes inside 1-clusters of size $m$ in the unit cube remains 
constant as $n$ increases. This facilitates control over
the intensity of the locations of large-lifetime cycles in the Poisson approximation.
Finally, we want to prove that in our inhomogeneous Poisson model, the expected number of $m$- and $m+1$-clusters is still at most proportional to $\rho_{n,m}$ and $\rho_{n,m+1}$, respectively.

\begin{lemma}[Bound on clusters]
	\label{lem:pruning}
	Let $j \geq 2$ and define $\rho_{n,j} = n(nr_n^d)^{j-1}$. Under assumption $P$,
	\vspace{0.75ex} $$
	\E\bigg[ \sum_{\xx \in \PP_{n\kappa}^{(j)}} \one\{\xx \in C_{r_n}(j) \} \bigg] \leq c j^{j-2}\tilde c^{j-1} \rho_{n,j},
	\vspace{0.75ex} $$
	where $c = \int \kappa(y) \ \d y$ and $\tilde c = \vol(B(0,1)) \sup_{y \in W} \kappa(y)$.
\end{lemma}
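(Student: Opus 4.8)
The plan is to rerun the union-bound-over-spanning-trees estimate already used in the proof of Lemma~\ref{lem:Ln_bound}, now on $\R^d$ and with the inhomogeneous intensity $n\kappa$ in place of the stationary intensity on the torus. First I would record the combinatorial characterisation of clusters: two open balls $B(y_i,r_n/2)$ and $B(y_{i'},r_n/2)$ intersect exactly when $\|y_i-y_{i'}\|<r_n$, and for convex sets a union is connected if and only if the associated intersection graph is connected, so $\xx=(y_1,\dots,y_j)\in C_{r_n}(j)$ is equivalent to the graph on $\{1,\dots,j\}$ with edge set $\{(i,i'):\|y_i-y_{i'}\|<r_n\}$ being connected; such a graph contains a spanning tree. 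Writing $\mc T_j$ for the spanning trees on $\{1,\dots,j\}$, this gives the pointwise bound
$$
\one\{\xx\in C_{r_n}(j)\}\le\sum_{T\in\mc T_j}\prod_{(i,i')\in T}\one\{\|y_i-y_{i'}\|<r_n\}.
$$

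Next I would apply Mecke's formula from Section~\ref{sec:1}. Since the indicator above does not depend on the whole process, the expectation becomes an ordinary integral against the product measure $(n\kappa)^{\otimes j}$, so that
$$
\E\Bigl[\sum_{\xx\in\PP_{n\kappa}^{(j)}}\one\{\xx\in C_{r_n}(j)\}\Bigr]\le n^j\sum_{T\in\mc T_j}\int_{W^j}\prod_{(i,i')\in T}\one\{\|y_i-y_{i'}\|<r_n\}\prod_{i=1}^{j}\kappa(y_i)\,\d y_1\cdots\d y_j.
$$
For a fixed tree $T$ I would peel off vertices one at a time: pick a current leaf $i_0$ with unique remaining neighbour $i_1$, and bound the $y_{i_0}$-integral by $\int_W\one\{\|y_{i_0}-y_{i_1}\|<r_n\}\kappa(y_{i_0})\,\d y_{i_0}\le\bigl(\sup_{y\in W}\kappa(y)\bigr)\vol(B(0,r_n))=\tilde c\,r_n^d$, where finiteness of $\sup_W\kappa$ is exactly the boundedness part of assumption $P$. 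After the $j-1$ edges have all been removed in this way a single vertex remains and its integral is $\int_W\kappa(y)\,\d y=c$; hence each tree contributes at most $c(\tilde c r_n^d)^{j-1}$.

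Finally, using Cayley's formula $|\mc T_j|=j^{j-2}$ and combining the two displays,
$$
\E\Bigl[\sum_{\xx\in\PP_{n\kappa}^{(j)}}\one\{\xx\in C_{r_n}(j)\}\Bigr]\le n^j\,j^{j-2}\,c\,(\tilde c r_n^d)^{j-1}=c\,j^{j-2}\,\tilde c^{\,j-1}\,n(nr_n^d)^{j-1}=c\,j^{j-2}\,\tilde c^{\,j-1}\,\rho_{n,j},
$$
which is the asserted bound. There is no genuine obstacle here; the estimate is purely combinatorial and measure-theoretic, and the hypotheses enter only through assumption $P$ (boundedness of $\kappa$ gives $\tilde c<\infty$, integrability of $\kappa$ gives $c<\infty$ so the bound is not vacuous). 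The one step meriting care is the leaf-peeling induction — one must check that deleting a leaf leaves a tree on the remaining vertices, so the procedure terminates after exactly $j-1$ steps, and that reusing the global bound $\sup_W\kappa$ at every step (rather than a center-dependent bound) is what keeps the final estimate in the clean product form $c\,j^{j-2}\tilde c^{\,j-1}\rho_{n,j}$.
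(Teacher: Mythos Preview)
Your proposal is correct and essentially identical to the paper's own proof: both use the union bound over spanning trees $\mc T_j$, apply Mecke's formula, peel off leaves one at a time bounding each leaf integral by $\tilde c\, r_n^d$, leave the last vertex to contribute $c=\int_W\kappa$, and finish with Cayley's formula $|\mc T_j|=j^{j-2}$. The paper even explicitly flags this as ``a similar pruning argument as in the proof of Lemma~\ref{lem:Ln_bound}'', which is exactly what you wrote.
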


The proof of Lemma \ref{lem:pruning} relies on the same pruning-type argument as in the proof of Lemma \ref{lem:Ln_bound}.




\vspace{0.75ex}
	\subsection{Overall Poisson strategy}
\label{sec:4.2}
To prove Theorems \ref{thm:1} - \ref{thm:3}, 
we rely on a series of lemmas that we postpone proving until Section \ref{sec:4.4}.
Fix $p \in \{2,3,4\}$ throughout the section. First, Lemma \ref{lem:dkrhat} 
justifies calling $\hat \xi_{n,\kappa}^p$ an approximation of $\xi_n^p$ 
and relies on Lemmas \ref{lem:equalmeasures} and \ref{lem:pruning}.
Recall that $\dkr$ denotes the Kantorovich-Rubinstein distance
and that $\rho_{n, m+1} = n(nr_n^d)^m$. For convenience, we write $\hat \xi_n^p$ instead of $\hat \xi_{n,\kappa}^p$.

\begin{lemma}[$\hat\xi^p_n \approx \xi_n^{p}$]
	\label{lem:dkrhat}
	Under assumptions $F$, $M$, $P$, $T$, and $U$, it holds that
\vspace{0.75ex} $$
		\dkr(\xi_n^{p} ,\hat\xi_n^{p}) \in O(\rho_{n, m+1}).
\vspace{0.75ex} $$
\end{lemma}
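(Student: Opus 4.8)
The plan is to exhibit $\xi_n^p$ and $\hat\xi_n^p$ as functionals of a common Poisson process and to show that they differ only through cycles living in atypically large clusters, whose number is controlled by Lemma~\ref{lem:pruning}. Since both processes are deterministic functions of the same $\PP_{n\kappa}$, the diagonal coupling yields $\dkr(\xi_n^{p},\hat\xi_n^{p})\le\E[\dtv(\xi_n^{p},\hat\xi_n^{p})]$; and, both being integer-valued locally finite measures, $\dtv(\xi_n^{p},\hat\xi_n^{p})$ is at most the number of atoms carried by $\xi_n^{p}$ but not matched in $\hat\xi_n^{p}$, plus the number carried by $\hat\xi_n^{p}$ but not matched in $\xi_n^{p}$. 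So it suffices to bound, in expectation, the number of such mismatched cycles. For $p\in\{3,4\}$ I would run the argument after restricting to the compact mark window of the corresponding theorem; for $p=2$ the built-in threshold $\hat\ell^{\circ}\ge\lmax^{\circ}-u_{n,\alpha}$ already plays that role, so no restriction is needed.

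The key reduction is that every mismatched cycle must be generated by a connected $r_n$-cluster of more than $m$ points. A negative $k$-simplex $\xx\subseteq\PP_{n\kappa}$ with $r_\xx\le r_n$ has its negativity, birthtime, deathtime and scaled lifetime determined by the connected component $D$ of $\PP_{n\kappa}$ at scale $r_n$ containing it (this localization is the content behind Lemma~\ref{obs:lmaxincreasing}), and the same holds for the atoms of $\hat\xi_n^{p}$ with the role of $D$ played by the generating $m$-tuple $\yy$. If $|D|=m$, then $D$ is the unique connected $m$-tuple it contains and Lemma~\ref{lem:equalmeasures} gives that $\xx$ contributes identically to both processes. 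If $|D|<m$, then $\hat\ell^{\circ}_{\xx,\PP_{n\kappa}}=\hat\ell^{\circ}_{\xx,D}\le\lmax^{\circ}(|D|)\le\lmax^{\circ}(m-1)$, which by the strict monotonicity in Lemma~\ref{obs:lmaxincreasing} lies strictly below $\lmax^{\circ}=\lmax^{\circ}(m)$ by a fixed positive gap; since $u_{n,\alpha}\to0$ (Remark~\ref{rem:sparse_extension}(5)), for $n$ large no such $\xx$ has $\hat\ell^{\circ}\ge\lmax^{\circ}-u_{n,\alpha}$, nor a mark in the fixed compact window (which forces $\hat\ell^{\circ}\ge\lmax^{\circ}-u_{n,\alpha}\sup K\to\lmax^{\circ}$), so it contributes to neither process. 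Hence all mismatched cycles come from a $D$ with $|D|\ge m+1$.

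It then remains to bound the expected number of cycles generated by clusters of size $\ge m+1$. Each $m$-cluster $\yy$ lying inside a connected component of more than $m$ points can be completed, by adjoining one adjacent point of that component, to an $(m+1)$-cluster; and each negative $k$-simplex $\xx$ with $r_\xx\le r_n$ in such a component has its $k$ vertices of diameter $O(r_n)$ and can likewise be grown, by adjoining $m+1-k$ further points of the component, to an $(m+1)$-cluster at scale $c_m r_n$ for a constant $c_m$ depending only on $m$ and the filtration. Since each $(m+1)$-cluster contains at most $\binom{m+1}{m}$ sub-$m$-tuples and at most $\binom{m+1}{k}$ sub-$k$-tuples, the number of mismatched atoms of $\hat\xi_n^p$ is at most $\binom{m+1}{m}\binom{m}{k}\sum_{\yy'\in\PP_{n\kappa}^{(m+1)}}\one\{\yy'\in C_{r_n}(m+1)\}$ and that of $\xi_n^p$ at most $\binom{m+1}{k}\sum_{\yy'\in\PP_{n\kappa}^{(m+1)}}\one\{\yy'\in C_{c_m r_n}(m+1)\}$; both have expectation $O(\rho_{n,m+1})$ by Lemma~\ref{lem:pruning} with $j=m+1$, the rescaling $r_n\mapsto c_m r_n$ only affecting the constant. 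Adding the two bounds gives $\E[\dtv(\xi_n^{p},\hat\xi_n^{p})]=O(\rho_{n,m+1})$, which is the claim.

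I expect the genuine difficulty to lie in the localization step: one must verify carefully that for $r_\xx\le r_n$ both the negativity of $\xx$ and the birthtime of the cycle it kills depend only on the connected cluster of $\xx$ at scale $r_n$, so that the lifetime computed in $\PP_{n\kappa}$ agrees with the one computed inside the cluster; and, for $p\in\{3,4\}$ where $\xi_n^{p}$ records every negative simplex rather than only the large-lifetime ones, one must genuinely discard the a priori non-negligible contribution of cycles inside clusters of fewer than $m$ points, which works only because their marks escape every fixed compact window as $n\to\infty$. Once these two points are in place, the remaining estimates are a routine application of the pruning bound in Lemma~\ref{lem:pruning}.
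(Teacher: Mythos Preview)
Your argument is correct and follows the same overall strategy as the paper: both localize the discrepancy to $r_n$-connected components of $\PP_{n\kappa}$, dispose of components of size $\le m$ via Lemmas~\ref{obs:lmaxincreasing} and~\ref{lem:equalmeasures}, and control the remainder through the pruning bound of Lemma~\ref{lem:pruning}. There are two differences worth recording. First, rather than decomposing by the exact component size $j\ge m+1$, applying Lemma~\ref{lem:pruning} for each $j$, and summing the resulting series $\sum_{j\ge m+1}\tfrac{2\binom{j}{k}}{j!}\,c\,j^{j-2}\tilde c^{\,j-1}\rho_{n,j}$ via Stirling and a geometric tail (as the paper does), you embed each mismatched atom directly into an $(m{+}1)$-cluster at scale $c_m r_n$ and invoke Lemma~\ref{lem:pruning} a single time; this is cleaner, though you should make the embedding explicit---adjoining to $\xx$ the $m{+}1{-}k$ points of the component closest to a fixed $x_1\in\xx$ works, since connectivity of the component guarantees these lie within graph distance $m$ of $x_1$ and hence Euclidean distance $m r_n$, yielding a $2m r_n$-cluster containing $\xx$. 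Second, you are actually more careful than the paper about $p\in\{3,4\}$: the paper reduces to $p=4$ and then appeals to Lemma~\ref{lem:equalmeasures} on components of size $<m$, tacitly using that such components carry no $(k-2)$-cycles (true in the verified instances of Assumption~$U$, but not forced by the general hypotheses); you instead restrict to a compact mark window and argue that any small-cluster cycle has lifetime mark at least $(\lmax^\circ(m)-\lmax^\circ(m-1))/u_{n,\alpha}\to\infty$, which is precisely how the lemma is used downstream in the proofs of Theorems~\ref{thm:2} and~\ref{thm:3}.
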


Next, we apply the Poisson approximation result for U-statistics in \cite[Theorem 3.1]{decreusefond} to the approximation $\hat \xi_n^p$. 
Recall that $\dtv$ denotes the total variation distance.

\begin{lemma}[$\hat\xi^p_n \approx \text{Poisson}$]
	\label{lem:totalvariationestimate}
	Let $\zeta$ denote a Poisson point process on $\R^d$ with finite intensity. Under assumptions 
	$F$, $M$, $P$, $T$, and $U$, it holds for any Borel set $B$ that
\vspace{0.75ex} $$
		\dkr(\hat\xi^{p}_n \cap B, \zeta) 
		\in O\big(\dtv\big(\E[\hat\xi^p_n \cap B],\E[\zeta]\big) 
		+ \rho_{n,m+1} \big).
\vspace{0.75ex} $$
\end{lemma}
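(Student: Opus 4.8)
The plan is to apply the Kantorovich--Rubinstein bound for Poisson $U$-statistics of \cite[Theorem 3.1]{decreusefond} to $\hat\xi_n^p$. As recorded after (\ref{eq:xi_p_hat_def}), each of $\hat\xi_n^2,\hat\xi_n^3,\hat\xi_n^4$ is a Poisson $U$-statistic of order $m$ driven by $\PP_{n\kappa}$ --- they differ only in which (possibly trivial) mark coordinates are appended to $z_\xx$ --- so the argument is the same for the three values of $p$, and restricting to a Borel set $B$ merely inserts an extra indicator $\one\{\varphi_n(\xx,\yy)\in B\}$ into the (measure-valued, indicator-type) kernel, which shrinks its support. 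The cited theorem bounds $\dkr(\hat\xi_n^p\cap B,\zeta)$ by $\dtv(\E[\hat\xi_n^p\cap B],\E[\zeta])$ together with finitely many ``interaction'' terms: for each overlap size $\ell\in\{1,\dots,m-1\}$ there is one term, an integral with respect to $(n\kappa)^{\otimes(2m-\ell)}$ over ordered pairs of $m$-tuples $(\yy,\yy')$ from $\R^d$ having exactly $\ell$ coordinates in common, of the product of the two kernels evaluated at $\yy$ and at $\yy'$.

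To bound the $\ell$-th interaction term I would first discard from both kernel factors every indicator except the cluster indicator, retaining only $\one\{\yy\in C_{r_n}\}\one\{\yy'\in C_{r_n}\}$; this only enlarges the term. The key geometric observation is that if $\yy,\yy'\in C_{r_n}(m)$ have at least one point in common, then $\bigcup_{y\in\yy\cup\yy'}B(y,r_n/2)$ is connected, so the $2m-\ell$ distinct points of $\yy\cup\yy'$ form a $(2m-\ell)$-cluster at level $r_n$. Hence the $\ell$-th term is at most a constant (depending on $m$ and $k$) times $\E\big[\sum_{\zz\in\PP_{n\kappa}^{(2m-\ell)}}\one\{\zz\in C_{r_n}(2m-\ell)\}\big]$, which by Lemma \ref{lem:pruning} is $O(\rho_{n,2m-\ell})$. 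Writing $\rho_{n,2m-\ell}=\rho_{n,m+1}\,(nr_n^d)^{m-1-\ell}$ and using $nr_n^d\to0$ from Assumption M, every interaction term is $O(\rho_{n,m+1})$, the largest being the one with $\ell=m-1$, which is of exact order $\rho_{n,m+1}$. Summing the finitely many terms and adding $\dtv(\E[\hat\xi_n^p\cap B],\E[\zeta])$ gives the claim; the implied constants are uniform in $n$ because they depend only on $m$, $k$, $\sup_W\kappa$, $\int\kappa$, and $\zeta$.

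The main obstacle is the careful invocation of \cite[Theorem 3.1]{decreusefond} in its measure-valued (marked) form: one must read off precisely which error terms it produces --- in particular checking that no non-vanishing ``diagonal'' contribution from a single active $m$-tuple survives, which is exactly the point at which one uses that the limit is a Poisson \emph{process} with the matching intensity rather than a real-valued Poisson variable --- and verify that the theorem's hypotheses hold for each $n$ despite the kernel depending on $n$ through $r_n$, $u_{n,\alpha}$ and the $m$-cluster constraint. Once the error terms have been identified, their reduction to the cluster counts of Lemma \ref{lem:pruning} via the connectivity observation above, together with the elementary estimate $\rho_{n,2m-\ell}=O(\rho_{n,m+1})$, is routine.
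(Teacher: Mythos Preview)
Your proposal is correct and follows the same overall strategy as the paper: apply \cite[Theorem 3.1]{decreusefond} to the $U$-statistic $\hat\xi_n^p$ and show the interaction terms are $O(\rho_{n,m+1})$. The only noteworthy difference lies in how the interaction term for overlap size $\ell$ is bounded. You merge the two overlapping $m$-tuples into a single $(2m-\ell)$-cluster and invoke Lemma~\ref{lem:pruning} once, yielding $O(\rho_{n,2m-\ell})=O(\rho_{n,m+1}(nr_n^d)^{m-1-\ell})$. The paper instead writes the interaction term as $n^j\int(\cdot)^2$, pulls one factor of the inner integral out using the crude pointwise bound $n^{m-j}\int\one_{A_n^p}\le(\kappa_*\vol(B(0,2m)))^{m-j}(nr_n^d)^{m-j}$ (all remaining points lie in $B(y_1,2mr_n)$), and bounds the remaining full integral by $O(\rho_{n,m})$ via Lemma~\ref{lem:pruning}; the product is again $O(\rho_{n,m+1})$ with the worst case at $j=m-1$. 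Both routes are equally short; yours is perhaps slightly more geometric, while the paper's makes the role of the squared inner integral in \cite{decreusefond} more explicit. One point the paper spells out that you only allude to: Assumption~$U$ together with $u_{n,\alpha}\to0$ is used to rewrite $\hat\xi_n^p$ as a sum of \emph{single} Dirac masses indexed by $\yy\in\PP_{n\kappa}^{(m)}\cap A_n^p$ (equation~(\ref{eq:u_statistic})), which is the precise form \cite[Theorem 3.1]{decreusefond} requires.
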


As assumption $M$ ensures $nr_n^d \to 0$, it just remains to find a suitable 
$\zeta$, where the total variation distance of the intensity measures vanishes. 
To that end, recall that $\yy_k = (y_1,\ldots,y_k)$ and let
\vspace{0.75ex} $$
\hat \varphi_n(\yy)=\bigg(y_1,\frac{\lmax^\circ - \hat \ell_{\yy_k,\yy}^\circ}{u_{n,\alpha}},\frac{r_n - r_{\yy_k}}{u_{n,\alpha}}\bigg),
\vspace{0.75ex} $$
i.e., the marked first coordinate $y_1$, rather than the marked center $z_{\yy_k}$. Inspired by the expression in Lemma \ref{lem:expansion} with $\lambda = \kappa$, 
we now define the approximating measure
\vspace{0.75ex} \begin{equation}
	\label{eq:hatE}
	\hat\Theta^4_n(D) = \rho_{n, m}\frac{\binom{m}{k}}{m!} \int_{\R^d} \ldots \int_{\R^d} \one\{ (0,y_2,\ldots,y_m) \in C_1^-, \ \hat \varphi_n((0,y_2,\ldots,y_m)) \in D \} 
	\kappa(y_1)^{m} \ \d y_1 \ldots \d y_m,
\vspace{0.75ex} \end{equation}
where $D \in \mc B(\R^d \times [0,\infty)^2)$. In short, Lemma \ref{lem:removekmix} says that (\ref{eq:hatE}) is a good approximation of the intensity 
measure of $\hat\xi^4_n$ and is proven using Lemma \ref{lem:expansion}.

\begin{lemma}[$\hat\Theta^4_n \approx \E{[}\hat\xi_n^4{]}$]
	\label{lem:removekmix}
	Under assumptions $M$, $P$, $T$, and $U$, it holds for any compact sets $K_1,K_2 \subseteq [0,\infty)$ and $B=\R^d \times K_1 \times K_2$ that
	 \vspace{0.75ex} $$
	 \dtv\big(\E[\hat\xi_n^4\cap B ] ,\hat\Theta^4_n \cap B \big) \in O(r_n).
	 \vspace{0.75ex} $$
\end{lemma}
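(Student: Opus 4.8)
The plan is to turn both sides into integrals over $(y_1,\dots,y_m)\in\R^{dm}$ and compare the integrands. Starting from the intensity formula of Lemma \ref{lem:expansion} with $\lambda=\kappa$, write $\xx_0=(0,y_2,\dots,y_k)$ and $\yy_0=(0,y_2,\dots,y_m)$. Since negativity (and the set $\mc N(\cdot)$) is scale invariant, additive lifetimes are homogeneous, and multiplicative lifetimes are scale invariant, the two mark coordinates of $\varphi_n\big(r_n\xx_0+y_1,\,r_n\yy_0+y_1\big)$ equal $(\lmax^\circ-\ell^\circ_{\xx_0,\yy_0})/u_{n,\alpha}$ and $r_n(1-r_{\xx_0})/u_{n,\alpha}$, which coincide with the mark coordinates appearing in the integrand \eqref{eq:hatE} of $\hat\Theta^4_n$; the only structural differences are (i) the spatial coordinate, equal to $y_1+r_nz_{\xx_0}$ on the $\hat\xi^4_n$ side and to $y_1$ on the $\hat\Theta^4_n$ side, and (ii) the density weight, equal to $\kappa(y_1)\prod_{j=2}^{m}\kappa(r_ny_j+y_1)$ on the $\hat\xi^4_n$ side and to $\kappa(y_1)^m$ on the $\hat\Theta^4_n$ side. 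For (i), $z_{\xx_0}$ is the circumcentre of a point set containing the origin, so $|z_{\xx_0}|\le r_{\xx_0}$, and on the part of the domain selected by the third mark lying in $K_2\subseteq[0,\infty)$ one has $r_{\xx_0}\le1$; hence the spatial discrepancy is at most $r_n$.

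I would then interpolate through the measure $\Theta'_n$ defined exactly as $\hat\Theta^4_n$ but carrying the weight $\kappa(y_1)\prod_{j=2}^{m}\kappa(r_ny_j+y_1)$, and estimate $\dtv(\E[\hat\xi^4_n\cap B],\hat\Theta^4_n\cap B)\le \dtv(\E[\hat\xi^4_n\cap B],\Theta'_n\cap B)+\dtv(\Theta'_n\cap B,\hat\Theta^4_n\cap B)$. For the second term the two measures are pushforwards under the same map of densities differing only in the weight; as $(0,\yy_0)\in C_1(m)$ forces $|y_j|\le m$, every $r_ny_j+y_1$ lies within $2mr_n$ of $y_1$, so on the set where $B(y_1,2mr_n)\subseteq W$ the first modulus-of-continuity estimate of assumption $P$ together with boundedness of $\kappa$ and a telescoping bound give $\big|\kappa(y_1)\prod_{j=2}^{m}\kappa(r_ny_j+y_1)-\kappa(y_1)^m\big|\in O(r_n)$, while on the complementary boundary layer, of $\kappa$-measure $O(r_n)$ by the second estimate of assumption $P$, this difference is bounded crudely by $O(1)$. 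For the first term, the substitution $y_1\mapsto y_1-r_nz_{\xx_0}$ (Jacobian one, the shift being independent of $y_1$) aligns the spatial coordinates and leaves only a shift of every argument of $\kappa$ by at most $r_n\le 2mr_n$, handled by the same two assumption-$P$ estimates and the same boundary-layer split.

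It remains to integrate. Each pointwise bound above is $O(r_n)$ times a $\kappa(y_1)$-weighted indicator which — using the restriction $r_{\xx_0}\le1$ forced by $K_2\subseteq[0,\infty)$ — is supported on $\{(0,\yy_0)\in C_1^-,\ r_{\xx_0}\le1,\ \ell^\circ_{\xx_0,\yy_0}\ge\lmax^\circ-u_{n,\alpha}\sup K_1\}$; integrating and multiplying by the prefactor $\rho_{n,m}\binom{m}{k}/m!$ gives a contribution of order $r_n\,\rho_{n,m}\big(\int\kappa\big)\,g(u_{n,\alpha}\sup K_1)$. Here $\kappa\in L^1$ keeps the spatial factor finite, and since assumption $T$ together with \eqref{eq:alpha_formula} gives $\rho_{n,m}\,g(u_{n,\alpha})=\alpha m!/\binom{m}{k}$, the quantity $\rho_{n,m}\,g(u_{n,\alpha}\sup K_1)$ stays bounded as $n\to\infty$ — immediately when $K_1\subseteq[0,1]$ by monotonicity of $g$, and in general through the near-origin behaviour of $g$ established in Section \ref{sec:5}. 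Both terms are therefore of order $r_n$, which is the claim.

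The step I expect to be the main obstacle is exactly this final balance: the pointwise errors are $O(r_n)$, but they are amplified by the diverging normalisation $\rho_{n,m}$, and it is only the precise calibration of the threshold $u_{n,\alpha}$ in assumption $T$ — forcing the surviving configuration volume to shrink at the reciprocal rate — that keeps the product bounded. A further, lesser point concerns integrability in the spatial coordinate: for compactly supported $\kappa$ it is immediate, while for $W=\R^d$ one leans additionally on the decay implicit in $\kappa\in L^1$, and near $\partial W$ (when $W$ is bounded) the second estimate of assumption $P$ supplies the needed $O(r_n)$ control of the boundary layer.
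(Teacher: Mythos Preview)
Your proposal is correct and follows essentially the same route as the paper: split off an $O(r_n)$ boundary layer via the second estimate in assumption~$P$, replace the mixed weight $\kappa(y_1)\prod_{j\ge2}\kappa(r_ny_j+y_1)$ by $\kappa(y_1)^m$ using the modulus-of-continuity bound, and control the surviving $(y_2,\dots,y_m)$-integral through the calibration $\rho_{n,m}\tbinom{m}{k}/m!\cdot g(u_{n,\alpha})=\alpha$ from assumption~$T$. Your explicit substitution $y_1\mapsto y_1-r_nz_{\xx_0}$ to align the spatial coordinate is a step the paper absorbs without comment into its squeeze $\hat\Theta_n^-\le E_1\le\hat\Theta_n^+$, and you are right that the case $\sup K_1>1$ quietly requires near-origin regularity of $g$ --- the paper in fact invokes assumption~$G$ at that point even though it is not listed among the lemma's hypotheses.
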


From equation (\ref{eq:hatE}) as well as $A \in \mc B(\R^d)$ and $B \in  \mc B(\R^d\times[0,\infty))$, define the restricted measures:
\vspace{0.75ex} \begin{equation}
\label{eq:hat_exp_1_2}
\hat\Theta^2_n(A)
= \hat\Theta^4_n(A\times [0,1] \times [0, \infty)) \quad \text{ and } \quad \hat\Theta^3_n(B) 
= \hat\Theta^4_n(B\times [0, \infty)).
\vspace{0.75ex} \end{equation}

By Lemma \ref{lem:removekmix}, it also 
follows that $\hat\Theta^2_n $ and $\hat\Theta^3_n $ are 
good approximations of $\E[\hat\xi^2_n]$ and $\E[\hat\xi^3_n]$, respectively. 
What remains is to bound the distance between 
$\hat\Theta^p_n $ and $\E[\zeta]$, which
will be done individually in the proofs of Theorems \ref{thm:1} - \ref{thm:3}. To that end, 
we rely on Taylor expansions of order $q$, where $q$ is as defined in assumptions $G$ or $H$, and the Mean 
Value Theorem to obtain two final auxiliary results.

\begin{lemma}[Approximation of $g$]
	\label{lem:approximation_g}
	Let $a>0$. Under assumptions $F$, $M$, $T$, and $G$, 
		\vspace{0.75ex} $$
		\sup_{u \in [0,a]} \left\vert \rho_{n,m}\frac{\binom{m}{k}}{m!}  u_{n,\alpha} g^{(1)}(u_{n,\alpha} u)
		 - \alpha qu^{q-1}\right\vert \in O(u_{n,\alpha}).
		\vspace{0.75ex} $$
\end{lemma}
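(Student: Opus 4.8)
The plan is to invoke assumption $G$ to Taylor expand $g^{(1)}$ around $0$ and then use the scaling relation for $u_{n,\alpha}$ implicit in assumption $T$. First I would recall that assumption $G$ gives $g \in C^{q+1}([0,\lmax^\circ])$ with $g^{(j)}(0) = 0$ for $j < q$ and $g^{(q)}(0) > 0$, so $g^{(1)}(0) = \dots = g^{(q-1)}(0) = 0$ and a Taylor expansion with Lagrange remainder yields, for every $t \in [0, \lmax^\circ]$,
\begin{equation*}
g^{(1)}(t) = \frac{g^{(q)}(0)}{(q-1)!} t^{q-1} + R(t), \qquad |R(t)| \le \frac{\sup_{[0,\lmax^\circ]}|g^{(q+1)}|}{q!} t^{q},
\end{equation*}
and similarly $g(t) = \tfrac{g^{(q)}(0)}{q!}t^q + O(t^{q+1})$. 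Since assumption $T$ says $g(u_{n,\alpha}) = \alpha / (\rho_{n,m}\binom{m}{k}/m!)$ for all large $n$, substituting the expansion of $g$ gives
\begin{equation*}
\rho_{n,m}\frac{\binom{m}{k}}{m!}\Big(\frac{g^{(q)}(0)}{q!}u_{n,\alpha}^q + O(u_{n,\alpha}^{q+1})\Big) = \alpha,
\end{equation*}
hence $\rho_{n,m}\tfrac{\binom{m}{k}}{m!} u_{n,\alpha}^q = \tfrac{q!\,\alpha}{g^{(q)}(0)}(1 + O(u_{n,\alpha}))$. (Here I would cite the fact, already promised in Remark \ref{rem:sparse_extension}(5) and proven in Section \ref{sec:5}, that the thresholds in assumption $T$ are well-defined and that $u_{n,\alpha}\to 0$; that $u_{n,\alpha}\to 0$ is exactly what lets me absorb the remainder terms.)

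Next I would substitute $t = u_{n,\alpha}u$ with $u \in [0,a]$ into the expansion of $g^{(1)}$ and multiply by $\rho_{n,m}\tfrac{\binom{m}{k}}{m!}u_{n,\alpha}$:
\begin{align*}
\rho_{n,m}\frac{\binom{m}{k}}{m!} u_{n,\alpha} g^{(1)}(u_{n,\alpha}u)
&= \rho_{n,m}\frac{\binom{m}{k}}{m!}\Big(\frac{g^{(q)}(0)}{(q-1)!} u_{n,\alpha}^q u^{q-1} + u_{n,\alpha}R(u_{n,\alpha}u)\Big)\\
&= q\,\Big(\rho_{n,m}\frac{\binom{m}{k}}{m!} u_{n,\alpha}^q\frac{g^{(q)}(0)}{q!}\Big)u^{q-1} + \rho_{n,m}\frac{\binom{m}{k}}{m!} u_{n,\alpha}\,R(u_{n,\alpha}u).
\end{align*}
By the previous paragraph the bracketed quantity equals $\alpha(1 + O(u_{n,\alpha}))$, so the main term is $\alpha q u^{q-1}(1 + O(u_{n,\alpha}))$; since $u \in [0,a]$ is bounded, $\alpha q u^{q-1}O(u_{n,\alpha}) = O(u_{n,\alpha})$ uniformly. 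For the remainder term, $|R(u_{n,\alpha}u)| \le C u_{n,\alpha}^q u^q \le C a^q u_{n,\alpha}^q$, so it is bounded by $\rho_{n,m}\tfrac{\binom{m}{k}}{m!} u_{n,\alpha}^{q+1} C a^q = \big(\rho_{n,m}\tfrac{\binom{m}{k}}{m!} u_{n,\alpha}^q\big)C a^q u_{n,\alpha} = O(u_{n,\alpha})$, again using that $\rho_{n,m}\tfrac{\binom{m}{k}}{m!}u_{n,\alpha}^q$ is bounded. Taking the supremum over $u \in [0,a]$ gives the claim. Note assumption $M$ enters only through guaranteeing $\rho_{n,m}\to\infty$ (so that $g(u_{n,\alpha})\to 0$ and hence $u_{n,\alpha}\to 0$), while assumption $F$ is needed only to make $g$ well-defined.

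The main obstacle, and the only genuinely non-routine point, is establishing that $u_{n,\alpha}\to 0$ and that $\rho_{n,m}\tfrac{\binom{m}{k}}{m!}u_{n,\alpha}^q$ converges to the finite positive constant $q!\alpha/g^{(q)}(0)$ — equivalently, that the error in inverting $g$ via its leading Taylor term is indeed lower order. This is what couples the algebraic expansion to the definition of the thresholds; once it is in hand (by reference to Section \ref{sec:5}, where $g^{-1}$ is analyzed), everything else is bookkeeping with the Lagrange remainder. I would be careful to phrase all error estimates uniformly in $u$ over the compact interval $[0,a]$, which is harmless since every power $u^{q-1}, u^q$ is bounded there.
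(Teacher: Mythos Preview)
Your proposal is correct and follows essentially the same approach as the paper: Taylor-expand $g$ and $g^{(1)}$ around $0$ to order $q$, then use the identity $\rho_{n,m}\binom{m}{k}/m! = \alpha/g(u_{n,\alpha})$ from assumption $T$ (together with $u_{n,\alpha}\to 0$) to convert $\rho_{n,m}\binom{m}{k}/m!\,u_{n,\alpha}^q$ into $q!\alpha/g^{(q)}(0)(1+O(u_{n,\alpha}))$. The paper's version is organized a bit differently---it carries explicit mean-value points $z_n,z_n',u_{n,\alpha}'$ and invokes L'H\^opital before an algebraic splitting---but your direct use of the Lagrange remainder gives the same bound with less bookkeeping.
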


\begin{lemma}[Approximation of $h$]
	\label{lem:approximation_h}
	Let $a>0$. Under assumptions $F$, $M$, $T$, and $H$, 
	\vspace{0.75ex} $$
	\sup_{u,v \in [0,a]} 
	\bigg\vert \rho_{n,m}\frac{\binom{m}{k}}{m!}  u_{n,\alpha} h^{(1,1)}(u_{n,\alpha} u, v) - \alpha q u^{q-1}\frac{\tilde h^{(q,1)}(0,v)}{h^{(q,0)}(0,1)} \bigg \vert
	 \in O(u_{n,\alpha}).
	\vspace{0.75ex} $$
\end{lemma}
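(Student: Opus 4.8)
The plan is to adapt the proof of Lemma \ref{lem:approximation_g}, carrying the deathtime variable $v$ along as a parameter that is controlled uniformly on $[0,a]$.

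First I would handle the prefactor $\rho_{n,m}\tfrac{\binom{m}{k}}{m!}$ exactly as in Lemma \ref{lem:approximation_g}. By assumption $T$, $\rho_{n,m}\tfrac{\binom{m}{k}}{m!}g(u_{n,\alpha})=\alpha$ for all large $n$; since $u_{n,\alpha}\to 0$ (see Remark \ref{rem:sparse_extension}(5)) and assumption $G$ holds — it is implied by assumption $H$ — a Taylor expansion of $g$ at $0$ gives $g(u_{n,\alpha})=\tfrac{g^{(q)}(0)}{q!}u_{n,\alpha}^{q}+O(u_{n,\alpha}^{q+1})$. As $h(\cdot,1)=g$, differentiating $q$ times yields $h^{(q,0)}(0,1)=g^{(q)}(0)>0$, and combining the displays gives the two facts $\rho_{n,m}\tfrac{\binom{m}{k}}{m!}u_{n,\alpha}^{q}=\tfrac{q!\,\alpha}{h^{(q,0)}(0,1)}+O(u_{n,\alpha})$ and $\rho_{n,m}\tfrac{\binom{m}{k}}{m!}u_{n,\alpha}^{q+1}=O(u_{n,\alpha})$, which is what is needed at the end.

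Next comes the heart of the argument: a uniform-in-$v$ Taylor expansion of $h^{(1,1)}(\cdot,v)$ near $0$. Using the relation $\tilde h(u,v)=h(u,uv)$ and the Leibniz rule, the partial derivatives $\tilde h^{(j,1)}(0,v)=\partial_u^{\,j}\big(\partial_v\tilde h\big)(0,v)$ can be expressed through the $u$-derivatives of order at most $j-1$ of $u\mapsto h^{(0,1)}(u,uv)$ evaluated at $u=0$; assumption $H$ tells us that these vanish for $j<q$ and that $\tilde h^{(q,1)}(0,v)>0$. It follows that $u\mapsto\partial_v\tilde h(u,v)$ vanishes to order $q$ at $u=0$, so Taylor's theorem in the first variable gives, as $t\to 0$,
\[
h^{(1,1)}(t,v)=\frac{\tilde h^{(q,1)}(0,v)}{(q-1)!}\,t^{q-1}+O(t^{q}),
\]
with the $O(\cdot)$ uniform over $v\in[0,a]$ because the top-order derivative entering the Lagrange form of the remainder is continuous (by the regularity assumed in $H$) and hence uniformly continuous on a product $[0,\delta]\times[0,a]$.

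It then remains to combine the two steps: substituting $t=u_{n,\alpha}u$, multiplying by $\rho_{n,m}\tfrac{\binom{m}{k}}{m!}u_{n,\alpha}$, and invoking the estimates of the first step,
\[
\rho_{n,m}\tfrac{\binom{m}{k}}{m!}\,u_{n,\alpha}\,h^{(1,1)}(u_{n,\alpha}u,v)=\rho_{n,m}\tfrac{\binom{m}{k}}{m!}u_{n,\alpha}^{q}\cdot\frac{\tilde h^{(q,1)}(0,v)}{(q-1)!}u^{q-1}+O\Big(\rho_{n,m}\tfrac{\binom{m}{k}}{m!}u_{n,\alpha}^{q+1}\Big)=\alpha q u^{q-1}\frac{\tilde h^{(q,1)}(0,v)}{h^{(q,0)}(0,1)}+O(u_{n,\alpha}),
\]
uniformly in $(u,v)\in[0,a]^{2}$, using $q!/(q-1)!=q$. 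The main obstacle is the second step: one has to convert the hypotheses on the coupled function $\tilde h(u,v)=h(u,uv)$ into a Taylor expansion in the first variable whose remainder is simultaneously of order $O(t^{q})$ and uniform in $v$, which requires carefully carrying out the chain-rule/Leibniz bookkeeping linking $\tilde h^{(j,1)}(0,v)$ to the derivatives of $h^{(0,1)}(u,uv)$, and checking that the $C^{q+1}$-regularity in assumption $H$ really suffices for the claimed order of the error term; everything else is the same routine Taylor accounting as in Lemma \ref{lem:approximation_g}.
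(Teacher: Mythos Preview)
Your overall strategy is correct and matches the paper's: both adapt the proof of Lemma~\ref{lem:approximation_g} by rewriting the prefactor via assumption~$T$ and the Taylor expansion of $g=h(\cdot,1)$, and then expanding in the first variable using the vanishing conditions in assumption~$H$. The paper phrases the second step as L'H\^opital plus a Mean Value Theorem decomposition (exactly parallel to Lemma~\ref{lem:approximation_g}), whereas you write the Taylor expansion with Lagrange remainder directly; these are equivalent.

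One notational point does need fixing. The lemma as stated contains a typo: the quantity should be $\tilde h^{(1,1)}(u_{n,\alpha}u,v)$, not $h^{(1,1)}(u_{n,\alpha}u,v)$. The paper's own proof works with $\tilde h^{(1,1)}$ throughout, and it is $\tilde h^{(1,1)}$ that arises as the density in the proof of Theorem~\ref{thm:3}. You have carried this typo into your displays, and it creates a genuine gap in your step~2: from ``$u\mapsto\partial_v\tilde h(u,v)$ vanishes to order $q$ at $u=0$'' Taylor's theorem yields
\[
\tilde h^{(1,1)}(t,v)=\frac{\tilde h^{(q,1)}(0,v)}{(q-1)!}\,t^{q-1}+O(t^{q}),
\]
not the same formula for $h^{(1,1)}(t,v)$, which is a different function. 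Once you replace $h^{(1,1)}$ by $\tilde h^{(1,1)}$ in your two displays, your argument is complete and coincides with the paper's.
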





\vspace{0.75ex}
	\subsection{Proof of Theorems \ref{thm:1} to \ref{thm:3}}
\label{sec:4.3}

\bep[Proof of Theorem \ref{thm:1}]
Recall that we want to prove that $
\dkr(\xi_n^{2}, \zeta^{2})\in O(\r_{n, m+1}).
$	
By first applying Lemmas \ref{lem:dkrhat} and \ref{lem:totalvariationestimate},
it follows that
\vspace{0.75ex} $$
\dkr(\xi_n^2 ,\ze^2) \le \dkr(\xi_n^2 , 
\hat{\xi}_n^2) + \dkr(\hat{\xi}_n^2 , \ze^2 ) 
\in O\big(\r_{n, m+1}+ \rho_{n,m+1} +\dtv(\E[\hat\xi^2_n],\E[\zeta^2]) \big).
\vspace{0.75ex} $$
As $ r_n \leq \rho_{n,m+1}$ for all sufficiently large $n$,
it suffices to show that $\dtv(\E[\hat\xi^2_n],\E[\zeta^2])\in
O(r_n)$. Since
\vspace{0.75ex} $$
\frac{\lmax^\circ - \ell^\circ_{(0,y_2,\ldots,y_k),(0,y_2,\ldots,y_m)}}{u_{n,\alpha} } \in [0,1] 
 \iff \ell^\circ_{(0,y_2,\ldots,y_k),(0,y_2,\ldots,y_m)} \geq  \lmax^\circ - u_{n,\alpha},
\vspace{0.75ex} $$
it follows that
\begin{align*}
\hat\Theta_n^2(B) 
&= \rho_{n,m}\frac{\binom{m}{k}}{m!}  \int_{B} 
\int_{\R^d} \ldots \int_{\R^d} \one \{ (0,y_2,\ldots,y_m) \in C_1^-, \ r_{(0,y_2,\ldots,y_k)} \leq r_n \} \times \\
& \quad \quad \quad \quad  \quad \quad \quad \quad  \quad  \quad  \quad  \times
 \one\{\ell^\circ_{(0,y_2,\ldots,y_k),(0,y_2,\ldots,y_m)} \geq  \lmax^\circ - u_{n,\alpha}\}\k(y_1)^{m} 
\ \d y_m \ldots \d y_2 \d y_1.
\end{align*}
By combining this result with Lemma \ref{lem:expansion}, it follows for every Borel set $B$,
\vspace{0.75ex} $$
\hat\Theta_n^2(B)  = \a \int_{B} \k(y_1)^{m}\ \d y_1 = \E[\zeta^2(B)],
\vspace{0.75ex} $$
and therefore invoking Lemma \ref{lem:removekmix} completes the proof.
\enp

\bep[Proof of Theorem \ref{thm:2}]
Recall that for any compact $K \subseteq [0,\infty)$, we want to prove that
\vspace{0.75ex} $$
\dkr\big(\xi_n^3 \cap (\R^d \times K) , \zeta^3 \cap (\R^d \times K) \big)
 \in O\big(\r_{n, m+1}+ \r_{n, m}^{-1/q} \big). 
\vspace{0.75ex} $$
Similar to the proof of Theorem \ref{thm:1}, by Lemmas \ref{lem:dkrhat} and
\ref{lem:totalvariationestimate} with $B=\R^d \times K$,
\begin{align*}
\dkr(\xi_n^3\cap B,\ze^3\cap B) 
& \in O\big(\r_{n, m+1}+ \rho_{n,m+1} +\dtv(\E[\hat\xi^3_n\cap B],\E[\zeta^3\cap B]) \big).
\end{align*}
Further by introducing (\ref{eq:hat_exp_1_2}) via the triangle inequality,
\vspace{0.75ex} $$
\dtv(\E[\hat\xi^3_n\cap B],\E[\zeta^3\cap B] \big) 
\le \dtv(\E[\hat\xi^3_n\cap B],\hat\Theta^3_n\cap B \big)+ 
\dtv(\hat\Theta^3_n\cap B ,\E[\zeta^3\cap B)] \big),
\vspace{0.75ex} $$
then by
 Lemma \ref{lem:removekmix} it suffices to show
$\dtv\big(\hat\Theta^3_n(\cdot \cap B) ,\E[\zeta^3(\cdot \cap B)] \big) \in O(\rho_{n,m}^{-1/q})$.
This is also where the proof diverges from the proof of Theorem \ref{thm:1}:
Let $D \su \R^d$ be Borel and $t \in C$. By (\ref{eq:gdef_int}),
\vspace{0.75ex} $$
\hat\Theta^3_n(D \times [0,t])= \rho_{n,m} \frac{\binom{m}{k}}{m!}  \int_{D} g(u_{n,\alpha} t)
\kappa(y_1)^{m} \ \d y_1.
\vspace{0.75ex} $$
As $g(0)=0$ by assumption $G$, then by the
Fundamental Theorem of Calculus,
\vspace{0.75ex} $$
\hat\Theta^3_n(D \times [0,t])= \rho_{n,m} \frac{\binom{m}{k}}{m!}  \int_{D} \int_0^t u_{n,\alpha} g^{(1)}(u_{n,\alpha} u)  \kappa(y_1)^{m} \ \d u \d y_1,
\vspace{0.75ex} $$
By Dynkin's $\pi$-$\lambda$ theorem, then for any Borel set $A \su \R^d \times K$, 
\vspace{0.75ex} \begin{equation*}
\label{eq:comp_M2}
\hat\Theta^3_n(A)=\rho_{n,m} \frac{\binom{m}{k}}{m!}  \int_{A} u_{n,\alpha} g^{(1)}(u_{n,\alpha} u)  \kappa(y_1)^{m}  \  \d (y_1,u).
\vspace{0.75ex} \end{equation*}
By assumption $P$, then $\kappa$ 
is bounded and integrable. Thus there exists $c>0$ such that in combination with Lemma \ref{lem:approximation_g}, we see that
\vspace{0.75ex} \begin{equation*}
\label{eq:bounddiff2}
\abs{\hat\Theta^3_n(A) - \E[\zeta^3(A)]} 
\leq c
\sup_{u \in [0,\sup K]} \bigg\vert \rho_{n,m}\frac{\binom{m}{k}}{m!}  u_{n,\alpha} g^{(1)}(u_{n,\alpha} u) - \alpha qu^{q-1}\bigg\vert  \in O(u_{n,\alpha}).
\vspace{0.75ex} \end{equation*}
Taking the supremum over Borel sets $A \su \R^d \times K$ and using Remark \ref{rem:sparse_extension}(5) completes the proof.
\enp

\bep[Proof of Theorem \ref{thm:3}]
Recall that for any compact $K_1, K_2 \subseteq [0,\infty)$, we want to prove that
\vspace{0.75ex} $$
\dkr\big(\xi_n^4\cap (\R^d \times K_1 \times K_2), \zeta^4\cap (\R^d \times K_1 \times K_2) \big)
 \in O\big(\r_{n, m+1}+ \r_{n, m}^{-1/q} \big).
\vspace{0.75ex} $$
By Lemmas \ref{lem:dkrhat}, 
\ref{lem:totalvariationestimate} and \ref{lem:removekmix} with $B=\R^d \times K_1 \times K_2$
it (again) suffices to show that 
\vspace{0.75ex} $$
\dtv\big(\hat\Theta^4_n \cap B ,\E[\zeta^4 \cap B] \big) 
\in O(\rho_{n,m}^{-1/q}).
\vspace{0.75ex} $$
Continuing the strategy of the proof of Theorem \ref{thm:2},
let $D \su \R^d$ denote a Borel set and take $s \in C_1$, $st \in C_2$.
By equations (\ref{eq:hatE}), (\ref{eq:hdef_int}) and the fact that $h(u_{n,\alpha} s,u_{n,\alpha} st)=\tilde h(u_{n,\alpha} s,t)$, then
\vspace{0.75ex} $$
\hat\Theta^4_n(D\times [0,s] \times [0,st])= \rho_{n,m} \frac{\binom{m}{k}}{m!}  \int_{D} \tilde h(u_{n,\alpha} s, t) \kappa(y_1)^{m} \ \d y_1.
\vspace{0.75ex} $$
By Assumption $H$, the Fundamental Theorem of Calculus (twice) and Dynkin's $\pi$-$\lambda$ theorem,
\vspace{0.75ex} $$
\hat\Theta^4_n(A)= \rho_{n,m} \frac{\binom{m}{k}}{m!}  \int_{A} u_{n,\alpha} \tilde h^{(1,1)}(u_{n,\alpha} u, v)  \kappa(y_1)^{m} \ \d(u,v,y_1)
\vspace{0.75ex} $$
for any Borel set $A \subseteq \R^d \times K_1 \times K_2$. By assumption $P$ and Lemma \ref{lem:approximation_h}, 
there is a $c>0$ such that
\vspace{0.75ex} $$
\abs{\hat\Theta^4_n(A) - \E[\zeta^4(A)]}
\leq c \sup_{u,v \in [0,\sup (K_1 \cup K_2)]} \left\vert \rho_{n,m}\frac{\binom{m}{k}}{m!}  u_{n,\alpha} 
\tilde h^{(1,1)}(u_{n,\alpha} u, v) - \alpha q u^{q-1}\right\vert \in O(u_{n,\alpha}).
\vspace{0.75ex} $$
Invoking Remark \ref{rem:sparse_extension}(5) and taking the supremum over all
$A \subseteq \R^d \times K_1 \times K_2$ completes the proof.
\enp




\vspace{0.75ex}
	\subsection{Proof of Lemmas \ref{obs:lmaxincreasing} to \ref{lem:approximation_h}}
\label{sec:4.4}
\bep[Proof of Lemma \ref{obs:lmaxincreasing}] 
The statement consists of two claims which we prove separately.

\noindent \textit{Connected:} Let $\yy' \subseteq \yy$ be the $1$-connected component containing $\xx$.
By assumption $F$, we have $\ell^\circ_{\xx,\yy'} = \ell^\circ_{\xx,\yy} > \lmax^\circ(m-1)$, 
and by the definition of $\lmax^\circ(m-1)$, $\yy'$ contains $m$ points in $\R^d$.
Thus, we conclude $\yy' = \yy$, and subsequently, $\yy$ is 1-connected. 

\noindent \textit{Monotonicity:} Since $\lmax^\circ(m)\ge \lmax^\circ(m-1)$, suppose towards a contradiction that $\lmax^\circ(m)
= \lmax^\circ(m-1)$. Choose $\e>0$ in accordance with assumption $U$ and
$\yy' \in \R^{d(m-1)}$ and $\xx \in \mc N(\yy')$ with $r_\xx \leq 1$ and
$\ell_{x,\yy'} \geq \lmax^\circ(m-1)-\e = \lmax^\circ(m) - \e$. 
Place $y_0$ at a distance of 2 from $\yy'$.
Hence $\xx \in \mc N((y_0,\yy'))$ with $r_\xx \leq 1$ and $\ell_{\xx,\yy'}, \ell_{\xx,(y_0,\yy')} \geq \lmax^\circ(m) - \e$,
which contradicts assumption $U$.
\enp

\begin{proof}[Proof of Lemma \ref{lem:equalmeasures}]
By definition, the two measures are equal when $j = m$, so consider the case where 
$j < m$. Then $A^{(m)}= \emptyset$ by definition and therefore
$
\hat \xi^p_n=0.
$
Assume, for the sake of contradiction, that there is a $(k-2)$-dimensional large-lifetime cycle $\xx$ in $A$. Choose $\e
> 0$ in accordance with assumption $U$ and Lemma \ref{obs:lmaxincreasing}, 
as well as $n_0 \geq 1$ in accordance with Lemma \ref{lem:properties_of_g}, such
that 
\vspace{0.75ex} \begin{equation}
\label{eq:eps_and_N}
\inf_{n \ge n_0}(\lmax^\circ(m) - u_{n,\alpha})\ge \lmax^\circ(m) - \eps > \lmax^\circ(j).
\vspace{0.75ex} \end{equation}
By (\ref{eq:eps_and_N}), 
$
\ell^\circ_{\xx,A} > \lmax^\circ(j),
$
which is clearly a contradiction.
\end{proof}

\begin{proof}[Proof of Lemma \ref{lem:expansion}] 
The statement consists of two parts which we prove separately.

\noindent \textit{General $\lambda$:} Recall we want to prove that for any Borel set $D$,
\vspace{0.75ex} \begin{equation} 
\label{eq:general_lambda}
\begin{aligned}
\E \big[ \hat{\xi}^{4}_{n,\lambda }(D)\big] & = \rho_{n,m}\frac{\binom{m}{k}}{m!} 
\int \ldots  \int \one\{(0,y_2,\ldots,y_m)\in C_1^- \} \lambda(y_1) \prod_{j=2}^{m} \lambda(r_n y_j+y_1) \times \\
& \quad \quad \times \one\{ \varphi_n\big(r_n (0,y_2,\ldots,y_k) + y_1 , r_n (0,y_2,\ldots,y_m) + y_1 \big)\in D\}
\ \d y_1 \ldots \d y_m.
\end{aligned}
\vspace{0.75ex} \end{equation} 
First, introduce the set $\mc I$ and function $\pi_I$ for $I \in \mc I$ as
\vspace{0.75ex} $$
\mc I =\{I = (i_1, \ldots,i_{k}): 1 \leq i_1 < \ldots < i_{k} \leq m\} \quad \text{and} \quad \pi_I(y_1,\ldots,y_{m})=(y_{i_1},\ldots,y_{i_{k}}).
\vspace{0.75ex} $$
For $I \in \mc I$ and $r>0$, introduce also the set
$
C_r^I = \{ \yy \in C_r \colon \ \pi_I(\yy) \in \mc N(\yy) \}.
$
By Mecke's formula,
\vspace{0.75ex} \begin{align*}
\E \big[ \hat{\xi}^{4}_{n,\lambda }(D)\big]= n^m \f{1}{m!} \sum_{I \in \mc I}
& \int \ldots  \int \one\{(0,y_2,\ldots,y_m)\in C_{r_n}^I \} \prod_{j=1}^{m} \lambda(y_j) \times \\
& \quad \quad \quad \times \one\{ \varphi_n\big(\pi_I((y_1,\ldots,y_m) ), (y_1,\ldots,y_m) \big)\in D\}
\ \d y_1 \ldots \d y_m.
\vspace{0.75ex} \end{align*}
For each $I \in \mc I$, there is a permutation $\sigma_I$ such that
$\pi_I(\sigma_I(\yy))= \yy_k$, where $\yy_k = (y_1,\ldots,y_k)$. As permutations are continuously differentiable,
invertible, and the determinant of the Jacobian is $\pm 1$,
\vspace{0.75ex} \begin{equation*}
\begin{aligned}
\E \big[ \hat{\xi}^{4}_{n,\lambda }(D)\big]= n^m \f{\binom{m}{k}}{m!}
& \int \ldots  \int \one\{(0,y_2,\ldots,y_m)\in C_{r_n}^- \} \prod_{j=1}^{m} \lambda(y_j) \times \\
& \quad \quad \quad  \quad \quad \times \one\{ \varphi_n\big((y_1,\ldots,y_k) , (y_1,\ldots,y_m) \big)\in D\}
\ \d y_1 \ldots \d y_m.
\end{aligned}
\vspace{0.75ex} \end{equation*}
Applying the change of variables $\phi(w) = r_n w + y_1$ and 
using that under assumption $F$, additive lifetimes are homogenous, and multiplicative lifetimes are scale-invariant, 
this yields (\ref{eq:general_lambda}).

\noindent \textit{Special $\lambda\equiv 1$:} Recall we want to prove that
\vspace{0.75ex} \begin{equation*}
\E \big[ \hat{\xi}^{2}_{n,1}([0,1]^d)\big]
= \rho_{n,m}\frac{\binom{m}{k}}{m!} \int 
\one\big\{(0,\yy)\in C_1^-, \ \ell^\circ_{(0,\yy)_k,(0,\yy)}\ge \lmax^\circ - u_{n,\alpha }, \ r_{(0,\yy)_k} \leq r_n \big\}
\d\yy = \alpha.
\vspace{0.75ex} \end{equation*}
Linearity of the center $z(\cdot)$ and (\ref{eq:general_lambda})
imply that
\vspace{0.75ex} \begin{align*}
\E \big[ \hat{\xi}^2_{n,1}([0,1]^d)\big]
= \rho_{n,m}\frac{\binom{m}{k}}{m!} \int \ldots  \int& 
\one\big\{(0,y_2,\ldots,y_m)\in C_1^-, \ \ell^\circ_{(0,y_2,\ldots,y_k),(0,y_2,\ldots,y_m)}\ge \lmax^\circ - u_{n,\alpha } \big\} \times \\
& \ \times \one\{ r_{(0,y_2,\ldots,y_k)} \leq r_n, \ y_1 \in [0,1]^d-r_n z_{(0,y_2,\ldots,y_k)}\} \ \d y_1 \ldots \d y_m.
\vspace{0.75ex} \end{align*}
Plugging in the choice of $u_{n,\alpha}$ in assumption $T$ and using (\ref{eq:gdef_int}) completes the proof.
\end{proof}

\begin{proof}[Proof of Lemma \ref{lem:pruning}]
	Recall we want to prove that
	\vspace{0.75ex} $$
	\E\bigg[ \sum_{\xx \in \PP_{n\kappa}^{(j)}} \one\{\xx \in C_{r_n}(j) \} \bigg] \leq C j^{j-2}C_1^{j-1} \rho_{n,j}.
	\vspace{0.75ex} $$
	We now perform a similar pruning argument as in the proof of Lemma \ref{lem:Ln_bound}: Let $\mc T_j$ denote all spanning trees on the vertices $\{1,\ldots,j\}$. Since $\xx =
	(x_1,\ldots,x_j)$ is connected, it contains a spanning tree $T \in \mc T_j$ and
	hence by applying the union bound and Mecke's formula, then
	\vspace{0.6ex} $$
	\E\bigg[ \sum_{\xx \in \PP_{n\kappa}^{(j)}} \one\{\xx \in C_{r_n}(j) \} \bigg]  \leq 
	n^j \sum_{T \in \mc T_j} \int \prod_{(i,i') \in T} 
	\one\{\Vert x_i-x_{i'} \Vert  < r_n  \} \prod_{i=1}^j \kappa(x_i) \ \d \xx.
	\vspace{0.6ex} $$
	For $T \in \mc T_j$, select now a vertex $i_0$ of degree 1 in $T$ with unique
	neighbor $i_1$. Then
	\vspace{0.6ex} \begin{align*}
	\E\bigg[ \sum_{\xx \in \PP_{n\kappa}^{(j)}} \one\{\xx \in C_{r_n}(j) \} \bigg] 
	& \leq 
	n^j r_n^d \vol(B(0,1)) \sup_{y \in W} \kappa(y)
	\sum_{T \in \mc T_j} \int \ldots \iint \ldots \int  \prod_{\substack{i=1 \\ i \neq i_0}}^j \kappa(x_i) \times  \\
	& \quad \quad \quad \quad \quad \quad 
	\times \prod_{\substack{(i,i') \in T \\ i \neq i_0, i' \neq i_1}} 
	\one\{\Vert x_i-x_{i'} \Vert < r_n \}
	\ \d x_1 \ldots \d x_{i_0-1} \d x_{i_0+1} \ldots \d x_j.
	\vspace{0.6ex} \end{align*}
	Iterating this process on the remaining vertices of $T$
	and letting $c = \int \kappa(y) \ \d y < \infty$, then
	\vspace{0.75ex} $$
	\E\bigg[ \sum_{\xx \in \PP_{n\kappa}^{(j)}} \one\{\xx \in C_{r_n}(j) \} \bigg]  \leq 
	c n^j \big( r_n^d \vol(B(0,1))\sup_{y \in W} \kappa(y) \big)^{j-1} \sum_{T \in \mc T_j} 1.
	\vspace{0.75ex} $$
	By Cayley's formula, the number $ \abs{\mc T_j}$ of spanning trees is $j^{j-2}$ and therefore completes the proof.
	\end{proof}		
	
	\begin{proof}[Proof of Lemma \ref{lem:dkrhat}]
	Recall we want to prove that
	$
	\dkr(\xi_n^p ,\hat\xi_n^p) \in O(\r_{n, m+1}),
	$
	and note that it suffices to prove that $\dkr(\xi_n^4 ,\hat\xi_n^4) \in O(\r_{n, m+1})$.
	By the definition of the Kantorovich-Rubinstein distance,
	\vspace{0.75ex} \begin{equation*}
	\dkr(\xi_n^4 ,\hat\xi_n^4)
	\leq \E[\dtv(\xi_n^4 , \hat\xi_n^4)] 
	= \E \bigg[\max_{A \in \mc B(\R^d \times [0,\infty)^2)} 
	\vert \xi_n^4(A) - \hat\xi_n^4(A) \vert\bigg]. 
	\vspace{0.75ex} \end{equation*} 
	By the union bound and Lemma \ref{lem:equalmeasures}, it suffices to
	compute the difference of $\xi_n$ and $\hat\xi_n$ on $j$-clusters
	for $j \geq m+1$ and on these
	$\xi_n$ and $\hat\xi_n$ are both bounded by $\binom{j}{k}$ 
	by assumption $U$. Thus
	\vspace{0.75ex} $$
	\dkr(\xi_n^4 ,\hat\xi_n^4)
	\leq \sum_{j=m+1}^\infty \frac{2 \binom{j}{k}}{j!} \E\bigg[ \sum_{\xx \in \PP_n^{(j)}} 
	\one\{\xx \in C_{r_n}(j)\}\bigg].
	\vspace{0.75ex} $$
	By Lemma \ref{lem:pruning}, it holds that
	\vspace{0.75ex} $$
	\dkr(\xi_n^4 ,\hat\xi_n^4)  \leq \sum_{j=m+1}^\infty 
	\frac{2 c j^{j-2} (\sup_{y \in W} \kappa(y) \vol(B(0,1)))^{j-1}\rho_{n,j}}{k!(j-k)!}.
	\vspace{0.75ex} $$
	Using that $j! \leq j^j$ and $x \leq \exp(x)$, it follows by Stirling's
	formula that there is a $c' > 0$ such that
	\vspace{0.75ex} $$
	\dkr(\xi_n^4 ,\hat\xi_n^4)
	\leq 2cc' \sum_{j=m+1}^\infty \big(\exp(k+1)\sup_{y \in W} \kappa(y)\vol(B(0,1))\big)^{j-1} \rho_{n,j}.
	\vspace{0.6ex} $$
	Pulling out the $m$ first factors as well as the one factor of $n$, then
	\vspace{0.6ex} \begin{equation}
	\label{eq:geometric}
	\begin{aligned}
	\dkr(\xi_n^4 ,\hat\xi_n^4) &\leq 2cc'\big(\exp(k+1)\sup_{y \in W} \kappa(y) \vol(B(0,1))\big)^{m} \rho_{n,m+1} \times  \\
	& \quad \quad \quad \quad \quad \quad \quad \quad \times  \sum_{j=0}^{\infty} \big(\exp(k+1)\sup_{y \in W} \kappa(y) \vol(B(0,1))nr_n^d\big)^j.
	\end{aligned}
	\vspace{0.6ex} \end{equation}
	Since assumption M implies $nr_n^d \rightarrow 0$ as $n \to \infty$,
	the geometric series in (\ref{eq:geometric}) is finite for
	all sufficiently large $n$, which then completes the proof.
	\end{proof}
	
	\begin{proof}[Proof of Lemma \ref{lem:totalvariationestimate}]
		Recall for any Borel set $B$ in the domain of $\hat\xi^p_n$, we want to prove that 
		\vspace{0.6ex}
		$$
		\dkr(\hat\xi^p_n \cap B , \zeta) 
		\in O\big(\dtv\big(\E[\hat\xi^p_n\cap B ],\E[\zeta]\big) 
		+ \rho_{n,m+1} \big).
		\vspace{0.6ex}
		$$
		Let $\varphi_n^4(\xx,\yy) = \varphi_n(\xx,\yy)$, $\varphi_n^3(\xx,\yy) = (z_\xx,(\lmax^\circ - \hat \ell_{\xx,\yy}^\circ)/u_{n,\alpha})$, and 
		$\varphi_n^2(\xx,\yy) = z_\xx$, and define the set
		\vspace{0.75ex}
		$$
		A_n^p = \bigg\{ \yy \in C_{r_n}
		\colon \sum_{\xx \in \yy^{(k)}} 
		\one\{\xx \in \mc N(\yy)\} \de\{\varphi_n^p(\xx,\yy)\} > 0 \bigg\},
		\vspace{0.75ex}
		$$
		which is a set closed under coordinate permutations. By Assumption $U$ and since $u_{n,\alpha} \to 0$, it holds for all sufficiently large $n$ that
		\vspace{0.75ex}
		\begin{equation}
		\label{eq:u_statistic}
		\hat\xi_{n,\lambda}^p(D)  = \f1{m!} \sum_{ \yy\in \PP_{n\lambda}^{(m)}\cap A_n^p} \delta\bigg\{\f1{k!} \sum_{ \xx \in  \yy^{(k)}} 
		\one\big\{ \xx \in \mc N(\yy), \ \varphi_n^p(\xx,\yy) \in D\big\} \varphi_n^p(\xx,\yy) \bigg\}(D).
		\vspace{0.75ex}
		\end{equation}
		
		Define the set $r(A_n^p)$ as 
		\vspace{0.6ex}
		\begin{equation}
		\label{eq:rA_n}
		\begin{aligned}
		r(A_n^p) =  & \max_{1 \leq j \leq m-1} n^j  \int \ldots \int \prod_{i=1}^{j}\kappa(y_i) \ \times \\
		& \quad \times \bigg(n^{m-j} \int \ldots \int \one_{A_n^p}(y_1, \ldots, y_m)
		\prod_{i=j+1}^{m} \kappa(y_i) \ \d y_m \ldots \d y_{j+1} \bigg)^2  \ \d y_j \ldots \d y_1.
		\end{aligned}
		\vspace{0.6ex}
		\end{equation}
		
		If $\yy \in A_n^p$, then Assumption $F$ implies $y_{j+1},\dots, y_{m} \in B(y_1,2mr_n)$. Hence, using Assumption $P$,
		\begin{equation}
		\label{eq:max_bound}
		n^{m-j}\int \ldots \int \one_{A_n^p}(y_1, \ldots, y_m)
		\prod_{i=j+1}^{m} \kappa(y_i) \ \d y_m \ldots \d y_{j+1} \leq \big(\k_*\vol(B(0,2m))\big)^{m-j}(nr_n^d)^{m-j},
		\end{equation}
		where $\kappa_* = \sup_{y \in W} \kappa(y) < \infty$. Moreover, since $\yy \in A_n^p$ implies $\yy \in C_{r_n}$, then by Mecke's formula and Lemma \ref{lem:pruning}, it holds that
		\vspace{0.75ex}
		\begin{equation}
		\label{eq:cluster_bound}
		n^m  \int \one_{A_n^p}(\yy)
		\prod_{i=1}^{m} \kappa(y_i) \ \d \yy \leq c m^{m-2}\big(\kappa_*\vol(B(0,1))\big)^{m-1} \rho_{n,m}.
		\vspace{0.75ex}
		\end{equation}
		
		Now plugging in (\ref{eq:max_bound}) and (\ref{eq:cluster_bound}) into (\ref{eq:rA_n}) gives that
		\vspace{0.75ex}
		\begin{equation}
		\label{eq:rn_bound}
		r(A_n^p)
		\leq   c m^{m-2}\big(\kappa_*\vol(B(0,1))\big)^{m-1} \rho_{n,m}  \max_{1\le j \le m-1} \bigg(\big(\k_*\vol(B(0,2m))\big)^{m-j}(nr_n^d)^{m-j}\bigg).
		\vspace{0.75ex}
		\end{equation}
		
		By Assumption $M$, then $n r_n^d \to 0$, so the maximum in (\ref{eq:rn_bound}) is attained with $j=m-1$, and since $\rho_{n,m}nr_n^d = \rho_{n,m+1}$,
		 then $r(A_n^p) \in O(\rho_{n,m+1})$. Finally, using (\ref{eq:u_statistic}) and \cite[Theorem 3.1]{decreusefond},
		\vspace{0.75ex}
		\begin{equation*}
		\label{eq:poisest}
		\dkr(\hat \xi_n^p \cap B , \zeta) \leq \dtv(\E[\hat \xi_n^p  \cap B],\E[\zeta])  + \f{2^{m+1}}{m!} r(A_n^p) \in O(\dtv(\E[\hat \xi_n^p  \cap B],\E[\zeta]) + \rho_{n,m+1}),
		\vspace{0.75ex}
		\end{equation*}
		which then completes the proof.
		\end{proof}		

\begin{proof}[Proof of Lemma \ref{lem:removekmix}]
Recall that for $K_1,K_2$ compact and $B=\R^d \times K_1 \times K_2$, we want to prove that $
\dtv\big(\E[\hat\xi_n^4\cap B] ,\hat\Theta^4_n \cap B \big) \in O(r_n).
$
Introduce
\vspace{0.75ex} $$
W_n=\{y \in W \colon B(y,2(m-1)r_n) \su W\},
\vspace{0.75ex} $$
as well as $\tilde \rho_{n,m} = \rho_{n,m} \binom{m}{k}/(m!) $ and $\phi(u) = r_n u + y_1$.
Let $A \in \mc B(B)$. By Lemma \ref{lem:expansion},
\vspace{0.75ex} \begin{equation}
	\label{eq:expanded_3}
	\begin{aligned}
		\E & \big[ \hat{\xi}^4_n(A)\big] = \tilde \rho_{n,m}
		\int_{W_n}\int \one\{ (0,\yy) \in C_1^-, \varphi_n(\phi((0,\yy)_k), \phi((0,\yy)))\in A \}\kappa(y_1)
		 \prod_{j=2}^{m} \kappa(\phi(y_j)) \ \d\yy \d y_1 + \\
		& \quad \tilde \rho_{n,m}
		\int_{W_n^c}\int \one\{ (0,\yy) \in C_1^-, \varphi_n(\phi((0,\yy)_k), \phi((0,\yy)))\in A \}\kappa(y_1)
		 \prod_{j=2}^{m} \kappa(\phi(y_j)) \ \d\yy \d y_1 =: E_1 + E_2.
	\end{aligned}
\vspace{0.75ex} \end{equation}
Note that by Lemma \ref{lem:expansion} and assumption $P$, it follows that $E_2 \in O(r_n)$.
		If $(0,\yy)$ is 1-connected, then assumption $F$ implies that
		$r_n y_2+y_1,\ldots,r_n y_{m} +y_1 \in B(y_1,2(m-1)r_n)$ and this ball
		is contained in $W$ whenever $y_1 \in W_n$ by definition of $W_n$.
		Let 
\vspace{0.75ex} $$
d_n = \sup_{y ,y' \in W \colon |y-y'| \le 2(m-1)r_n} 
\abs{\k(y) - \k(y') }
\vspace{0.75ex} $$
and note that when $y_1 \in W_n$, then
\vspace{0.75ex} \begin{equation*}
 (\kappa(y_1)-d_n)^{m-1} \wedge 0 \leq \prod_{j=2}^{m} \kappa(r_ny_j+y_1)  \leq (\kappa(y_1)+d_n)^{m-1}.
	\vspace{0.75ex} \end{equation*}
By assumption $P$, then $d_n \to 0$ and by the 
The Binomial Theorem, there 
is a $c > 0$ such that
\vspace{0.75ex} \begin{equation}
	\label{eq:densitybounds}
	 \kappa(y_1)^{m-1}-c d_n  \leq \prod_{j=2}^{m} \kappa(r_ny_j+y_1) \leq \kappa(y_1)^{m-1}+c d_n.
	\vspace{0.75ex} \end{equation}
Now define the quantities $\hat\Theta^+_n,\hat\Theta^-_n$ by
	\vspace{0.75ex} $$
	\hat\Theta^\pm_n(A)=\tilde \rho_{n,m}
	\int \one\{ (0,\yy) \in C_1^-, \varphi_n(\phi((0,\yy)_k), \phi((0,\yy)))\in A \} \kappa(y_1)(\kappa(y_1)^{m-1}\pm c d_n) \ \d \yy.
	\vspace{0.75ex} $$
	Then by combining (\ref{eq:densitybounds}) and (\ref{eq:expanded_3}), we have that 
\vspace{0.75ex} 
\begin{equation}
	\label{eq:squeezeIM}
	\hat\Theta^-_n(A) + E_2 \leq \E \big[ \hat{\xi}_n^3(A) \big] \leq \hat\Theta^+_n(A) + E_2.
\vspace{0.75ex} 
\end{equation}
Let $t_* = \sup K_1$ and $c' = \int \kappa(y_1) \d y_1$. By assumptions $G$ and $T$, as well as a Taylor expansion, $\tilde \rho_{n,m}\in O(u_{n,\alpha}^{-1/q})$ and $g(t_* u_{n,\alpha}) \in O(u_{n,\alpha}^q)$. Hence, by (\ref{eq:hatE}), Lemma \ref{lem:expansion}, and assumption $P$,
\vspace{0.75ex} 
\begin{align*}
	\abs{\hat\Theta^\pm_n(A) - \hat\Theta^4_n(A) } 
	\leq c c' d_n \tilde \rho_{n,m} g(t_* u_{n,\alpha}) \in O(r_n).
\vspace{0.75ex} 
\end{align*}
Using the fact that $a \leq b \leq c$ implies 
$\vert b \vert \leq \vert a \vert \vee \vert c \vert$, it follows that 
\vspace{0.75ex} 
$$
\abs{\E \big[ \hat{\xi}_n^3(A) \big]- \hat\Theta^4_n(A)} 
\leq \bigg(\abs{\hat\Theta^-_n(A) - \hat\Theta^4_n(A)}\bigg) \vee
\bigg( \abs{ \hat\Theta^+_n(A) - \hat\Theta^4_n(A)} \bigg) 
\in O(r_n).
\vspace{0.75ex} 
$$ 
Hence, taking the supremum over all Borel sets $A$ completes the proof.
\end{proof}

\bep[Proof of Lemma \ref{lem:approximation_g}]
Let $q\geq 1$ be chosen in accordance with assumption $G$. 
By a Taylor expansion of $g$ and using the mean value form of the remainder,
there exists $z_n \in [0,u_{n,\alpha}]$ such that
\vspace{0.75ex} 
\begin{equation}
	\label{eq:taylor_g}
g(u_{n,\alpha}) = \frac{g^{(q)}(z_n)}{q!}u_{n,\alpha}^{q}.
\vspace{0.75ex} 
\end{equation}
By Lemma \ref{lem:expansion}, it 
follows that $ \rho_{n,m}\binom{m}{k}/(m!) = \alpha/ g(u_{n,\alpha}) $. 
Hence, using (\ref{eq:taylor_g}) for any $u \in [0,a]$,
\vspace{0.75ex} 
\begin{equation}
	\label{eq:frac_of_g_diff}
	\bigg\vert \rho_{n,m}\tfrac{\binom{m}{k}}{m!}  u_{n,\alpha} g^{(1)}(u_{n,\alpha} u) - \alpha qu^{q-1} \bigg\vert 
	= \alpha \bigg \vert \frac{q! g^{(1)}(u_{n,\alpha} u)}{u_{n,\alpha}^{q-1} g^{(q)}(z_n)} 
	- qu^{q-1} \bigg\vert.
\vspace{0.75ex} 
\end{equation}
By Remark \ref{rem:sparse_extension} and continuity, $g^{(q)}(z_n) \rightarrow
g^{(q)}(0)$ as $n \to \infty$. By L'Hôpital's rule,
\vspace{0.75ex} 
$$
\frac{g^{(1)}(u_{n,\alpha} u)}{u_{n,\alpha}^{q-1}} \longrightarrow \frac{g^{(q)}(0)}{(q-1)!} u^{q-1},
\vspace{0.75ex} 
$$
and thus there exists $g_* > 0$ such that $g^{(1)}(u_{n,\alpha} u)u_{n,\alpha}^{-(q-1)} \leq g_*$ uniformly. By the expansion,
\vspace{0.75ex} 
\begin{align*}
\frac{q!g^{(1)}(u_{n,\alpha} u)}{g^{(q)}(z_n)u_{n,\alpha}^{q-1}} - qu^{q-1} 
&= \bigg(\frac{q!}{g^{(q)}(z_n)}
-\frac{q!}{g^{(q)}(0)}\bigg)\frac{g^{(1)}(u_{n,\alpha} u)}{u_{n,\alpha}^{q-1}}
 + \bigg(\frac{g^{(1)}(u_{n,\alpha} u)}{u_{n,\alpha}^{q-1}}
  - \frac{g^{(q)}(0)}{(q-1)!} \bigg)\frac{q!u^{q-1} }{g^{(q)}(0)}.
\vspace{0.75ex} 
\end{align*}
Using a Taylor expansion of $g$ and the Mean Value Theorem (twice), there exist numbers $z_n' \in [0,z_n]$ and $u_{n,\alpha}' \in [0,u_{n,\alpha}]$,
where $z_n', u_{n,\alpha}' \leq u_{n,\alpha} \rightarrow 0$ by Lemma \ref{lem:properties_of_g},
such that
\vspace{0.75ex} 
\begin{equation}
	\label{eq:final_g_eq}
\bigg\vert \frac{q!g^{(1)}(u_{n,\alpha} u)}{g^{(q)}(z_n)u_{n,\alpha}^{q-1}} - qu^{q-1} \bigg\vert 
\leq \bigg\vert  \frac{q!g^{(q+1)}(z_n')}{\big(g^{(q)}(z_n')\big)^2}\bigg\vert z_n g_*
 + \bigg\vert\frac{g^{(q+1)}(u_{n,\alpha}')}{(q-1)!}\bigg\vert u_{n,\alpha} \frac{q!u^{q-1} }{g^{(q)}(0)} \in O(u_{n,\alpha}),
\vspace{0.75ex} 
\end{equation}
since $g^{(q)}, g^{(q+1)}$ are
continuous. Combining equations (\ref{eq:frac_of_g_diff}) and (\ref{eq:final_g_eq}) completes the proof.
\end{proof}

\bep[Proof of Lemma \ref{lem:approximation_h}]
Let $q\geq 1$ be chosen in accordance with Assumption $H$.
By equation (\ref{eq:taylor_g}),
\vspace{0.75ex} 
$$
h(u_{n,\alpha},1) = \frac{h^{(q,0)}(z_n,1)}{q!}u_{n,\alpha}^{q}.
\vspace{0.75ex} 
$$
Therefore, by Lemma \ref{lem:expansion} and equation (\ref{eq:hdef_int}), it follows 
for any $u,v \in [0,a]$ that
\vspace{0.75ex} 
\begin{equation*}
	\label{eq:frac_of_h_diff}
	\bigg\vert \rho_{n,m}\frac{\binom{m}{k}}{m!}  u_{n,\alpha} \tilde h^{(1,1)}(u_{n,\alpha} u,v) - \alpha qu^{q-1}\frac{\tilde h^{(q,1)}(0,v)}{h^{(q,0)}(0,1)}\bigg\vert 
	= \alpha \bigg \vert \frac{\tilde h^{(1,1)}(u_{n,\alpha} u,v) q!}{u_{n,\alpha}^{q-1} \tilde h^{(q,0)}(z_n,1)} 
	- qu^{q-1} \frac{\tilde h^{(q,1)}(0,v)}{h^{(q,0)}(0,1)}  \bigg\vert.
\vspace{0.75ex} 
\end{equation*}
By Lemma \ref{lem:properties_of_g} and continuity, $h^{(q,0)}(z_n,1) \rightarrow
h^{(q,0)}(0,1)$ as $n \to \infty$. By L'Hôpital's rule,
\vspace{0.75ex} 
$$
\frac{\tilde h^{(1,1)}(u_{n,\alpha} u,v)}{u_{n,\alpha}^{q-1}} \longrightarrow \frac{\tilde h^{(q,1)}(0,v)}{(q-1)!} u^{q-1}.
\vspace{0.75ex} 
$$
Rewriting and using the Mean Value Theorem in the same manner as in Lemma \ref{lem:approximation_g},
it follows by Lemma \ref{lem:properties_of_g} and the same line of argumentation 
as in Lemma \ref{lem:approximation_g} that 
\vspace{0.75ex} 
$$
\bigg\vert \rho_{n,m}\frac{\binom{m}{k}}{m!}  u_{n,\alpha} \tilde h^{(1,1)}(u_{n,\alpha} u,v) - \alpha qu^{q-1}\frac{\tilde h^{(q,1)}(0,v)}{h^{(q,0)}(0,1)}\bigg\vert
\in O(u_{n,\alpha}),
\vspace{0.75ex} 
$$
which completes the proof.
\enp






\vspace{1ex}
\section{Verifying assumptions}
\label{sec:5}
This section is dedicated to discussing the extent to which the model and topological assumptions 
in the unbounded and $m$-sparse regimes are valid. In Section \ref{sec:5.1}, we verify that the thresholds in assumption $T'$ are well-defined, 
which was the only point of concern in the unbounded regime. 
In Section \ref{sec:5.2}, we turn to the $m$-sparse regime. First, 
we analogously verify that the thresholds in assumption $T$ are well-defined and provide 
sufficient conditions for $\kappa$ and $W$ in assumption $P$. Next, we
partially verify the uniqueness of the near-maximal lifetime in assumption $U$. 
The remaining combinations of $d$, $k$, and $m$ are left as an open problem. 
Finally, we also partially verify the regularity conditions on $h$ in assumption $H$
in dimension $d=2$ and for additive lifetimes. Similarly as before, we leave $d \geq 3$ and multiplicative lifetimes as an open problem.
\subsection{The unbounded regime}




\label{sec:5.1} 
\begin{proposition}[Assumption $T'$]
	\label{prop:T'}
	Under assumption $F'$, it holds that $v : [1,\infty) \to [0,\infty)$ in (\ref{eq:v_def}) 
	is continuous and strictly decreasing. Hence, $v$ has a continuous inverse $v^{-1}$.
\end{proposition}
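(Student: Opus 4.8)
The plan is to establish, in this order, that $v$ is finite and non-increasing, that it is continuous, and that it is strictly decreasing on $[1,\infty)$; the existence of a continuous inverse is then the elementary fact that a strictly monotone continuous map between intervals is a homeomorphism onto its image.

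Finiteness and monotonicity are immediate: whenever $\xx \in \mc N(\PP)$ one has $\ell^*_{\xx,\PP}\ge 1$, so the integrand in (\ref{eq:v_def}) is bounded by $1$, giving $v(\ell)\le v(1)\le \vol_{\T^2}(\T^{2\cdot 3}) = 1$, while $\ell \mapsto \one\{\ell^*_{\xx,\PP_n^\xx}\ge \ell\}$ is pointwise non-increasing (measurability of $\xx \mapsto P(\xx\in\mc N(\PP_n^\xx),\, \ell^*_{\xx,\PP_n^\xx}\ge\ell)$ being routine). For \emph{continuity}, fix $\ell_0 \ge 1$. Left-continuity is immediate from dominated convergence, since as $\ell \uparrow \ell_0$ the integrands decrease pointwise to $\one\{\ell^*_{\xx,\PP_n^\xx}\ge \ell_0\}$. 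For right-continuity, dominated convergence gives $\lim_{\ell\downarrow \ell_0} v(\ell) = \int_{\T^{2\cdot 3}} P(\xx \in \mc N(\PP_n^\xx),\ \ell^*_{\xx,\PP_n^\xx} > \ell_0)\,\d\xx$, so it suffices to show $\int_{\T^{2\cdot 3}} P(\xx \in \mc N(\PP_n^\xx),\ \ell^*_{\xx,\PP_n^\xx} = \ell_0)\,\d\xx = 0$ for every $\ell_0 \ge 1$. I would prove this using $\ell^*_{\xx,\PP_n^\xx} = r_\xx / b_{\xx,\PP_n^\xx}$, where $r_\xx$ is the circumradius of $\xx$ and, in the \v Cech filtration, $b_{\xx,\PP_n^\xx} = \tfrac12 \|p-q\|$ for the positive edge $\{p,q\}\subseteq \PP_n^\xx$ whose insertion creates the cycle killed by $\xx$. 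If $\{p,q\}$ meets $\PP_n$, then $\ell^*_{\xx,\PP_n^\xx} = \ell_0$ forces a point of $\PP_n$ onto a fixed sphere of radius $2r_\xx/\ell_0$ about a point of $\PP_n^\xx$, which is a Poisson-null event by Mecke's formula as spheres are Lebesgue-null; if $\{p,q\}$ is an edge of $\xx$, then $\ell^*_{\xx,\PP_n^\xx}$ equals the deterministic quantity $2r_\xx/\|x_i-x_j\|$, and $4 r_\xx^2 = \ell_0^2 \|x_i - x_j\|^2$ is a non-trivial algebraic relation among the coordinates of $\xx$, hence cuts out a Lebesgue-null subset of $\T^{2\cdot 3}$. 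Summing over the finitely many cases yields the claim, so $v$ is continuous.

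For \emph{strict monotonicity}, let $1 \le \ell < \ell'$ and note $v(\ell) - v(\ell') = \int_{\T^{2\cdot 3}} P(\xx \in \mc N(\PP_n^\xx),\ \ell \le \ell^*_{\xx,\PP_n^\xx} < \ell')\,\d\xx$, so it suffices to exhibit a Lebesgue-positive family of base triples $\xx$ on which this probability is positive. I would do this by an explicit construction in the spirit of the discretisation argument in the proof of Lemma \ref{prop:asymptotics}: fix a circle $C$ of suitable radius $\rho$, mark $N+3$ positions near $C$ (three forming an acute triangle, the remaining $N$ interleaved so as to subdivide the arcs into comparable pieces with one slightly longer gap), let $\xx$ range over a small product of balls about the three triangle positions, and require $\PP_n$ to contain exactly one point near each of the remaining $N$ positions and no point elsewhere in a large ball — an event whose probability is bounded below uniformly in $\xx$ by a product of Poisson weights. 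On this event the $N+3$ near-circle points bound a polygonal loop born at roughly half the largest consecutive gap and killed at roughly $\rho$, since every triple of points on $C$ has circumradius $\rho$; hence the unique $2$-simplex pairing with that loop has multiplicative lifetime close to $2\rho/(\text{largest gap})$, a ratio that ranges over $(1, 1/\sin(\pi/N)]$ as the marked positions vary and so can be placed in any prescribed window $(\ell,\ell') \cap (1,\infty)$ for $N$ large and the balls small. Therefore $v(\ell) - v(\ell') > 0$.

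It follows that $v$ is a continuous strictly decreasing function on the interval $[1,\infty)$, hence by the intermediate value theorem a bijection from $[1,\infty)$ onto the interval $v([1,\infty)) = \big(\lim_{\ell\to\infty}v(\ell),\ v(1)\big] = (0, v(1)]$ — the lower endpoint being $0$ because $\ell^*_{\xx,\PP_n^\xx} < \infty$ almost surely (another application of dominated convergence), and $v(1) > 0$ then being forced by strict monotonicity — and the inverse of a strictly monotone continuous bijection between intervals is again continuous. In particular $v^{-1}$ is defined and continuous on $(0, v(1)]$, which contains $\alpha/n^3$ for all sufficiently large $n$, exactly as assumption $T'$ requires. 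The main obstacle is the strict-monotonicity step: one must exhibit genuine point configurations realising a negative simplex with multiplicative lifetime in an arbitrary prescribed window and with positive probability, which requires simultaneous control of the birth- and death-times of the associated loop together with an argument identifying which simplex is the negative one — this is where the circle-subdivision construction, and the perturbation needed to break ties among simplices of equal circumradius, have to be handled with care.
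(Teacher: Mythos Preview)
Your proposal is essentially correct and follows the same strategy as the paper: continuity via a null-level-set argument and strict monotonicity via an explicit circle construction. The paper's version differs only in execution --- for continuity it argues that, given the birth edge $\{x_1',x_2'\}$, the third vertex $x_3$ is forced onto a circle and hence a null set, while for strict monotonicity it places $x_1,x_2,x_3$ and the auxiliary points directly on a common circle, invokes the characterization $\mc N(\PP)=\{\xx:\PP\cap B(z_\xx,r_\xx)=\emptyset,\ z_\xx\in\text{int}(\text{conv}(\xx))\}$ from Remark~\ref{rem:cech_properties}(1) to certify that $\xx$ itself is negative, and then appeals to Hausdorff stability of persistence to control birth and death under perturbation into half-balls chosen so that $B(z_\xx,r_\xx)$ remains empty; this sidesteps the ``which simplex is the negative one'' issue you flag at the end, since the characterization gives negativity of $\xx$ directly without needing to identify the pairing.
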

\begin{proof}
	The arguments in this proof are inspired by the proof of \cite[Lemma 2]{CH20}.

	\noindent \textit{Continuity:} First, we show that $v$ is continuous. Let $\ell \geq 1$ and 
	$\xx = (x_1,x_2,x_3) \in \mc N(\PP_n^\xx)$ with $\ell^*_{\xx,\PP_n^\xx} = \ell$.
	By the definition of the \v{C}ech filtration, there exist $x_1',x_2' \in \PP_n^\xx$ such that
	\[
	\ell = r_\xx/\big(\Vert x_1' - x_2' \Vert/2\big),
	\]
	i.e., the 1-cycle destroyed by $\xx$ is born when
	the edge between $x_1$ and $x_2$ is added. Assuming, without loss of generality, that $x_1',x_2' \neq x_3$, it
	follows that $x_3 \in D(x_1,x_2)$, where $D(x_1,x_2)$ is the union of the boundaries of the two balls
	of radius $\ell \Vert x_1' - x_2' \Vert/2 $ with $x_1$ and $x_2$ on the boundary.
	Since $D(x_1,x_2)$ is a null set with respect to $\vol(\cdot)$, it follows by Tonelli's theorem that
	\vspace{0.6ex} \begin{equation}
		\label{eq:point_lifetime_mass_torus}
		\int_{\T^{2\cdot 3}}\one\{\xx \in \mc N(\PP_n^\xx)\}
		\one\{ \ell^*_{\xx,\PP_n^\xx} = \ell \} \ \d \xx = 0.
	\vspace{0.6ex} \end{equation}
	Hence, for any $\e \geq 0$, it
	follows by (\ref{eq:point_lifetime_mass_torus}) and the Monotone Convergence Theorem that
	\vspace{0.6ex} $$
	\abs{v(\ell) - v(\ell + \e)} \leq \int_{\T^{2\cdot 3}}\one\{\xx \in \mc N(\PP_n^\xx)\}
	\one\{\ell \leq \ell^*_{\xx,\PP_n^\xx}\leq \ell + \e \}\
	\d \xx \longrightarrow 0
	\vspace{0.6ex} $$ 
	as $\e \to 0$. Using an analogous argument for $\e < 0$, we conclude $v$ is continuous.
	
	\noindent \textit{Strictly decreasing:} Note that it suffices to show that with positive Lebesgue measure,
	we have a 1-cycle with multiplicative lifetime inside $(\ell - \e,\ell + \e)$ for any $\ell > 1$ and $\e > 0$.
	Consider $\xx=(x_1,x_2,x_3)$ all on the boundary of the unit ball $B(0,1)$ such that $\Vert x_1 - x_2 \Vert = 1/\ell$ and
	such that $\Vert x_1 - x_3 \Vert = \Vert x_2 - x_3 \Vert > 1/\ell$. Now add points $x_4, x_5,\ldots, x_m$ on the circle such
	that the distance between any two neighbors is less than $1/\ell$ and with $x_1$ and $x_2$ remaining neighbors (see Figure \ref{fig:constructed_lifetime}).
	Since the interior
	of $B(z_\xx,r_\xx)$ is empty and $z_\xx$ is in the interior of $\text{conv}(\xx)$, then $\xx$ is negative by Remark \ref{rem:cech_properties}(1).
	Moreover, the multiplicative lifetime of $\xx$ is $1/(1/\ell)=\ell$ if there are no other points inside $B(0,2)$.
	If, for $\e' > 0$, we let each point $x_i$ for $i=1,\ldots,m$ vary inside a small half-ball of radius $\e'$ centered at $x_i$ (such that $B(z_\xx,r_\xx)$ 
	remains empty), it follows by 
	the Hausdorff stability of persistent homology (\cite[Corr. 11.29]{chazal}) that the birthtime and deathtime
	only change by at most $\e'$, and by choosing $\e'$ small enough, it follows that 
	the multiplicative lifetime is inside $(\ell - \e,\ell + \e)$.
	Scaling the points to fit inside the torus does not change the multiplicative lifetime.
\end{proof}
	\vspace{-2ex}

	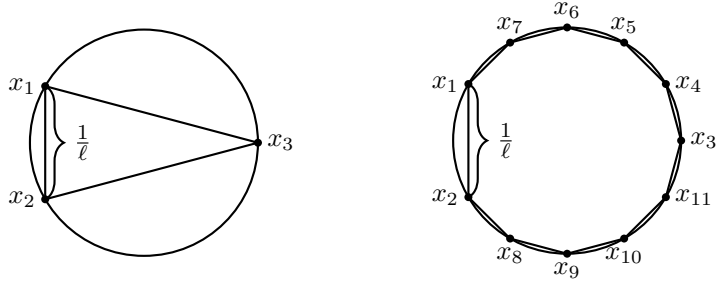
\begin{figure}[h]
    \centering

    \begin{subfigure}{0.25\textwidth}
        \centering
        \begin{tikzpicture}[scale=0.75]
            \draw[thick] (0,0) circle (2cm);

            \coordinate (A) at ({2*cos(0)},{2*sin(0)}); 
            \coordinate (B) at ({2*cos(150)},{2*sin(150)}); 
            \coordinate (C) at ({2*cos(210)},{2*sin(210)}); 

            \fill[black] (A) circle (2pt) node[right] {$x_3$};
            \fill[black] (B) circle (2pt) node[left] {$x_1$};
            \fill[black] (C) circle (2pt) node[left] {$x_2$};

            \coordinate (E) at ({2*cos(90)},{2*sin(90)}); 
            \coordinate (I) at ({2*cos(270)},{2*sin(270)}); 
            \fill[black] (I) circle (0.001pt) node[below = 5pt] {${ \ }$};

            \draw[thick] (A) -- (B) -- (C) -- cycle;
            \draw[decorate, decoration={brace, amplitude=7pt}, thick] (B) -- (C) node[midway, right = 7 pt] {\large $\frac{1}{\ell}$};

        \end{tikzpicture}
    \end{subfigure}
    \hspace{0.1\textwidth}
    \begin{subfigure}{0.25\textwidth}
        \centering
        \begin{tikzpicture}[scale=0.75]
            \draw[thick] (0,0) circle (2cm);

            \coordinate (A) at ({2*cos(0)},{2*sin(0)}); 
            \coordinate (B) at ({2*cos(150)},{2*sin(150)}); 
            \coordinate (C) at ({2*cos(210)},{2*sin(210)}); 

            \coordinate (D) at ({2*cos(120)},{2*sin(120)}); 
            \coordinate (E) at ({2*cos(90)},{2*sin(90)}); 
            \coordinate (F) at ({2*cos(60)},{2*sin(60)}); 
            \coordinate (G) at ({2*cos(30)},{2*sin(30)}); 

            \coordinate (H) at ({2*cos(240)},{2*sin(240)}); 
            \coordinate (I) at ({2*cos(270)},{2*sin(270)}); 
            \coordinate (J) at ({2*cos(300)},{2*sin(300)}); 
            \coordinate (K) at ({2*cos(330)},{2*sin(330)}); 

            \fill[black] (A) circle (2pt) node[right] {$x_3$};
            \fill[black] (B) circle (2pt) node[left] {$x_1$};
            \fill[black] (C) circle (2pt) node[left] {$x_2$};

            \fill[black] (D) circle (2pt) node[above] {$x_7$};
            \fill[black] (E) circle (2pt) node[above] {$x_6$};
            \fill[black] (F) circle (2pt) node[above] {$x_5$};
            \fill[black] (G) circle (2pt) node[right] {$x_4$};

            \fill[black] (H) circle (2pt) node[below] {$x_8$};
            \fill[black] (I) circle (2pt) node[below] {$x_9$};
            \fill[black] (J) circle (2pt) node[below] {$x_{10}$};
            \fill[black] (K) circle (2pt) node[right] {$x_{11}$};

            \draw[thick] (C) -- (B) -- (D) -- (E) -- (F) -- (G) -- (A) -- (K) -- (J) -- (I) -- (H) -- cycle;
            \draw[decorate, decoration={brace, amplitude=7pt}, thick] (B) -- (C) node[midway, right = 7 pt] {\large $\frac{1}{\ell}$};

        \end{tikzpicture}
    \end{subfigure}

    \caption{Placement of the points $x_4,\ldots,x_m$ in the proof of Proposition \ref{prop:T'} such that $\Vert x_1 - x_2 \Vert $ remains the largest distance.}
    \label{fig:constructed_lifetime}
\end{figure}




\subsection{The $m$-sparse regime}

\label{sec:5.2} 
\vspace{1ex}
\begin{proposition}[Assumption $T$]
	\label{lem:properties_of_g}
	Under assumptions $F$, $M$ and $U$, it holds that $g$ in 
	 (\ref{eq:gdef_int})
	is continuous and strictly increasing on $\big(0,\lmax^\circ(m) - \lmax^\circ(m-1)\big)$. 
	Hence $g$ has a strictly increasing, continuous inverse $g^{-1}$.
	Additionally, under assumption $T$, then $u_{n,\alpha} \to 0$.
\end{proposition}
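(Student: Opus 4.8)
The plan is to follow the blueprint of Proposition~\ref{prop:T'}, organising the argument into three parts: continuity of $g$, strict monotonicity of $g$, and the limit $u_{n,\alpha}\to0$. \textit{Continuity:} First I would record that the domain of integration in (\ref{eq:gdef_int}) is bounded, since $(0,\yy)\in C_1$ forces every $y_j$ to lie within Euclidean distance $m$ of the origin, so the integrand is dominated by the indicator of a fixed ball. The heart of the matter is the null-level-set estimate
\[
\int_{\R^{d(m-1)}}\one\big\{(0,\yy)\in C_1,\ (0,\yy)_k\in\mc N((0,\yy)),\ r_{(0,\yy)_k}\le 1,\ \ell^\circ_{(0,\yy)_k,(0,\yy)}=c\big\}\,\d\yy=0
\]
for every $c\in\R$. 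To see this I would condition on the finitely many combinatorial types the filtration of $(0,\yy)$ can exhibit around the birth- and death-scales of the feature identified with $(0,\yy)_k$ (which pair realises the birthtime, which vertices determine the smallest enclosing ball, the order in which the relevant simplices enter). On each such cell the birthtime is a fixed half pairwise distance, while the deathtime $r_{(0,\yy)_k}$ is a real-analytic, non-constant function of the $k$ vertices of $(0,\yy)_k$; since $k\ge3$ exceeds the two birthtime vertices, there is a support vertex of $(0,\yy)_k$ outside the birthtime pair, and translating it changes the deathtime — hence the lifetime — exactly as the point $x_3$ on the sphere $D(x_1,x_2)$ does in Proposition~\ref{prop:T'}. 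Thus the level set lies, after fixing the remaining coordinates and on each cell, in the zero set of a non-identically-zero analytic function; Tonelli's theorem and summation over the finitely many cells give the display. Dominated convergence then upgrades the obvious monotonicity of $g$ to continuity on $[0,\lmax^\circ(m)-\lmax^\circ(m-1))$, and the display with $c=\lmax^\circ$ (using $\ell^\circ_{(0,\yy)_k,(0,\yy)}\le\lmax^\circ$) yields $g(0)=0$.

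\textit{Strict monotonicity:} The null-level-set estimate forces the continuous map $\yy\mapsto\ell^\circ_{(0,\yy)_k,(0,\yy)}$ to be nowhere locally constant on the open set of valid configurations, because an open set on which it were constant would sit inside one level set. Hence, near any valid $\yy_0$ with lifetime $\ell_0$ and for any $\e>0$, the set $\{\ell^\circ\in(\ell_0-\e,\ell_0+\e)\}$ contains a non-empty open set and so has positive Lebesgue measure. It remains to exhibit, for each target $\ell^*\in(\lmax^\circ(m-1),\lmax^\circ(m))$, \emph{some} valid configuration with lifetime close to $\ell^*$. Starting from a configuration realising a lifetime $\ell_1\in(\ell^*,\lmax^\circ(m))$ — available from the definition of $\lmax^\circ(m)$, after a tiny perturbation into general position that preserves near-maximality by Hausdorff stability of persistent homology (\cite[Corr.\ 11.29]{chazal}) — I would continuously shrink the circumscribed ball of the negative $k$-simplex $\xx$ (for instance moving one of its vertices toward the circumcenter), which decreases the deathtime and hence the lifetime continuously, while keeping $z_\xx\in\text{int}(\text{conv}(\xx))$, keeping $r_\xx\le1$, keeping the ball empty of the other points, and keeping connectedness automatic by Lemma~\ref{obs:lmaxincreasing} as long as the lifetime stays above $\lmax^\circ(m-1)$; the Intermediate Value Theorem then produces a valid configuration with lifetime $\ell^*$. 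Finally, for $u_1<u_2$ in $(0,\lmax^\circ(m)-\lmax^\circ(m-1))$, set $\ell^*=\lmax^\circ-\tfrac{u_1+u_2}{2}$ and choose $\e$ small enough that $(\ell^*-\e,\ell^*+\e)\subseteq[\lmax^\circ-u_2,\lmax^\circ-u_1)$, so that
\[
g(u_2)-g(u_1)\ \ge\ \int_{\R^{d(m-1)}}\one\big\{(0,\yy)\ \text{valid},\ \ell^\circ_{(0,\yy)_k,(0,\yy)}\in(\ell^*-\e,\ell^*+\e)\big\}\,\d\yy\ >\ 0.
\]
Therefore $g\colon[0,\lmax^\circ(m)-\lmax^\circ(m-1))\to[0,\sup g)$ is a continuous strictly increasing bijection, and $g^{-1}$ is continuous and strictly increasing.

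\textit{The limit:} Under assumption $T$ we have, for all large $n$, $u_{n,\alpha}=g^{-1}\big(\alpha/(\rho_{n,m}\binom{m}{k}/m!)\big)$ with $\rho_{n,m}=n(nr_n^d)^{m-1}$. Assumption $M$ gives $\rho_{n,m}\to\infty$, so the argument of $g^{-1}$ tends to $0$ and eventually lies in the range $[0,\sup g)$; continuity of $g^{-1}$ at $0$ together with $g^{-1}(0)=0$ (which follows from $g(0)=0$) then gives $u_{n,\alpha}\to0$.

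\textit{Main obstacle:} The delicate step is the Intermediate-Value argument in the monotonicity part: one must verify that the chosen deformation keeps the distinguished $k$-simplex genuinely \emph{negative} throughout (in dimensions $d\ge3$, "critical" need not imply "negative", cf.\ Remark~\ref{rem:cech_properties}), keeps $r_\xx\le1$, and never lets another point of the configuration enter the shrinking circumscribed ball, over the whole range of lifetimes between $\ell^*$ and $\ell_1$. Everything else is a routine adaptation of tools already employed for Proposition~\ref{prop:T'} — null level sets, Hausdorff stability, and monotone/dominated convergence over a bounded domain.
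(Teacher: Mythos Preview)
Your continuity argument and the final limit argument are essentially the paper's: null level sets plus dominated (or monotone) convergence, and $g(0)=0$ combined with $\rho_{n,m}\to\infty$. The interesting divergence is in the strict-monotonicity step.

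Your route is: (a) null level sets imply the lifetime map is nowhere locally constant on valid configurations, hence any small open neighbourhood of a valid configuration has positive measure intersected with a short lifetime window; (b) manufacture a valid configuration at the target lifetime by continuously shrinking the circumscribed ball of $\xx$ and invoking the Intermediate Value Theorem. You correctly flag that (b) is the delicate point: one must keep $\xx$ negative, keep $r_\xx\le 1$, keep the ball empty, and this becomes awkward once $d\ge 3$ (critical $\ne$ negative) or once the filtration is Vietoris--Rips (where the deathtime is a longest edge, not a circumradius).

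The paper sidesteps exactly this obstacle. It also starts from a single valid configuration $\yy'$ with lifetime in the target window, but then, instead of your nowhere-locally-constant argument, it invokes the $\delta$-matching guaranteed by Hausdorff stability of persistence diagrams: for any $\yy$ with $\|y_i-y_i'\|<\delta/2$, there exists a matched negative simplex $\rho(\xx',\yy)\in\mc N(\yy)$ whose birth and death are within $\delta$ of those of $\xx'$. The matched simplex may differ combinatorially from $\xx'$, so there is no need to verify that any particular simplex stays negative along a deformation --- stability hands you \emph{some} negative simplex with the right lifetime. A union bound over $I\in\mc I$, the permutation $\sigma_I$ and a translation then recover the form of $g$ and give $g(\e_2)-g(\e_1)>0$ directly. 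In short, the paper trades your IVT-deformation for the abstract persistence matching, which absorbs precisely the ``keep it negative'' bookkeeping you identified as the main obstacle. Conversely, your deformation argument is more explicit about why a configuration with the target lifetime exists at all --- a point the paper passes over by citing Lemma~\ref{obs:lmaxincreasing} rather tersely.
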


\begin{proof} We split the proof into three parts: continuity, strictly increasing, and convergence.

	\noindent \textit{Continuity:} First in the \v Cech case, we can use the same arguments in Proposition \ref{prop:T'} to prove that 
	the event of a specific additive and multiplicative is a Lebesgue null set, which by the Monotone Convergence Theorem implies continuity.
	As for the Vietoris-Rips case, we can replace the null set argument with the corresponding argument in \cite[Lemma 2]{CH20} (which again works for both additive and multiplicative lifetimes) and
	then use the Monotone Convergence Theorem to show continuity.

\noindent \textit{Strictly increasing:} Let $ \e_1, \e_2 \in  (0,\lmax^\circ(m) - \lmax^\circ(m-1)\big)$ with $\e_1 < \e_2 $ and 
let $\yy' \in \R^{dm}$ for which there exists
$\xx' \in \mc N(\yy')$ with $r_{\xx'} < 1$ and $\lmax^\circ(m) - \e_2 \leq \ell_{\xx',\yy'} \leq \lmax^\circ(m) - \e_1$ and
$\ell_{\xx',\yy'}>\lmax^\circ(m-1)$ which is possible by Lemma \ref{obs:lmaxincreasing}.
Next choose a sufficiently small $\delta > 0$ and large $R>0$ such that
\begin{enumerate}[label=(\roman*)]
	\item 
    $
    \displaystyle
    \begin{cases}
        \big(\lmax^\circ(m) - \e_2 \big) \vee \lmax^\circ(m-1) < (r_{\xx'} \pm \delta) - (b_{\xx',\yy'} \pm \delta) < \lmax^\circ(m) - \e_1, & \text{if } \circ = + \\
        \big(\lmax^\circ(m) - \e_2 \big) \vee \lmax^\circ(m-1) < (r_{\xx'} \pm \delta) / (b_{\xx',\yy'} \pm \delta) < \lmax^\circ(m) - \e_1, & \text{if } \circ = * \\
    \end{cases}
    $
	\item $b_{\xx',\yy'} > \delta$,
	\item $r_{\xx'} + \delta < 1$,
	\item $2\delta < r_{\xx'} - b_{\xx',\yy'}$,
	\item $z_{\yy}\in B(0, R)$ whenever $\Vert y_i - y_i' \Vert < \delta/2$ for all $1 \leq i \leq m$.
\end{enumerate}
Let $ \yy = (y_1,\dots, y_{m})$ satisfy that $\Vert y_i - y_i' \Vert < \delta/2$ 
for all $ 1 \leq i \leq m$. Hence the Hausdorff distance
between $ \yy$ and $ \yy'$ is less than $\delta/2$ and by Hausdorff stability of
persistent homology (see \cite[Corr. 11.29]{chazal}), 
there exist $\delta$-matching $\rho(\cdot, \yy)$, which recall satisfies that
\vspace{0.75ex} \begin{equation}
	\label{eq:delta_property}
\abs{r_\xx- r_{\rho(\xx,\yy)}} < \delta \quad \text{ and } \quad \abs{b_{\xx,\yy'} - b_{\rho(\xx,\yy),\yy}} < \delta \quad \text{ for all } \xx \in \mc N_1(\yy'),
\vspace{0.75ex} \end{equation}
where $\mc N_1(\yy') = \{\xx \in \mc N(\yy') \colon r_\xx < 1\}$. By (\ref{eq:delta_property}), (ii) and (iv), then $r_{\rho(\xx',\yy)} > b_{\rho(\xx',\yy),\yy} > 0$.
By (\ref{eq:delta_property}) and (i), then
$\ell_{\rho(\xx',\yy),\yy} > \lmax^\circ(m-1)$, so by Lemma
\ref{obs:lmaxincreasing} this implies that
$\yy \in C_1$. Moreover (iii) and (\ref{eq:delta_property}),
implies that $r_{\rho(\xx',\yy)}< 1$. Finally, then (v) implies $z_{\yy} \in  B(0, R)$. 
Combining all of this,
\vspace{0.75ex} \begin{align*}
0 & < 
\int  \prod_{i=1}^{m}
\one\big\{ \Vert y_i - y_i' \Vert < \tfrac \delta 2\big\} \ \d \yy \\
& \leq \int \one_{C_1}(\yy) \one_{\mc N_1(\yy)}(\rho(\xx',\yy)) \one_{B(0,R)}(z_{\yy}) 
\one\{\lmax^\circ(m) - \e_2 \leq \ell_{\rho(\xx',\yy),\yy}\leq \lmax^\circ(m) - \e_1\} \ \d \yy.
\vspace{0.75ex} \end{align*}
Introducing $\mc I$ and $\pi_I$ as in the proof of Lemma \ref{lem:expansion} and applying the union bound, then
\vspace{0.75ex} \begin{equation*}
	0 < 
	\sum_{I \in \mc I}
	\int  \one_{C_1}(\yy) \one_{\mc N_1(\yy)}(\rho(\pi_I(\yy),\yy)) \one_{B(0,R)}(z_{\yy}) 
\one\{\lmax^\circ(m) - \e_2 \leq \ell_{\rho(\pi_I(\yy),\yy),\yy}\leq \lmax^\circ(m) - \e_1\} \ \d \yy.
\vspace{0.75ex} \end{equation*}
For each $I \in \mc I$, there is a permutation $\sigma_I$ such that
$\pi_I(\sigma_I(\yy))= \yy_k$, where $\yy_k = (y_1,\ldots,y_k)$. Hence applying this substitution and subsequently translating by $(0,y_1,\ldots,y_1)$, it follows that
\vspace{0.75ex} \begin{equation*}
	0 < 
	\int \one_{F_0^1}(\yy)\one_{B(0,R)}(z_{\yy}) 
	\one\{\lmax^\circ(m) - \e_2 \leq \ell^\circ_{\yy_k,\yy}\leq \lmax^\circ(m) - \e_1\} \d \yy =  \vol(B(0,R)) \big(g(\e_2) - g(\e_1)\big),
\vspace{0.75ex} \end{equation*}
Division by $ \vol(B(0,R)) > 0$ proves the claim.

\noindent \textit{Convergence:} Note that $g(0)=0$ from the fact that $\ell = \lmax^\circ$ is a Lebesgue null-set. Thus, by assumption $M$, we see that
$
u_{n,\alpha} = g^{-1}\big(\tfrac{\alpha}{\rho_{n,m} \binom{m}{k}/m! }\big) \to 
0$
as $n \to \infty$. This completes the proof.
\end{proof}

\begin{proposition}[Assumption $P$]
	\label{prop:assumption_P}
	Assume $W=\R^d$ or $W$ is convex and bounded, and assume $\kappa$ is bounded, integrable, and continuously differentiable on the interior of $W$. Then assumption $P$ holds.
\end{proposition}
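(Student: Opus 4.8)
The statement to verify, Assumption $P$, bundles together that $\kappa$ is bounded, integrable and supported on a Borel set $W$ --- which is exactly the hypothesis here --- with the two displayed estimates. So the plan is to establish
\begin{equation*}
\sup_{y,y'\in W \colon |y-y'| \le 2m r_n} \abs{\k(y) - \k(y') } \in O(r_n)
\qquad\text{and}\qquad
\int_W \one\{B(y,2m r_n) \nsubseteq W\}\,\kappa(y)\,\d y \in O(r_n),
\end{equation*}
handling the first by a Lipschitz-type argument using convexity of $W$, and the second by a boundary-layer volume estimate. Since $m$ is fixed, constants in the $O(\cdot)$ may depend on $m$, $\kappa$ and $W$; and we may assume $W$ has nonempty interior, for otherwise $W$ lies in a hyperplane, $\vol(W)=0$, and the underlying point process is a.s.\ empty, rendering everything vacuous.

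\noindent\textit{First estimate.} I would derive this from the mean value theorem. For $y,y'\in W$ the segment $[y,y']$ lies in $W$ by convexity, and for interior points $\kappa(y')-\kappa(y)=\int_0^1 \nabla\kappa\big(y+t(y'-y)\big)\cdot(y'-y)\,\d t$, so that $\abs{\kappa(y)-\kappa(y')}\le \abs{y-y'}\,\sup_z\norm{\nabla\kappa(z)}$ after extending by continuity to the closure. Here $\nabla\kappa$ is bounded --- this is the real content of the differentiability hypothesis: automatic from compactness of $\overline W$ in the bounded case (reading ``$C^1$'' up to $\partial W$), and assumed outright when $W=\R^d$. Hence the supremum in question is at most $2m r_n\,\norm{\nabla\kappa}_\infty = O(r_n)$.

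\noindent\textit{Second estimate.} If $W=\R^d$ the indicator $\one\{B(y,2mr_n)\nsubseteq W\}$ is identically $0$, so assume $W$ convex and bounded. Bounding $\kappa$ above by $\kappa_*:=\sup_W\kappa<\infty$, it suffices to show $\vol\big(\{y\in W\colon B(y,\e)\nsubseteq W\}\big)=O(\e)$ for $\e=2mr_n$. I would compare $W$ with its inner parallel body $W_{-\e}:=\{y\colon B(y,\e)\su W\}$. Fix $x_0$ and $\rho>0$ with $B(x_0,\rho)\su W$; for $t\in[0,1]$ and $w\in W$, convexity gives $(1-t)w+tx_0+B(0,t\rho)=(1-t)w+tB(x_0,\rho)\su(1-t)W+tW=W$, so $(1-t)W+tx_0\su W_{-t\rho}$ and therefore $\vol(W_{-t\rho})\ge(1-t)^d\vol(W)$. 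Taking $t=\e/\rho\le 1$ yields
\begin{equation*}
\vol\big(\{y\in W\colon B(y,\e)\nsubseteq W\}\big)=\vol(W)-\vol(W_{-\e})\le\big(1-(1-\e/\rho)^d\big)\vol(W)\le \frac{d\,\vol(W)}{\rho}\,\e = O(r_n),
\end{equation*}
and multiplying by $\kappa_*$ gives the claim.

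\noindent\textit{Main obstacle.} The first estimate is routine once one grants boundedness of $\nabla\kappa$; the step requiring the most care --- and the only place convexity of $W$ is genuinely used --- is the \emph{linear} (not merely $o(1)$) bound on the volume of the $2mr_n$-wide boundary collar, obtained above via the inclusion $(1-t)W+tx_0\su W_{-t\rho}$. The minor subtlety worth flagging in the write-up is precisely why the hypothesis pairs continuous differentiability with either boundedness of $W$ or $W=\R^d$, namely to guarantee a global Lipschitz constant for $\kappa$.
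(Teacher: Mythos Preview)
Your proof is correct and, for the modulus-of-continuity part, essentially identical to the paper's (mean value theorem along the segment, using convexity). Where you genuinely diverge is in the boundary estimate: the paper cites a quermassintegral inequality for inner parallel bodies from the convex-geometry literature (Cifre--Saor\'in) to get $\vol(W)-\vol(W_{-\e})\le d\,W_1(W;B(0,1))\,\e$, whereas you give a self-contained elementary argument via the scaling inclusion $(1-t)W+tx_0\subseteq W_{-t\rho}$ around an interior ball $B(x_0,\rho)\subseteq W$, yielding $\vol(W)-\vol(W_{-\e})\le (d\vol(W)/\rho)\,\e$. Your route is lighter and avoids the external reference, at the cost of a constant that depends on the inradius rather than the first quermassintegral; for the purpose of an $O(r_n)$ bound this is immaterial. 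One remark worth making explicit (you flag it, the paper glosses over it): in the case $W=\R^d$, continuity of $\nabla\kappa$ alone does not give a global Lipschitz constant, so the hypothesis should be read as including boundedness of $\nabla\kappa$ there.
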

\begin{proof} The proof is split into two parts based on the two convergence conditions in assumption $P$.
	
	\noindent \textit{Modulus of continuity:} Let $\Vert y - y' \Vert \leq 2(m-1)r_n$. By the multivariate Mean Value Theorem and convexity of $W$, then for some $t \in [0,1]$,
	it follows that
	\vspace{0.75ex} $$
		\vert \kappa(y) - \kappa(y') \vert \leq \Vert y - y' \Vert \vert \kappa'(ty + (1-t)y') \vert \in O(r_n),
	\vspace{0.75ex} $$
	since $\kappa'$ is continuous (and locally finite). Taking supremum over such $y,y'$ completes the proof.

	\noindent \textit{Boundary condition:} Let $W^{-r_n}=\{y \colon B(y,2m r_n) \nsubseteq W\}$. If $W = \R^d$, then $W^{-r_n}$ is empty and we are done. Instead assume $W$ is convex and bounded. Since $\kappa$ is bounded,
	it suffices to prove that $\vol(W) - \vol(W^{-r_n}) \in O(r_n)$ and to this end we rely on the lower bound of inner parallel volumes in \cite{CS10}
	for convex sets: Let $W_i(W;B(0,1))$ denote the $i$'th relative quermassintegral of $W$ and $r(W;B(0,1))$ the relative inradius of $W$.
	By \cite[Theorem 1.1]{CS10} and $s_n \to 0$, it follows that
	\vspace{0.75ex} \begin{align*}
	\vol(W^{-r_n}) & \geq \vol(W) - \binom{d}{1} W_1(W;B(0,1)) s_n + \binom{d}{2}(d-2)\int_0^{r_n} \frac{(r_n-s)^2W_2(W^{-s};B(0,1))}{r(W;B(0,1))-s} \ \d s \\
	& \geq \vol(W) - d W_1(W;B(0,1)) r_n,
	\vspace{0.75ex} \end{align*}
	As $W_1(W;B(0,1)) < \infty$, then $\vol(W) - \vol(W^{-r_n}) \in O(r_n)$, which completes the proof. 
\end{proof}

\begin{proposition}[Assumption $U$]
	\label{prop:assumption_U}
	For \v Cech and $k \leq d+1$ and $m=k$, then assumption $U$ holds.
	For Vietoris-Rips, $k\leq d+1$ and $m=k$ or $k=d+1$ and $m=2d$, then assumption $U$ holds.
\end{proposition}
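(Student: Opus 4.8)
The plan is to handle the cases $m=k$ separately from the Vietoris--Rips case $m=2(k-1)$, the former being a counting observation and the latter a geometric rigidity argument. If $\vert\PP\vert=m=k$, then $\PP$ contains exactly one $k$-element subset, namely $\PP$ itself, and any $\PP'\su\PP$ carrying a $k$-simplex must equal $\PP$; hence there is at most one pair $(\PP',\xx)$ of the form required in Assumption~$U$, so the assumption holds with any $\e>0$. Here one only notes $\lmax^\circ(k,k)<\infty$: additive lifetimes are bounded by $r_\xx\le 1$, and, after normalising $r_\xx=1$, the $k$ affinely independent vertices of $\xx$ lie on a fixed circumsphere, which bounds the birthtime away from $0$ and hence the multiplicative lifetime.

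For Vietoris--Rips with $k=d+1$ and $m=2d$, the first step is to describe the configurations realising $\lmax^\circ(k,2d)$. A negative $d$-simplex $\xx=\{x_0,\dots,x_d\}$ has $r_\xx=\tfrac12\max_{i,j}\Vert x_i-x_j\Vert$; say this is attained by $\{x_0,x_1\}$, so $\xx$ and every $d$-simplex sharing that edge enter the filtration at time $r_\xx$. Unwinding the persistence reduction at level $r_\xx$, the $(d-1)$-cycle destroyed by $\xx$ is born only once all $(d-1)$-faces of the form $\{x_0\}\cup G$ and $\{x_1\}\cup G$ have appeared, $G$ ranging over the $(d-1)$-subsets of the remaining $2(d-1)$ points; since $\max(\Vert x_0-y\Vert,\Vert x_1-y\Vert)\ge\tfrac12\Vert x_0-x_1\Vert$ for every $y$, this birthtime is at least $r_\xx/2$. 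Hence $\ell^+_{\xx,\PP}\le r_\xx/2\le\tfrac12$ and $\ell^*_{\xx,\PP}\le 2$, so $\lmax^+(k,2d)=\tfrac12$ and $\lmax^*(k,2d)=2$, with equality approached only when $x_0,x_1$ is the unique diameter-achieving pair of $\PP$, all $2(d-1)$ remaining points collapse near the midpoint of $[x_0,x_1]$, and (in the additive case) $r_\xx\to 1$---for $d=2$, precisely the picture of Remark~\ref{rem:sparse_extension}(4). By Hausdorff stability of persistent homology (\cite[Corr.~11.29]{chazal}) any configuration whose lifetime exceeds $\lmax^\circ-\e$ is $\delta(\e)$-close, up to isometry and relabelling, to such a rigid configuration, with $\delta(\e)\to0$; in particular it uses all $2d$ points, so no proper subset $\PP'\subsetneq\PP$ supports a lifetime above $\lmax^\circ(k,2d)-\e$ once $\e<\lmax^\circ(k,2d)-\lmax^\circ(k,2d-1)$, which is positive by the same analysis (this also re-establishes the strict monotonicity of $\lmax^\circ$ without circularly invoking Assumption~$U$).

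It remains to exclude two distinct near-extremal pairs inside a single $2d$-point cloud. Fix $\e$ as above and suppose $\xx\ne\xx'$ are both negative in $\PP$ with $r\le 1$ and $\ell^\circ\ge\lmax^\circ-\e$. Applying the rigidity to $\xx$ forces all $2d$ points of $\PP$ to lie near the segment $[x_0,x_1]$ with the remaining $2(d-1)$ of them near its midpoint; in particular the diameter of $\PP$ is attained uniquely by $\{x_0,x_1\}$. Since $\xx'$ is also near-extremal, its longest edge---no longer than the diameter of $\PP$---must, up to the $\e$-tolerance, be $\{x_0,x_1\}$ as well, so $x_0,x_1\in\xx'$ and $\xx,\xx'$ are two distinct $d$-simplices completed at the common level $r_\xx=\tfrac12\Vert x_0-x_1\Vert$. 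But the persistence algorithm pairs the (essentially unique, by the rigidity) cycle born near $r_\xx/2$ with a single such $d$-simplex; any other $d$-simplex completed at that level is positive---hence not in $\mc N(\PP)$ and carrying no lifetime---or paired with a face of the same filtration value ($\ell^\circ=0$), or paired with a face born bounded away from $r_\xx/2$ ($\ell^\circ$ bounded away from $\lmax^\circ$). Each alternative contradicts $\ell^\circ_{\xx',\PP}\ge\lmax^\circ-\e$ for $\e$ small. For $d=2$ this last point is a short explicit check over the finitely many combinatorial types of $4$-point clouds (generically three nontrivial $1$-cycles, of which only the latest-processed of the two coinciding $2$-simplices inherits the early one, the other being the positive simplex paired with the tetrahedron); for general $d$ one runs the analogous reduction-cascade bookkeeping, which I expect to be the main obstacle.
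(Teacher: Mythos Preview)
Your treatment of the case $m=k$ is correct and matches the paper's argument exactly.

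For the Vietoris--Rips case $k=d+1$, $m=2d$, however, your computation of $\lmax^\circ$ is wrong, and this invalidates the rigidity argument built on it. You claim that the birthtime bound $b_{\xx,\PP}\ge r_\xx/2$ is tight, with equality approached as the $2(d-1)$ remaining points collapse to the midpoint of $[x_0,x_1]$. But in that limit the edge $\{x_2,x_3\}$ (and for larger $d$ all edges among the collapsed points) is present arbitrarily early, so the triangles $\{x_0,x_2,x_3\}$ and $\{x_1,x_2,x_3\}$ appear together with the four ``side'' edges and immediately fill in the 4-cycle: the $(d-1)$-homology is trivial and there is no cycle at all. In fact for $d=2$ the paper shows (Proposition~\ref{prop:5}) that $\lmax^+(3,4)=1-\tfrac{1}{\sqrt 2}$, realised by the square, not $\tfrac12$; your bound $b\ge r_\xx/2$ is valid but strictly loose. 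A related imprecision: your claim that the cycle is born only once \emph{all} $(d-1)$-faces $\{x_0\}\cup G$, $\{x_1\}\cup G$ with $G$ an arbitrary $(d-1)$-subset of the remaining $2(d-1)$ points have appeared over-counts for $d\ge 3$ --- only the $2^{d-1}$ cross-polytope facets through $x_0$ (one vertex from each antipodal pair) matter.

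The paper takes a much shorter and purely combinatorial route: \cite[Lemma~4.4]{KM18} asserts that a nontrivial $(d-1)$-cycle in a Vietoris--Rips (flag) complex requires at least $2d$ vertices. Hence any $\PP'\subseteq\PP$ supporting such a cycle must equal $\PP$, and since on exactly $2d$ vertices the only way to have $H_{d-1}\neq 0$ is the cross-polytope boundary (whose $(d-1)$-homology is one-dimensional), there is at most one nontrivial persistent $(d-1)$-class in the whole filtration, hence at most one negative $d$-simplex with positive lifetime. Taking $\e<\lmax^\circ$ finishes. Note this argument never computes $\lmax^\circ$ and needs no geometric rigidity or stability --- the uniqueness is a combinatorial fact about flag complexes on $2d$ vertices, which is what you should invoke instead.
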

\begin{proof} First, let $d \geq 2$ and $k \leq d+1$. If $m=k$, then for any $\PP'$ as in assumption $U$, we must have that $\PP'=\PP$,
	 and since $\vert \PP^{(k)} \vert = 1 $, then
	in particular there is at most one $\xx$ with the property described in assumption $U$. 
	If instead $k=d+1$ and choosing the Vietoris-Rips filtration, the claim
	follows from the lower bound on the number of vertices of a non-trivial cycle in \cite[Lemma 4.4]{KM18}.
\end{proof}

\begin{proposition}[Assumption $H$ -- \v Cech]
	\label{prop:4}
	Let $d=2$ and $m=k=3$.
Using the \v Cech filtration, 
then $\lmax^+(3,3)=1-\tfrac{\sqrt 3}{2}$ and realized by an equilateral triangle of
side length $\tfrac{\sqrt 3}{2}$.
Moreover, assumption $H$ holds with $q=3$ and therefore $\ell_{n,\alpha} \sim 1-\tfrac{\sqrt 3}{2} - 1/(nr_n^{4/3})$ by Remark \ref{rem:sparse_extension}(5). 
\end{proposition}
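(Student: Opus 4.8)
The plan is to first pin down the extremal configuration for $\lmax^+(3,3)$ and then analyze the local volume growth near the extremum to extract $q = 3$ in Assumption $H$. For the first part, I would reason as follows. With $m = k = 3$ and a \v Cech filtration in $d = 2$, a negative $2$-simplex $\xx = (y_1,y_2,y_3)$ with $r_\xx \le 1$ has deathtime $r_\xx$ equal to the circumradius of the triangle (by Remark \ref{rem:cech_properties}(1), using that $z_\xx \in \mathrm{int}(\mathrm{conv}(\xx))$, i.e.\ the triangle is acute so the smallest enclosing ball is the circumscribed ball) and birthtime $b_{\xx,\yy}$ equal to half the longest edge length. Thus $\ell^+_{\xx,\yy} = r_\xx - b_{\xx,\yy}$, and we want to maximize this over all acute triangles with circumradius at most $1$. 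By scaling it suffices to fix the circumradius to be exactly $1$ (increasing the scale only helps, up to the constraint $r_\xx \le 1$) and then minimize the longest edge, i.e.\ minimize the largest of the three chord lengths $2\sin(\theta_i)$ where $\theta_1,\theta_2,\theta_3$ are the half-arc angles (the angles of the triangle) summing to $\pi$. Minimizing the maximum of $\sin\theta_1,\sin\theta_2,\sin\theta_3$ subject to $\theta_i > 0$, $\sum\theta_i = \pi$, $\theta_i < \pi/2$ is achieved at $\theta_1 = \theta_2 = \theta_3 = \pi/3$ by symmetry and convexity considerations, giving longest edge $2\sin(\pi/3) = \sqrt{3}$ and hence $b_\xx = \tfrac{\sqrt3}{2}$ and $\lmax^+ = 1 - \tfrac{\sqrt3}{2}$, realized by the equilateral triangle of side length $\sqrt3$ inscribed in the unit circle. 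I would write this out carefully as a small optimization lemma, also verifying acuteness of the equilateral triangle so that it is indeed negative.

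For the second part — establishing Assumption $H$ with $q = 3$ — I would study $h(u,v)$ from \eqref{eq:hdef_int} near $(0,1)$ and its companion $\tilde h(u,v) = h(u,uv)$. Place $y_1 = 0$ and parametrize the remaining two points $y_2, y_3$. The constraint set is: the three points have circumradius $r \in [1-v,1]$, the triangle is acute (so $(0,y_2,y_3)_k \in \mc N$), $r \le 1$, and $\ell^+ = r - \tfrac12\max\text{edge} \ge \lmax^+ - u = 1 - \tfrac{\sqrt3}{2} - u$. The heuristic from Remark \ref{rem:sparse_extension}(4) is the right picture: after fixing $y_1$, the choice of deathtime $r$ (a $1$-parameter freedom, of size $O(u)$) together with the circumradius constraint forces the center to lie somewhere and then each of $y_2,y_3$ must lie within $O(u)$ of its extremal position along a circle, a $1$-dimensional locus, contributing one more factor of $u$ each — but one of the three "degrees of freedom" is spent placing the triple's center at a fixed point since $y_1 = 0$ is fixed and the circumcenter is then determined, costing a factor of $u^0$ effectively via the circle having measure zero but being traced out. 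Counting dimensions in $\R^{d(m-1)} = \R^4$: the extremal locus (equilateral triangles with one vertex at $0$, circumradius $1$) is a $1$-parameter family (rotation), and relaxing the lifetime by $u$ opens up the remaining $3$ directions each by an amount $\asymp u$, so $g(u) = h(u,1) \asymp u^3$. I would make this rigorous by a change of variables to coordinates $(r, \text{center angle}, \text{two edge-length slacks})$ or similar, Taylor-expanding the constraints around the equilateral configuration, and checking that the resulting region has volume $\tfrac{g^{(3)}(0)}{3!}u^3 + O(u^4)$ with $g^{(3)}(0) > 0$. The same local analysis, but now tracking the deathtime variable $r_\xx = r$ explicitly through the extra constraint $r \in [1-v,1]$, gives the mixed behavior needed for $\tilde h$: one checks $\tilde h(u,0) = 0$ (no volume when the deathtime window is empty), $h^{(j,0)}(0,1) = 0$ for $j < 3$, $h^{(3,0)}(0,1) > 0$, and the analogous first-$v$-derivative statements $\tilde h^{(j,1)}(0,v) = 0$ for $j<3$, $\tilde h^{(3,1)}(0,v) > 0$, and smoothness $h(\cdot,v), h(u,\cdot) \in C^{q+1}$ of the relevant order. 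The smoothness is where I would be most careful: the boundary of the constraint region (acuteness, $r \le 1$) could in principle create corners, so I would argue that near the extremum the binding constraints are only the two lifetime/deathtime inequalities and the region is, after the change of variables, a polytope-like set with smoothly varying facets, whose volume is a polynomial-plus-smooth function of $(u,v)$ to the required order.

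The main obstacle I expect is the rigorous dimension-count and smoothness verification for $h$ near $(0,1)$: one must choose good local coordinates, show that the equilateral configuration is a nondegenerate minimizer of the longest edge on the circumradius-$1$ locus (so the relaxation genuinely opens $3$ directions at linear rate, not fewer at a slower rate), and confirm that no spurious boundary strata of lower exponent intervene. Once the quadratic/cubic behavior of the constraint functions at the equilateral point is computed — essentially a Hessian computation for $\max(\sin\theta_1,\sin\theta_2,\sin\theta_3)$ on the simplex $\{\sum\theta_i = \pi\}$, complicated slightly by the $\max$ being nonsmooth exactly at the symmetric point — the value $q = 3$ and the positivity of the leading coefficients follow, and then $\ell_{n,\alpha} = u_{n,\alpha} \sim \big(n(nr_n^2)^2\big)^{-1/3} = (nr_n^{4/3})^{-1}\cdot n^{-0}\cdots$, i.e.\ $\lmax^+ - \ell_{n,\alpha} \sim 1/(nr_n^{4/3})$ as claimed, by Remark \ref{rem:sparse_extension}(5) with $m = 3$, $q = 3$, since $\big(n(nr_n^2)^{2}\big)^{-1/3} = n^{-1}r_n^{-4/3}$.
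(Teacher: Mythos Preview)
Your outline for the maximal-lifetime part is essentially the paper's argument: parametrize by the angles of the triangle, note that the birthtime is the circumradius times the largest $\sin\theta_i$, and optimize. The paper uses coordinates $(r,\tau,\theta,\phi)$ (circumradius, angle to circumcenter, two interior angles) and reaches the same equilateral conclusion.

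For the regularity part your strategy is in the right direction but leaves the crux unresolved. You correctly flag that the obstacle is the nonsmoothness of $\max(\sin\theta_1,\sin\theta_2,\sin\theta_3)$ precisely at the extremal point $\theta_1=\theta_2=\theta_3=\pi/3$, and you propose to handle it by ``Hessian computation \dots\ complicated slightly by the $\max$''. But this is exactly where an abstract Taylor-expansion argument breaks down: at the symmetric point all three branches of the $\max$ are active, so there is no single smooth constraint function whose Hessian you can compute, and a naive dimension count does not by itself yield the $C^{q+1}$ regularity that Assumption~$H$ demands. You need a device to turn the piecewise-smooth constraint into something tractable.

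The paper's device is concrete rather than abstract: it writes down the explicit change of variables $T(r,\tau,\theta,\phi)=(y_2,y_3)$, computes the Jacobian $16r^3\sin\phi\sin\theta\sin(\theta+\phi)$, and then \emph{partitions the domain by which angle is largest}. By the threefold symmetry this gives a factor of $3$ and reduces to the single smooth constraint $\sin\theta \ge \lmax^+ - u$ on the region $\pi/3\le\theta<\pi/2$, $\pi-2\theta\le\phi\le\theta$. After a further substitution $\ell=r(1-\sin\theta)$, $h(u,v)$ becomes an explicit double integral of an elementary function $f(\theta)$, and the conditions $h^{(j,0)}(0,1)=0$ for $j<3$, $h^{(3,0)}(0,1)>0$, and the analogous $\tilde h$ statements follow by differentiating under the integral sign and evaluating $f(\pi/3)=0$, $f'(\pi/3)>0$ directly. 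This replaces your proposed nondegeneracy-of-the-Hessian check with a hands-on computation, and in particular delivers the $C^{q+1}$ smoothness for free since $f$ is smooth on $(-\pi/2,\pi/2)$.
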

\begin{proof} To improve the exposition, we split the proof into two parts: maximal lifetime and regularity.
	
	\noindent \textit{Maximal lifetime:}
	First, note that any three points in $\R^2$ with circumradius less than 1
	must be $1$-connected and that the unique triangle ($2$-simplex) is necessarily negative.
	 Note also that any right angle triangle has circumradius equal
	to half the length of the hypotenuse and therefore has lifetime 0.
	Therefore, the lifetime is positive if and only if the triangle is acute.
	For $y_2,y_3 \in \R^2$, let
	\vspace{0.75ex}  \begin{equation}
		\label{eq:triangle_angles}
	\theta = \arccos\bigg(\frac{\langle y_2, y_3 \rangle}{\Vert y_2 \Vert \Vert y_3\Vert }\bigg)
	\quad \text{and} \quad
	\phi = \arccos\bigg(\frac{\langle y_2 - y_3, -y_3 \rangle}{\Vert y_2 - y_3 \Vert \Vert y_3\Vert}\bigg)
	\vspace{0.75ex}  \end{equation}
	be the interior angles
	of the triangle $\triangle(0,y_2,y_3)$ at the points $0$ and $y_3$ (see Figure \ref{fig:triangle_param}). 
	Define
	\vspace{0.75ex} $$
	D = \{(\theta,\phi) : 0<\theta< \pi/2, \ 0<\phi<\pi/2,\ 0<\pi - \theta - \phi < \pi/2\},
	\vspace{0.75ex} $$
	and note that $\triangle(0,y_2,y_3)$ is acute 
	if and only if $(\theta,\phi) \in D$. Next, let $r > 0$ and $\tau \in [0,2\pi)$
	satisfy that $z((0,y_2,y_3))=(r\cos(\tau),r\sin(\tau))$, see Figure \ref{fig:triangle_param}.
	With these coordinates, then
	the birthtime is $r (\sin(\theta) \vee \sin(\phi) \vee \sin (\theta + \phi))$,
	the deathtime $r$ and the lifetime $r (1-\sin(\theta) \vee \sin(\phi) \vee \sin (\theta + \phi))$.
	This expression is maximized by taking $\theta = \phi = \pi/3$ and $r=1$, and
	hence the shape with maximal lifetime is an equilateral triangle with side length
	$\sqrt{3}/2$ and the lifetime is $1-\sqrt{3}/2$ as claimed.

\begin{figure}[h!]
	\centering
	\begin{tikzpicture}[scale=5.5]
		\draw[->] (-0.15,0) -- (0.85,0) ;
		\draw[->] (0,-0.15) -- (0,0.85) ;
	
		\filldraw[black] (0,0) circle (0.25pt) node[below left] {$0$};
		\filldraw[black] (0.8,0.2) circle (0.25pt) node[above right] {$y_3$};
		\filldraw[black] (0.2,0.7) circle (0.25pt) node[above left] {$y_2$};

		\filldraw[gray] (0.36,0.26) circle (0.25pt) node[above = 3pt] {$z$};

	\draw[thin] (0.1,0.025) arc[start angle=15,end angle=75,radius=0.1] node[above = 3.5pt, right = 1pt] {\small $\theta$};
	\draw[thin] (0.725,0.26) arc[start angle=147,end angle=194,radius=0.1] node[above = 10pt, left = 2pt] {\small $\phi$};
	\draw[thin] (0.178,0.615) arc[start angle=260,end angle=315,radius=0.1] node[above = 5pt, right = 3 pt] {\small $\pi - \theta - \phi$};
	\draw[thin, gray] (0.3,0) arc[start angle=0,end angle=36,radius=0.3] node[midway, above=5 pt, right = 0.5 pt] {\small $\tau$};

		\draw[black] (0,0) -- (0.8,0.2);
		\draw[gray] (0,0) -- (0.36,0.26);
		\draw[black] (0.8,0.2) -- (0.2,0.7);
		\draw[black] (0.2,0.7) -- (0,0);

	\draw [decorate,decoration={brace,mirror, amplitude=5pt},gray] (0.36,0.26) -- (0,0) node[midway,above=6pt,gray] {\small $r$};
	\end{tikzpicture}
	\caption{In black, the 2-simplex $(0,y_2,y_3)$ and angles $\theta, \phi$ as defined in the proof of Proposition \ref{prop:4}. 
	In gray, the center $z$ of $(0,y_2,y_3)$ in polar coordinates $(r,\tau)$.}
	 \label{fig:triangle_param}
\end{figure}
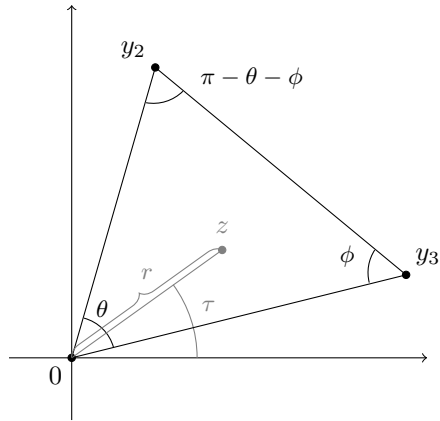

	\noindent \textit{Regularity:} Next, we verify assumption $H$. Observe that in this case (\ref{eq:hdef_int}) simplifies to
	\vspace{0.75ex}  $$
	h(u,v)  = \iint
	\one\{\ell_{(0,y_2,y_3),(0,y_2,y_3)} \geq \lmax^+ - u\} 
	\one\{ 1-v \leq r_{(0,y_2,y_3)}\leq 1\} \ \d y_2 \d y_3.
	\vspace{0.75ex}  $$
	Consider the transformation
	$T:(0,\infty)\times[0,2\pi)\times D \to \R^4$ defined as
	\vspace{0.75ex}$$
		T(r,\tau, \theta,\phi) = -2r\begin{pmatrix}
			\sin(\phi)\cos(\tau +\phi) \nonumber\\
			\sin(\phi)\sin(\tau +\phi) \nonumber\\
			\sin(\theta + \phi)\cos(\tau + \theta + \phi) \nonumber\\
			\sin(\theta + \phi)\sin(\tau + \theta + \phi) 
		\end{pmatrix},
		\vspace{0.75ex}$$
	and note $T(r,\tau, \theta,\phi)=(y_2,y_3)$.
	 Putting $\omega = \phi + \theta$, 
	 $\tilde \omega = \omega - \tau$ and $\tilde \phi = \phi - \tau $, the Jacobian of $T$ is
	\vspace{0.75ex} $$
	\scalemath{0.95}{
		2r \begin{pmatrix}
		\sin(\phi)\sin(\tilde \phi )/r & -\sin(\phi)\cos(\tilde \phi ) & 0 & \sin(\phi)\cos(\tilde \phi ) - \cos(\phi)\sin(\tilde \phi )\\
		\sin(\phi)\cos(\tilde \phi )/r & \sin(\phi)\sin(\tilde \phi ) & 0 & -\sin(\phi)\sin(\tilde \phi ) + \cos(\phi)\cos(\tilde \phi )\\
		\sin(\omega)\sin(\tilde \omega)/r & -\sin(\omega)\cos(\tilde \omega) & \sin(\omega)\cos(\tilde \omega)+\cos(\omega)\sin(\tilde \omega) & \sin(\omega)\cos(\tilde \omega) + \cos(\omega)\sin(\tilde \omega) \\
		\sin(\omega)\cos(\tilde \omega)/r  & \sin(\omega)\sin(\tilde \omega)& -\sin(\omega)\sin(\tilde \omega)+\cos(\omega)\cos(\tilde \omega) & -\sin(\omega)\sin(\tilde \omega) + \cos(\omega)\cos(\tilde \omega)
	\end{pmatrix}
	}
	\vspace{0.75ex} $$
	with determinant  $16r^3\sin(\phi)\sin(\theta)\sin(\phi + \theta)$.
	Hence applying $T$, it follows that
	\vspace{0.75ex}  $$
	h(u,v)  = 32\pi
	\int_{ 1-v}^1 \int_{D} \one\{r(1-\sin\theta \vee \sin\phi\vee \sin(\theta + \phi)) \geq \lmax^+ - u\}r^3 \sin(\phi)\sin(\theta)\sin(\phi + \theta)\  \d(\theta,\phi) \d r,
	\vspace{0.75ex}  $$
	First note when $(\theta,\phi)\in D$, then $\theta$ is larger
	than $\phi$ and $\pi - \theta - \phi$ if and only if
	$\pi/3 \leq \theta < \pi/2$ and  $\pi - 2\theta \leq \phi\leq \theta$.
	By partitioning $D$ into three sets,
	where $\theta$, $\phi$ and $\pi - \theta - \phi$ are the largest, respectively,
	and applying the change of variables $(r,\theta,\phi) \mapsto (r,\phi,\theta)$
	and $(r,\theta,\phi)\mapsto (r,\phi,\pi-\theta-\phi)$ in the latter two cases,
	it follows that
	\vspace{0.75ex}  $$
	h(u,v)  = 96 \pi
	 \iint_{\pi/3}^{\pi/2}\int_{\pi - 2\theta}^{\theta} \one\{1-v \leq r \leq 1 \} \one\{r(1-\sin\theta) \geq \lmax^+ - u\}r^3 \sin(\phi)\sin(\theta)\sin(\phi + \theta)\  \d \phi \d \theta \d r.
	\vspace{0.75ex}  $$
	Applying now the substitution $\ell = r(1-\sin(\theta))$, for $v < 1$, it holds that 
	\vspace{0.75ex}  \begin{equation}
		\label{eq:h_expanded}
	h(u,v)  = 96 \pi
 \int_{\lmax^+ - u}^{\lmax^+} \int_{\arcsin\big(1-\tfrac{\ell}{1-v}\big)}^{\arcsin(1-\ell)} \ell^3 f(\theta) \  \d \theta \d \ell, 
	\vspace{0.75ex}  \end{equation}
and 
\vspace{0.75ex}  \begin{equation}
	\label{eq:h_expanded_2}
h(u,1)  = 96 \pi
\int_{\lmax^+- u}^{\lmax^+} \int_{\pi/3}^{\arcsin(1-\ell)} \ell^3 f(\theta) \  \d \theta \d \ell, 
\vspace{0.75ex}  \end{equation}
where, using the trigonometric identity $\sin(\alpha)\sin(\beta)=\cos(\alpha - \beta) - \cos(\alpha + \beta)$, then
\vspace{0.75ex}  $$
	f(\theta)  = \frac{\sin\theta }{(1 - \sin\theta)^4}  \int_{\pi - 2\theta}^\theta \sin\phi\sin(\theta + \phi)  d\phi
	= \frac{\sin\theta((3\theta - \pi)\cos\theta - \sin(3\theta))}{2(1 - \sin\theta)^4}.
	\vspace{0.75ex}  $$
	Note that $f$ is smooth on $(-\pi/2,\pi/2)$ and  that 
	\vspace{0.75ex}  \begin{equation}
		\label{eq:f_properties}
	f(\tfrac \pi 3)  = 0 \quad \text{ and } \quad f'(\tfrac \pi 3)  = \frac{\sin(\pi/3)(3\cos(\pi/3)-(3\pi/3 - \pi)\sin(\pi/3)-3\cos(3\pi) )}{2(1 - \sin(\pi/3))^4} > 0.
	\vspace{0.75ex}  \end{equation}
	From the smoothness of $f$ and the product rule it follows that $u \mapsto h(u,v)$ and $v \mapsto h(u,v)$ are smooth.
	Also from (\ref{eq:h_expanded_2}), it follows that $h(0,1)=0$. Moreover
	by the Fundamental Theorem of Calculus and letting $u \to 0$, then
	\vspace{0.75ex}  $$
	h^{(1,0)}(u,1) = 96 \pi (\lmax^+ - u)^3 \int_{\pi/3}^{\arcsin(\tfrac{\sqrt 3}{2} + u)} f(\theta) \  \d \theta \longrightarrow 0
	\vspace{0.75ex}  $$
	and using (\ref{eq:f_properties}),
	\vspace{0.75ex}  \begin{align*}
	h^{(2,0)}(u,1) & = -288 \pi (\lmax^+ - u)^2 \int_{\pi/3}^{\arcsin(\tfrac{\sqrt 3}{2} + u)} f(\theta) \  \d \theta + \\
	& \quad \quad + 96\pi(\lmax^+ - u)^3 f(\arcsin(\tfrac{\sqrt 3}{2} + u))\bigg(\frac{\d}{\d u} \arcsin(\tfrac{\sqrt 3}{2} + u)\bigg) \longrightarrow 0,
	\vspace{0.75ex}  \end{align*}
	while using (\ref{eq:f_properties}) again yields that 
	\vspace{0.75ex}  \begin{align*}
		h^{(3,0)}(u,1) & = 476 \pi (\lmax^+ - u) \int_{\pi/3}^{\arcsin(\tfrac{\sqrt 3}{2} + u)} f(\theta) \  \d \theta - \\
		& \quad \quad -476 \pi (\lmax^+ - u) f(\arcsin(\tfrac{\sqrt 3}{2} + u))\bigg(\frac{\d}{\d u} \arcsin(\tfrac{\sqrt 3}{2}+u)\bigg) -  \\
		& \quad \quad -288 \pi(\lmax^+ - u)^2 f(\arcsin(\tfrac{\sqrt 3}{2} + u))\bigg(\frac{\d}{\d u} \arcsin(\tfrac{\sqrt 3}{2}+u)\bigg) + \\
		& \quad \quad + 96\pi(\lmax^+ - u)^3 f'(\arcsin(\tfrac{\sqrt 3}{2} + u))\bigg(\frac{\d}{\d u} \arcsin(\tfrac{\sqrt 3}{2}+u)\bigg)^2 + \\
		& \quad \quad + 96\pi(\lmax^+ - u)^3 f(\arcsin(\tfrac{\sqrt 3}{2} + u))\bigg(\frac{\d^2}{\d u^2} \arcsin(\tfrac{\sqrt 3}{2}+u)\bigg) \longrightarrow 384\pi (\lmax^\circ)^3 f'(\tfrac\pi 3) > 0.
		\vspace{0.75ex}  \end{align*}
		where we note that the squared first-order derivate of $u \mapsto \arcsin(\sqrt{3}/2 + u)$ evaluated at $u=0$ is equal to 4. Thus this verifies assumption $G$
		and partly assumption $H$ with $q=3$ as claimed. Now by (\ref{eq:h_expanded}), for any $v<1$, then
		\vspace{0.75ex}  \begin{equation}
		\tilde h(u,v)  = 96 \pi
	 \int_{\lmax^+ - u}^{\lmax^+} \int_{\arcsin\big(1-\tfrac{\ell}{1-uv}\big)}^{\arcsin(1-\ell)} \ell^3 f(\theta) \  \d \theta \d \ell, 
		\vspace{0.75ex}  \end{equation}
		and therefore $\tilde h(0,v)=0$. Moreover, by the Fundamental Theorem of Calculus and the Dominated Convergence Theorem,
		\vspace{0.75ex}  $$
		\tilde h^{(0,1)}(u,v)  = - 96 \pi
		\int_{\lmax^+ - u}^{\lmax^+}  \ell^4
		\frac{uf\bigg(\arcsin\big(1-\tfrac{\ell}{1-uv}\big)\bigg) }{\sqrt{1-\big(1-\tfrac{\ell}{1-uv}\big)^2}(1-uv)^2} \ \d \ell, 
		   \vspace{0.75ex}  $$
	Now similar to above $\ell^4$ or constants will not play a role in determining $q$, so for fixed $v$, define
	\vspace{0.75ex}  $$
	p(u)  =
	\int_{\lmax^+ - u}^{\lmax^+}
	\frac{uf\bigg(\arcsin\big(1-\tfrac{\ell}{1-uv}\big)\bigg) }{\sqrt{1-\big(1-\tfrac{\ell}{1-uv}\big)^2}(1-uv)^2} \ \d \ell, 
	   \vspace{0.75ex}  $$
	and note $p^{(j)}(0)=0$ if and only if $\tilde h^{(j,1)}(0,v)=0$. By letting $u \to 0$, then
	\vspace{0.75ex}  $$
	p'(u)  = 
	\frac{uf\big(\arcsin(b(u))\big) }{\sqrt{1-b(u)^2}(1-uv)^2} + \int_{\lmax^+ - u}^{\lmax^+} \frac{\d}{\d u}
	\frac{uf\bigg(\arcsin\big(1-\tfrac{\ell}{1-uv}\big)\bigg) }{\sqrt{1-\big(1-\tfrac{\ell}{1-uv}\big)^2}(1-uv)^2} \ \d \ell \longrightarrow 0,
	   \vspace{0.75ex}  $$
	where
	\vspace{0.75ex} \begin{equation}
		\label{eq:b_properties}
	b(u) = 1 - \frac{\lmax^+ - u}{1-uv} \to \tfrac{\sqrt 3}{2} > 0 \quad \text { and } \quad b'(u) = \frac{v\lmax^+ - 1}{(1-uv)^2} \to v\lmax^+ - 1 < 0,
	\vspace{0.75ex}  \end{equation}
	and using (\ref{eq:f_properties}), then
	\vspace{0.75ex}  \begin{align*}
	p''(u)  &= 
	2 \frac{f\big(\arcsin(b(u))\big) }{\sqrt{1-b(u)^2}(1-uv)^2} + u \frac{\d}{\d u} \frac{f\big(\arcsin(b(u))\big) }{\sqrt{1-b(u)^2}(1-uv)^2} +  \\[0.5em]
	& \quad \quad \quad \quad \quad \quad \quad \quad \quad \quad \quad + u \bigg(\frac{\d}{\d u}
	\frac{f\bigg(\arcsin\big(1-\tfrac{\ell}{1-uv}\big)\bigg) }{\sqrt{1-\big(1-\tfrac{\ell}{1-uv}\big)^2}(1-uv)^2}\bigg)\bigg\vert_{\ell = \lmax^+-u} + \\[0.5em]
	& \quad \quad \quad \quad \quad \quad \quad \quad \quad \quad \quad  + \int_{\lmax^+ - u}^{\lmax^+} \frac{\d^2}{\d u^2}
	\frac{uf\bigg(\arcsin\big(1-\tfrac{\ell}{1-uv}\big)\bigg) }{\sqrt{1-\big(1-\tfrac{\ell}{1-uv}\big)^2}(1-uv)^2} \ \d \ell \longrightarrow 0.
	   \vspace{0.75ex}  \end{align*}
	Now using (\ref{eq:f_properties}), (\ref{eq:b_properties}) and letting $u \to 0$, it follows that
	\vspace{0.75ex}  \begin{align*}
		\frac{\d}{\d u} & \frac{f\big(\arcsin(b(u))\big) }{\sqrt{1-b(u)^2}(1-uv)^2} = 
		\frac{f'\big(\arcsin(b(u))\big) \tfrac{-2b(u) b'(u)}{\sqrt{1-b(u)^2}}}{\sqrt{1-b(u)^2}(1-uv)^2}+ \\[0.5em]
		& \quad \quad +\frac{2f\big(\arcsin(b(u))\big)\bigg(\tfrac{b(u)(1-uv)^2}{1\sqrt{1-b(u)^2}}+v(1-uv)\sqrt{1-b(u)^2}\bigg)}{(1-b(u)^2)(1-uv)^4} 
		\longrightarrow -4\sqrt{3} f'(\tfrac{\pi}{3})(v \lmax^+ - 1) > 0,
		   \vspace{0.75ex}  \end{align*}
	and it follows (by direct calculation) that $p'''(0) > 0 $, which therefore completes the proof.
\end{proof}

\begin{proposition}[Assumption $H$ -- Vietoris-Rips]
	\label{prop:5}
	Let $d=2$, $k=3$ and $m=4$.
Using the Vietoris-Rips filtration,
then $\lmax^+(3,4)=1-\tfrac{1}{\sqrt{2}}$ and realized by an equilateral diamond with
side length $\tfrac{1}{\sqrt{2}}$.
Moreover, assumption $H$ holds with $q=4$ and therefore $\ell_{n,\alpha} \sim 1-\tfrac{1}{\sqrt{2}} - n(nr_n^2)^{-3/4}$. 
\end{proposition}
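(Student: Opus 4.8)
The plan is to follow the two-part structure of the proof of Proposition~\ref{prop:4}: first identify $\lmax^+(3,4)$ together with its extremizer, and then verify the regularity conditions of Assumption~$H$ by an explicit change of variables followed by repeated differentiation under the integral sign. For the \textit{maximal lifetime} I would first reduce to a clean geometric picture. If $\yy$ has $m=4$ points, $\xx\in\yy^{(3)}\cap\mc N(\yy)$ and $r_\xx\le 1$, then a positive additive lifetime forces the $1$-cycle destroyed by $\xx$ to be the $4$-cycle through all of $\yy$ and $\yy$ to be in convex position: in the Vietoris--Rips filtration a triangle is born and filled at the same scale (half its longest edge), so a nontrivial surviving loop needs at least four vertices, hence exactly the four of $\yy$, and if one point lay inside the triangle of the other three the complex would be a contractible fan as soon as its $2$-simplices appear. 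Writing $M$ for the longest side of the convex quadrilateral $\yy$ and $a\le b$ for its two diagonals, the loop is born at $M/2$ and, since the two complementary tilings of the quadrilateral are completed at $\max(M,a)/2$ and $\max(M,b)/2$, it dies at $\max(M,a)/2$; a positive lifetime thus needs $a>M$, whence $r_\xx=a/2\le 1$ gives $a\le 2$ and $\ell^+_{\xx,\yy}=(a-M)/2$. The crucial input is the inequality $M\ge a/\sqrt 2$: if all four sides were shorter than $a/\sqrt 2$ then $|PQ|^2+|QR|^2<a^2=|PR|^2$ along the short diagonal $PR$, so by the law of cosines the angles at the two off-diagonal vertices exceed $\pi/2$; by the converse of Thales' theorem both vertices then lie strictly inside the disc with diameter $PR$, forcing the other diagonal to satisfy $b<a$, a contradiction. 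Hence $\ell^+_{\xx,\yy}=(a-M)/2\le\tfrac a2\bigl(1-\tfrac1{\sqrt 2}\bigr)\le 1-\tfrac1{\sqrt 2}$, and inspecting the equality cases ($a=b=2$ and all sides $=\sqrt 2$) pins down the extremizer as the equilateral diamond, namely the square with diagonals of length $2$.

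For the \textit{regularity} I would repeat the mechanism of Proposition~\ref{prop:4}. Rewrite $h(u,v)$ from \eqref{eq:hdef_int} as an integral over the six coordinates of $(x_2,x_3,x_4)$ with $x_1$ pinned at the origin, quotient out the overall rotation (a harmless factor $2\pi$), and then construct a change of variables in which the constraints active at the diamond become coordinate half-spaces, so that $h$ collapses to an iterated integral with a smooth integrand. The geometric content is that near the extremizer two constraints are tight: the death-time bound $r_\xx=\rho_{12}/2\le 1$ and the inequality $\rho_{34}\ge\rho_{12}$ needed for the negative simplex to be carried by $\{x_1,x_2,x_3\}$; together with $\ell^+\ge\lmax^+-u$ these confine $\rho_{12}$ to an interval of length $O(u)$ about $2$ and force $x_3$ and $x_4$ into wedge-shaped neighbourhoods (bounded by two circular arcs meeting at a right angle) of the two tips of the lens $B(x_1,r)\cap B(x_2,r)$ with $r=\sqrt 2+O(u)$, each such neighbourhood of area $O(u^2)$. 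In the adapted coordinates (a radial variable for $\rho_{12}$ and, at each tip, a pair of ``wedge'' variables) one computes the Jacobian explicitly and then, exactly as in Proposition~\ref{prop:4}, applies the Fundamental Theorem of Calculus repeatedly to obtain $h^{(j,0)}(0,1)=0$ for $j<q$ and $h^{(q,0)}(0,1)>0$, and, after substituting $\tilde h(u,v)=h(u,uv)$, also $\tilde h(0,v)=0$, $\tilde h^{(j,1)}(0,v)=0$ for $j<q$ and $\tilde h^{(q,1)}(0,v)>0$, with $q$ read off from the number of these soft degrees of freedom. Assumptions $F$, $M$, $P$ and $U$ already hold by hypothesis and by Propositions~\ref{prop:assumption_P} and \ref{prop:assumption_U}, so $H$ follows, and the stated asymptotics $\ell_{n,\alpha}\sim\lmax^+-u_{n,\alpha}$ with $u_{n,\alpha}\sim\rho_{n,m}^{-1/q}$ are then immediate from Remark~\ref{rem:sparse_extension}(5).

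The hard part will be the regularity computation, because---unlike the \v Cech case of Proposition~\ref{prop:4}, where the extremum is interior---the diamond sits at a \emph{corner} of the feasible region: the bound $r_\xx\le 1$ and the inequality $\rho_{34}\ge\rho_{12}$ are simultaneously equalities there. One must therefore choose the parametrization so that both become coordinate half-spaces, track how shrinking $\rho_{12}$ below $2$ tightens the wedge neighbourhoods of $x_3$ and $x_4$ through $\rho_{34}\ge\rho_{12}$, and confirm that the leading coefficient of $h(u,1)$ in $u$ is genuinely nonzero---that there is no cancellation among the several contributions---so that the order $q$ claimed in the statement is exact. A lesser but still nontrivial point is the reduction in the first part: excluding non-convex point configurations and loops on fewer than four vertices, and justifying the formula $\max(M,a)/2$ for the death time, each requires a short but careful inspection of the Vietoris--Rips filtration on four points.
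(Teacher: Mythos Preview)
Your maximal-lifetime argument is correct and genuinely different from the paper's. The paper does not prove the sharp inequality $M\ge a/\sqrt 2$; instead it proves a separate perturbation result (Lemma~\ref{lem:beta_balls}) showing that if $((0,0),(2,0),y_3,y_4)$ carries a negative $2$-simplex with birthtime at most $\tfrac1{\sqrt 2}+u$ and $\|y_3-y_4\|\ge 2$, then $y_3,y_4$ must lie within $O(u)$ of $(1,\pm 1)$, and then lets $u\to 0$ to pin down the extremizer. Your law-of-cosines/converse-of-Thales argument is cleaner for the value of $\lmax^+$, but the paper's lemma is not wasted effort: it supplies exactly the localisation of $y_3$ and $y_4$ that the regularity computation needs.

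For the regularity the paper also departs from your plan. Rather than building a global change of variables and then differentiating repeatedly as in Proposition~\ref{prop:4}, it proceeds by direct area estimates: after normalising to $y_1=(0,0)$, $y_2=(2r,0)$ and rescaling, Lemma~\ref{lem:beta_balls} confines $y_3$ to a rectangle of area $O(u_r^2)$ at the tip of the lens $\overline{B_0^{u_r}\cap B_2^{u_r}}$, and the admissible set for $y_4$, namely $A(y_3)=\overline{B_0^{u_r}\cap B_2^{u_r}}\cap B(y_3,2)^c$, is replaced by an isosceles right triangle of height $O(u_r)$ through three explicit geometric approximations (curved top $\to$ straight slanted top $\to$ horizontal top; curved bottom $\to$ right-angled triangle), each incurring an $O(u_r^3)$ error. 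This gives $\int\vol(A(y_3))\,\d y_3\sim c\,u_r^4$, and the outer integral over $r\in[1-v,1]$ contributes one further power of $u$. The paper then shows that $h(u,1)/u^{5}$ and $\tilde h^{(0,1)}(u,v)/u^{5}$ tend to finite positive limits, i.e.\ it establishes $q=5$, in agreement with your own degree-of-freedom count $u\cdot u^2\cdot u^2$ and with Remark~\ref{rem:sparse_extension}(4); the ``$q=4$'' in the proposition statement (and the corresponding scaling of $\ell_{n,\alpha}$) is a misprint.
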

If one considers four points as an equilateral diamond with
side length $\sqrt{2}$, then the birthtime is $b=\tfrac{1}{\sqrt{2}}$, the deathtime is $r=1$  and the additive lifetime is $\ell= 1-\tfrac{1}{\sqrt{2}}$. In order to prove Proposition \ref{prop:5}, 
we will first prove that any configuration of four points that contains a 1-cycle with birthtime $b \leq \tfrac{1}{\sqrt{2}}+u$ and deathtime $r=1$
cannot be too far away from the equilateral diamond.
\begin{lemma}[Deviation from the diamond]
	\label{lem:beta_balls}
	Let $u\geq 0$ sufficiently small and assume $((0,0),(2,0),y_3)$ is a negative simplex in $((0,0),(2,0),y_3,y_4)$ 
	with birthtime less than $1/\sqrt{2}+u$ and that $\Vert y_3 - y_4 \Vert \geq 2$. Supposing $y_3$ lies
	in the positive half-plane, then $y_4$ lies in the negative half-plane. Moreover, there exist $\beta,\beta_0, \beta_1 >0$ (independent of $u$) such that
	$y_3 \in \big[ 1 - \beta_1 u, 1 + \beta_1 u \big] \times \big[ 1 - \beta_0 u, 1 + 2\sqrt{2}u \big]$, and
	\vspace{0.75ex} $$
	 \big\Vert y_3 - \big(1,1) \big\Vert \leq \beta u \quad \text{ and } \quad \big\Vert y_4 - \big(1,-1\big) \big\Vert \leq \beta u.
	\vspace{0.75ex} $$
\end{lemma}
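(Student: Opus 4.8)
The plan is to reduce the statement to planar geometry by two moves: first identify which $1$-cycle is destroyed by $\xx=((0,0),(2,0),y_3)$, so that the birthtime bound becomes explicit distance constraints on $y_3,y_4$; then run a quantitative rigidity argument around the equilateral diamond. Write $a=(0,0)$, $b=(2,0)$, $y_i=(p_i,q_i)$ for $i\in\{3,4\}$, and set $s:=\sqrt2+2u$ and $R:=\sqrt{s^2-1}$, so that $R^2=1+4\sqrt2\,u+4u^2$. Since $\|a-b\|=2$ and $\|y_3-y_4\|\ge2$, in the Vietoris--Rips filtration on these four points the edges $\{a,b\}$ and $\{y_3,y_4\}$ appear only at times $\ge1$; hence at every time $t<1$ the $1$-skeleton is a subgraph of the bipartite graph on $\{a,b\}\sqcup\{y_3,y_4\}$, whose unique cycle is $a\,y_3\,b\,y_4\,a$, and no $2$-simplex is present. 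Consequently the cycle destroyed by $\xx$, born at $b_\xx<\tfrac1{\sqrt2}+u<1$, is exactly this $4$-cycle, and it is born when the last of the edges $\{a,y_3\},\{b,y_3\},\{a,y_4\},\{b,y_4\}$ appears; so $\|a-y_3\|,\|b-y_3\|,\|a-y_4\|,\|b-y_4\|<s$, which gives $r_\xx=1$ and $y_3,y_4\in B(a,s)\cap B(b,s)$. Adding the two ball inequalities for $y_i$ yields $(p_i-1)^2+q_i^2\le R^2$, while taking the larger of $p_i^2,(p_i-2)^2$ gives the sharper $q_i^2\le R^2-2|p_i-1|$, i.e.\ $|p_i-1|\le\tfrac12(R^2-q_i^2)$; moreover $p_i\in[\,2-\sqrt{s^2-q_i^2},\,\sqrt{s^2-q_i^2}\,]$, and therefore $|p_3-p_4|\le\sqrt{s^2-q_3^2}+\sqrt{s^2-q_4^2}-2$.

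Next I would exploit $\|y_3-y_4\|\ge2$. The signs of $q_3$ and $q_4$ must be opposite (equivalently $q_3q_4<0$): otherwise $|q_3-q_4|\le R$ and $|p_3-p_4|\le 2s-2$ would force $\|y_3-y_4\|^2\le(2s-2)^2+R^2<4$ for $u$ small, a contradiction; since $q_3>0$ by assumption, this gives $q_4<0$. Writing $\sigma:=R^2-q_3^2\ge0$ and $\tau:=R^2-q_4^2\ge0$, the identity $s^2-q_3^2=1+\sigma$ gives $\sqrt{s^2-q_3^2}\le1+\tfrac\sigma2$ (and similarly for $q_4$), so $|p_3-p_4|\le\tfrac12(\sigma+\tau)$, while $|q_3-q_4|=|q_3|+|q_4|=\sqrt{R^2-\sigma}+\sqrt{R^2-\tau}\le 2R-\tfrac{\sigma+\tau}{2R}$. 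Substituting into $4\le(p_3-p_4)^2+(q_3-q_4)^2$ and setting $w:=\sigma+\tau$ produces the scalar inequality
\[
\frac{1+R^2}{4R^2}\,w^2-2w+(4R^2-4)\ \ge\ 0,
\]
an upward parabola with roots $w_\pm=\bigl(4R^2\pm4R\sqrt{1+R^2-R^4}\bigr)/(1+R^2)$. As $u\to0$ one has $R\to1$, $w_-\to0$ and $w_+\to2$, whereas $w=\sigma+\tau\le2R^2<w_+$ once $u$ is small, so the inequality forces $w\le w_-$, and a Taylor expansion gives $\sigma+\tau\le 8\sqrt2\,u+O(u^2)$. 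The rest is bookkeeping: $q_3^2=R^2-\sigma\ge1-4\sqrt2\,u+O(u^2)$ gives $q_3\ge1-\beta_0u$ for an absolute constant $\beta_0$, while $q_3\le R\le\sqrt{1+4\sqrt2\,u+8u^2}=1+2\sqrt2\,u$, and $|p_3-1|\le\tfrac\sigma2\le\beta_1u$; this is the asserted box for $y_3$, and $(p_3-1)^2+(q_3-1)^2\le(\beta_1u)^2+(\beta_0u)^2$ yields $\|y_3-(1,1)\|\le\beta u$. Repeating the same estimates with $q_4<0$ gives $\|y_4-(1,-1)\|\le\beta u$, with $\beta,\beta_0,\beta_1$ independent of $u$ and $u$ sufficiently small.

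I expect the main obstacle to be the scalar-inequality step: it is precisely there that the first-order rigidity of the extremal equilateral-diamond configuration is captured, and the delicate point is to verify that the \emph{a priori} admissible range $w=\sigma+\tau\in[0,2R^2]$ lies below the larger root $w_+$, so that the quadratic inequality genuinely confines $w$ to the $O(u)$-small interval $[0,w_-]$ rather than merely excluding a middle band. By contrast, identifying the destroyed cycle is routine finite-point combinatorics for the Vietoris--Rips complex, and the concluding estimates are elementary; the half-plane hypothesis only serves to select the branch $q_4<0$, the other branch being symmetric under $q\mapsto-q$.
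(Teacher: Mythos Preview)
Your argument is correct and actually cleaner than the paper's, modulo one harmless arithmetic slip: at $R=1$ one has $w_+=(4R^2+4R\sqrt{1+R^2-R^4})/(1+R^2)=(4+4)/2=4$, not $2$. This does not matter, since all you need is $w=\sigma+\tau\le 2R^2<w_+$, and $2<4$ holds with room to spare; the conclusion $w\le w_-=8\sqrt2\,u+O(u^2)$ is unaffected.

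The paper takes a different route. It does not introduce the variables $\sigma,\tau$ or the scalar quadratic in $w$ at all; instead it argues geometrically: it locates the intersection points $p_3(u),p_4(u)$ of the two circles $\partial B((0,0),s)$ and $\partial B((2,0),s)$, observes that if the lens meets $B(y_3,2)^c$ then it must contain the bottom corner $p_4(u)$, replaces $p_4(u)$ by the simpler approximation $\hat p_4(u)=(1,-1-2\sqrt2\,u)$, and then explicitly solves for the intersection of $\partial B((0,0),s)$ with $\partial B(\hat p_4(u),2)$ inside the lens, using L'H\^opital on the resulting expression to extract the $1-\beta_0 u$ lower bound on $q_3$. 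Your approach is more symmetric in $y_3,y_4$, avoids the ad hoc approximation $\hat p_4$ and the explicit circle--circle intersections, and makes the first-order rigidity mechanism transparent through the single inequality $\tfrac{1+R^2}{4R^2}w^2-2w+4(R^2-1)\ge0$. The paper's approach, on the other hand, yields the specific upper endpoint $1+2\sqrt2\,u$ for $q_3$ directly from the lens geometry and ties in visually with the figures and the triangle approximations used immediately afterwards in the proof of Proposition~\ref{prop:5}. Your explicit identification of the destroyed cycle as the bipartite $4$-cycle (and the consequence $r_\xx=1$) is a useful clarification that the paper only spells out later, inside the proof of Proposition~\ref{prop:5}.
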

\begin{proof}
	First, define the balls, 
	\vspace{0.75ex} \begin{equation}
		\label{eq:u_balls}
B_0^u  = B\big((0,0),\sqrt{2} + 2u\big) \quad \text{and} \quad B_2^u  = B\big((2,0),\sqrt{2}  + 2u\big). 
\vspace{0.75ex} \end{equation}
	We can find the intersection points $p_3(u)$ and $p_4(u)$ of the two circles $\partial B_1^u$ and $\partial B_2^u$ by solving
	\vspace{0.75ex} \begin{equation}
		\label{eq:circles}
		x^2 + y^2 = \big( \sqrt{2} + 2u \big)^2 \quad \text{ and } \quad (x-2)^2 + y^2 = \big( \sqrt{2} + 2u \big)^2,
	\vspace{0.75ex} \end{equation}
	which have the solutions
	\vspace{0.75ex} $$
	 p_{3}(u) = \bigg(1,  2\sqrt{\frac{1}{4} + \sqrt{2}u + u^2}  \bigg)\quad \text{ and } \quad p_{4}(u) = \bigg(1, -2\sqrt{\frac{1}{4} + \sqrt{2}u + u^2}  \bigg).
	\vspace{0.75ex} $$
	Note that if $\overline{B_0^u \cap B_2^u} \cap B(y_3,2)^c$ is non-empty, it must contain $p_4(u)$. Based on this observation, we now determine a lower bound on the 
	second coordinate (height) of $y_3$ under the condition that $\Vert y_3 - y_4 \Vert \geq 2$. To that end, we introduce the two points
	\vspace{0.75ex} $$
	 \hat p_{3}(u) = \big(1,  1 + 2\sqrt{2}u  \big)\quad \text{ and } \quad \hat p_{4}(u) = \big(1, -1 -2\sqrt{2}u \big),
	\vspace{0.75ex} $$
	Note that $\hat p_{3}(u)$ is slightly above $p_3(u)$ and $\hat p_{4}(u)$ is slightly below $p_4(u)$, respectively (see Figure \ref{fig:p_hat}). 
	
	\begin{figure}[h!]
    \begin{tikzpicture}[scale=1.5]
         \draw[black] (0,0) circle (1.6);
         \draw[black] (2,0) circle (1.6);
         \fill[black] (0,0) circle (1pt);
         \fill[black] (2,0) circle (1pt);

         \node[below] at (0,0) {$(0,0)$};
         \node[below] at (2,0) {$(2,0)$};
     
         \coordinate (P3) at (1,1.25); 
         \coordinate (P4) at (1,-1.25); 
         \fill[black] (P3) circle (1pt);
         \fill[black] (P4) circle (1pt);
     
         \node[below=3pt] at (P3) {$p_3$};
         \node[above=3pt] at (P4) {$p_4$};
     
         \coordinate (Phat3) at (1,1.5); 
         \coordinate (Phat4) at (1,-1.5); 
         \fill[black] (Phat3) circle (1pt);
         \fill[black] (Phat4) circle (1pt);
     
         \node[above] at (Phat3) {$\hat{p}_3$};
         \node[below] at (Phat4) {$\hat{p}_4$};

          \draw[dotted] (-1,-1.5) arc[start angle=180,end angle=0,radius=2.1];
     
    \end{tikzpicture}
    \caption{Illustration of the two circles $\partial B_0^{u}$ and $\partial B_2^{u}$ with intersection points $p_3 = p_3(u)$ and $p_4 = p_4(u)$. Additionally, the two
    approximations $\hat p_3$ and $\hat p_4$ are illustrated, and the ball $B(\hat p_4,2)$ is (partly) displayed as the dotted semi-circle.}
    \label{fig:p_hat}
\end{figure}
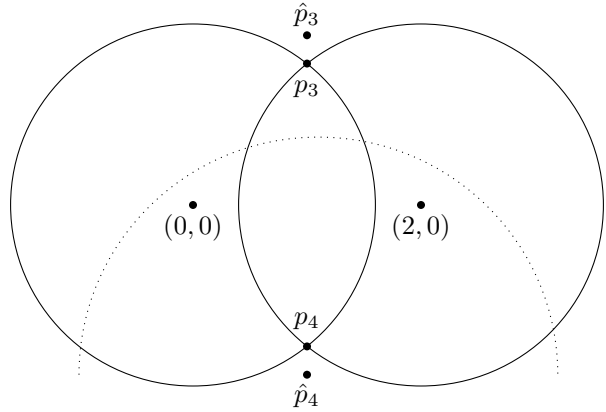

	We now find the intersection point 
of the two circles 
$\partial B_0^u$ and $\partial B(\hat p_4(u),2)$ inside the lens $\overline{B_0^u \cap B_2^u}$ 
by solving equations similar to (\ref{eq:circles}), which leads to the expression for $x$:
\vspace{0.75ex} \begin{equation}
	\label{eq:circles_x}
	x = y(1+2\sqrt{2}u) + 4\sqrt{2}u + 6u^2,
\vspace{0.75ex} \end{equation}
which subsequently leads to a quadratic equation in $y$ that has the solution (inside the lens):
\vspace{0.75ex} \begin{equation}
	\label{eq:circles_y}
	y = \frac{-4(1+2\sqrt{2}u)(2\sqrt{2}u+3u^2) + 2\sqrt{d}}{2((1+2\sqrt{2}u)^2 + 1)},
\vspace{0.75ex} \end{equation}
where the discriminant $d$ is given by
\vspace{0.75ex} \begin{equation}
	\label{eq:circles_d}
	d = 4(1+2\sqrt{2}u)^2(2\sqrt{2}u+3u^2)^2-4((1+2\sqrt{2}u)^2 + 1)((2\sqrt{2}u+3u^2)^2-(\tfrac{1}{\sqrt{2}} + u)^2).
\vspace{0.75ex} \end{equation}
Hence, by applying L'Hôpital's rule, it follows from equations (\ref{eq:circles_y}) and (\ref{eq:circles_d}) that $y \geq 1 - \beta_0 u$ for some $\beta_0 > 0$. 
Moreover, it follows that the second coordinate of $y_3$ must be at least $1 - \beta_0 u$. Inserting this into (\ref{eq:circles_x}), 
it follows that for some $\beta_1 > 0$ depending on $\beta_0$, $y_3$ must lie in the rectangle $R(u)$ defined as
\vspace{0.75ex} $$
R(u) = \big[ 1 - \beta_1 u, 1 + \beta_1 u \big] \times \big[ 1 - \beta_0 u, 1 + 2\sqrt{2}u \big],
\vspace{0.75ex} $$
and note that by the Pythagorean Theorem, this simultaneously proves that $\Vert y_3 - (1,1) \Vert \leq \beta_2 u$ for some $\beta_2 > 0$ depending on $\beta_0$ and $\beta_1$. 
By symmetry, we may repeat the argument for $y_4$, which completes the proof.
\end{proof}
\begin{proof}[Proof of Proposition \ref{prop:5}] Again, we split the proof into two parts as in the proof of Proposition \ref{prop:4}.

	\noindent \textit{Maximal lifetime:} Let $(y_1, y_2, y_3, y_4) \in \R^{2 \cdot 4}$ be 1-connected and assume, without loss of generality, 
	that \(y_1 = (0,0)\) and \(y_2 = (2r,0)\) for some \(r > 0\). Assume \(((0,0),(2r,0),y_3)\) is a negative simplex in \(((0,0),(2r,0),y_3,y_4)\)
	 with deathtime \(r \leq 1\) and lifetime greater than \(\lmax^+(3,4) - ru\). In particular, the lifetime is greater than \(r(1-1/\sqrt{2}-u)\), 
	 and \(\Vert y_3 - y_4 \Vert \geq 2r\). Now, scaling the four points by \(1/r\) gives that \(((0,0),(2,0),y_3)\) has deathtime equal to 1 and 
	 lifetime greater than \(1-1/\sqrt{2}-u\), which implies that the birthtime is less than \(1/\sqrt{2}+u\). 
	 Thus, it follows from Lemma \ref{lem:beta_balls}, by letting \(u \to 0\), that \(\lmax^+(3,4) = 1-1/\sqrt{2}\), 
	 and this lifetime is realized by \(y_3 = (1,1)\) and \(y_4 = (1,-1)\) as claimed.
	 \noindent \textit{Regularity:} By the definition of the differential quotient, it suffices to prove that 
	 \begin{enumerate}[label=(\roman*)]
		 \item $\displaystyle \lim_{u \to 0} \frac{h(u,1)}{u^5} \in (0,\infty) $, \\[0.25ex]
		 \item $\displaystyle \lim_{u \to 0} \frac{\tilde h^{(0,1)}(u,v)}{u^5} \in (0,\infty) $ for any $v < 1$,
	 \end{enumerate}
	 in order to conclude $q=5$ works in assumption $H$. From the definition of $h$ in (\ref{eq:hdef_int}),
	 \vspace{0.75ex} \begin{align*}
	 h(u,v)  = \iiint
	 \one\big\{(0,y_2,y_3,y_4) \in F_{1-v}^1, \ \ell_{(0,y_2,y_3),(0,y_2,y_3,y_4)} \geq \lmax^+- u\big\}  \ \d y_2 \d y_3 \d y_4.
	 \vspace{0.75ex}  \end{align*}
	 Applying the translation with the vector $(y_1,y_1,y_1,y_1)$ and Tonelli's theorem, it follows that
	 \vspace{0.75ex} \begin{align*}
	 h(u,v)  = \iiiint
	 \one\big\{(y_1,y_2,y_3,y_4) \in F_{1-v}^1, \ & \ell_{(y_1,y_2,y_3),(y_1,y_2,y_3,y_4)} \geq \lmax^+ - u \big\} \times \\
	 & \quad \times \one \bigg\{ \frac{1}{4} \sum_{j=1}^4 y_i \in [0,1]^2 \bigg\}  \ \d y_1 \d y_2 \d y_3 \d y_4.
	 \vspace{0.75ex}  \end{align*}
	 By \cite[Lemma 4.4]{KM18}, since there are four points, the created 1-cycle is isomorphic to a 2-dimensional cross-polytope, i.e., a diamond or rhombus.
	 Thus, it automatically follows that $(0,y_2,y_3,y_4)$ is 1-connected. Moreover, since $(y_1,y_2,y_3)$ is negative, all pairwise distances
	 between $y_1$, $y_2$, and $y_3$ are less than $2 r_{(y_1,y_2,y_3)}$. As one diagonal distance in the diamond
	 must be larger than $2 r_{(y_1,y_2,y_3)}$, this leaves exactly three disjoint options, namely the distance between $y_4$ and $y_i$ for $i=1,2,3$. Thus, by symmetry
	 and choosing $i=3$, meaning the deathtime $r_{(y_1,y_2,y_3)}=\Vert y_1 - y_2 \Vert /2$, then
	 \vspace{0.75ex} \begin{align*}
	 h(u,v)  = 3 &\iiiint
	 \one\big\{(y_1,y_2,y_3) \in \mc N((y_1,y_2,y_3,y_4)), \ \ell_{(y_1,y_2,y_3),(y_1,y_2,y_3,y_4)} \geq \lmax^+ - u \big\} \times \\
	 & \quad \times \one \bigg\{ 1-v \leq \tfrac{\Vert y_1 - y_2 \Vert}{2} \leq 1,  \ \frac{1}{4} \sum_{j=1}^4 y_i \in [0,1]^2, \Vert y_3 - y_4 \Vert \geq \Vert y_2 - y_1 \Vert \bigg\}  \ \d y_1 \d y_2 \d y_3 \d y_4.
	 \vspace{0.75ex}  \end{align*}
	 Translating back to $y_1 = 0$ and using that the lifetimes are additive,
	 \vspace{0.75ex} \begin{align*}
	 h(u,v)  = 3 \iiint
	  \one\big\{(0,y_2,y_3) & \in \mc N((0,y_2,y_3,y_4)), \ b_{(0,y_2,y_3),(y_1,y_2,y_3,y_4)} \leq \Vert y_2 \Vert - \lmax^+ + u \big\} \times \\
	 & \quad \quad \quad \times \one \big\{ 1-v \leq \tfrac{\Vert y_2 \Vert}{2} \leq 1,  \ \Vert y_3 - y_4 \Vert \geq \Vert y_2 \Vert \big\}  \ \d y_2 \d y_3 \d y_4.
	 \vspace{0.75ex}  \end{align*}
	 Now transforming into polar coordinates such that $y_2 = r(\cos(\theta),\sin(\theta))$, then rotating $y_2$, $y_3$, and $y_4$ by $-\theta$,
	 and changing $y_3$ and $y_4$ back into Cartesian coordinates, it follows that 
	 \vspace{0.75ex} \begin{align*}
	 h(u,v)  = 24 \pi   \int_{1-v}^1 r \iint
	 \one\big\{&(0,(2r,0),y_3) \in \mc N((0,(2r,0),y_3,y_4))\big\} \times \\
	 & \ \times \one \big\{\ b_{(0,y_2,y_3),(y_1,y_2,y_3,y_4)} \leq r -\lmax^+ + u, 
	  \ \Vert y_3 - y_4 \Vert \geq 2r \big\}  \ \d y_3 \d y_4 \d r.
	 \vspace{0.75ex}  \end{align*}
	 Substituting $y_3$ and $y_4$ with $r y_3$ and $r y_4$ as well as introducing $u_r  = \max\{\lmax^+ -\tfrac{\lmax^+-u}{r},0\}$,
	 \vspace{0.75ex} \begin{align*}
	 h(u,v)  = 24 \pi   \int_{1-v}^1 r^5 \iint
	 \one\big\{&(0,(2,0),y_3) \in \mc N((0,(2,0),y_3,y_4))\big\} \times \\
	 & \ \times \one \big\{b_{(0,y_2,y_3),(y_1,y_2,y_3,y_4)} \leq \tfrac{1}{\sqrt{2}} + u_r, 
	  \ \Vert y_3 - y_4 \Vert \geq 2 \big\}  \ \d y_3 \d y_4 \d r,
	 \vspace{0.75ex}  \end{align*}
	 where we have also plugged in the value of $\lmax^+$ as found above.
	 Now, introduce
	 \vspace{0.75ex}  $$
	 C_3^{u_r} = \overline{B_0^{u_r} \cap B_2^{u_r}} \cap B((1,1),\beta {u_r}) \quad \text{ and } \quad    A(y_3) = \overline{B_0^{u_r} \cap B_2^{u_r}} \cap B(y_3,2)^c,
	 \vspace{0.75ex}  $$
	 where $B_0^u$ and $B_2^u$ are defined as in (\ref{eq:u_balls}).
	 By Lemma \ref{lem:beta_balls}, 
	 \vspace{0.75ex}  $$
	 h(u,v)  = \int_{1-v}^{1} \int_{C_3^{u_r}} \vol(A(y_3)) \  \d y_3 \d r.
	 \vspace{0.75ex}  $$	 
	 The idea is now to approximate the admissible area $A(y_3)$ by an isosceles right triangle, which we do through several steps. For a triangle with vertices $P$, $Q$, and $R$, 
	 we let $\angle PQR$ denote the interior angle at $Q$ (i.e., defined analogously as in (\ref{eq:triangle_angles})). 
	 First, if the intersection of $\partial B(y_3,2)$ with $\partial B_0^{u_r}$ and $\partial B_2^{u_r}$ is empty, then $\vol(A(y_3))=0$ by definition of $A(y_3)$.
	 Therefore, assuming without loss of generality the intersection is non-empty, let
	 $Q_0$ and $Q_2$ denote the intersections of $\partial B(y_3,2)$ with $\partial B_0^{u_r}$ and $\partial B_2^{u_r}$ inside the lens $\overline{B_0^{u_r} \cap B_2^{u_r}}$, respectively. Let $Q_1$
	 denote the point on $\partial B(y_3,2)$ that lies inside the lens and is equidistant to $Q_0$ and $Q_2$. 
	 Consider points $\tilde Q_0$ on $\partial B_0^{u_r}$ and $\tilde Q_2$ on $\partial B_2^{u_r}$ and both inside the lens such that $\angle \tilde Q_2 Q_1 y_3$ 
	 and $\angle \tilde Q_0 Q_1 y_3$ are right angles, see Figure \ref{fig:triangle_construction}. Let
	 \vspace{0.75ex} $$
	 \tilde A(y_3) = A(y_3) \cap \text{conv}(\tilde Q_0,\tilde Q_2,y_3)^c.
	 \vspace{0.75ex} $$
	 By Lemma \ref{lem:beta_balls} and symmetry, it follows that
	 \vspace{0.75ex} \begin{equation}
		 \label{eq:first_step}
	 \vol\big( A(y_3) \triangle \tilde A(y_3)\big) \leq 2 \beta^2u_r^2 \sin(\angle \tilde Q_2 y_3 Q_1) \cos(\angle \tilde Q_2 y_3 Q_1) \leq 2 \beta^3u_r^3 \in O(u_r^3).
	 \vspace{0.75ex} \end{equation}
	 In other words, we have changed the "curved top" of $A(y_3)$ to a "straight and slanted top." Next, we argue that we can go from "slanted" to "horizontal". 
	 Consider the point $Q_3$ in the lens such that $\angle Q_1 Q_3 y_3$ is a right angle and write $Q_1 = (q_{1,1},q_{1,2})$. Let
	 \vspace{0.75ex} $$
	 \overline A(y_3) = A(y_3) \cap (\R \times (-\infty,q_{1,2}]).
	 \vspace{0.75ex} $$
	 Again, by Lemma \ref{lem:beta_balls} and symmetry, it follows that
	 \vspace{0.75ex} \begin{equation}
		 \label{eq:second_step}
	 \vol\big( \tilde A(y_3) \triangle \overline A(y_3)\big) \leq 2 \beta^2 u_r^2 \sin(\angle \tilde Q_3 y_3 Q_1) \cos(\angle \tilde Q_3 y_3 Q_1) \in O(u_r^3).
	 \vspace{0.75ex} \end{equation}
	 Finally, we argue that we can go from a "curved bottom" to a "right-angled bottom." Let $P_4$ denote the bottom intersection point of $\partial B_0^{u_r}$ and $\partial B_2^{u_r}$, 
	 and let $Q_4$ and $Q_5$ denote two points on the horizontal line $y=q_{1,2}$ such that
	 $\angle Q_4P_4Q_5$ is a right angle, see Figure \ref{fig:triangle_construction}. Consider the triangle,
	 \vspace{0.75ex} $$
	 \hat A(y_3) = \text{conv}(P_4,Q_4,Q_5),
	 \vspace{0.75ex} $$
	 Take $Q_6$ on the horizontal line through $P_4$ such that $\angle Q_4Q_6P_4$ is a right angle. 
	 As above,
	 \vspace{0.75ex} \begin{equation}
		 \label{eq:third_step}
	 \vol\big( \overline A(y_3) \triangle \hat A(y_3)\big) \leq 2 \beta^2 u_r^2 \sin(\angle \tilde Q_4 P_4 Q_6) \cos(\angle \tilde Q_4 P_4 Q_6) \in O(u_r^3).
	 \vspace{0.75ex} \end{equation}	 

	\begin{figure}[h!]
    \centering

    \begin{subfigure}[b]{0.32\textwidth}
        \centering
        \begin{tikzpicture}[scale=1]
            \draw[gray] (0,0) arc[start angle=270, end angle=450, radius=2];
            \draw[gray] (2,0) arc[start angle=270, end angle=90, radius=2];
            \draw[gray,dotted] (-1.5,3) arc[start angle=180,end angle=360,radius=2];
            \coordinate (Y3) at (0.5,3);
         \fill[black] (Y3) circle (1pt);
         \node[below=3pt] at (Y3) {$y_3$};

         \coordinate (Q1) at (1.2,1.14);
         \fill[gray] (Q1) circle (1pt);
         \node[gray, above=3pt] at (Q1) {$Q_1$};
        
         \coordinate (Q0) at (1.92, 1.4);
         \fill[black] (Q0) circle (1pt);
         \node[below, right] at (Q0) {$\tilde Q_0$};

         \coordinate (Q2) at (0.39, 0.84);
         \fill[black] (Q2) circle (1pt);
         \node[below=2pt] at (Q2) {$\tilde Q_2$};
         
         \draw[black] (Q0) -- (Q2);

        \end{tikzpicture}
    \end{subfigure}
    \begin{subfigure}[b]{0.32\textwidth}
        \centering
        \begin{tikzpicture}[scale=1]
            \draw[gray] (0,0) arc[start angle=270, end angle=450, radius=2];
            \draw[gray] (2,0) arc[start angle=270, end angle=90, radius=2];
            \draw[gray,dotted] (-1.5,3) arc[start angle=180,end angle=360,radius=2];
            \coordinate (Y3) at (0.5,3);
         \fill[black] (Y3) circle (1pt);
         \node[below=3pt] at (Y3) {$y_3$};

         \coordinate (Q1) at (1.2,1.14);
         \fill[black] (Q1) circle (1pt);
         \node[black, above=3pt] at (Q1) {$Q_1$};
        
         \coordinate (Q0) at (1.92, 1.4);

         \coordinate (Q2) at (0.39, 0.84);
         
         \draw (0.2,1.14) -- (1.8,1.14);
        \end{tikzpicture}
    \end{subfigure}
    \begin{subfigure}[b]{0.32\textwidth}
        \centering
        \begin{tikzpicture}[scale=1]
            \draw[gray] (0,0) arc[start angle=270, end angle=450, radius=2];
            \draw[gray] (2,0) arc[start angle=270, end angle=90, radius=2];
            \draw[gray,dotted] (-1.5,3) arc[start angle=180,end angle=360,radius=2];
            \coordinate (Y3) at (0.5,3);
         \fill[black] (Y3) circle (1pt);
         \node[below=3pt] at (Y3) {$y_3$};
    
        \coordinate (Q0) at (1.92, 1.4);
        \coordinate (Q2) at (0.39, 0.84);
        
         \coordinate (Q4) at (-0.1,1.14);
         \fill[black] (Q4) circle (1pt);
         \node[black, left=2pt] at (Q4) {$Q_4$};

         \coordinate (Q5) at (2.1,1.14);
         \fill[black] (Q5) circle (1pt);
         \node[black, right=2pt] at (Q5) {$Q_5$};

         \coordinate (P4) at (1,0.28);
         \fill[black] (P4) circle (1pt);

         \draw (Q5) -- (Q4) -- (P4) -- cycle;
        \end{tikzpicture}
    \end{subfigure}

    \caption{From left to right: The construction of $\tilde A(y_3)$, $\overline A(y_3)$, and $\hat A(y_3)$
    as well as an illustration of the points $\tilde Q_0$, $Q_1$, $\tilde Q_2$, $Q_4$, $Q_5$
    in the proof of Proposition \ref{prop:5}.}
    \label{fig:triangle_construction}
\end{figure}
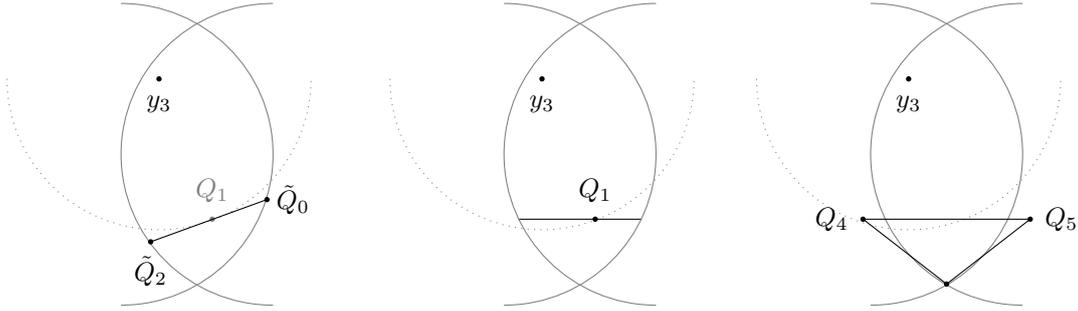

	Thus, we now write 
	\vspace{0.75ex} $$
	h(u,v)  = \int_{1-v}^{1} r^5 \int_{C_3^{u_r}} 
	 \vol\big(\hat A(y_{3})\big) \  \d y_3 \d r  + \int_{1-v}^{1} \int_{C_3^{u_r}}  \vol\big(\hat A(y_{3}) \triangle A(y_3) \big) \  \d y_3 \d r =: E_1(u,v) + E_2(u,v).
	\vspace{0.75ex} $$
	By (\ref{eq:first_step}), (\ref{eq:second_step}), and (\ref{eq:third_step}), it suffices to show (i) and (ii) with $h(u,v)$ replaced by $E_1(u,v)$. For $y_{3,2}$ fixed, it follows
	from the intersections of the horizontal line $y=y_{3,2}$ with $\partial B_0^u$ and $\partial B_2^u$ that
	\vspace{0.75ex} $$
	(y_{3,1},y_{3,2}) \in C_3^u \iff 2 - \sqrt{(\sqrt{2}+2u)^2 - y_{3,2}^2} \leq y_{3,1} \leq \sqrt{(\sqrt{2}+2u)^2 - y_{3,2}^2}.
	\vspace{0.75ex} $$
	Then it follows that 
	\vspace{0.75ex} $$
	E_1(u,v) = \int_{1-v}^{1} r^5 \int_{2-2\sqrt{\frac{1}{4} + \sqrt{2}u_r + u_r^2}}^{2\sqrt{\frac{1}{4} + \sqrt{2}u_r + u_r^2}} 
	\int_{2- \sqrt{(\sqrt{2}+2u_r)^2 - y_{3,2}^2}}^{\sqrt{(\sqrt{2}+2u_r)^2 - y_{3,2}^2}}
	\vol\big(\hat A((y_{3,1},y_{3,2}))\big) \  \d y_{3,1} \d y_{3,2} \d r .
	\vspace{0.75ex} $$
	In the above construction, using analogous trigonometric arguments, the dependence of $\hat A((y_{3,1},y_{3,2}))$ on $y_{3,1}$ is of order $O(u_r^3)$;
	hence, we may assume without loss of generality that $\vol\big(\hat A((y_{3,1},y_{3,2}))\big)$ is independent of $y_{3,1}$. Since the
	area of an isosceles right triangle of height $\Delta$ is $\Delta^2/2$,
	\vspace{0.75ex} \begin{equation}
		\label{eq:sanity_check}
	\begin{aligned}
	E_1(u,v) = \int_{1-v}^{1} \frac{r^5}{2} & \int_{2-2\sqrt{\frac{1}{4} + \sqrt{2}u_r + u_r^2}}^{2\sqrt{\frac{1}{4} + \sqrt{2}u_r + u_r^2}} 
	\bigg(y_{3,2} - 2 + 2\sqrt{\frac{1}{4} + \sqrt{2}u_r + u_r^2}\bigg)^2  \times \\ 
	& \quad \quad \quad \times \bigg( \sqrt{(\sqrt{2}+2u_r)^2-y_{3,2}^2} - (2- \sqrt{(\sqrt{2}+2u_r)^2 - y_{3,2}^2})\bigg)
	\ \d y_{3,2} \d r. 
	\end{aligned}
	\vspace{0.75ex} \end{equation}
	As a brief sanity check, observe that as $u \to 0$, the domain of $y_{3,2}$ in (\ref{eq:sanity_check}) approaches $[1,1]$
	and hence the integrand approaches $(1-2+1)^2(\sqrt{2-1}-2+\sqrt{2-1})=0$;
	so $E_1(u,v) \to 0$. Let 
	\vspace{0.75ex} $$
	f(u_r) = \int_{2-2\sqrt{\frac{1}{4} + \sqrt{2}u_r + u_r^2}}^{2\sqrt{\frac{1}{4} + \sqrt{2}u_r + u_r^2}} 
	\bigg(y_{3,2} - 2 + 2\sqrt{\frac{1}{4} + \sqrt{2}u_r + u_r^2}\bigg)^2
	 \bigg( 2\sqrt{(\sqrt{2}+2u_r)^2-y_{3,2}^2} - 2 \bigg) \ \d y_{3,2}. 
	\vspace{0.75ex} $$ 
	By L'Hôpital's rule, we see that 
	\vspace{0.75ex} \begin{align*}
		\lim_{u_r \to 0} & \frac{4\sqrt{\frac{1}{4} + \sqrt{2}u_r + u_r^2}-2}{u_r} = 2 \sqrt{2}, \\[0.5ex]
		\lim_{u_r \to 0} & \frac{2\sqrt{(\sqrt{2}+2u_r)^2-1} - 2 }{u_r} = 4\sqrt{2}, \\[0.5ex]
		\lim_{u_r \to 0} & \frac{\bigg(1 - 2 + 2\sqrt{\frac{1}{4} + \sqrt{2}u_r + u_r^2}\bigg)^2}{u_r^2} = \sqrt{2}.
	\vspace{0.75ex} \end{align*} 
	Thus, there exists a $c>0$ such that $f(u_r)/u_r^4 \to c$ as $u_r \to 0$. Furthermore, for $u_r$ sufficiently small,
	\vspace{0.75ex} \begin{equation}
		\label{eq:bound_on_f}
	(c-\e) u_r^4\leq f(u_r) \leq (c+\e)u_r^4,
	\vspace{0.75ex} \end{equation}
	for some $\e >0$. Finally, we are prepared to prove the statements regarding the regularity of $h$. First, regarding point (i), setting $v=1$ and plugging in the definition of $u_r$, we see that 
	\vspace{0.75ex} $$
	\frac{1}{2} \int_{0}^1 r^5 (c\pm \e) u_r^4 \ \d r =  \frac{c\pm \e}{2} \int_{1-u/\lmax^+}^1 r (u-(1-r)\lmax^+)^4\ \d r. 
	\vspace{0.75ex} $$
	Applying the substitution $r = 1 - uw$, it follows that
	\vspace{0.75ex} \begin{equation}
		\label{eq:one_order}
		\frac{1}{2}  \int_{0}^1 r^5 (c\pm \e) u_r^4 \ \d r =  u^5\frac{c\pm \e}{2}  \int_{0}^{1/\lmax^+} (1-uw) (1-w\lmax^+)^4 \ \d w.
	\vspace{0.75ex} \end{equation}
	By combining (\ref{eq:sanity_check}), (\ref{eq:bound_on_f}), and (\ref{eq:one_order}), we complete the proof of (i). Next, let $v < 1$. By the Fundamental Theorem of Calculus,
	\vspace{0.75ex} \begin{equation}
		\label{eq:two_order}
		\frac{\d}{\d v} \frac{1}{2}  \int_{1-uv}^1 r^5 (c\pm \e) u_r^4 \ \d r = -u^5\frac{c\pm \e}{2} (1-uv)(1-v\lmax^+)^4,
	\vspace{0.75ex} \end{equation} 
	so by combining (\ref{eq:sanity_check}), (\ref{eq:bound_on_f}), and (\ref{eq:two_order}), we complete the proof of (ii).
\end{proof}






\pagebreak
\section{Examples}
\label{sec:6}

This section is dedicated to demonstrating some applications of 
Theorems \ref{thm:4}, \ref{thm:1}, \ref{thm:2}, and \ref{thm:3},
and, in particular, to studying the distributional properties of the largest lifetime.
Note that in all examples, when situated in the $m$-sparse regime, we only consider
models where Propositions \ref{prop:assumption_P} and \ref{prop:assumption_U} guarantee that assumptions
$P$ and $U$ are satisfied.
\begin{example}[\textit{"Constant intensity"}]
	\label{ex:1}
Let $\PP_n$ denote a stationary Poisson point process with
constant intensity $n \geq 1$ supported on $W=\T^2$. Suppose further that the
\v Cech filtration is built on top of $\PP_n$ and that the lifetimes are
measured multiplicatively. Choose $k=3$, i.e.,
we are interested in 1-cycles with a large lifetime
that are killed by solid triangles, and let $\alpha = 1$. Now choosing $\ell_{n,1}$ according to assumption $T'$, Theorem \ref{thm:4} with $\e = 1/36 - 1/37 > 0$ gives that
\vspace{0.75ex} $$
\dkr(\xi_n^1, \zeta^1)  = O(n^{-1/37}),
\vspace{0.75ex} $$
where $\zeta^1$ is a stationary Poisson point process with intensity 1. Alternatively, to put ourselves in the 3-sparse setting,
we let $W=[0,1]^2$ and introduce a deathtime bound $r_n> 0$, which satisfies that
$n(nr_n^2)^{2} \rightarrow \infty$, yet  $n(nr_n^2)^{3} \rightarrow 0$.
As $r_n = n^{-\beta}$ satisfies this for $\beta \in (2/3,3/4)$, we can choose $n^{-7/10}$ as the deathtime bound.
Then, choosing $\ell_{n,1}$ in accordance with
assumption $T$ and
letting $\zeta^2 = \zeta^1$, it follows
by Theorem \ref{thm:1}
that
\vspace{0.75ex} $$
\dkr(\xi_n^2, \zeta^2) \in O\big( n(n(n^{-7/10})^2)^3 \big) = O(n^{-1/5}).
\vspace{0.75ex} $$
Moreover, by Theorem \ref{thm:2}, Proposition 5.4, and Remark \ref{rem:sparse_extension}(5), it follows that
\vspace{0.75ex} $$
\dkr(\xi_n^3, \zeta^3) \in O\big( n(n(n^{-7/10})^2)^3 + (n(n^{-7/10})^{4/3})^{-1} \big) = O(n^{-1/15}),
\vspace{0.75ex} $$
where $\zeta^3$ is a Poisson point process with intensity function $(y,u) \mapsto 3u^2$. Thus, the probability of a certain (scaled) lifetime $\hat \ell$ is
proportional to the squared deviation of $\hat \ell$ to $\lmax^*(3,3)=2/\sqrt 3 \approx 1.155$. 

\end{example}

\begin{example}[\textit{"Deathtime bound"}]
	\label{ex:2}
Let $r_n = n^{-\beta}$ for some $\beta > 0$ as in Example \ref{ex:1}. Based on the verified instances of assumption $U$
in Proposition \ref{prop:assumption_U}, we will compute the possible values of $\beta$, as well
as the value of $\beta$ when $d=2$ and $m$ varies freely.

\noindent \textit{\v Cech:}
Let $d\geq 2$ and $m=d+1$. Then, it holds that
\vspace{0.75ex} \begin{equation}
	\label{eq:r_n_choice_1}
	n(nr_n^d)^{d} \rightarrow \infty \quad \text{yet} \quad n(nr_n^d)^{d+1} \rightarrow 0 \quad \iff \quad \beta \in \bigg(\frac{d+2}{d(d+1)},\frac{d+1}{d^2}\bigg).
\vspace{0.75ex} \end{equation}

\noindent \textit{Vietoris-Rips:} Let $d \geq 2$ and $m=2d$. Then, it holds that
\vspace{0.75ex} \begin{equation}
	\label{eq:r_n_choice_2}
	n(nr_n^d)^{2d-1} \rightarrow \infty \quad \text{yet} \quad n(nr_n^d)^{2d} \rightarrow 0 \quad \iff \quad \beta \in \bigg(\frac{2d+1}{2d^2},\frac{2}{2d-1}\bigg).
\vspace{0.75ex} \end{equation}

\noindent \textit{2-dimensional:} Let $d=2$ and $m \geq 3$. Then, it holds that
\vspace{0.75ex} \begin{equation}
	\label{eq:r_n_choice_3}
	n(nr_n^2)^{m-1} \rightarrow \infty \quad \text{yet} \quad n(nr_n^2)^{m} \rightarrow 0 \quad \iff \quad \beta \in \bigg(\frac{m+1}{2m},\frac{m}{2(m-1)}\bigg).
\vspace{0.75ex} \end{equation}
\end{example}

\begin{example}[\textit{"Gaussian intensity"}]
	\label{ex:3}
Let $\kappa$ denote the $d$-dimensional standard Gaussian density,
\vspace{0.75ex} $$
\kappa(y) = (2\pi)^{-\frac{d}{2}}
\exp\big(-\frac{1}{2}\Vert y \Vert^2\big).
\vspace{0.75ex} $$
Suppose we want to investigate the Poisson point process $\PP_{n\kappa}$ in $\R^d$ with 
intensity function $n \cdot \kappa$
using the Vietoris-Rips filtration.
For $k=d-1$ and $m=2d$, then by Example \ref{ex:2}, we may choose 
$ \beta = \frac{8d^2-1}{4d^2(2d-1)}$ and
 $r_n = n^{-\beta}$. Moreover, for any $\alpha > 0$, choose $\ell_{n,\alpha}$ 
according to assumption $T$. If $\zeta$ denotes a Poisson point process
with intensity function $ y \mapsto 
\alpha \cdot (2\pi)^{-d^2}\exp\big(-d\Vert y \Vert^2\big),
\vspace{0.75ex} $
then by Theorem \ref{thm:1}, it follows that
\vspace{0.75ex} $$
\dkr(\xi_n^2, \zeta^2) \in O(\rho_{n,m+1}) 
= O\big(n^{1+2d - (8d^2-1)/(2(2d-1))} \big). \\[0.1em]
\vspace{0.75ex} $$
In particular, when $d=2$, then $\beta = 31/48$ and 
$\dkr(\xi_n^2, \zeta^2) \in O(n^{-1/6})$.
\end{example}

\begin{example}[\textit{"Largest lifetime"}]
	\label{ex:4}
We will now investigate what the Poisson approximation results tell us about the distributional properties of the largest lifetime in both regimes.  

\noindent \textit{The $m$-sparse regime:} Suppose that all assumptions in Theorem \ref{thm:2} hold. Define the largest (scaled) lifetime $\ell_n^{(1)} $ as
\vspace{0.75ex} \begin{equation}
	\label{eq:largest_lifetime_def}
\ell_n^{(1)} = 
\max_{\xx \in \mc N(\PP_{n\kappa})} \hat \ell_{\xx,\PP_{n\kappa}}.
\vspace{0.75ex} \end{equation}
Then for any $t \geq  0$,
\vspace{0.75ex} $$
P\big( (\lmax^\circ - \hat \ell_n^{(1)})/u_{n,\alpha} \geq t \big) 
= P\big( \min_{\xx \in \mc N(\PP_{n\kappa}) \cap \PP^{(k)}}
(\lmax^\circ -  \hat \ell_{\xx,\PP_{n\kappa}})/u_{n,\alpha} \geq t \big) 
= P\big(\xi_n^3(W \times [0,t]) = 0\big).
\vspace{0.75ex} $$
By Theorem \ref{thm:2}, it follows that as $n \to \infty$,
	\vspace{0.75ex} $$
	P\bigg( \frac{\lmax^\circ - \ell_n^{(1)}}{u_{n,\alpha}} \geq t \bigg) 
	\longrightarrow \exp\big(- \alpha \int_0^t \int_W \kappa^{m}(y) qu^{q-1} \ \d y \d u\big)
	= \exp \big(-\alpha \gamma t^q \big),
	\vspace{0.75ex} $$
	where $\gamma = \int_W \kappa^{m}(y) \d y $. Thus, we conclude that
	\vspace{0.75ex} $$
	\frac{\lmax^\circ -  \ell_n^{(1)}}{u_{n,\alpha}}
	\overset{\sim}{\longrightarrow} \text{Weibull}((\gamma \alpha)^{-1/q},q) \quad \text{ as } n \to \infty,
	\vspace{0.75ex} $$
where $\overset{\sim}{\longrightarrow} $ denotes convergence in distribution. In other words, the deviation from the maximal lifetime asymptotically follows a Weibull law in the $m$-sparse setting. 
If $\kappa$ is as in 
Example \ref{ex:3} and $\alpha = 1$,
then the convergence is to a 
$\text{Weibull}(2(2\pi^3)^{1/4},4) \approx \text{Weibull}(5.6,4)$.

\noindent \textit{The unbounded regime:} Relying on Remark \ref{rem:unbounded_extension}(4), we can actually extend the ideas above to the unbounded regime.
	Letting $\ell_n^{(1)}$ denote the largest lifetime again, it follows for any $\alpha > 0$ that 
	\vspace{0.75ex} $$
	P\big( n^3 v(\ell_n^{(1)})  \geq \alpha \big) 
	= P\big( \Xi_n(\T^2 \times [0,\alpha]) = 0\big) \longrightarrow \exp(-\alpha),
	\vspace{0.75ex} $$
	as $n \to \infty$, which means the transformed maximal lifetime $n^3 v(\ell_n^{(1)})$ follows a Weibull(1,1), or Exponential(1), law asymptotically.
\end{example}
\begin{example}[\textit{"The persistence diagram and deathtimes"}]
	\label{ex:6}
	The persistence diagram is a tool in TDA used to summarize the lifetimes of cycles in a point cloud. 
Each point in a 2D diagram corresponds to a cycle, represented as a pair $(\ell, r)$, where $\ell$ is the \textit{lifetime} and $r$ is the \textit{deathtime}. 
Taking a view towards statistical applications, provided the assumptions hold, Theorem \ref{thm:3} tells us that the deathtimes of all large-lifetime cycles (i.e., $\ell \geq \lmax - u_{n,\alpha}$) 
are independent and identically distributed (i.i.d.). Thus, when visualizing the persistence diagram, the right section of the lifetime axis (i.e., to the right of $\lmax - u_{n,\alpha}$) should 
asymptotically exhibit this behavior: subdividing the axis into intervals, the deathtime values should be i.i.d. 
The main issue, of course, is that the value of $\lmax - u_{n,\alpha}$ is unknown, and we only have a rough asymptotic estimate through a Taylor approximation. 
Alternatively, one can subdivide the point cloud into sections and, in each section, consider only the deathtimes of the largest and second-largest lifetimes. 
This allows us to investigate whether these deathtimes, viewed as two samples, are i.i.d. as provided by Theorem \ref{thm:3}.

\end{example}

	
	
	
	

	\section{Simulations}
	\label{sec:7}

	This section is dedicated to supplementing the theoretical results with simulations and exploring some stated conjectures in Section 2 regarding the values of \(\lmax\) and \(q\). 
Finally, we illustrate how to apply Theorem \ref{thm:3} to devise a statistical test based on deviations in the deathtimes, as alluded to in Example \ref{ex:6}. 
To keep this section somewhat concise, we will only carry out these simulations for the \v Cech filtration and additive lifetimes, as well as work with a moderate number of iterations.
The cases of the Vietoris-Rips filtration and multiplicative lifetimes are left as future work, and we think it could be interesting to combine this with larger-scale simulation
studies, which try to investigate the behavior of the threshold $u_{n,\alpha}$ and, in particular, how $u_{n,\alpha}$ depends on $n$ and $\alpha$.
All simulations are carried out using the programming language \texttt{R}. In particular, the \v Cech filtration is built as the Alpha filtration (see Section \ref{sec:3.3})
using the \texttt{R}-package \texttt{TDA}, 
where we have used the \texttt{GUDHI} and \texttt{Dionysus} libraries. All source code is available through the link here:
\href{https://github.com/Statolaj/largelifetimecyclescode.git}{\texttt{GitHub repository}}.

\begin{experiment}[\textit{Optimal point configuration}]
	\label{exp:1}
As already discussed in Section \ref{sec:5}, it is not immediately clear what shape \(m\) points containing a \((k-2)\)-cycle with the largest possible lifetime \(\lmax(m)\) 
has or even what the value of \(\lmax(m)\) is. In the \v Cech filtration when \(d=2\), \(k=3\), and the deathtime is bounded by 1, we conjecture that the configuration
reaching maximal lifetime will 
be \(m\) points equally spaced on the unit circle.
This conjecture is consistent with Proposition \ref{prop:4}. As we know that the lifetime 
obtained by equally spaced points must be a lower bound for the maximal lifetime, the conjecture is actually that this is, in fact, an upper bound too.
In Table \ref{fig:1}, we list the conjectured upper bound for \(m=4,5,6,7,8\) alongside the proven case of \(m=3\).

\begin{table}[h!]
	\centering
	\begin{tabular}{|c|c|c|c|c|c|c|}
		\hline
		Value of \(m\) & 3  & 4 & 5 & 6 & 7 & 8 \\ \hline
		Value of maximal lifetime & 0.134 & 0.293 & 0.412 & 0.500 & 0.566 & 0.617 \\ \hline
	\end{tabular}
	\caption{Conjectured value for the maximal \textit{additive} lifetime \(\lmax^{+}(m)\) for different values of \(m\) when \(d=2\) and \(k=3\) using the \v Cech filtration.}
	\label{tab:1}
\end{table}
We now investigate this conjecture for additive lifetimes and \(m = 4\), \(m = 5\), and \(m = 6\). 
For optimization, we use simulated annealing as implemented in the \texttt{R}-package \texttt{optimization}, and we initialize the optimization scheme from \(m\) 
uniformly chosen points inside the unit ball. The results are displayed in Figure \ref{fig:optimal}, and these three final
(blue) configurations reach an additive lifetime of 0.273, 0.374, and 0.418, respectively. Thus, we see that as \(m\) increases, 
the simulated value of the maximal lifetime deviates further from the conjectured upper bound, which we attribute to the challenging nature of the optimization problem. 
With an increasing number of points, it becomes more difficult to find the global maximum.
However, the conjectured upper bounds in Table \ref{tab:1} are not violated, and the final configurations seem close to that of equidistant points on the unit circle, as we also 
conjectured above.

		\begin{figure}[h!]
			\centering
			\includegraphics[width=0.95\textwidth]{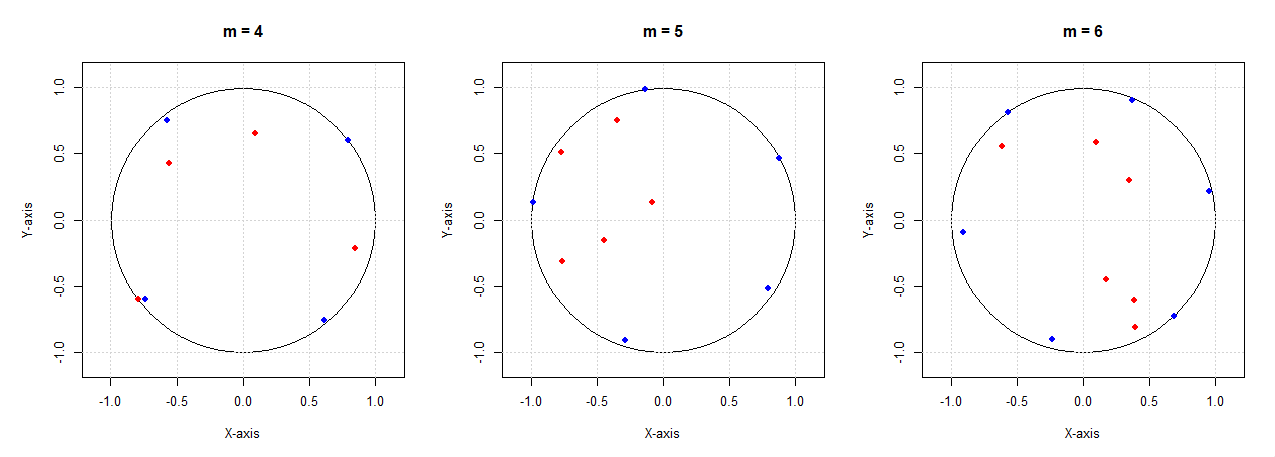}
			\caption{Illustration of the initial configuration (red points) and final configuration (blue points) for the
			 largest lifetime achieved through optimization for $m=4,5,6$ in Experiment \ref{exp:1}. The lifetimes of these 
			 configurations are 0.273, 0.374 and 0.418 and can be compared with conjectured values in Table \ref{tab:1}.}
			\label{fig:optimal}
		\end{figure}

	\end{experiment}
	
	\begin{experiment}[\textit{Distribution of the largest lifetime}]
		The goal of this simulation experiment is twofold: First, we aim to observe whether the deviation of the largest additive lifetime $\ell_n^{(1)}$ still approximately 
follows a Weibull distribution with a shape parameter \(q\), as computed in Example \ref{ex:6}, when \(n\) is of moderate size. 
Secondly, we use this convergence result to explore the shape parameter \(q\) for other values of \(m\), still considering \(d=2\) and \(k=3\).
Recall that the survival function of the Weibull distribution with rate $\lambda > 0$ and shape \(q > 0\) is \(S(t) = \exp(-(t/\lambda)^q)\). Based on this property,
we see that plotting the points
\vspace{0.75ex} \begin{equation}
	\label{eq:weibull_plot}
\big(\log(t), \log(-\log(S(t)))\big)
\vspace{0.75ex} \end{equation}
for different \(t\) should display a linear relationship with \(q\) as the slope and \(-q\log(\lambda)\) as the intercept.
To address the first objective, we choose a stationary Poisson point process on the unit cube \([0,1]^2\) with intensity \(n=20,50,100,200,500,1000\).
For each \(n\), ignoring the scaling \(u_{n,\alpha}\), we will estimate the survival function \(S\) of \(\lmax^+ - \ell_n^{(1)}\)
based on 5000 independent samples. Choosing the \v Cech filtration, \(m=3\), and \(r_n = n^{-0.74}\) yields six plots as displayed in Figure \ref{fig:mod_slope}.
On top of the plot of the pair of points in (\ref{eq:weibull_plot}), we fit a straight line using the \texttt{lm} routine in \texttt{R}. Observe that the slope of this line seems to increase to the theoretical value \(q=3\), as we would expect.
Additionally, at least for \(n=200,500,1000\), the linear relationship
also looks convincing. Thus, from these simulations, it might be reasonable to rely on the approximate Weibull distribution of the deviation
of the largest lifetime already for \(n \geq 200\).

		\begin{figure}[h!]
			\centering
			\includegraphics[width=0.95\textwidth]{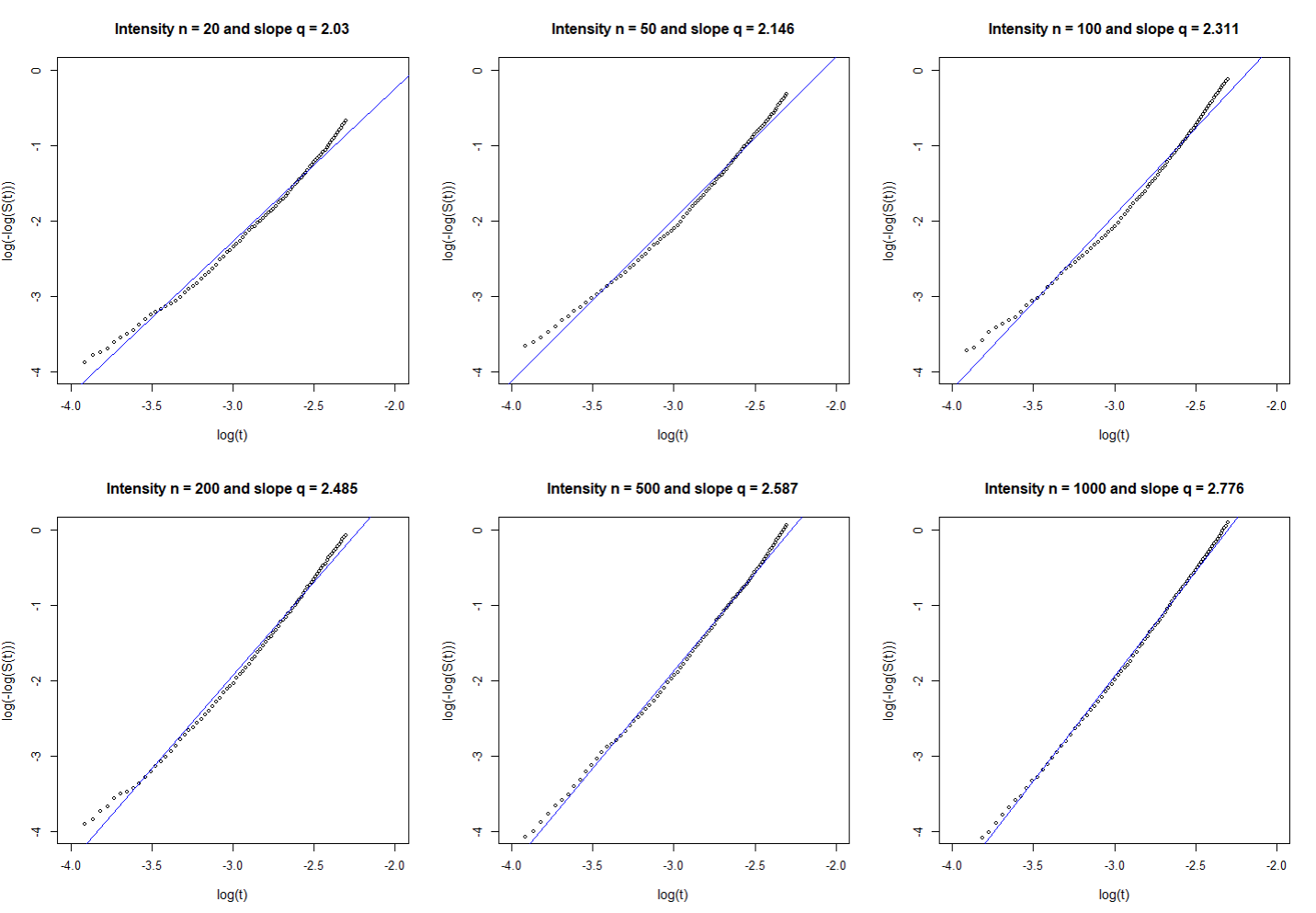}
			\caption{The estimated value of $\log(-\log(S(t)))$ plotted against $\log(t)$ when $m=3$ and a straight line is fitted (blue) with the slope recorded as $q$.
			This is done for the intensity $n=20,50,100,200,500,1000$.}
			\label{fig:mod_slope}
		\end{figure}

		Next, to address the second objective, we will repeat the method above for the values \(m=4\), \(m=5\), and \(m=6\). Following the same idea as described in
Remark \ref{rem:sparse_extension}, one might think that each point added would allow \(u^2\) more freedom, and thus we should expect (provided assumption \(G\) holds) that
\(q=5\), \(q=7\), and \(q=9\). From Example \ref{ex:2}, we choose the deathtime bounds \(r_n = n^{-0.66}\), \(r_n = n^{-0.62}\), and \(r_n = n^{-0.60}\), respectively. Here we will
rely on the conjectured values of the maximal additive lifetime as computed in Table \ref{tab:1} above.
The results are displayed in Figure \ref{fig:large_slope} below for intensities \(n=1000,2000,3500,5000\),
where, as before, we have plotted the points in (\ref{eq:weibull_plot}) and fitted a line. Contrary to what we just discussed,
it seems that \(q=4\), \(q=6\), and \(q=8\) based on the observed convergence from these simulations. Also, the linear relationship is the worst for \(m=4\),
which might indicate that something interesting happens here. If the conjecture in Experiment \ref{exp:1} holds, then these maximal lifetimes would be lifetimes
of four points that are close to a right-angled rhombus. Based on the results of the simulations, the above conjecture, i.e., that
\(q=2m-3\), does not seem violated for \(m>4\), however.

		\begin{figure}[h!]
			\centering
			\includegraphics[width=0.99\textwidth]{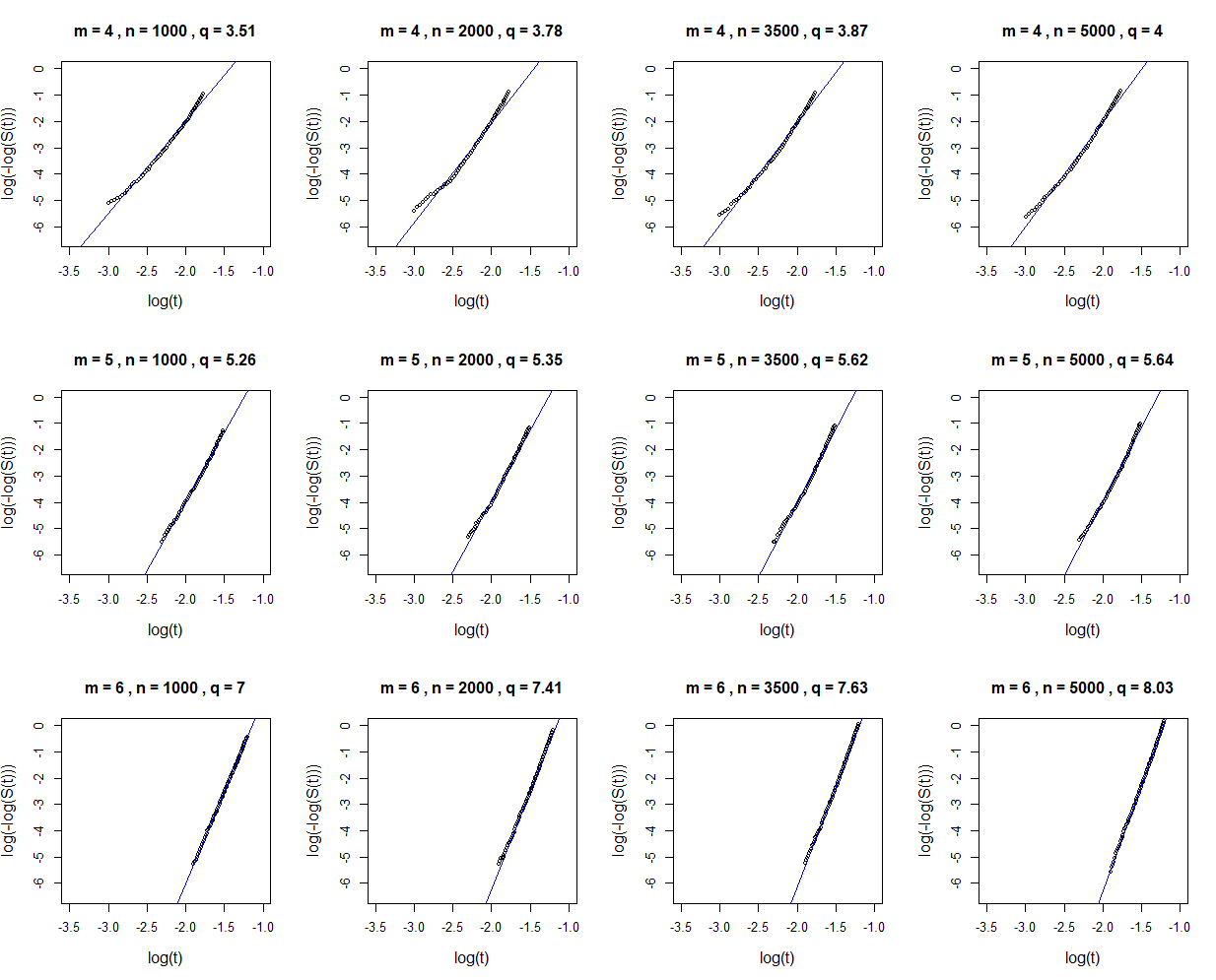}
			\caption{The results of checking the fit of the Weibull distribution for the deviation of the largest lifetime for $m=4,5,6$, where
			 $\log(-\log(S(t)))$ is plotted against $\log(t)$ and a straight line is fitted (blue) to determine a candidate for the value $q$.
			 Each value of $m$ is explored for the intensity $n=1000,2000,3500,5000$.}
			\label{fig:large_slope}
		\end{figure}
	\end{experiment}
\pagebreak
	\begin{experiment}[\textit{Deathtimes of largest and second largest lifetimes}]
		The goal of this simulation experiment is to investigate and illustrate the model-checking procedure discussed in Example \ref{ex:6},
involving the distribution of the deathtimes of the two cycles with the largest and second-largest lifetime. 
Let \(d=2\), \(m=k=3\), the filtration \v Cech, and the lifetimes additive.
Since \(m=3\), we will only consider \(1\)-cycles with deathtime less than \(r_n \leq n^{-0.67}\).
For a specific value of \(n\), we sample 100 instances of a stationary Poisson point process with intensity \(10n\) on the unit cube \([0,1]^2\),
compute the largest and second-largest lifetime and the corresponding deathtime among \(1\)-cycles,
and compute the empirical correlation from the 100 instances of the deathtimes of the two largest lifetimes. We repeat this for a total of 30 repetitions
and compute the average empirical correlation from these 30 empirical correlations. In Figure \ref{fig:deathtime_correlation} below, we plot the average correlation resulting for \(n=100,200,\ldots,1000\). 
Already for \(n \geq 200\), the correlation stays between \(-0.01\) and \(0.06\).

\begin{figure}[h!]
	\centering
	\includegraphics[width=0.6\textwidth]{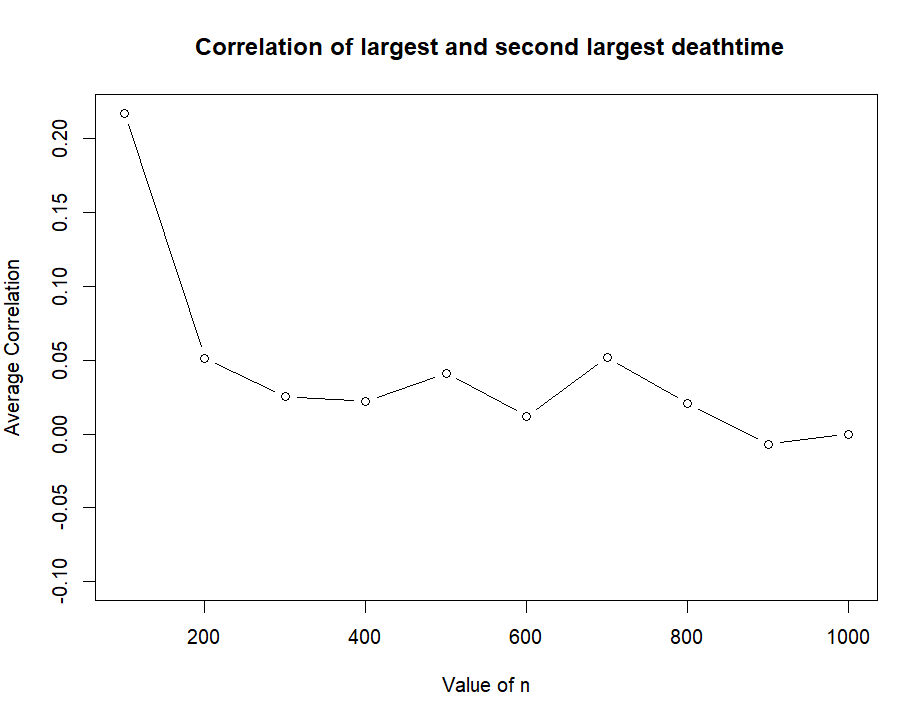}
	\caption{Average correlation between deathtimes of the two largest lifetimes, which is carried out with intensity $n=100,200,\ldots,1000$.}
	\label{fig:deathtime_correlation}
\end{figure}
\end{experiment}


\pagebreak
\vspace{1ex}

\begin{center}
\Large{\textsc{Acknowledgements}}
\end{center}
\vspace{1.5 ex}
The authors would like to thank Esben Trøllund Boye
for his contributions to an earlier version of the manuscript as well as thank Casper Urth Pedersen for his help with finishing touches.
Also, C. Hirsch was supported by a research grant (VIL69126) from VILLUM FONDEN
and M. Otto was supported by the NWO Gravitation project NETWORKS under grant agreement no. 024.002.003.


\bibliographystyle{abbrv}
\bibliography{lit}


\end{spacing}
\end{document}